\documentclass[11pt, reqno]{article}
\usepackage{fullpage}
\usepackage{enumerate}
\usepackage{setspace}
\usepackage{graphicx}
\usepackage{float}

\usepackage{lipsum} 
\usepackage[normalem]{ulem}

\usepackage{upgreek, mathtools,bm}

\usepackage{amsmath,amsfonts,amssymb,amsthm, comment}
\usepackage{hyperref}
\hypersetup{
    colorlinks=true,
    linkcolor=blue,
    citecolor=red,
    urlcolor=blue,
    pdfborder={0 0 0}
}
\usepackage{yfonts}
\usepackage{enumitem}
\usepackage{xcolor}
\usepackage{bbm}
\usepackage{wrapfig}

\usepackage[font=sf, labelfont={sf,bf}, margin=1cm]{caption}

\usepackage{cleveref}
  \crefname{theorem}{Theorem}{Theorems}
  \crefname{thm}{Theorem}{Theorems}
  \crefname{thm*}{Theorem*}{Theorems}
  \crefname{lemma}{Lemma}{Lemmas}
  \crefname{lem}{Lemma}{Lemmas}
  \crefname{remark}{Remark}{Remarks}
  \crefname{prop}{Proposition}{Propositions}
\crefname{notation}{Notation}{Notations}
\crefname{claim}{Claim}{Claims}
  \crefname{defn}{Definition}{Definitions}
  \crefname{corollary}{Corollary}{Corollaries}
  \crefname{section}{Section}{Sections}
  \crefname{figure}{Figure}{Figures}
    \crefname{assumption}{Assumption}{Assumptions}

\newtheorem{thm}{Theorem}[section]
\newtheorem{thm*}{Theorem*}[section]

\newtheorem{lemma}[thm]{Lemma}
\newtheorem{corollary}[thm]{Corollary}
\newtheorem{prop}[thm]{Proposition}
\newtheorem{defn}[thm]{Definition}

\newtheorem{con}[thm]{Conjecture}

\numberwithin{equation}{section}

\theoremstyle{definition}
\newtheorem{remark}[thm]{Remark}

\def\cT{\mathcal{T}}

\def\cP{\mathcal{P}}

\def\cN{\mathcal{N}}
\def\cM{\mathcal{M}}
\def\cL{\mathcal{L}}

\def\cI{\mathcal{I}}
\def\cH{\mathcal{H}}
\def\cG{\mathcal{G}}
\def\cF{\mathcal{F}}
\def\cE{\mathcal{E}}

\def\cC{\mathcal{C}}
\def\cB{\mathcal{B}}
\def\cA{\mathcal{A}}

\def \bb {\beta}

\def\I{\mathbf{I}}

\def\btt{B(\cT_2)}
\def\bt3{B(\cT_3)}
\def\btc{B(\cT^c)}

\def\P{\mathbb{P}}

\def\E{\mathbb{E}}
\def\C{\mathbb{C}}
\def\R{\mathbb{R}}

\def\S{\mathbb{S}}

\def  \p- {p\textunderscore}

\def\ep{\varepsilon}

\newcommand{\note}[1]{{\color{red}{[note: #1]}}}

\newcommand{\abs}[1]{ \left \lvert #1 \right \rvert}

\def\Bnp{\cB_{np}}

\def\Bn{\cB_{n}}

\newcommand{\be}{\beta}

\newcommand{\de}{\delta}
\newcommand{\si}{\sigma}

\newcommand{\Om}{\Omega}

\newcommand{\lam}{\lambda}

\newcommand{\sub}{\subset}

\newcommand{\lr}[1]{\left( {#1}\right)}

\newcommand{\set}[1]{ \left\{ #1\right\}}

\newcommand{\inn}[1]{ \left\langle #1\right\rangle}

\newcommand{\1}{\mathbf{1}}
\newcommand{\e}{\mathbf{e}}

\newcommand{\Ls}{\mathcal{L}}

\newcommand{\sR}{\mbox{Supp } R}

\newcommand{\Tc}{\textbf{Col}}

\newcommand{\tnrm}[1]{{\left\vert\kern-0.25ex\left\vert\kern-0.25ex\left\vert #1 
    \right\vert\kern-0.25ex\right\vert\kern-0.25ex\right\vert}}

\begin{document}
 \title{Least singular values of shifted sparse random combinatorial matrices
}

\author{\textsc{Dongbin Li, Alexander E. Litvak,  and Tingzhou Yu}}
\date{}

\maketitle
\abstract{Let $M_n$ be an $n\times n$ random matrix with entries in $\{0, 1\}$, where each row is independently and uniformly sampled from the set of all vectors in $\{0, 1\}^n$ containing exactly $d$ ones.  we establish quantitative lower bounds on the smallest singular value of the shifted matrices $M_n-z \mathbf{I}_n$ whenever  $|z| \leq \sqrt{d}\, \log\log d$ and $ C \log n  \leq d \leq n/2$ for some absolute positive constant $C$. 
As an application, we show that the empirical spectral distribution of the appropriately rescaled matrix $M_n$ converges in probability to the circular law provided that $\log^{2+\ep} n \leq d=o(n)$ for some fixed $\ep \in (0,1)$.
}

\bigskip

{\small
\noindent{\bf AMS 2010 Classification:}
primary: 60B20, 15B52, 60B10;
secondary: 60C05, 05C80, 46B06.\\
\noindent
{\bf Keywords:} circular law, invertibility,  
random graph, random matrix, regular graphs, 
singular probability, smallest singular value, 
sparse matrix.

\tableofcontents


\section{Introduction}\label{sec:intro}
Let $\cM_{n,d}$ consist of all $n\times n$ matrices with entries in $\{0,1\}$ where each row has a fixed sum of  $d$. Our random matrix $M_n$ is uniformly drawn from the set $\cM_{n,d}$. In some works such  matrices are called {\it random combinatorial matrices}, so we follow this terminology. Note that the rows of $M_n$ are independent random vectors. An element $M_n \in \cM_{n,d}$ can be interpreted as the adjacency matrix of a directed graph (digraph) on $n$ labeled vertices, where each vertex has exactly $d$ outgoing edges. Alternatively, $M_n$ can be viewed as the adjacency matrix of a bipartite graph between two disjoint sets of $n$ vertices each, with each vertex on the left having degree $d$. In this paper, we investigate the quantitative bounds on the least singular value of $M_n$ in the sparse regime $d= \Omega(\log n)$. Our main result is the following.

\begin{thm} \label{thm: lsvlowertails}
There exist absolute positive constants $C_1$, $C_2$, $C_3$, $C_{\ref{thm: lsvlowertails}}\geq 1$
such that the following holds. Let $n$ be a large enough integer and 
$C_{\ref{thm: lsvlowertails}}\log n \leq d \leq n/2$.  
    Let $M_n$ be an $n \times n$ random matrix uniformly drawn from the set $\cM_{n,d}$.
    Let $z\in \C$ satisfy $|z| \leq \sqrt{d}\, \log \log d$. Then 
    \[
    \mathbb{P}(s_n(M_n-z\I_{n}) \leq n^{-\bb}) 
   \leq C_1/\sqrt{d},
    \]
    for some $\bb =\bb(n, d) \in   (2,  C_2 \log \log n]$. Moreover, if  $d\geq(\log n)^{2}$ then $\beta <9$, 
    if $d\geq C_3 (\log n)^{21}$ then $\beta <5$. 
\end{thm}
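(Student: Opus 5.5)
We follow the Rudelson--Vershynin geometric approach, adapted to the sparse combinatorial setting where the rows $R_1,\dots,R_n$ of $M_n$ are independent but each has dependent coordinates (exactly $d$ ones). Set $A=M_n-z\I_n$. We split the unit sphere $S^{n-1}_{\C}$ into \emph{compressible} (or, in sparse language, ``steep''/structured) vectors and \emph{incompressible} ones. Then we bound
\[
\P\bigl(s_n(A)\le n^{-\bb}\bigr)\le \P\Bigl(\inf_{x\ \mathrm{compr}}\|Ax\|\le n^{-\bb}\Bigr)+\P\Bigl(\inf_{x\ \mathrm{incompr}}\|Ax\|\le n^{-\bb}\Bigr),
\]
and apply the same argument to $A^{*}$, since we need both $Ax$ and the distance of a column to the span of the others. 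A preliminary step restricts to a good event $\cE$ of probability at least $1-C/\sqrt d$. On $\cE$ we have $\|M_n\|\le C d$ and $\|M_n-\frac dn \1\1^T\|\le C\sqrt d$, and column sums are controlled. This follows from known concentration for random $d$-sparse rows via Bernstein and a net, using $d\ge C\log n$. The $1/\sqrt d$ tail is exactly what one loses from events such as some column being very light or two rows coinciding on a small support, so we do not try to beat it.

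For compressible vectors we use a chain of ``levels'' in the spirit of Litvak--Tikhomirov's steep-vector decomposition. Order the coordinates of $x$ by magnitude and partition $[n]$ into blocks of geometrically growing size $n_1<n_2<\dots$, with ratios roughly $d$ at first and then tending to $n/d$. For each level we show that if $x_{n_{j}}^*\gg x_{n_{j+1}}^*$, then with high probability some row of $A$ meets the top block in exactly one coordinate and avoids the rest. In that case $|(Ax)_i|\gtrsim x^*_{n_j}$. This is an expansion-type estimate for the bipartite graph and uses $d$-regularity of rows: the probability a given row hits a set $S$ exactly once is about $\frac{d|S|}{n}e^{-d|S|/n}$. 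The shift $z$ only touches the diagonal, so it changes at most one coordinate per row; hence $|z|\le\sqrt d\log\log d$ is harmless as long as the gap between levels exceeds $|z|$ by a constant factor. Union bounds over nets of the steep sets close each level. The number of levels is about $\log n/\log d$ in general, and each loses a factor polynomial in $d$ in the ratio $x^*_1/x^*_n$. Choosing the level parameters carefully, the total loss is $n^{O(\log\log n)}$ when $d\sim\log n$, $n^{O(1)}$ for $d\ge\log^2 n$, and sharper for $d\ge\log^{21}n$. This is what determines $\bb$ and the stated bounds $\bb\le C_2\log\log n$, $\bb<9$ and $\bb<5$.

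For incompressible ("non-steep") vectors we use the invertibility-via-distance lemma: $\P(\inf\|Ax\|\le \ep\rho n^{-1/2})\lesssim \frac1{\delta n}\sum_k\P(\mathrm{dist}(C_k,H_k)\le\ep)$. Since columns are not independent, we instead work with rows of $A^*$, using $s_n(A)=s_n(A^T)$, or equivalently with rows of $A$ and a normal vector $Y$ to the span of the other $n-1$ rows. Then $\mathrm{dist}\ge |\langle R_k - z e_k, Y\rangle|$. The row $R_k$ is a uniform $d$-subset indicator, and we need a Littlewood--Offord bound for such vectors. Pair up coordinates via a random matching (the switching trick): condition on the multiset and swap an index in the support with one outside it, producing a sum of independent terms $\xi_j(Y_{a_j}-Y_{b_j})$. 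Anti-concentration then follows from spreadness of the differences $Y_a-Y_b$ with Esseen's inequality. The normal $Y$ is incompressible, by the previous step applied to the $(n-1)\times n$ matrix, and a single-scale small ball estimate gives $\P(|\langle R_k,Y\rangle-w|\le\ep)\le C\ep\sqrt{n/d}\,\|Y\|_\infty^{-1}+C/\sqrt d$, which suffices at scale $\ep=n^{-\bb}$.

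The main obstacle is the compressible/steep part near the threshold $d\approx C\log n$. There the union bound over nets for intermediate levels, with $n_j\approx n/d$, is barely affordable. The row-dependence prevents direct use of independent-entry tensorization, and the shift $z$ of size up to $\sqrt d\log\log d$ competes with the gaps $x^*_{n_j}/x^*_{n_{j+1}}$. I would first try to handle this with a graph-expansion lemma: on $\cE$, every set $S$ of columns with $|S|\le c n/d$ has at least $c d|S|$ rows meeting it exactly once. This would be proved by the first moment and Chernoff for hypergeometric counts, and it gives deterministic lower bounds on $\|Ax\|$ for steep $x$, with the level ratio chosen as $C\log\log d$ times $\sqrt d$ to absorb $z$.
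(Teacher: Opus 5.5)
The decisive gap is the dichotomy itself. Every row of $M_n$ has exactly $d$ ones, so $\langle R_k,\lambda\1+v\rangle=d\lambda+\langle R_k,v\rangle$. Hence the inner product of a row with any vector that is within $\rho/\sqrt n$ of some $\lambda\1$ on all but $\delta n$ coordinates is essentially deterministic. Such vectors ($\1/\sqrt n$ and its small perturbations) are as incompressible and as non-steep as possible.

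Your small-ball bound is therefore false as stated: for $Y=\1/\sqrt n$ and $w=d/\sqrt n$ the left side equals $1$, while the right side is at most $C\ep n/\sqrt d+C/\sqrt d$. The anti-concentration actually available for uniform $d$-subset rows is Lemma~\ref{lem: weak_smallball_notalmost}. That lemma is where the $C_1/\sqrt d$ in the theorem comes from; the remaining bad events in the argument have probability $O(1/n)$, not the light-column or coinciding-row events you name. It requires the normal vector to be non-almost-constant, $Y\notin\mbox{Cons}(a_3,\rho)$. You invoke spreadness of the differences $Y_a-Y_b$, but your previous step only rules out steep normals.

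What is missing is an invertibility estimate on almost-constant vectors without large jumps. It is needed both for the $(n-1)\times n$ row submatrix of $M_n-z\I_n$ and for $M_n^T-z\I_n$. The paper does this in Theorem~\ref{thm: invnonsteep}:
\begin{itemize}
\item such an $x$ is $\approx\lambda_0\1$ off a small set;
\item a typical row or column has at least $4d/5$ of its ones on that set;
\item so the inner product has modulus at least $d|\lambda_0|/5$, dominating both the other coordinates and $|z||x_i|$ for $|z|\le 4d/25$.
\end{itemize}
So the right split is $\mbox{Cons}(a_3,\rho)$ versus its complement, with $\mbox{Cons}$ further divided into steep and gradual parts. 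The admissible $\rho\approx\sqrt n/(B_\cT d^{3/4})$, through $s_n\ge\rho^2/n$, is what determines $\beta$.

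Your steep part also has concrete failures.

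\begin{itemize}
\item $\|M_n-\E M_n\|\le C\sqrt d$ does not follow from Bernstein and a net when $d\approx\log n$. A summand $M_{ij}x_iy_j$ can be of order $1$, so Bernstein only gives a tail $e^{-c\sqrt d}$ at level $C\sqrt d$, far too weak against a $9^{2n}$-net; matrix Bernstein loses a logarithmic factor. The paper (Theorem~\ref{thm: operator_bdd}) uses the Kahn--Szemer\'edi light/heavy splitting. Heavy couples are controlled deterministically through a discrepancy property, proved with Bennett's inequality for negatively associated entries. This sharp bound, with the triple norm separating the $\1$ direction, is what makes the nets for moderate jumps affordable down to $d\approx C\log n$.
\item A deterministic expansion argument needs level ratios of order $d$, not $\sqrt d\log\log d$. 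A row meeting the top block once still has about $d$ other ones (a column of $M_n$ up to $3d/2$), each contributing up to $x^*_{m_2}$. So $x^*_{m_1}\gtrsim d\,x^*_{m_2}$ is required (the paper uses $6d$); this, not $z$, is the binding constraint.
\item Jumps between constant size and $d$ (the classes $\cT_2$ and $\cT_3\cap\mbox{Cons}(a_3,\rho)$) cannot be handled deterministically. They need nets plus individual small-ball bounds, obtained by conditioning on block structure and applying Esseen's inequality, with a switching argument on the $M_n^T$ side.
\item For $M_n^T$, the out-neighbourhood sizes are not sums of independent indicators, so plain Chernoff does not apply; the paper uses negative association there.
\end{itemize}
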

\begin{remark}
    The lower bound on $d$ of the order of $\log n$ is optimal up to constants at $z=0$ in the sense that, with probability close to 1, $M_n$ contains a zero column if $d \leq (1-\ep)\log n$ (see Section~\ref{appx} for a proof).
\end{remark}
\begin{remark}\label{sing-comp-bb}
  In fact, our proof gives 
  $\mathbb{P}(s_n(M_n-z\I_{n}) \leq\alpha(n, d))    \leq C_1/\sqrt{d}$, where 
$$
   \mbox{(1)  if } \, \, d\geq c_0 n^{2/3} (\log n)^{1/3} \quad \mbox{ then } \quad  
    \alpha(n, d)\geq \frac{1}{2n \, d^{3/2}}\geq  \frac{1}{n^{5/2}};\quad\quad\quad 
    \quad\quad\quad\quad\quad \quad\quad\quad\quad\quad\quad\quad\quad\quad\quad
$$
$$
   \mbox{(2)  if } \, \,d\leq c_0 n^{2/3} (\log n)^{1/3} \quad \mbox{ then } \quad  
    \alpha(n, d)\geq \frac{ d^{3\alpha-5.5}}{n^{2 \alpha} (\log n)^{\alpha -1/2}}\geq
    n^{-2 \alpha};\quad\quad\quad \quad \quad\quad\quad\quad\quad\quad\quad\quad\quad\quad
$$
 for  
$$
  \alpha = \frac{2 \log (6d)}{\log (d/(C_2 \log n))} \in (2,  C_4 \log \log n] . 
$$
In particular, for large enough $n$, 
$$
     \mbox{(3)  if } \, \,  d\geq C_3 (\log n)^{21} 
    \quad \mbox{ then } \quad  \alpha < 2.1  \quad \mbox{ and } \quad  
    \alpha(n, d)\geq \frac{ d^{0.8}}{n^{4.2} (\log n)^{1.6}}\geq \frac{1}{n^{4.2} };
    \quad\quad\quad \quad  \quad\quad\quad\quad\quad\quad\quad\quad\quad\quad
$$
$$
     \mbox{(4)  if } \, \,  d\geq  (\log n)^{2} 
    \quad \mbox{ then } \quad  \alpha < 4.1  \quad \mbox{ and } \quad  
    \alpha(n, d)\geq \frac{ d^{6.8}}{n^{8.2} (\log n)^{3.6}}\geq \frac{1}{n^{8.2} }.
    \quad\quad\quad \quad  \quad\quad\quad\quad\quad\quad\quad\quad\quad\quad
$$
Here, $c_0$, $C_1$, $C_3$, $C_4$ are appropriate positive absolute constants. 
\end{remark}

Our result, on one hand, is intimately linked to the invertibility of the random combinatorial matrices, the study of which was initiated by Nguyen. In \cite{nguyen2013singularity}, he proved that if $d=n/2$, then $\mathbb{P} (s_{n}(M_n)=0) = O(n^{-C})$ for any $C>0$; moreover, in the same paper, it was conjectured that the sharp singularity probability is $(\frac{1}{2} +o(1))^{n}$. Nguyen's result has been strengthened by several authors \cite{ferber2021counting, jain2021approximate, tran2020smallest},  and finally, the sharp singularity probability was obtained by Jain, Sah and Sawhney \cite{jain2020sharpsing}. 
In the sparse case, it was Aigner-Horev and Person \cite{APsparseRCM} who first showed that $\mathbb{P}(s_n(M_n)=0)=o(1)$ under the assumption that $\lim_{n \rightarrow \infty} d/(n^{1/2}\log ^{3/2} n) = \infty$. Shortly after, it was shown by Ferber, Kwan and Sauermann \cite{ferber2022singularity} that $M_n$ is asymptotically almost surely non-singular when $\min (d,n-d) \geq (1+\ep)\log n$ for a fixed $\ep >0$. As for the quantitative side, in the dense regime $d=pn$ with $p\in (0, 1/2]$ fixed, Tran \cite{tran2020smallest} established a quantitative lower bound for $s_n(M_n)$. Later, a matching upper bound of order $s_n(M_n)=O(\sqrt{d}/n)$ was obtained in \cite{LLY-upper}, therefore identifying the correct scale $n^{-1/2}$ for the least singular value in the dense regime.
However, obtaining a quantitative lower tail bound on the smallest singular value in the sparse regime remains largely unsolved. In this paper, by setting $z=0$, we have that 
    $$
    \mathbb{P}(s_n(M_n) \leq n^{-\bb}) 
    \leq C_1/\sqrt{d}.
    $$
   To the best of our knowledge, this is the first time a quantitative lower bound for $s_n(M_n)$ has been obtained in the sparse regime $d\geq C \log n$.  

On the other hand, the quantitative estimate for $s_n(M_n-z\I_{n})$ in Theorem \ref{thm: lsvlowertails} enables us to establish the following circular law for random combinatorial matrices in the sparse regime $\min\{d, n-d\} \geq \log^{2+\ep} n $, which complements an earlier result of Nguyen and Vu \cite{NVcircgivensum} in the dense regime.

\begin{corollary}\label{thm:CLRCB}
Fix $\ep\in (0,1)$. For each $n\geq 1$ Let $d = d(n)$ satisfy  $\min\{d, n-d\} \geq \log^{2+\ep} n $ and 
$\min\{d, n-d\}/n \to 0$, and let $M_n$ be drawn uniformly from $\cM_{n,d}$.  Denote  
$$
  \overline{M_{n}}=\frac{M_n}{\sqrt{d(1-d/n)}}. 
$$ 
 Then the sequence of empirical spectral distributions $\left(\mu_{\overline{M_{n}}}\right)_{n\geq 1}$ 
  converges in probability to $\mu_{circ}$ as $n \rightarrow \infty$.
\end{corollary}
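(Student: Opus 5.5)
The plan is the standard Hermitization route of Tao--Vu, as adapted to sparse matrices (Basak--Rudelson, Cook). We take $\varepsilon$-fixed and first reduce to $d\le n/2$. If $d>n/2$, then $J-M_n$ is uniform on $\cM_{n,n-d}$, where $J$ is the all-ones matrix. Since $J$ has rank one, a rank-one perturbation changes the empirical spectral distribution by $o(1)$ in the sense needed for the replacement principle. The change $\overline{M_n}\mapsto -\overline{M_n}$ preserves $\mu_{circ}$, so it suffices to treat $\min\{d,n-d\}=d$. We then write $\sigma^2=d(1-d/n)$, so that $\sigma^2 \sim d$ because $d/n\to 0$.

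By the replacement principle (Tao--Vu), it suffices to prove two things for a.e.\ $z\in\C$. The first is that $\frac1{n^2}\|\overline{M_n}\|_{HS}^2$ is bounded in probability; this is trivial, since it equals $nd/(n^2\sigma^2)\to 0\cdot$const, hence is bounded. The second is that $\frac1n\log|\det(\overline{M_n}-z\I)|-\frac1n\log|\det(G_n/\sqrt n - z\I)|\to 0$ in probability, where $G_n$ is a Ginibre matrix. Equivalently, we show that $\int\log x\,d\nu_n^z(x)$ converges in probability to the corresponding integral for the circular law, where $\nu_n^z$ is the empirical singular value distribution of $\overline{M_n}-z\I$. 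This splits into three ranges.

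\emph{(i) Bulk convergence.} We show that $\nu_n^z$ converges weakly in probability to the same limit $\nu^z$ as in the i.i.d.\ case. We would argue this via the Stieltjes transform of the Hermitized matrix and a resolvent/Schur complement argument, using that the rows are independent and that the entries within a row are exchangeable with variance approximately $d/n$. The exact-sum dependence within a row only contributes rank-one corrections, of size $O(1/n)$ in the Stieltjes transform. Concentration of the Stieltjes transform follows from independence of the rows via a martingale (McDiarmid) argument.

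\emph{(ii) The smallest singular value.} We have $|z|\sigma\le C\sqrt d\le \sqrt d\log\log d$, so \cref{thm: lsvlowertails} applies to $M_n-z\sigma\I$. With probability $1-O(d^{-1/2})=1-o(1)$, this gives $s_n(\overline{M_n}-z\I)\ge n^{-\beta}/\sigma$ with $\beta\le C\log\log n$. Hence $\frac1n|\log s_n|\le C\log n\log\log n/n\to0$. This handles the single smallest singular value.

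\emph{(iii) Intermediate singular values.} This is the main obstacle. We need $s_{n-i}(\overline{M_n}-z\I)\ge c\,i/n$ for all $n^{1-\delta}\le i\le n-1$ (or at least for $i\ge C n/\log^{1+\varepsilon/2}n$ after an adjustment), which makes $\frac1n\sum_{i}\log s_{n-i}$ uniformly integrable near $0$. The tool is the negative second moment identity: $\sum_j s_j(A')^{-2}=\sum_j \mathrm{dist}(R_j,H_j)^{-2}$ for the $n\times(n-k)$ submatrix $A'$. For each row $R_j$ restricted to $n-k$ columns, we bound the distance to a fixed $k$-codimensional subspace $H_j$ from below by $c\sqrt{k\,d/n}$ with probability $1-e^{-ck d/n}$. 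This is a Talagrand-type concentration for a random $d$-subset vector: sampling without replacement is dominated by sampling with replacement (Hoeffding), so the convex distance inequality still applies, or one can use a projection-variance computation. A union bound over the $n$ rows requires $kd/n\gg\log n$, i.e.\ $k\ge Cn\log n/d$. Because $d\ge\log^{2+\varepsilon}n$, this gives $k\ge n/\log^{1+\varepsilon}n$. The bound then yields $s_{n-i}\ge c\,i/n$ for $i\ge n/\log^{1+\varepsilon}n$. The remaining window $1\le i\le n/\log^{1+\varepsilon}n$ is controlled by the bound from (ii): it contributes at most $\frac{1}{\log^{1+\varepsilon}n}\cdot C\log n\log\log n\to0$. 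This is exactly where the exponent $2+\varepsilon$ together with $\beta=O(\log\log n)$ is used. Combining (i)--(iii) with the bounded operator norm for large singular values (the uniform integrability at infinity follows from the Hilbert--Schmidt bound) gives convergence of the log-determinants, and hence $\mu_{\overline{M_n}}\to\mu_{circ}$ in probability.
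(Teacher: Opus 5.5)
\textbf{The gap is in step (i).} Steps (ii) and (iii) are sound, but step (i), the bulk convergence of the singular value distribution of $\overline{M_n}-z$, is never actually established, and the mechanism you name for it does not work as stated. The Schur complement derivation for the Hermitization $\begin{pmatrix}0&\overline{M_n}-z\\(\overline{M_n}-z)^*&0\end{pmatrix}$ removes a row \emph{and} a column. It then averages both quadratic forms, using that each is independent of the remaining minor. In this model the column part has no randomness left. Every row sums to $d$, so $M_{ki}=d-\sum_{l\neq i}M_{kl}$, and column $i$ (off the diagonal) is a deterministic function of the minor. Your remark that the exact row sum only adds a rank-one correction to the row covariance addresses a different, harmless issue. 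A row-only argument (Sherman--Morrison updates for $\sum_i w_iw_i^*$) might be made to close, but it would have to control the shift terms $|z|^2$ times individual diagonal resolvent entries, and your sketch does not address this.

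\textbf{A simple repair.} Couple the two models row by row. Realize row $i$ of $M_n$ from a Bernoulli$(p)$ row with support size $S_i\sim\mathrm{Bin}(n,p)$, by deleting or adding a uniformly random set of $|S_i-d|$ positions; by symmetry the result is uniform on $d$-subsets. Then $\E\frac1n\|\overline{M_n}-\Bnp/\sigma\|_{HS}^2=\E|S_1-d|/\sigma^2\le 1/\sigma=O(d^{-1/2})$. Mirsky's inequality $\sum_i(s_i(A)-s_i(B))^2\le\|A-B\|_{HS}^2$ then transfers the known sparse-Bernoulli limit $\nu^z$ to $\overline{M_n}-z$.

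\textbf{Comparison with the paper's route.} With this repair your argument is complete and genuinely different from the paper's. The paper never needs a limiting singular value distribution for $M_n$. It applies the replacement principle against $\Bnp$ itself and writes both log-determinants as $\sum_i\log(\text{distance of row } i \text{ to the span of the previous rows})$.
\begin{itemize}
\item The first $m=n-d^{-1/2+\ep/6}n$ rows are handled by a row-by-row hybrid swap. A combinatorial row and a Bernoulli row have distances to the same subspace that concentrate around the common value $E(p)$, with relative error $O(d^{-\ep/12})$ (Theorem~\ref{thm:distance}, Corollary~\ref{Cor:distance1}).
\item The last $n-m$ rows are handled by least singular value bounds for \emph{both} models, together with interlacing and Lemma~\ref{lemma: nsm}.
\end{itemize}
Your (ii)--(iii) play the role of the paper's $T_2$, but use the least singular value bound only on a window of $O(n\log n/d)$ indices. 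What your route buys: you need the least singular value bound only for $M_n$, never Theorem~\ref{thm: Berlowertails}. What it costs: the Hermitization machinery, whose bulk input must still come from the Bernoulli model via the coupling above.

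\textbf{Smaller points.}
\begin{itemize}
\item In (iii), Hoeffding's with/without-replacement comparison only controls expectations of convex functions. That gives the upper tail of the distance, not the lower tail you need. Use Corollary~\ref{Cor:distance1} instead: condition a Bernoulli vector on its sum, at a cost of $C\sqrt d$.
\item The relevant Hilbert--Schmidt quantity is $\frac1n\|\overline{M_n}\|_{HS}^2=(1-d/n)^{-1}\le2$, not the one you wrote.
\item In (iii), $A'$ must consist of the first $n-k$ \emph{rows}, giving codimension $k+1$. Columns are dependent.
\item The reduction to $d\le n/2$ cannot rest on ``rank-one perturbations change the ESD by $o(1)$''; that is false for non-normal matrices in general. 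It is exact here, though. $\1$ is a common right eigenvector of $M_n$ and $J$, so $J-M_n$ has spectrum $\{n-d\}$ together with the negatives of the eigenvalues of $M_n$, with one copy of $d$ removed.
\end{itemize}
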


\begin{figure}[h!]
    \centering
   \includegraphics[width=0.8\textwidth, clip]{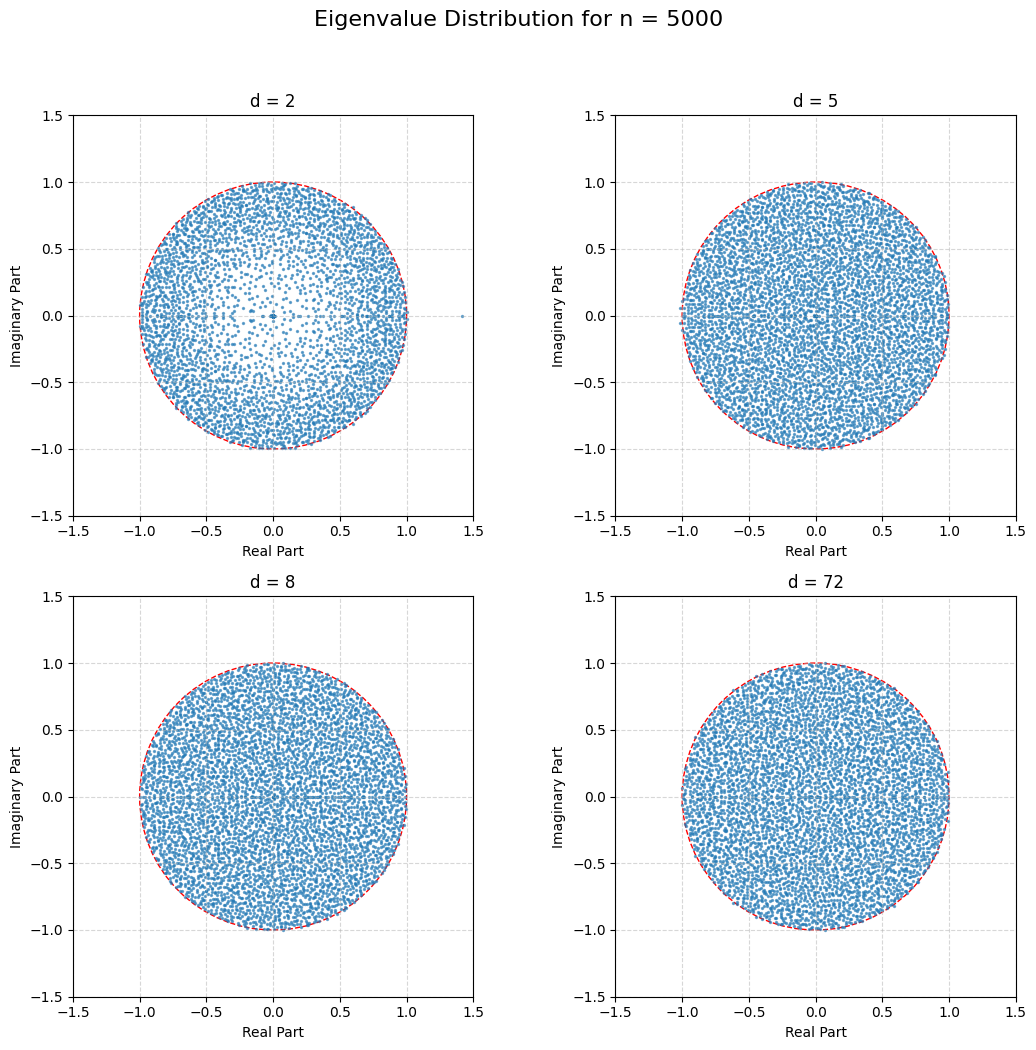}
    \caption{Empirical spectra of four independently generated $5000 \times 5000$ random combinatorial matrices where each row has exactly $d\in \{2, 5, 8,72\}$ ones. In each panel, the plotted points are the eigenvalues of the normalized matrix $\overline{M_{n}}=M_n/\sqrt{d(1-d/n)}$. The red dashed circle is the unit circle $\{z\in \mathbb{C}: |z|=1\}$. For $d=2$, the empirical spectral distribution is visibly inconsistent with the circular law. However, as $d$ increases, the spectrum becomes progressively closer to uniform on the unit disk.}
    \label{fig:enter-label}
\end{figure}

\begin{remark} 
Denote by $E_n$ the matrix with all entries equal to 1.  Similarly to \cite[Remark~1.5]{Cookcircdigraph}) 
(cf., \cite[Corollary~1.12]{TVuniversality}), we observe that $M_n$ satisfies circular law iff $E_n-M_n$ does.
Thus, we may assume that $d \leq n/2$.  
\end{remark}

\begin{remark}
The result for the dense regime $\min{(d, n-d)}=cn$  for some $c \in (0,1/2]$ was obtained by Nguyen and Vu \cite{NVcircgivensum}. In their work, random $\pm 1$ matrices $A_n$ with prescribed row sums $s$ satisfying $|s| \leq (1-\ep)n$ for some fixed $\ep \in (0,1]$ are considered. Since $(A_n+E_n)/2$ has the same distribution as our random combinatorial matrices $M_n$ with $d=(n +s)/2 \in [\ep n/2, (1-\ep/2)n]$. Our proof also works for the case $\min{(d, n-d)}=cn$.
\end{remark}

\begin{remark}\label{remark-conj}
   Denote by $X$ the number of  zero columns in the random matrix $M_n$. Clearly, $\mathbb{E}X=n(1-d/n)^n$, which implies that for any fixed $d$, the multiplicity of the zero eigenvalue  is bounded from below by a constant proportion of $n$ with a positive probability (see Section~\ref{appx} for a proof). Therefore, one cannot expect that the circular law holds when $d$ does not grow with $n$. This is also consistent with the numerical evidence shown in Figure~\ref{fig:enter-label}, where for $d=2, n=5000$ the empirical spectral distribution is visibly inconsistent with the circular law. On the other hand, as $d$ increases, the empirical spectrum becomes progressively closer to the unit disk. This suggests Corollary~\ref{thm:CLRCB} may remain valid as long as $d \rightarrow \infty$ with $n$ at any speed as indicated by the graphs above, where relatively large $d$ are chosen.  We formally state this as Conjecture~\ref{conjdtoinfty}. 
\end{remark}

The remainder of the introduction focuses on the main ideas behind the proof of Theorem~\ref{thm: lsvlowertails}, and for ease of exposition, we take $z=0$.
 An often employed approach to estimating the least singular value, introduced in \cite{LPRTsingular} (see also \cite{LPRTVRandom}), is to partition the complex unit sphere  and to work separately with different types of vectors. In this context, one usually splits the sphere  into vectors of small complexity (close to sparse vectors) and ``spread" vectors with a relatively small $\ell_{\infty}$-norm.  Such splitting was further formalized in \cite{rudelson2008littlewood} into a concept of ``compressible" and ``incompressible" vectors to obtain sharp lower tail bounds for the smallest singular value of random matrices with i.i.d. subgaussian entries. For non-Hermitian random matrices with dependent entries such as $d$-regular random matrices and random combinatorial matrices, the concept of compressible and incompressible vectors is not directly applicable. We use a different splitting by partitioning the complex unit sphere $\mathbb{S}^{2n-1}\subset \C^n$ into two parts. The first part consists of \textit{almost constant vectors}, defined as follows:
\[
\mbox{Cons}(\delta, \rho)=\left\{x \in \mathbb{S}^{2n-1}: \exists \lambda \in \C \ \text{such that} \ \left|\left\{i \leq n: |x_{i}-\lambda| \leq \frac{\rho}{\sqrt{n}} \right\} \right| > (1-\delta)n \right\},
\]
where $\delta, \rho \in (0,1)$, and the second part is its complement. The prototype of this structural dichotomy dates back to Cook's work  \cite{Cooksingularitydreg} and was also used and refined in various papers (see, e.g., \cite{Cookcircdigraph,litvak2017adjacency,litvak2019smallest,litvak2019structure,tran2020smallest}). 

The set $\mbox{Cons}(\delta, \rho)$, roughly speaking, consists of vectors obtained by shifting very sparse vectors by constant vectors. Given its low complexity, it is tempting to invoke a standard net argument to establish that the images of such vectors are bounded away from zero. However, due to the sparsity of the random matrices, the net argument deployed in \cite{tran2020smallest} or \cite{jain2020sharp} is not directly applicable in the present context. 

To elucidate the underlying obstruction, we first note that since our random matrices have neither i.i.d. entries nor $d$-regularity, $M_n$ and $M_n^{T}$ are not equally distributed. We therefore have to show that for all $x \in \mbox{Cons}(\delta, \rho)$, $\|\bar{x}M_n\|_2$ and $\|M_n x\|_2$ are bounded away from zero separately. The former estimation allows us to reduce the invertibility problem for non-almost constant vectors to a lower bound on the distance between a row vector to a random hyperplane spanned by the remaining $n-1$ rows; while the latter estimate reveals the structure of a unit normal vector to this hyperplane. Different difficulties arise in establishing these bounds.

To bound $\|M_nx\|_2$, one may use the independence between rows; however, in the sparse regime, the individual probability for a fixed vector $x \in \mathbb{S}^{2n-1}$, as computed using the method for the dense regime (see, e.g., \cite{tran2020smallest, jain2020sharp}), is too large to beat the metric entropy of $\mbox{Cons}(\delta, \rho)$ when employing the $\ep$-net argument. If one tries to further reduce the complexity of almost constant vectors by shrinking $\delta$ and $\rho$, it consequently impairs the arithmetic structure of a unit normal vector, rendering the small ball estimation ineffective.
Meanwhile, bounding $\|\bar{x} M_n\|_2$ is considerably more difficult because the columns of $M_n$ (or the rows of $M_n^T$) are not independent. In \cite{jain2020sharp,tran2020smallest}, the authors addressed the left-multiplication problem for the dense case $d=n/2$ via a conditioning argument: conditioning on the first $k-1$ columns of $M$ with $1 \leq k \leq n/4$,  the entries in the $k$-th column are independent Bernoulli variables with success probability $p \in [1/3,2/3]$. Hence, the Paley-Zygmund inequality together with a tensorization argument yields a good individual probability. However, in the sparse regime, the sparsity leads to a lack of sufficient randomness, preventing the use of the conditioning argument to establish the desired individual probability necessary for the $\varepsilon$-net argument. 

To overcome the difficulty due to the sparsity, we split vectors into four (overlapping) classes on $\C^n$: steep vectors (possessing a significant jump), gradual vectors (that are not steep), almost constant vectors, and non-almost constant vectors. A similar splitting also appears in \cite{litvak2019smallest,litvak2019structure,litvak2022singularity}.
 To be more precise, given a vector $x=(x_{i})_{1\leq i \leq n} \in \C^n$, denote by $(x_{i}^{\ast})_{1 \leq i \leq n}$ the non-increasing rearrangement of the sequence $(|x_{i}|)_{1\leq i \leq n}$. The vector $x$ is said to be steep, if $x_k^{\ast} \gg x_m^{\ast}$ for some $k \ll m$.

To deal with steep vectors, we first explore the expansion properties of our random matrices. Unlike the purely combinatorial approach used in \cite{litvak2017adjacency} for the 
$d$-regular case, which relies heavily on the deterministic structure of regular graphs, we introduce a probabilistic framework based on the negative association of suitably constructed random variables. This negative association method is new to this context and appears essential here, as we do not see how the simple switching argument from \cite{litvak2017adjacency} would directly extend to our row-constrained model with free columns.


The expansion properties guarantee that there are relatively many rows that have exactly one entry  equal to 1 within the set of columns corresponding to large coordinates of a steep vector $x$. 
Then the inner product of each such row with the steep vector $x$ is dominated by the absolute  value of the unique large coordinate due to the large jump and sparsity of the matrices. Therefore, to fully take advantage of the expansion properties, we need to choose the jump size and its location carefully. In addition, the selection of the size of the jump and its location also directly impacts the construction of the associated nets. Furthermore, the effective deployment of the net argument depends critically on a sharp estimate of the operator norm $\|M_n-\mathbb{E}M_n\|$ (which is equal to the operator norm of $M_n$ restricted to the subspace of vectors having zero sum of coordinates). Although this estimate may be of interest in its own right, it is not a mere technical refinement but an indispensable prerequisite for the entire argument. In particular, its optimality is essential,  for it is precisely this sharp bound that dictates the optimal order $\log n$ obtained in our main theorem.

It should be emphasized, however, that such optimality for the operator norm is far from immediate—indeed, it is considerably less transparent than in the classical i.i.d. setting in \cite[Lemma 3.6]{litvak2022singularity}, where Talagrand's inequality is directly applicable.  By contrast, in the present dependent framework, standard matrix concentration tools also seem limited: a naive application of the matrix Bernstein inequality (see, e.g., \cite[Corollary 6.12]{tropp2015introduction}) yields only a suboptimal bound of order $\sqrt{d} \log n$, which is insufficient for our purpose. The required sharp estimate is instead obtained through an adaptation of the Kahn-Szemer\'{e}di argument. The argument was first introduced by Kahn and Szemer\'{e}di in \cite{friedman1989second} to bound the second eigenvalue of a random graph from the permutation model, and has since been adapted to establish bounds on the spectral gap for several other random graph models (see, e.g., \cite{CookGJsizebiased,zhu2023second} and the references therein).

Bounding the magnitude of the matrix-vector product for almost constant gradual vectors is straightforward. Loosely speaking, without big jumps, any nonzero almost constant gradual vector $x$ is essentially very close to a constant nonzero vector $\lambda_{0} \mathbf{1}$, so the $\ell_2$-norms are also comparable. 
Moreover, by the concentration, each column contains approximately $d$ ones, therefore the inner product of each row or column with an almost constant gradual vector is roughly $d\lambda_0$. Hence, we can  deduce that both $\|\bar{x}M_n\|_2$ and $\|M_nx\|_2$ are bounded below.

After  bounds for the above two classes are obtained, it remains to deal with non-almost constant vectors. The infimum over the non-almost constant vectors can be tackled by relating it to the average distance of a row of the matrix $M_n$ to the subspace spanned by the rest of the rows (similar reductions were used in various papers, see, e.g.,   \cite{rudelson2008littlewood,litvak2019smallest,TVuniversality}). 
Note that the distance is further lower bounded by the absolute value of the inner product between a row and a unit normal vector to the subspace. Moreover, since the rows of our random matrices are independent, via conditioning, we may assume the row and the unit normal vector are independent. Now the bounds for the first two classes imply that, with  high probability, the normal vector belongs to the class of non-almost constant vectors. This enables us to apply an anti-concentration inequality in \cite[Lemma 8.5]{sah2023limiting} to complete the proof of Theorem \ref{thm: lsvlowertails}.

We finally discuss the organization of the paper. In the next section we introduce notation and some preliminary results. 
In \cref{sec:clrcb}, we will prove Corollary \ref{thm:CLRCB} using Theorem \ref{thm: lsvlowertails} and the \textit{replacement principle} first introduced by Tao and Vu in \cite{TVuniversality}.
Then the rest of the paper will be mostly devoted to the proof of Theorem~\ref{thm: lsvlowertails}. In \cref{sec:res_nrm}, we obtain a sharp upper bound on 
 the operator norm $\|M_n-\mathbb{E}M_n\|$ using the Kahn-Szemer\'{e}di argument. In \cref{sec:concen_expan}, we focus on the expansion properties of our 
random matrices, which also crucially determine the definition of the steep vectors. In 
\cref{sec: inverb_almost_const}, we show that for any almost constant non-zero vector $x$ and any complex number 
$z$ with $|z| \leq \sqrt{d} \log \log d$, with high probability, both $\|\bar{x}(M_n-z\I_n)\|_2/\|\bar{x}\|_2$
and $\|(M_n-z\I_n)x\|_2/\|x\|_2$ are bounded from below by a constant depending on $n$ and $d$. The argument is technically 
involved since we are required to distinguish several types of jumps as well as different jump locations and combine these with a 
very delicate construction of the $\ep$-nets. More details will be discussed there. We prove Theorem \ref{thm: lsvlowertails} in 
\cref{sec:proof_lsv}. The key idea is to show that with large probability the distance of a row of $M_n$ from the subspace spanned by the rest of the rows cannot be too small. This requires combining the results in \cref{sec: inverb_almost_const} with an application of anti-concentration inequality in \cref{lem: weak_smallball_notalmost}. Finally, in the last section, we briefly 
discuss the singularity threshold.

\section{Notations and preliminaries}

In the rest of the paper,
we simply write our random matrix $M_n$ as $M$  omitting the index $n$.

\subsection{Notations}
Given a natural number $n$,  
we denote $[n]:=\{1, 2,\dots, n\}$. For $x\in \C^n$, let $\sigma_x: [n]\to [n]$ be a permutation such that $|x_{\sigma_x(1)}|\ge |x_{\sigma_x(2)}|\ge \cdots \ge |x_{\sigma_x(n)}|$.  Let $x^*\in \C^n$ be the non-increasing rearrangement of $x$ defined in the following way: $x_i^*=|x_{\sigma_x(i)}|$ for $i\in [n]$. 

For a vector $x \in \C^n$ (or $x \in \R^n$), its canonical Euclidean norm is denoted by $\|x\|_2$. 
With a slight abuse of notation we denote the unit sphere in $\C^n$ by  
$$\S^{2n-1}:=\{z\in \C^n: \|z\|_2=1\}$$
and the unit sphere in $\R^n$ by  
$$\S^{n-1}:=\{x \in \R^n: \|x\|_2=1\}.$$ 
We also denote 
$$
\S_{0}^{2n-1}:=\Big\{v\in \S^{2n-1}: \sum_{i=1}^n v_i=0\Big\} \quad \quad \mbox{ and } \quad \quad 
\S_{0}^{n-1}:=\Big\{v \in \S^{n-1}: \sum_{i=1}^n v_i=0\Big\}.
$$

Given a matrix $A$, we denote its $i$-th row by $R_i=R_i(A)$.
 and $i$-th column by $\Tc_i(A)$. 
For $n\times n$ zero-one matrices we denote
\[
\mbox{Supp }R_i=\mbox{Supp}(R_i(A)) := \set{j \in  [n] : A_{ij} = 1}.
\]

Denote $\mathbf{1}=(1,1,...,1)\in\C^n$ and let $\I_n$ denote the $n\times n$ identity matrix. Given a matrix $A$, $A^{T}$ is the transpose of $A$. Given a (column) vector $x \in \C^n$, the transpose of $x$ is denoted by $x^T$.

Given an $m \times n$ matrix $A$, the operator norm of $A$ acting from $\ell_{2}^{n}$ to $\ell_{2}^{m}$ is denoted by $\|A\|$, that is, 
\[
\|  A \| = \sup_{\|x\|_2 =1}\| Ax \|_2.
\]
Given two real numbers $a$ and $b$ we use the standard notation $a\vee b=\max\{a, b\}$. 
For a complex number $z\in \C$, we denote by $\Re (z)$ its real part.

\subsection{Concentration}

Below we will use the following particular case of the Bennett inequality 
(see, e.g., \cite[Lemma~3.4]{litvak2022singularity}).

\begin{lemma} \label{Benet} Let $p\in (0, 1/2]$,  $m\geq 1$. Let $\de_1, ..., \de_m$ 
be independent Bernoulli($p$) random variables. Then for $\tau> e$,
$$
   \P\left( \sum_{i=1}^{m} \de_{i}> (1+\tau) pm \right)\le 
   \exp(-\tau \log(\tau/e) pm),
$$   and for $0<\tau <1$,
$$
   \P\left( \sum_{i=1}^{m} \de_{i}> (1+\tau) pm \right)\le 
   \exp\left(-\frac{mp\tau^2}{2(1-p)}(1-\tau/3)\right) \leq \exp\left(-\frac{mp\tau^2}{3(1-p)}\right).
$$
\end{lemma}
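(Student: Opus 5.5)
This is the standard Chernoff–Bennett bound, and I would prove it by the exponential moment method. Set $S=\sum_{i=1}^m \de_i$ and $t=(1+\tau)pm$. For any $\lambda>0$, Markov's inequality applied to $e^{\lambda S}$, independence, and $1+x\le e^x$ give
$$
\P(S>t)\le e^{-\lambda t}\,\E e^{\lambda S} = e^{-\lambda t}\bigl(1+p(e^\lambda-1)\bigr)^m \le \exp\bigl(pm(e^\lambda-1)-\lambda t\bigr).
$$

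\textbf{Large deviations, $\tau>e$.} I choose $\lambda=\log(1+\tau)$, which gives
$$
\P(S>t)\le \exp\bigl(-pm\,h(\tau)\bigr),\qquad h(\tau)=(1+\tau)\log(1+\tau)-\tau .
$$
It remains to check $h(\tau)\ge \tau\log(\tau/e)=\tau\log\tau-\tau$. This follows because $(1+\tau)\log(1+\tau)\ge \tau\log\tau$, since $x\log x$ is increasing for $x\ge 1$. The assumption $\tau>e$ only ensures that the right-hand side is meaningful, i.e.\ positive.

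\textbf{Small deviations, $0<\tau<1$.} This is the only delicate part, because the claimed bound carries the factor $1/(1-p)$. The crude inequality $1+x\le e^x$ discards the variance information: it replaces the variance $pm(1-p)$ by $pm$. So I would not use it here. Instead I apply Bennett's inequality to the centered variables $X_i=\de_i-p$. These satisfy $X_i\le 1-p=:b$ and $\sum_i \E X_i^2=\sigma^2=mp(1-p)$. Bennett gives
$$
\P\Bigl(\sum_i X_i>\tau pm\Bigr)\le \exp\Bigl(-\frac{\sigma^2}{b^2}\,h\Bigl(\frac{b\,\tau pm}{\sigma^2}\Bigr)\Bigr).
$$
Here $\frac{b\,\tau pm}{\sigma^2}=\tau$ and $\frac{\sigma^2}{b^2}=\frac{mp}{1-p}$, so the exponent is $-\frac{mp}{1-p}h(\tau)$.

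Now I use the elementary bound $h(u)\ge \frac{u^2}{2(1+u/3)}$, which is the Bernstein form. For $u\in(0,1)$,
$$
\frac{1}{2(1+u/3)}\ge \frac{1-u/3}{2}
$$
because $(1+u/3)(1-u/3)\le 1$. Hence $h(\tau)\ge \frac{\tau^2}{2}(1-\tau/3)$, which is the first small-deviation bound. Finally, $1-\tau/3\ge 2/3$ for $\tau<1$, so the exponent is at most $-\frac{mp\tau^2}{3(1-p)}$, giving the second bound.

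If a self-contained derivation of Bennett's inequality is wanted, it follows from the same exponential moment method. The key estimate is $\E e^{\lambda X_i}\le \exp\bigl(\frac{\sigma_i^2}{b^2}(e^{\lambda b}-1-\lambda b)\bigr)$, which holds for any variable $X_i\le b$ with mean zero and variance $\sigma_i^2$. One then optimizes at $\lambda=\frac1b\log(1+\tau)$. The assumption $p\le 1/2$ is not needed for these bounds.
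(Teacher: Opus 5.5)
Your proof is correct. The paper gives no argument for this lemma; it quotes it as a special case of Bennett's inequality from \cite[Lemma~3.4]{litvak2022singularity}. Your derivation fills in that citation with the standard argument: the uncentered Chernoff bound with $\lambda=\log(1+\tau)$ for $\tau>e$, and Bennett's inequality for the centered variables $\de_i-p\le 1-p$ combined with $h(u)\ge u^2/(2(1+u/3))$ for $0<\tau<1$.

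A useful comparison is with the paper's own exponential-moment proof of \cref{thm:Bennettinequality}. There the summands are not centered, so the variance proxy is the second moment. For $m$ Bernoulli($p$) summands this gives only $\exp(-pm\,h(\tau))$, which suffices for the first claim but misses the factor $1/(1-p)$ in the second. Your centering, with range $b=1-p$ and $\sigma^2=mp(1-p)$, is exactly what produces that factor, as you point out.

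One step you leave implicit is the key estimate $\E e^{\lambda X_i}\le \exp\bigl(\frac{\sigma_i^2}{b^2}\phi(\lambda b)\bigr)$, where $\phi(u)=e^u-u-1$. The centered $X_i$ take the negative value $-p$. So the pointwise bound $\phi(\lambda x)\le \frac{x^2}{b^2}\phi(\lambda b)$ for $x\le b$, $\lambda\ge 0$ needs $\phi(u)/u^2$ to be nondecreasing on all of $\mathbb{R}$, not only on $[0,\infty)$ as in the paper's proof. This holds because $\phi(u)/u^2=\int_0^1(1-s)e^{su}\,ds$, and you should state it.
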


The following immediate consequence of Bennett's lemma controls the number of ones in each column, 
which will be used later in several places, in particular in Theorem~\ref{thm:CLRCB} and  
in Lemmas~\ref{lem: Net} and \ref{lem:t1}.
 
  \begin{lemma} \label{lem: colsum} \label{lem: rowsum}
  Let $n\geq 2$, $m \geq 1$, $\beta\geq 4$, and $1\leq d\le n/2$.  
      Let $M$ be an $m \times n$ random matrix with entries in $\{0, 1\}$, where each row is independently and uniformly sampled from the set of all vectors in $\{0, 1\}^n$ containing exactly $d$ ones. Given $\tau>0$, consider the event
    \[
    \cE_{\ref{lem: colsum}}=\cE_{\ref{lem: colsum}}(\tau):=\set{M=\{M_{ij}\}_{i \in [m],j \in [n]}: \sum_{i=1}^{m} M_{ij}\le \frac{(1+\tau) md}{n} \ \mbox{ for every } j\le n}.
    \]
    Then for $\tau=\max{\big\{e^2, \frac{\beta n\log m}{md}\big\}}$,
    \[
    \P( \cE_{\ref{lem: colsum}})\ge 1-n\cdot m^{-\be},
    \]   
and for $0 < \tau < 1$,
    \[
    \P( \cE_{\ref{lem: colsum}})\ge 1-n\exp\left(-\frac{md\tau^2}{3n}\right).
    \]
\end{lemma}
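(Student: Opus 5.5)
The proof is a union bound over columns, with Lemma~\ref{Benet} applied to each column after a comparison step. Fix a column $j\le n$. Its sum $S_j=\sum_{i=1}^m M_{ij}$ is a sum of $m$ independent indicators, since the rows are independent. Each row is uniform among vectors with exactly $d$ ones, so by symmetry $\P(M_{ij}=1)=d/n$. Hence $S_j$ is exactly Binomial$(m,p)$ with $p=d/n\le 1/2$: the dependence inside a row does not matter for a single column. Lemma~\ref{Benet} therefore applies directly to $\de_i=M_{ij}$, giving a tail bound for the event $S_j>(1+\tau)md/n$. The complement of $\cE_{\ref{lem: colsum}}$ is the union of these $n$ events, so a union bound costs a factor $n$.

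\emph{Case $0<\tau<1$.} The second inequality of Lemma~\ref{Benet} gives
\[
\P\big(S_j>(1+\tau)pm\big)\le \exp\Big(-\frac{mp\tau^2}{3(1-p)}\Big)\le \exp\Big(-\frac{md\tau^2}{3n}\Big),
\]
where the last step uses $1-p\le 1$. The union bound over $j$ gives the claimed estimate.

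\emph{Case $\tau=\max\{e^2,\beta n\log m/(md)\}$.} Here $\tau>e$, so the first inequality of Lemma~\ref{Benet} gives a per-column bound of $\exp(-\tau\log(\tau/e)\,pm)$. Since $\tau\ge e^2$, we have $\log(\tau/e)\ge 1$, so the exponent is at least $\tau pm=\tau md/n\ge \beta\log m$. Each column's failure probability is therefore at most $m^{-\beta}$, and the union bound gives $\P(\cE_{\ref{lem: colsum}}^c)\le n\,m^{-\beta}$.

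Strictly speaking, $\tau=e^2$ is not ``$>e$''-borderline, so the first inequality applies without any boundary issue. The only point needing care is the observation that each column sum is exactly binomial despite the row constraint. It holds because distinct rows are independent and each entry of a row is marginally Bernoulli$(d/n)$. The assumptions $\beta\ge4$ and $n\ge2$ are not used beyond guaranteeing nontrivial statements.
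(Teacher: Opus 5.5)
Your proposal is correct and matches the paper's proof. Each column sum is a sum of $m$ independent Bernoulli$(d/n)$ variables, Lemma~\ref{Benet} is applied in each of the two regimes, and a union bound over the $n$ columns finishes. Your check that $\tau\ge e^2$ gives $\log(\tau/e)\ge 1$, hence an exponent of at least $\tau md/n\ge\beta\log m$, just spells out a step the paper leaves implicit.
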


\begin{proof}
    Fix $j\in [n]$. Note that $M_{1j},\dots, M_{mj}$ are independent Bernoulli($d/n$) random variables. By Lemma~\ref{Benet}, for $\tau=\max{\big\{e^2, \frac{\beta n\log m}{md}\big\}}$ , 
    \[
    \P\lr{\sum_{i=1}^{m} M_{ij}>\frac{(1+\tau)md}{n}}\le \exp(-\tau \log (\tau/e)(dm/n))
    \le m^{-\be},
    \]
 and for $0 < \tau < 1$,
    \[
    \P\left( \sum_{i=1}^{m} M_{ij}>\frac{(1+\tau)md}{n} \right)\leq \exp\left(-\frac{md\tau^2}{3n(1-d/n)}\right) \leq  \exp\left(-\frac{md\tau^2}{3n}\right).
    \]
 Then the union bound over $j\leq n$ completes the proof . 
\end{proof}

\subsection{Negative association} \label{ssna}

In Sections~\ref{sec:res_nrm} and \ref{sec:concen_expan} we will use the notion of negatively associated (NA) 
random variables, which allows us to work with lightly dependent random variables (such as entries of $M$) 
as if they were independent.

\begin{defn}(Negative association)\label{def:na}
Random variables $X_i, i \in [n]$, are said to be negatively associated (NA) if for all disjoint subsets $I, J \subset [n]$ and all coordinate-wise non-decreasing functions $f: \R^I\to \R$ and $g: \R^J\to \R$ one has 
\begin{equation}\label{NA}
\mathbb{E} f((X_i)_{i \in I})\, g((X_j)_{j \in J}) \leq 
\mathbb{E} f((X_i)_{i \in I})\, \mathbb{E} g((X_j)_{j \in J})  .
\end{equation}
\end{defn}

\begin{remark}\label{rem: non-inc}
 Not that equivalently (passing to functions $-f$ and $-g$) we can require that $f$ and $g$  
 are coordinate-wise non-increasing in the definition.  (cf, \cite{joag1983negative}).
\end{remark}

The following two properties of negative association are very useful in the applications (see, e.g., \cite[Properties~4 and 6]{joag1983negative}).
\begin{enumerate}
    \item \textbf{Closure under products.} If $X_1, \dots, X_n$ and $Y_1, \dots, Y_m$ are two independent  families of negatively associated random variables, then the family \[X_1, \dots, X_n, \, Y_1, \dots, Y_m\] is also negatively associated.
    \item \textbf{Disjoint monotone aggregation.} If $X_i, i \in [n]$, are negatively associated random variables, $\mathcal{A}$ is a family of disjoint subsets of $[n]$, and for each $A\in \mathcal{A}$, $f_{A}: \R^A\to \R$ is a coordinate-wise non-decreasing function, then the random variables 
    \[
    f_{A}((X_i)_{i \in A}), \quad A \in \mathcal{A},
    \]
    are also negatively associated.
    \end{enumerate}

\begin{lemma}\cite[Theorem 2.6]{joag1983negative}\label{lem:nega_joa}  
    Let $X_1,\dots, X_n$ be a sequence of independent random variables. Suppose that the conditional expectation $\E\lr{f((X_i)_{i \in A})\, |\, \sum_{i\in A}X_i}$ is increasing in $\sum_{i\in A}X_i$ for every proper subset $A$ of $[n]$ and  every coordinate-wise increasing function $f: \{0, 1\}^{|A|}\to \R$. Then the conditional distributions of $X_1,\dots, X_n$ given $\sum_{i\in [n]}X_i$ is NA almost surely.
\end{lemma}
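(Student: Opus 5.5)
This is a classical result of Joag-Dev and Proschan, and I would reprove it through a monotone-coupling formulation of the hypothesis. Condition on $S:=\sum_{i\in[n]}X_i=s$. Fix disjoint $I,J\subset[n]$ and coordinate-wise non-decreasing $f$ on $\{0,1\}^{I}$ and $g$ on $\{0,1\}^{J}$. The statement is phrased for $\{0,1\}$-valued variables, which is the case needed for the rows of $M$. If $I\cup J\neq[n]$, enlarge $J$ to $J'=[n]\setminus I$ and regard $g$ as a function on $\{0,1\}^{J'}$; it is still non-decreasing. So we may assume $J=[n]\setminus I$, and then $I$ and $J$ are both proper subsets (the case $I$ or $J$ empty is trivial). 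Write $S_I=\sum_{i\in I}X_i$ and $S_J=s-S_I$.

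The main step is to condition further on $S_I$. Given $S=s$ and $S_I=k$, the vectors $(X_i)_{i\in I}$ and $(X_j)_{j\in J}$ are conditionally independent. This holds because the $X_i$ are independent, so conditioning on $S_I=k$ and $S_J=s-k$ factorizes across the two blocks. Hence
\[
\E\bigl(f g\,\big|\, S=s\bigr)=\E\bigl(\phi(S_I)\,\psi(s-S_I)\,\big|\, S=s\bigr),
\]
where
\[
\phi(k)=\E\bigl(f\,\big|\,S_I=k\bigr),\qquad \psi(m)=\E\bigl(g\,\big|\,S_J=m\bigr).
\]
By hypothesis, applied to the proper subsets $I$ and $J$, $\phi$ is non-decreasing in $k$ and $\psi$ is non-decreasing in $m$. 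Therefore $k\mapsto\psi(s-k)$ is non-increasing.

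Under the conditional law given $S=s$, the variable $S_I$ is a single real random variable. We apply Chebyshev's association inequality (the one-dimensional Harris/FKG inequality): for one random variable $Y$, a non-decreasing $\phi$ and a non-increasing $h$,
\[
\E\,\phi(Y)h(Y)\le \E\,\phi(Y)\,\E\, h(Y).
\]
Its proof is the usual symmetrization. Take $Y'$ an independent copy of $Y$; then $(\phi(Y)-\phi(Y'))(h(Y)-h(Y'))\le 0$, and taking expectations gives the inequality. Thus
\[
\E(fg\mid S=s)\le \E\bigl(\phi(S_I)\mid S=s\bigr)\,\E\bigl(\psi(s-S_I)\mid S=s\bigr).
\]
By the tower property, the right-hand side equals $\E(f\mid S=s)\,\E(g\mid S=s)$. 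This is exactly \eqref{NA} for the conditional law.

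The only delicate point is the justification of the identities $\E(f\mid S_I, S=s)=\phi(S_I)$ and $\E(g\mid S_I,S=s)=\psi(s-S_I)$, i.e.\ the conditional independence together with the fact that the law of $(X_i)_{i\in I}$ given $(S_I,S)$ depends only on $S_I$. Both follow from independence of the original variables by an elementary computation of conditional probabilities. One also restricts to values $s$ with $\P(S=s)>0$, which accounts for the "almost surely".
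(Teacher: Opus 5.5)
The paper does not prove this lemma; it quotes it from Joag-Dev and Proschan. Your argument is correct and is the standard proof of that theorem: reduce to complementary blocks $I$ and $J=[n]\setminus I$, condition additionally on $S_I$ so the blocks become conditionally independent with conditional means $\phi(S_I)$ and $\psi(s-S_I)$, and finish with Chebyshev's inequality for an increasing and a decreasing function of the single variable $S_I$ (equivalently, the law of total covariance, whose inner term vanishes). Restricting to $\{0,1\}$-valued variables, as the statement does, loses nothing for the paper, since the lemma is only applied to i.i.d.\ Bernoulli$(d/n)$ variables in Lemma~\ref{lemma: Negas}.
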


\begin{lemma} \label{lemma: Negas}  Let $n, m\geq 1$ and 
  let $M$ be an $m \times n$ random matrix with entries in $\{0, 1\}$, where each row is independently and uniformly sampled from the set of all vectors in $\{0, 1\}^n$ containing exactly $d$ ones. Then the entries $(M_{ij})_{ 1 \leq i \leq m,  1 \leq j \leq n}$ are negatively associated. 
\end{lemma}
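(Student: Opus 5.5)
We will prove Lemma~\ref{lemma: Negas} in two steps: first treat a single row, then use closure under products (Property~1 in Subsection~\ref{ssna}) to pass to the whole matrix.

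\textbf{A single row.} Fix a row $i$. Its distribution is uniform on $\{0,1\}$-vectors with exactly $d$ ones. This is the same as the conditional distribution of independent Bernoulli($p$) variables $X_1,\dots,X_n$ (any fixed $p\in(0,1)$) given $\sum_{j\in[n]} X_j = d$. Indeed, conditional on the sum, every vector with exactly $d$ ones has the same probability $p^d(1-p)^{n-d}$.

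By Lemma~\ref{lem:nega_joa}, it is enough to check one monotonicity property. For every proper subset $A\subset[n]$ and every coordinate-wise increasing $f:\{0,1\}^{|A|}\to\R$, the function
\[
k\mapsto \E\Bigl(f((X_j)_{j\in A}) \,\Big|\, \sum_{j\in A}X_j=k\Bigr)
\]
must be increasing in $k$. Given $\sum_{j\in A}X_j=k$, the vector $(X_j)_{j\in A}$ is uniform on the $k$-subsets of $A$. So we need to show $\E f(U_k)\le \E f(U_{k+1})$, where $U_k$ is a uniformly random $k$-subset of $A$.

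To do this, we couple the two. Let $U_k$ be uniform, and obtain $U_{k+1}$ by adding to $U_k$ a uniformly random element of $A\setminus U_k$. By symmetry, the resulting set is uniform among $(k+1)$-subsets. Since $U_k\subset U_{k+1}$ pointwise, coordinate-wise monotonicity of $f$ gives $f(U_k)\le f(U_{k+1})$, and taking expectations gives the claim.

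Lemma~\ref{lem:nega_joa} then says that the conditional law given $\sum_{j\in[n]} X_j = d$ is NA almost surely. In particular it is NA on the event $\{\sum_j X_j=d\}$, which has positive probability. Hence the entries $M_{i1},\dots,M_{in}$ of row $i$ are NA.

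\textbf{The whole matrix.} The rows are independent, and each is an NA family. Applying Property~1 (closure under products) inductively over $i=1,\dots,m$ shows that all entries $(M_{ij})$ together are NA.

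The only real step is verifying the hypothesis of Lemma~\ref{lem:nega_joa}, and the coupling above handles it. One small point needs care: the conditioning event $\{\sum_j X_j=d\}$ has positive probability, so "almost surely" in Lemma~\ref{lem:nega_joa} applies to it.
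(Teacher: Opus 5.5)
Your proof is correct and follows the paper's route. You reduce a single row to Bernoulli variables conditioned on their sum, verify the monotonicity hypothesis of Lemma~\ref{lem:nega_joa}, and then use closure under products across the independent rows. Your coupling (adding a uniformly random new element to a uniform $k$-subset) is the probabilistic form of the paper's double-counting argument showing $g(t+1)\ge g(t)$.
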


\begin{proof}
    Let $\delta_{1}, \cdots, \delta_{n}$ be i.i.d. Bernoulli($p$) random variables with $p=d/n$, 
    note that for each $i\in[m]$, $(M_{i1}, \dots, M_{in})$ has the same distribution as the 
    conditional distribution of $(\delta_{1}, \cdots, \delta_{n})$ given $\sum_{i=1}^{n} 
    \de_{i}=d$. Therefore, by \cref{lem:nega_joa}, it suffices to show that the conditional 
    expectation 
    $$
     \mathbb{E}\set{f((\de_i)_{i \in A})\,\, |\,\,  \sum_{i\in A} \de_{i} }
     $$ 
     is increasing in 
    $\sum_{i\in A} \de_{i}$, for every $\ell\leq n$, every proper subset 
    $A$ of $[n]$ with $|A|=\ell$ and  every coordinate-wise increasing function 
    $f: \{0, 1\}^{|A|}\to \R$.  
    
     Fix $\ell$, $A\sub [n]$ with $|A|=\ell$ and a coordinate-wise increasing function 
    $f: \{0, 1\}^{|A|}\to \R$. Consider the function $g$ defined by 
    $$
     g(t):=\mathbb{E}\set{f((\delta_i)_{i\in A})\,\, |\,\,  \sum_{i\in A} \de_{i} =t}
    $$ 
    for every $0\le t\le \min(\ell, d)$. We first show that $g$ is increasing.
Indeed, given a set $B\sub A$, let $f(B)$ denote $f((w_i)_{i\in A})$, where 
$w_i=1$ if $i\in B$ and $w_i=0$ otherwise. 
Then 
\begin{align*}
    g(t)=\frac{1}{\binom{\ell}{t}}\sum_{S’\subset A, |S’|=t}  f(S')
\end{align*}
and
\begin{align*}
    g(t+1)&=\frac{1}{\binom{\ell}{t+1}}\sum_{S\subset A, |S|=t+1} f(S)
    = \frac{1}{\binom{\ell}{t+1}} \frac{1}{t+1} \sum_{\substack{S\subset A\\|S|=t+1}} \sum_{\substack{S' \subset S\\ |S'|=t}} f(S)\\
    &= \frac{1}{\binom{\ell}{t+1}}\frac{1}{t+1} \sum_{\substack{S'\subset A\\|S'|=t}} \sum_{\substack{S \supset S'\\ |S|=t+1}}f(S)
    \ge \frac{1}{\binom{\ell}{t+1}}\frac{1}{t+1} \sum_{\substack{S'\subset A\\|S'|=t}}
    (\ell-t)f(S')=g(t). 
\end{align*}

Thus, for every fixed row with index $i\in [m]$, the family $\cC_i:=\{M_{ij}: j\in [n]\}$ is NA. Meanwhile, since the rows of $M$ are independent, the families $\{\cC_i\}_{i\in [m]}$ are independent NA families. Therefore, by the first property (``closure under products") of NA, 
$$
 \bigcup_{i\in [m]}\cC_i=\set{M_{ij}: i \in [m], j \in [n]}
 $$ 
 is an NA family.
\end{proof}

Let $Q$ be an $m \times n$ deterministic matrix, and let $M$ be as in \cref{lemma: Negas}. Denote  
\begin{equation} \label{fsubq}
 f_{Q}(M):=Q\circ  M=\sum_{ i \in [m],  j \in [n]} Q_{ij}M_{ij},
\end{equation}
$$
  \mu:=\mathbb{E} f_{Q}(M)=\frac{d}{n} \sum_{i \in [m],  j \in [n]} Q_{ij}, \quad\quad
  \mbox{ and }
 \quad \quad \sigma^{2}:=\frac{d}{n} \sum_{i \in [m],  j \in [n]} Q_{ij}^2.
$$
 We show Bennett's inequality for  $f_{Q}(M)$ using negative association.
\begin{thm} \label{thm:Bennettinequality} Let $n, m\geq 1$.
Let $M$ be as in \cref{lemma: Negas}.
 Let $Q$ be an $m \times n$ deterministic matrix with all entries in $[0,K]$.
 Then for all $t \geq 0$,
     \[
     \max\{ \mathbb{P}(f_Q(M)-\mu \geq t), \, \mathbb{P}(f_Q(M)-\mu \leq -t)\}
     \leq \exp \left(-\frac{\sigma^2}{K^2} h \left( \frac{Kt}{\sigma^2} \right)\right) 
     \leq  \exp \left(-\frac{3t^2}{6\sigma^2 + 2Kt} \right),
     \]
     where $h(u)=(1+u) \log (1+u) -u $ for $ u \geq 0$.
\end{thm}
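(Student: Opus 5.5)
The plan is the Chernoff--Cramér method, with negative association replacing independence. By Lemma~\ref{lemma: Negas} the entries $(M_{ij})$ are NA. For $\lambda\ge 0$ and $Q_{ij}\ge 0$, each map $M_{ij}\mapsto e^{\lambda Q_{ij}M_{ij}}$ is non-decreasing. So by the disjoint monotone aggregation property (with singleton blocks), the variables $Y_{ij}:=e^{\lambda Q_{ij}M_{ij}}$ are NA and nonnegative. Applying the defining inequality \eqref{NA} repeatedly, with $f$ the product over one index set and $g$ a single factor (products of nonnegative non-decreasing functions are non-decreasing), gives
\[
\E e^{\lambda f_Q(M)}=\E\prod_{i,j} Y_{ij}\le \prod_{i,j}\E e^{\lambda Q_{ij}M_{ij}} .
\]
For the lower tail we use $\lambda\le 0$. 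Then each map $M_{ij}\mapsto e^{\lambda Q_{ij}M_{ij}}$ is non-increasing, and by Remark~\ref{rem: non-inc} the same product inequality holds. Thus in both directions the moment generating function is dominated by the one for independent Bernoulli$(p)$ variables, $p=d/n$, which are the marginals of $M_{ij}$.

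Next I bound each factor. For $X\sim$ Bernoulli$(p)$ and $a=Q_{ij}\in[0,K]$,
\[
\log \E e^{\lambda a(X-p)}=\log\bigl(1+p(e^{\lambda a}-1)\bigr)-\lambda a p\le p\,(e^{\lambda a}-1-\lambda a).
\]
Write $\phi(u)=e^u-1-u$. The function $\phi(u)/u^2$ is increasing on $\R$. For $\lambda\ge0$ this gives $\phi(\lambda a)\le \frac{a^2}{K^2}\phi(\lambda K)$. Summing over all entries then yields $\log \E e^{\lambda(f_Q-\mu)}\le \frac{\sigma^2}{K^2}\phi(\lambda K)$.

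For $\lambda=-s<0$ I use $\phi(-u)\le\phi(u)$ for $u\ge 0$. This holds because $\phi(u)-\phi(-u)=2(\sinh u-u)\ge0$. Combined with the same monotonicity, it gives the identical bound $\frac{\sigma^2}{K^2}\phi(sK)$ for $-(f_Q-\mu)$.

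Markov's inequality then gives $\P(\pm(f_Q-\mu)\ge t)\le \exp\bigl(-s t+\frac{\sigma^2}{K^2}\phi(sK)\bigr)$ for every $s\ge0$. I optimize by taking $sK=\log(1+Kt/\sigma^2)$, which produces exactly $\exp\bigl(-\frac{\sigma^2}{K^2}h(Kt/\sigma^2)\bigr)$. The second inequality in the statement follows from the standard estimate $h(u)\ge \frac{3u^2}{6+2u}=\frac{u^2}{2(1+u/3)}$ for $u\ge0$. To check it, note both sides vanish to second order at $0$ and compare second derivatives: $h''(u)=1/(1+u)$, while the right-hand side's second derivative is $27/(3+u)^3\le 1/(1+u)$. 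Substituting $u=Kt/\sigma^2$ then gives $\frac{\sigma^2}{K^2}h(u)\ge \frac{3t^2}{6\sigma^2+2Kt}$.

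The main point needing care is the first step: justifying that NA yields the product bound on the moment generating function. This requires the NA inequality for nonnegative monotone functions applied inductively, with the monotonicity direction matched to the sign of $\lambda$. Everything after that is the classical Bennett computation. The degenerate case $\sigma=0$ is trivial, since then $f_Q=\mu$ almost surely.
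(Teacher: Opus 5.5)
Your proposal is correct and follows essentially the same route as the paper: negative association (via disjoint monotone aggregation, and Remark~\ref{rem: non-inc} for $\lambda\le 0$) to factor the moment generating function, the monotonicity of $\phi(u)/u^2$ to bound each factor, and Chernoff optimization at $\lambda K=\log(1+Kt/\sigma^2)$, with the second inequality coming from $h(u)\ge u^2/(2(1+u/3))$. The differences are cosmetic. The paper normalizes $K=1$, and for the lower tail it uses $e^u\le 1+u+u^2/2$ for $u\le 0$ together with $\lambda^2/2\le\phi(-\lambda)$, in place of your $\phi(-u)\le\phi(u)$. You also spell out the inductive NA product bound and the second-derivative check, which the paper only cites.
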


\begin{proof}
    The second inequality follows from the elementary inequality  $h(u)\geq \frac{u^2}{2(1+u/3)}$ for $ u \geq 0$. We proceed to proving the first inequality. By homogeneity, we may assume that $K=1$. Since $Q_{ij}$ are nonnegative, by disjoint monotone aggregation, $(Q_{ij}M_{ij})_{i \in [m],j \in [n]}$ is again an NA family. We begin by proving a bound for the upper tail. For $\lambda \geq 0$, by Chebyshev's inequality, one has 
    \[
    \mathbb{P}(f_Q(M)-\mu \geq t) \leq e^{-\lambda(t+\mu)} \mathbb{E}e^{\lambda f_Q(M)} \leq 
    e^{-\lambda(t+\mu)} \prod_{ i \in [m],j \in [n]}  \mathbb{E}e^{\lambda Q_{ij} M_{ij}},
    \]
    where for the second inequality, we used that $(Q_{ij}M_{ij})_{i \in [m],j \in [n]}$ is an NA family.
    
    Consider the function $\phi(u):=e^u-u-1$ on $\R$. By taking a continuous extension at $u=0$, one can readily verify that $\phi(u)/u^2$ is increasing on $[0,\infty)$.
    Applying this to $u=\lam  Q_{ij}M_{ij}\leq \lam$, 
    we observe 
    \[ \frac{\phi(\lambda Q_{ij}M_{ij})}{ (Q_{ij}M_{ij})^2 } \leq \phi(\lambda).
    \]
    This implies that 
\begin{align*}
   \mathbb{E}e^{\lambda Q_{ij} M_{ij}} &= \mathbb{E}\left(1+\lambda Q_{ij} M_{ij}
   + \phi(\lambda Q_{ij}M_{ij})\right)
   \leq 1+\lambda \mathbb{E} (Q_{ij} M_{ij})+ \phi(\lambda) \mathbb{E} (Q_{ij} M_{ij})^2
   \\ &=
   1+\frac{\lambda d Q_{ij}}{n} + \frac{\phi(\lambda) d Q_{ij}^2}{n} \leq \exp \left(\frac{\lambda d Q_{ij}}{n} + \frac{\phi(\lambda) d Q_{ij}^2}{n} \right).
 \end{align*}
     Thus,
    \[
     e^{-\lambda(t+\mu)} \prod_{i \in [m],j \in [n]}  \mathbb{E}e^{\lambda Q_{ij} M_{ij}} \leq e^{-\lambda t + \sigma^2 \phi(\lambda)}.
    \]
    Optimizing over $\lambda \geq 0$, we obtain the desired bound on $\mathbb{P}(f_Q(M)-\mu \geq t)$.

    Next we show the lower bound. For $\lambda \leq 0$, by Chebyshev's inequality, one has 
    \[
    \mathbb{P}(f_Q(M)-\mu \leq -t) \leq e^{\lambda(t-\mu)} \mathbb{E}e^{\lambda f_Q(M)} .
    \]
Since $\lambda\le 0$, the map $u\mapsto e^{\lambda u}$ is coordinate-wise non-increasing on $[0,\infty)$. 
By Remark~\ref{rem: non-inc} we can define NA family using coordinate-wise non-increasing functions. Thus,
using this, the fact that $(Q_{ij}M_{ij})_{i \in [m],j \in [n]}$ is an NA family, and the disjoint monotone 
aggregation property of an NA family, we obtain 
  \[
    \mathbb{P}(f_Q(M)-\mu \leq -t) \leq 
    e^{\lambda(t-\mu)} \prod_{i \in [m],j \in [n]}  \mathbb{E}e^{\lambda Q_{ij} M_{ij}}. 
    \]
    Note that for $u \leq 0$ one has $e^u \leq 1+ u + u^2/2$ and $e^{-u}\geq 1-u+u^2/2$, 
    therefore, for $\lambda \leq 0$,
    \[
    e^{\lambda Q_{ij} M_{ij}} \leq 1 + \lambda Q_{ij} M_{ij} + \frac{\lambda^2  (Q_{ij} M_{ij})^2}{2} \leq  1 + \lambda Q_{ij} M_{ij} + \phi(-\lambda) (Q_{ij} M_{ij})^2.
    \]
    This implies 
    \[
     \mathbb{E}e^{\lambda Q_{ij} M_{ij}} \leq \exp \left(\frac{\lambda d Q_{ij}}{n} + \frac{\phi(-\lambda) d Q_{ij}^2}{n} \right).
    \]
    Thus,
    \[
    \mathbb{P}(f_Q(M)-\mu \leq -t) \leq e^{\lambda t + \sigma^2 \phi(-\lambda)}.
    \]
    Optimizing over $\lambda \leq 0$, we complete the proof.
\end{proof}

\section{Proof of Corollary \ref{thm:CLRCB}}
\label{sec:clrcb}

The main idea to prove Corollary \ref{thm:CLRCB} is to use the replacement principle, which is stated in Theorem~\ref{thm: replaceprinciple} below. It enables us to compare our random combinatorial matrix with a random matrix having i.i.d. Bernoulli($d/n$) entries. We then invoke the circular law for random matrices with i.i.d. Bernoulli($d/n$) entries (see, e.g., Theorem 1.4 in \cite{AJMcirclawrev}) to conclude the proof. Our exposition is similar to \cite{TVuniversality, NVcircgivensum}. The replacement principle requires two conditions to be checked. The first one is relatively straightforward in our model. To verify the second one, we need a bound on the smallest singular value similar to the one  given in Theorem~\ref{thm: lsvlowertails}, but for random matrices with i.i.d. Bernoulli($d/n$) entries.  Such an estimate was first obtained by Basak and Rudelson \cite[Theorem~2.2]{BRudcirclawsparse} in a much more general setting (when each entry of a matrix was the product of a Bernoulli random variable with a sub-Gaussian random variable).
In fact, following similar lines to those of the proof of Theorem~\ref{thm: lsvlowertails}, we can 
obtain the following.

\begin{thm}\label{thm: Berlowertails}
 There exist absolute positive constants $C$, $C_1$, $C_2\geq 1$
such that the following holds.   Let $n$ be a large enough integer and 
$C\log n \leq d \leq n/2$.  Let 
     $\Bn$ be an $n \times n$ random matrix having i.i.d. Bernoulli$(d/n)$ entries. Let $z\in \C$ 
     satisfy  $|z| \leq \sqrt{d}\, \log \log d$. Then 
    \[
    \mathbb{P}(s_n(\Bn-z\I_{n}) \leq n^{-\bb}) 
    \leq \frac{C_1}{\sqrt{d}},
    \]
     for some $\bb \in   (2,  C_2 \log \log n]$.
\end{thm}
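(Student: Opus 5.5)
The plan is to run the same scheme as for Theorem~\ref{thm: lsvlowertails}, but for $\Bn$, where every step becomes simpler because all $n^2$ entries are independent. There are three ingredients. The first is a norm bound. The second is invertibility on almost constant vectors, from both the left and the right. The third is a distance and anti-concentration argument for the remaining vectors.

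\textbf{Step 1 (norm and degrees).} We show that with probability $1-n^{-c}$ we have $\|\Bn-\E\Bn\|\le C\sqrt d$. For i.i.d.\ Bernoulli$(d/n)$ entries with $d\ge C\log n$ this is standard: one can use known sparse-matrix norm bounds, or rerun the Kahn--Szemer\'edi argument with Theorem~\ref{thm:Bennettinequality}. That theorem applies because independent variables are trivially NA. The same holds for $\Bn^T$, since it has the same distribution. By Lemma~\ref{Benet} and a union bound, every row sum and every column sum of $\Bn$ lies in $[d/2,2d]$. This replaces the deterministic row sums $d$ of $M_n$. We also have $\|\Bn-z\I_n\|\le 2d$ on this event, since $|z|\le \sqrt d\log\log d$.

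\textbf{Step 2 (expansion and almost constant vectors).} Next we establish the expansion property. For every $S\subset[n]$ of size up to $cn/d$, many rows have exactly one $1$ in the columns indexed by $S$. For independent entries this follows directly from Bennett's inequality and a union bound over $S$; no negative-association construction is needed. Because $\Bn$ and $\Bn^T$ are equidistributed, this is needed only for columns. We then repeat the case analysis over steep and gradual vectors with the same jump sizes, jump locations and nets. This gives, with probability $1-O(n^{-c})$, that
\[
\inf_{x\in \mathrm{Cons}(\delta,\rho)}\min\{\|(\Bn-z\I_n)x\|_2,\|\bar x(\Bn-z\I_n)\|_2\}\ge n^{-\beta'}.
\]
For gradual almost constant vectors, $x\approx \lambda_0\1$. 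Then $\Bn x\approx (\text{row sums})\lambda_0$, and the row sums are of order $d$, which is much larger than $|z|$. In the steep case, the individual small-ball probability per row only improves under independence. We expect this step to be the main obstacle. One must check that the net cardinalities still beat the single-vector probabilities when $d$ is as small as $C\log n$, now using only approximate row sums. We would try first to keep the paper's parameters and absorb the row-sum fluctuation $[d/2,2d]$ into the constants.

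\textbf{Step 3 (non-almost constant vectors).} On the complement of $\mathrm{Cons}$, we use the standard reduction to $\mathrm{dist}(R_i,H_i)$, where $H_i$ is the span of the other rows and $\delta n$ rows are averaged. We condition on all rows except $R_i$. By Step~2 applied to the left action, the unit normal vector $Y$ to $H_i$ is, with high probability, not almost constant. Since $R_i$ has independent Bernoulli$(d/n)$ coordinates and is independent of $Y$, the anti-concentration inequality \cite[Lemma 8.5]{sah2023limiting} (in its Littlewood--Offord form for sparse Bernoulli vectors) gives
\[
\P(|\langle R_i-ze_i,Y\rangle|\le \varepsilon)\le C\varepsilon\sqrt{n/d}+C/\sqrt d.
\]
Combining the three steps, choosing $\varepsilon=n^{-\beta+1}$, and taking $\beta$ as in Remark~\ref{sing-comp-bb} yields the bound $C_1/\sqrt d$ with $\beta\in(2,C_2\log\log n]$.
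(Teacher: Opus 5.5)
Your overall route (norm bound, expansion, the steep/gradual case analysis with the triple-norm nets, then the distance reduction and anti-concentration for the normal vector) is what the paper has in mind when it says the Bernoulli case follows the lines of Theorem~\ref{thm: lsvlowertails}. Steps~1--2 do transfer. For example, the triple-norm bound survives because $\E\Bn=\frac dn\1\1^T$ annihilates $\e^\perp$ and $\|\Bn\e\|_2\le d+C\sqrt d$.

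\textbf{The gap is in Step~3.} In Step~2 the parameter $\rho$ cannot be a constant.
\begin{itemize}
\item Theorem~\ref{thm: invnonsteep} needs $\rho\le\sqrt n/(5B_\cT)$.
\item The nets of Lemma~\ref{lem: net_Tvectors} need $\rho\le\sqrt n/(B_\cT d^{3/4})$.
\item Since $B_\cT\ge\sqrt{2n}$, and $B_\cT$ is much larger when $n_1>1$, one checks that $\rho=o(n^{-1/2})$ in every regime.
\end{itemize}
So all Step~2 tells you about the normal $Y$ is that $Y\notin\mathrm{Cons}(\delta,\rho)$ for this tiny $\rho$. Taking $\lambda=0$, this means at least $\delta n$ coordinates of $Y$ exceed $\rho/\sqrt n$ in modulus. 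Via Lemma~\ref{rogozin1961increase}, this gives $\mathcal{L}(\langle R_i-ze_i,Y\rangle,\rho/\sqrt n)\le C/\sqrt{\delta d}$. At a larger scale $\varepsilon$ you get only this bound times a factor of order $(\varepsilon\sqrt n/\rho)^2$.

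Your estimate $C\varepsilon\sqrt{n/d}+C/\sqrt d$ at an arbitrary scale $\varepsilon$ tacitly treats $Y$ as non-almost-constant at a constant scale $\rho$, and nothing in your argument provides that. The same mismatch appears in the final count. Lemma~\ref{lem: inv_dis} only yields $\inf\|\bar xA\|_2\ge\varepsilon\rho/\sqrt n$, and for $\varepsilon=n^{-\beta+1}$ this is $o(n^{-\beta})$. So the choice $\varepsilon=n^{-\beta+1}$ closes neither the anti-concentration step nor the exponent bookkeeping.

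\textbf{The repair} is to tie $\varepsilon$ to $\rho$ as the paper does: take $\varepsilon=\rho/\sqrt n$ with $\rho=\sqrt n\min\{1/(B_\cT d^{3/4}),\sqrt{\theta(n,d)}\}$. The failure probability is then $O(1/\sqrt d)$, and
$s_n\ge\rho^2/n=\min\{B_\cT^{-2}d^{-3/2},\theta(n,d)\}$.
This is at least $n^{-5/2}$ when $n_1=1$ and at least $n^{-2\alpha}$ otherwise, with $\alpha$ as in Remark~\ref{sing-comp-bb}. That, not a free choice of $\varepsilon$, is what produces $\beta\in(2,C_2\log\log n]$.

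Two smaller points:
\begin{itemize}
\item The normal satisfies $M'Y=0$ for the $(n-1)\times n$ matrix $M'$ of the remaining rows of $\Bn-z\I_n$. So what you need is the \emph{right} action of that rectangular matrix, i.e.\ Step~2 rerun with $n-1$ rows as in Lemma~\ref{lem:dist_p}, not the left action.
\item For Bernoulli rows no analogue of Lemma~\ref{lem: weak_smallball_notalmost} is needed, since the bound from Lemma~\ref{rogozin1961increase} above already suffices.
\end{itemize}
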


\begin{remark}\label{rem:Berlowertails}
 Similarly to Theorem~\ref{thm: lsvlowertails}, the bound $n^{-\bb}$ in Theorem~\ref{thm: Berlowertails} 
 can be substituted with $\alpha(n, d)$ introduced in Remark~\ref{sing-comp-bb}.  
\end{remark}

Recall that a sequence of random variables $(X_n)_{n\geq 1}$ is said to be bounded in probability if we have 
\[
\lim_{K \rightarrow \infty } \liminf_{n \rightarrow \infty} \mathbb{P}(|X_n| \leq K) =1
\]

\begin{thm}[Replacement principle]\cite[Theorem 2.1]{TVuniversality} \label{thm: replaceprinciple}
 For every  $n\geq 1$  let $A_n, B_n$ be two random $n \times n$  matrices. 
 Assume that the following two conditions hold. 
 \begin{enumerate}
     \item The sequence 
     \[\frac{1}{n} \left(\|A_n\|_{HS}^2 + \|B_n\|_{HS}^2\right)
     \]
     is bounded in probability, where $\|\cdot\|_{HS}$ denotes the Hilbert-Schmidt norm of a matrix.
     \item For almost all complex number $z$,
     \[
     \frac{1}{n}\log \left|\det\left(A_n-zI_n\right)\right|-\frac{1}{n}\log \left|\det\left(B_n-zI_n\right)\right|
     \]
     converges in probability to $0$.
 \end{enumerate}
 Then $\mu_{A_n}-\mu_{B_n}$ converges in probability to $0$.
\end{thm}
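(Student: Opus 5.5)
This statement is quoted from \cite[Theorem 2.1]{TVuniversality}, so in the paper it can simply be cited. If I were to prove it, I would use the logarithmic potential and Girko's hermitization identity. For an $n\times n$ matrix $A$ with empirical spectral distribution $\mu_A$, and every $z$ that is not an eigenvalue, we have
\[
\frac1n\log|\det(A-z\I_n)|=\int_{\C}\log|w-z|\,d\mu_A(w)=:U_{\mu_A}(z).
\]
Condition 2 therefore says exactly that $U_{\mu_{A_n}}(z)-U_{\mu_{B_n}}(z)\to 0$ in probability for almost every $z$. The goal is to upgrade this pointwise convergence of potentials to convergence of $\int f\,d\mu_{A_n}-\int f\,d\mu_{B_n}$ for every test function $f\in C_c^\infty(\C)$. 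This suffices, since vague convergence together with tightness gives weak convergence, and $C_c^\infty$ is convergence determining.

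\textbf{Step 1 (tightness).} By Schur's inequality, $\sum_i|\lambda_i(A)|^2\le\|A\|_{HS}^2$, so $\int|w|^2\,d\mu_{A_n}\le \frac1n\|A_n\|_{HS}^2$, and likewise for $B_n$. Condition 1 therefore shows that the second moments of $\mu_{A_n}$ and $\mu_{B_n}$ are bounded in probability. In particular both sequences of measures are tight in probability.

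\textbf{Step 2 (reconstruction).} In the sense of distributions, $\frac{1}{2\pi}\Delta U_\mu=\mu$. Hence for $f\in C_c^\infty(\C)$,
\[
\int f\,d\mu_{A_n}-\int f\,d\mu_{B_n}=\frac1{2\pi}\int_{\C}\Delta f(z)\,\bigl(U_{\mu_{A_n}}(z)-U_{\mu_{B_n}}(z)\bigr)\,dz.
\]
Fix $R$ with $\operatorname{supp} f\subset\{|z|\le R\}$. It remains to show that $\int_{|z|\le R}|\Delta f(z)|\,|D_n(z)|\,dz\to0$ in probability, where $D_n:=U_{\mu_{A_n}}-U_{\mu_{B_n}}$.

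\textbf{Step 3 (uniform integrability; the main obstacle).} Pointwise convergence in probability does not by itself pass through the $z$-integral, so I would prove a uniform $L^2$ bound on the disk. By Jensen's inequality and Fubini,
\[
\int_{|z|\le R}|U_\mu(z)|^2\,dz\le\int_{\C}\int_{|z|\le R}\log^2|w-z|\,dz\,d\mu(w)\le C_R\int_{\C}\bigl(1+\log^2(1+|w|)\bigr)\,d\mu(w).
\]
The last integral is bounded by $C_R\bigl(1+\int|w|^2\,d\mu\bigr)$, which by Step 1 is bounded in probability for both sequences. Thus, on an event of probability at least $1-\ep$, the functions $D_n$ are bounded in $L^2$ of the disk by some $K_\ep$. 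Now split the integral of $|D_n|$ at level $T$. The part where $|D_n|>T$ contributes at most $K_\ep^2/T$. The truncated part $\int_{|z|\le R}\min(|D_n|,T)\,dz$ has expectation tending to $0$, by Fubini and bounded convergence applied to $\E\min(|D_n(z)|,T)\to0$ for almost every $z$. Markov's inequality then converts this into convergence in probability. Letting $n\to\infty$, then $T\to\infty$, then $\ep\to0$ completes the proof.

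The delicate points are all in Step 3. The integrand must be truncated before taking expectations, because $D_n$ need not be integrable in expectation; and the bound on the singular set near eigenvalues must hold uniformly, which is supplied by the local integrability of $\log^2$.
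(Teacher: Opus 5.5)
Your proof is correct. The paper gives no argument of its own and simply cites \cite[Theorem~2.1]{TVuniversality}; your route is essentially the proof given in that reference, namely Schur's inequality for tightness in probability, the identity $\int f\,d\mu_A=\frac{1}{2\pi}\int\Delta f(z)\,U_{\mu_A}(z)\,dz$, and a truncation/uniform-integrability step driven by an $L^2$ bound on the logarithmic potential over a disk.
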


As we mentioned above, the main difficulty is to verify the second condition. 
We need a few auxiliary results.

\begin{thm} \label{thm:distance} 
Let  $1\leq k<n$ and $p\in (0,1)$ satisfy $p(1-p)(n-k)\geq 1$. 
Let $V \subset \C^n$ be a (fixed) subspace of dimension $k$ 
and  $u\in \C^n$ be a (fixed) vector. 
Let $\delta=(\delta_1,\cdots, \delta_n)$ be a random vector consisting 
of i.i.d. Bernoulli($p$) random variables, and denote the distance from 
$\de + u$ to $V$ by $r$. Furthermore, denote 
$$
   E(p) = \E r \quad \quad \mbox{ and } \quad \quad D(p)= \sqrt{p(1-p)(n-k) + d^2_{u'}},
$$
where $u'=u+p\, \1=u+\E \delta$ and $d_{u'}$ is the distance from $u'$ to $V$. 
Then  
$$
          \tfrac{1}{2}  D(p)\leq     E(p)\leq D(p)
$$
and for any $t>0$, 
\[
 \mathbb{P}_{\delta}\left( \left|r- E(p) \right|\geq t\right) \leq Ce^{-ct^2},
\]
where $C,c$ are positive absolute constants.
\end{thm}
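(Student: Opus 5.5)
Write $P$ for the orthogonal projection onto $V^\perp$, a subspace of dimension $n-k$. Then $r=\|P(\delta+u)\|_2=\|P(\delta-p\1)+Pu'\|_2$. Put $\xi=\delta-p\1$, whose coordinates are i.i.d., centered, and bounded by $1$, with variance $p(1-p)$. Put $w=Pu'$, so that $\|w\|_2=d_{u'}$.

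\textbf{Second moment.} Since $\E\xi=0$, the cross term vanishes and
$$
\E r^2=\E\|P\xi\|_2^2+\|w\|_2^2=p(1-p)\,\mathrm{tr}(P)+d_{u'}^2=D(p)^2 .
$$
Jensen's inequality then gives $E(p)\le \sqrt{\E r^2}=D(p)$, which is the upper bound.

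\textbf{Concentration.} The map $\delta\mapsto r$ is convex and $1$-Lipschitz with respect to $\ell_2$, because $P$ is a contraction. The entries of $\delta$ are independent and lie in $[0,1]$. Talagrand's convex concentration inequality therefore yields
$$
\P\bigl(|r-\mathrm{Med}\,r|\ge t\bigr)\le 4e^{-t^2/4}.
$$
Integrating this tail shows $|\E r-\mathrm{Med}\,r|\le C'$, and so $\P(|r-E(p)|\ge t)\le Ce^{-ct^2}$ after adjusting constants. For $t\le 2C'$ the bound holds trivially once $C$ is large, which absorbs the shift from median to mean. The same integration gives $\mathrm{Var}(r)\le C''$, an absolute constant.

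\textbf{Lower bound.} We have $E(p)^2=\E r^2-\mathrm{Var}(r)\ge D(p)^2-C''$. This gives $E(p)\ge D(p)/2$ as soon as $D(p)^2\ge \tfrac43 C''$. The main obstacle is the regime where $D(p)$ is of order $1$. The hypothesis only guarantees $D(p)^2\ge p(1-p)(n-k)\ge 1$, so the variance absorption above can fail there.

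To handle this regime, I would bound $E(p)$ from below through a fourth-moment (Paley--Zygmund type) argument instead. Write $r^2=\|P\xi\|^2+2\Re\inn{P\xi,w}+\|w\|^2$. Then compute $\E r^4\le C_0 D(p)^4$, using the following facts about the entries of $\xi$:
\begin{itemize}
\item $\E|\xi_i|^4\le \E\xi_i^2=p(1-p)$;
\item $\E\|P\xi\|^4\le (\mathrm{tr}P)^2\sigma^4+\sigma^2\sum_i P_{ii}^2\le \sigma^4(n-k)^2+\sigma^2(n-k)$, where $\sigma^2=p(1-p)$;
\item the assumption $\sigma^2(n-k)\ge1$, which makes $\sigma^2(n-k)\le \sigma^4(n-k)^2$.
\end{itemize}
Hölder's inequality, $\E r^2\le (\E r)^{2/3}(\E r^4)^{1/3}$, then gives
$$
E(p)\ge \bigl((\E r^2)^3/\E r^4\bigr)^{1/2}\ge D(p)/\sqrt{C_0}.
$$
If $\sqrt{C_0}\le 2$ this finishes the proof directly. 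Otherwise I would combine it with the variance bound above, which covers every case with $D(p)$ large, and handle the remaining bounded range of $D(p)$ by tracking the constants in $\E r^4$ more carefully; this is the step requiring care. If the constants do not cooperate, the fallback is to note that a factor $\tfrac12$ replaced by an absolute constant suffices for the application in the replacement principle.
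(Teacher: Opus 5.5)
\textbf{There is a genuine gap: the lower bound $E(p)\ge \tfrac12 D(p)$ is never established.}

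Your upper bound (Jensen) and your concentration step (Talagrand for the convex $1$-Lipschitz map $\delta\mapsto r$, then median-to-mean) are fine and agree with the paper. Your fallback for the lower bound, H\"older in the form $\E r^2\le(\E r)^{2/3}(\E r^4)^{1/3}$, is exactly the paper's route. The argument hinges on $\E r^4\le C_0D(p)^4$ with $C_0\le 4$, but you leave $C_0$ unspecified and concede the case $\sqrt{C_0}>2$. The facts you list do not force $C_0\le 4$. For instance, combining them through $(a+b+c)^2\le 3(a^2+b^2+c^2)$, with $a=\|P\xi\|_2^2$, $b=2\Re\inn{P\xi,w}$, $c=d_{u'}^2$, gives only $C_0=6$, i.e.\ $E(p)\ge D(p)/\sqrt6$. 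As you note, $E(p)^2\ge D(p)^2-C''$ is useless when $D(p)$ is of order one. Retreating to an unspecified absolute constant proves a weaker statement than the one claimed, even if it would suffice for the replacement-principle application.

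\textbf{The missing idea} is to keep the main term exactly and bound only the fluctuation. Write $\E r^4=(\E r^2)^2+\mathrm{Var}(r^2)$ and show $\mathrm{Var}(r^2)\le 3D(p)^2$, a bound \emph{linear} in $D(p)^2$.

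Indeed, $r^2-\E r^2=\sum_iP_{ii}(\xi_i^2-\sigma^2)+\sum_{i\ne j}P_{ij}\xi_i\xi_j+2\Re\inn{\xi,w}$. The three pieces have second moments at most $\sigma^2(1-2p)^2\sum_iP_{ii}^2$, $2\sigma^4\sum_{i\ne j}|P_{ij}|^2$, and $4\sigma^2d_{u'}^2$ respectively. Now use $(\alpha+\beta+\gamma)^2\le 3(\alpha^2+\beta^2+\gamma^2)$, together with $\sum_{i,j}|P_{ij}|^2=\mathrm{tr}\,P=n-k$ and $4\sigma^2\le 1$. This gives $\mathrm{Var}(r^2)\le 3\sigma^2(n-k)+3d_{u'}^2=3D(p)^2$.

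Hence $\E r^4\le D^4+3D^2$, and H\"older yields $E(p)\ge D^2/\sqrt{D^2+3}\ge D/2$. The last step holds precisely because $D(p)\ge 1$.

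A side remark: your intermediate inequality $\E\|P\xi\|^4\le(\mathrm{tr}P)^2\sigma^4+\sigma^2\sum_iP_{ii}^2$ omits the off-diagonal contribution $2\sigma^4\sum_{i\ne j}|P_{ij}|^2$. It fails, e.g., for $P=\e\e^*$, $p=1/2$, $n\ge 4$. Your final bound $\sigma^4(n-k)^2+\sigma^2(n-k)$ is nevertheless true.
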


\begin{proof}
 Denote by $P=\{p_{ij}\}_{ij}$ the orthogonal projection matrix from $\C^n$ to the subspace $V^{\perp}$. 
 Let $x=\delta-\mathbb{E}\delta$. Then
 \begin{equation} \label{r-dist}
 r^2=\|P(\delta+u)\|^2_2=\|P(x +\E\delta+u)\|^2_2=\|P(x+u')\|_2^2=\|Px\|_2^2+2 \Re(\langle x,Pu' \rangle) +\|Pu'\|_2^2.
 \end{equation}
 Since random vector $x$ consists of i.i.d. mean-zero coordinates, we have 
 \begin{equation} \label{0-inner}
 \mathbb{E}\, \Re(\langle x,Pu'\rangle)=0
 \end{equation}
 and 
\begin{equation} \label{norm-2} 
  \mathbb{E}\|Px\|_2^2=
 \E \langle P x, x\rangle=\mathbb{E}\sum_{i,j}p_{ij}x_ix_j = 
 p(1-p)\sum_{i=1}^{n}P_{ii}= p(1-p)\mbox{Tr}(P) = p(1-p)(n- k).
 \end{equation} 
 Therefore, 
 \[\mathbb{E}\, r^2=p(1-p)(n-k)+d^2_{u'} = (D(p))^2.
 \]
 Using that $x$ is a $\R^n$-valued random vector consisting of i.i.d. 
 mean zero coordinates again we obtain that 
 \begin{align}\label{inner-b}
  &\mathbb{E} |\langle x, Pu'\rangle|^2=\mathbb{E} \left|\sum_{i=1}^{n} x_i 
  \sum_{j=1}^{n} \overline{p_{ij}u'_{j}} \right|^2= \sum_{i=1}^{n}\mathbb{E} x_i^2 
  \left| \sum_{j=1}^{n} \overline{p_{ij}u'_{j}} \right|^2\\
  &=p(1-p)\sum_{i=1}^{n} \left| \sum_{j=1}^{n} \overline{p_{ij}u'_{j}} \right|^2=p(1-p)\|Pu'\|^2=p(1-p)d_{u'}^2  
 \end{align}
and 
\[
 \mathbb{E} \left| \sum_{i\neq j} p_{ij}x_i x_j \right|^2=\sum_{i\neq j}\sum_{k\neq \ell}p_{ij}\overline{p_{k\ell}}\, \mathbb{E}(x_ix_jx_kx_\ell)=p^2(1-p)^2\left(\sum_{i\neq j}|p_{ij}|^2+\sum_{i\neq j}p_{ij}\overline{p_{ji}}\right).
\]
Hence, 
\[
 \mathbb{E} \left| \sum_{i\neq j} p_{ij}x_i x_j \right|^2\le p^2(1-p)^2\left(\sum_{i\neq j}|p_{ij}|^2+\sum_{i\neq j}|p_{ij}||p_{ji}|\right)
\]
Using $|ab|\le (|a|^2+|b|^2)/2$ for $\forall a, b \in \C$, we observe 
$\sum_{i\neq j}p_{ij}\overline{p_{ji}}\leq \sum_{i\neq j}|p_{ij}||p_{ji}|\le \sum_{i\neq j}|p_{ij}|^2$. Therefore,
\begin{equation} \label{off-diag} 
 \mathbb{E} \left| \sum_{i\neq j} p_{ij}x_i x_j \right|^2\le2p^2(1-p)^2\sum_{i\neq j}|p_{ij}|^2.
\end{equation}
Since $x_i^2-p(1-p)$ are also i.i.d.  mean zero,  
\begin{equation} \label{4thpow} 
\mathbb{E}\left|\sum_{i=1}^{n}p_{ii}[x_i^2-p(1-p)]\right|^2=\sum_{i=1}^{n}|p_{ii}|^2 \mathbb{E}[x_i^2-p(1-p)]^2=p(1-p)(1-2p)^2 \sum_{i=1}^{n}|p_{ii}|^2.
\end{equation} 
Next, we evaluate the variance of $r^2$, which yields a lower bound on $\E r$.
Using (\ref{r-dist}), (\ref{0-inner}), and (\ref{norm-2}),  we obtain 
 \begin{align*}
 \mbox{Var}(r^2)&=\mathbb{E}(r^2-\mathbb{E}(r^2))^2
 = \E \left(\|P(x+u')\|_2^2 - \E \|P(x+u')\|_2^2 \right)^2
 \\&=
 \E \left(\|Px\|_2^2 + 2\Re(\langle x, Pu'\rangle) - \E \|Px\|_2^2 \right)^2
 \\ &= \mathbb{E}\left( \sum_{i=1}^{n}p_{ii}\left(|x_i|^2-p(1-p)\right) 
  +\sum_{i\neq j} p_{ij}x_i x_j + 2\Re(\langle x, Pu'\rangle) \right)^2
 \\ &\leq 3 \mathbb{E}\left| \sum_{i=1}^{n}p_{ii}[|x_i|^2-p(1-p)]\right|^2+3 \mathbb{E} \left|\sum_{i\neq j} p_{ij}x_i x_j \right|^2 + 12 \mathbb{E} \left|\Re(\langle x, Pu'\rangle )\right|^2.
 \end{align*}
 Since for every $z\in \C$, $|\Re(z)|\le |z|$, the last term is bounded by $12 \mathbb{E} \left|\langle x, Pu'\rangle \right|^2$.
 Therefore, using (\ref{inner-b}), (\ref{off-diag}), and (\ref{4thpow}),  and $4p(1-p)\leq 1$, 
 \begin{align*}
 \mbox{Var}(r^2)&\leq 
  3p(1-p)\Big( (1-2p)^2\sum_{i=1}^{n}|p_{ii}|^2 + 2 p(1-p)\sum_{i\neq j} |p_{ij}|^2
  + 4 d^2_{u'} \Big) \\
 &\leq 3 p(1-p)\sum_{1\leq i,j \leq n} |p_{ij}|^2 + 3 d^2_{u'}=  3 \left(p(1-p) \mbox{Tr}(PP^{*}) + d^2_{u'}\right)\\
 &=3 \left(p(1-p) \mbox{Tr}(P)+d^2_{u'}\right)= 3\left( p(1-p)(n-k) + d^2_{u'} \right)
 = 3 (D(p))^2.
\end{align*}
This implies 
$$
  \E r^4 \leq 3 (D(p))^2 + (\E r^2)^2 = (D(p))^2 ((D(p))^2 +3) .
$$
 Using the H{\"o}lder inequality, we observe
 $$
   (D(p))^2 = \E\, r^2 = \E \, r^{2/3} r^{4/3} \leq 
   \left(\E\, r\right)^{2/3}\left(\E r^{4}\right)^{1/3} \leq 
   \left(\E\, r\right)^{2/3} \, (D(p))^{2/3} ((D(p))^2 +3)^{1/3}.
 $$
 Using that under the assumptions of the theorem $(D(p))\geq 1$, we obtain
$$
   \E \, r \geq \frac{(D(p))^2}{\sqrt{(D(p))^2 +3}} 
   \geq  \frac{D(p)}{2}. 
$$
Since $\E \, r\leq \left(\E\, r^2\right)^{1/2} = D(p)$, this shows the first 
inequality of Theorem~\ref{thm:distance}. The second one follows from 
Talagrand's concentration inequality (see, e.g., \cite[Theorem 2.1.13]{tao2012topics}), 
as $r$ is a $1$-Lipschitz convex function of $\de$.
%
\end{proof}

\begin{corollary} \label{Cor:distance1}
Let $1\leq d\leq n/2$ and $1\leq k<n$. Denote $p=d/n$ and assume 
$p(1-p)(n-k)\geq 1$.  Let $V \subset \C^n$ be a (fixed) subspace 
of dimension $k$ and  $u\in \C^n$ be a (fixed) vector. 
Let $\de$ be a random vector drawn uniformly from the set of all vectors in $\{0, 1\}^n$ containing exactly $d$ ones  and denote the distance from $\de + u$ to $V$ by $r$. 
Let $E(p)=E(d/n)$ 
be the parameter from Theorem~~\ref{thm:distance} 
(note that to define it we need to deal with a Bernoulli random vector). 
Then for any $t>0$ we have
\[
 \mathbb{P}_{\delta}\left(\left|r- E(p) \right|\geq t\right) \leq
C\sqrt{d}\, e^{-ct^2},
\]
where $C, c$ are positive absolute constants.
\end{corollary}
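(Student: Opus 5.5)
The corollary follows from Theorem~\ref{thm:distance} by conditioning. A Bernoulli vector conditioned on having exactly $d$ ones is uniformly distributed on the vectors with $d$ ones, and the probability that a Bernoulli vector has exactly $d$ ones is of order $1/\sqrt{d}$.

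Concretely, let $\xi=(\xi_1,\dots,\xi_n)$ have i.i.d. Bernoulli($p$) coordinates with $p=d/n$, and set $S=\sum_{i=1}^n \xi_i$. Conditioned on $\{S=d\}$, the vector $\xi$ is uniformly distributed on the set of $\{0,1\}$-vectors with exactly $d$ ones, which is exactly the law of $\de$. Write $r(\cdot)$ for the distance from $\cdot+u$ to $V$. Then
$$
\P_\de\big(|r(\de)-E(p)|\ge t\big)=\P\big(|r(\xi)-E(p)|\ge t \,\big|\, S=d\big)\le \frac{\P\big(|r(\xi)-E(p)|\ge t\big)}{\P(S=d)}.
$$
The numerator is at most $Ce^{-ct^2}$ by Theorem~\ref{thm:distance}. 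Its hypothesis $p(1-p)(n-k)\ge1$ is assumed here, and $E(p)$ is by definition the same parameter as in that theorem.

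It remains to show $\P(S=d)\ge c'/\sqrt{d}$ for $1\le d\le n/2$; this is the only real computation. Since $d=np$ is an integer, it is the mode of the binomial distribution $\mathrm{Bin}(n,p)$, so
$$
\P(S=d)=\binom{n}{d}p^d(1-p)^{n-d}.
$$
Stirling's formula with explicit error bounds, $\sqrt{2\pi m}(m/e)^m\le m!\le e\sqrt{m}(m/e)^m$, gives
$$
\binom{n}{d}\ge c\sqrt{\frac{n}{d(n-d)}}\,\frac{n^n}{d^d(n-d)^{n-d}}.
$$
With $p=d/n$ the exponential factors cancel exactly, leaving
$$
\P(S=d)\ge c\sqrt{\frac{n}{d(n-d)}}\ge \frac{c}{\sqrt d},
$$
where the last step uses $n-d\le n$. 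The edge cases $d=1$ and $n-d$ small need a check, but $d\le n/2$ gives $n-d\ge n/2\ge1$, so Stirling applies uniformly. Alternatively one can cite the standard fact that the modal probability of $\mathrm{Bin}(n,p)$ is at least $c/\sqrt{np(1-p)+1}$.

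Combining the two bounds gives $\P_\de(|r(\de)-E(p)|\ge t)\le C\sqrt d\,e^{-ct^2}$. I expect no serious obstacle. The only care needed is to get the point-probability lower bound with an absolute constant, uniformly in $1\le d\le n/2$.
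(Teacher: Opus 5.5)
Your proposal is correct and is essentially the paper's argument. The paper likewise conditions an i.i.d.\ Bernoulli$(d/n)$ vector on having exactly $d$ ones, bounds the unconditional tail by Theorem~\ref{thm:distance}, and uses Stirling's formula to show $\P(\sum_i \delta_i' = d)\geq \frac{1}{4}\sqrt{1/(\pi d)}$, which gives the constant $8C\sqrt{d}$.
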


\begin{proof}
Let $\delta'=(\delta_1',\cdots, \delta_n')$ be a random vector consisting of i.i.d. Bernoulli($d/n$) random variables,  and denote the distance from 
$\de' + u$ to $V$ by $r'$. Denote also 
 $u'=u+p\, \1=u+\E \delta= u+\E \delta'$ and let $d_{u'}$ be the distance from 
 $u'$ to $V$.    
By Theorem \ref{thm:distance}, we have
\[
\mathbb{P}_{\delta'}\left( \left|r'- E(p) \right|\geq t\right) \leq
Ce^{-ct^2}
\]
for some absolute positive constants $C$ and $c$. 
Using Stirling's formula $m!=\sqrt{2\pi m} (m/e)^m e^{\theta_m /(12 m)}$ for some $\theta_m \in (0,1)$, we observe 
\[
\mathbb{P}_{\delta'}\Big(\sum_{i=1}^{n}\delta_i'=d\Big)=\binom{n}{d}\left(\frac{d}{n}\right)^d\left(1-\frac{d}{n}\right)^{n-d} \geq \frac{1}{2}\, 
\sqrt{\frac{n}{2\pi d(n-d)}} \geq \frac{1}{4}\, \sqrt{\frac{1}{\pi d}}.
\]
Therefore,
\[
    \mathbb{P}_{\delta} \left(\left|r-E(p)\right|\geq t\right)
=\mathbb{P}_{\delta'}\left(\left|r'- E(p)  \right|\geq t\,\, \middle\vert \,\,  \sum_{i=1}^{n}\delta_i'=d\right)
\leq \frac{\mathbb{P}_{\delta'}\left(\left|r'-E(p) \right|\geq t\right)}{\mathbb{P}_{\delta'}(\sum_{i=1}^{n}\delta_i'=d)} \leq 8 C \sqrt{d}\, e^{-ct^2},
\]
which completes the proof. 
\end{proof}

We need two more well-known facts about singular values.

\begin{lemma}[Cauchy interlacing law] \cite [Lemma A.1]{TVuniversality} \label{lemma: CIL} Let $1\leq m\leq n$. 
Let $A$ be an $n\times n$ matrix and $A'$ be the sub-matrix formed by the first $n-m$ rows of $A$. Let $s_1(A) \geq \cdots \geq s_n(A) \geq 0$ and $s_1(A') \geq \cdots \geq s_{n}(A') \geq 0$ be the singular values of $A$ and $A'$, respectively. Then we have 
\[
s_{i+m}(A) \leq   s_i(A') \leq  s_i(A),
\]
 for $1 \leq i \leq n-m$.   
\end{lemma}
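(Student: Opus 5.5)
The plan is to prove both inequalities from the min–max characterization of singular values, $s_i(A)=\max_{\dim W=i}\min_{x\in W,\|x\|_2=1}\|Ax\|_2$. We could equally argue with $AA^*$ and $A'A'^*$ and Cauchy interlacing for Hermitian matrices. The key observation is that
\[
\|A'x\|_2\le \|Ax\|_2 \quad\text{for every } x\in\C^n,
\]
since $A'x$ consists of the first $n-m$ coordinates of $Ax$.

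\emph{Upper bound $s_i(A')\le s_i(A)$.} This follows directly from the key observation. For any $i$-dimensional subspace $W$,
\[
\min_{x\in W,\|x\|_2=1}\|A'x\|_2\le \min_{x\in W,\|x\|_2=1}\|Ax\|_2 .
\]
Taking the maximum over $W$ gives the claim.

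\emph{Lower bound $s_{i+m}(A)\le s_i(A')$.} Fix an $(i+m)$-dimensional subspace $W$ that attains $s_{i+m}(A)$, so that $\|Ax\|_2\ge s_{i+m}(A)$ for all unit $x\in W$.

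Let $U$ be the span of the conjugated last $m$ rows of $A$, so $U^\perp$ is the set of $x$ with $R_j(A)x=0$ for $j>n-m$. Then $\dim U^\perp\ge n-m$. Put $W'=W\cap U^\perp$; by counting dimensions, $\dim W'\ge i$.

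For every $x\in W'$ the last $m$ coordinates of $Ax$ vanish, so $\|A'x\|_2=\|Ax\|_2$. Hence every unit $x\in W'$ satisfies $\|A'x\|_2\ge s_{i+m}(A)$. Restricting to an $i$-dimensional subspace of $W'$ and applying the max-min formula for $A'$ gives $s_i(A')\ge s_{i+m}(A)$.

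\emph{Conventions.} $A'$ is an $(n-m)\times n$ matrix. Its singular values $s_i(A')$ for $i\le n-m$ are the square roots of the eigenvalues of $A'^*A'$ in non-increasing order, and the max-min formula holds with $\|A'x\|_2$. This is the only point needing care: the indices must stay within $1\le i\le n-m$ so that $i+m\le n$.

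No genuine obstacle is expected. The only delicate step is the dimension count $\dim(W\cap U^\perp)\ge (i+m)+(n-m)-n=i$.
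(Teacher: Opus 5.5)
Your proof is correct and complete. The upper bound follows from $\|A'x\|_2\le\|Ax\|_2$ through the max--min formula. The lower bound follows from intersecting an optimal $(i+m)$-dimensional subspace with the null space of the last $m$ rows, which has dimension at least $n-m$ and on which $\|A'x\|_2=\|Ax\|_2$.

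The paper does not prove this lemma itself; it only cites \cite[Lemma A.1]{TVuniversality}. Your min--max argument, or the equivalent route you mention via Hermitian interlacing for the principal submatrix $A'A'^*$ of $AA^*$, is the standard proof of that fact, so there is nothing to add.
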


\begin{lemma}[Negative second moment]\cite[Lemma A.4]{TVuniversality} \label{lemma: nsm}
Let $1\leq k \leq n$, and let $A'$ be  a $k \times n$ matrix of rank $k$ with singular values $s_1(A') \geq \cdots \geq s_{k}(A') \geq 0$ and rows $R_1,\cdots, R_{k}$. For each $i \leq k$, let $W_i$ be the subspace generated by the $k-1$ rows $R_1, \cdots, R_{i-1}, R_{i+1}, \cdots, R_{k}$. Then we have
\[
\sum_{i=1}^{k} s_{i}^{-2}(A') = \sum_{i=1}^{k} \mbox{\rm dist}^{-2}(R_i, W_i).
\]
\end{lemma}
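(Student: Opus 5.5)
The plan is to express both sides as the trace of the inverse Gram matrix of the rows. Set $G:=A'(A')^{*}$, the $k\times k$ matrix with entries $G_{ij}=\inn{R_i,R_j}$, using the inner product $\inn{x,y}=\sum_l x_l\overline{y_l}$. Since $A'$ has rank $k$, $G$ is Hermitian positive definite. Its eigenvalues are $s_1^2(A'),\dots,s_k^2(A')$, all nonzero. Therefore
$$
 \sum_{i=1}^k s_i^{-2}(A')=\mbox{Tr}(G^{-1})=\sum_{i=1}^k (G^{-1})_{ii}.
$$
It then suffices to prove, for each $i\le k$, the identity $(G^{-1})_{ii}=\mbox{dist}^{-2}(R_i,W_i)$.

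To prove it, I would build the dual basis. Let $V=\mbox{span}\{R_1,\dots,R_k\}$, which has dimension $k$. Take the unique $u_1,\dots,u_k\in V$ with $\inn{R_j,u_i}=\delta_{ij}$; they exist because the Gram matrix of the basis $R_1,\dots,R_k$ is invertible. Write $u_i=\sum_l c_{il}R_l$. The duality conditions read $\sum_l \overline{c_{il}}\,G_{jl}=\delta_{ij}$, which gives $\overline{C}=(G^{T})^{-1}$. A short computation then yields
$$
\inn{u_i,u_j}=\sum_l c_{il}\inn{R_l,u_j}=c_{ij}.
$$
With $G^{-1}$ Hermitian, this gives $\|u_i\|_2^2=c_{ii}=\overline{c_{ii}}=\big((G^{T})^{-1}\big)_{ii}=(G^{-1})_{ii}$. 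I expect this to be the only delicate step: keeping the complex conjugations and transposes straight. Everything else is elementary. As a sanity check, one can verify it first in the real case, where $C=G^{-1}$ directly.

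Finally, I would identify $\|u_i\|_2$ geometrically. By construction $u_i\perp R_j$ for all $j\ne i$, so $u_i\perp W_i$; within $V$, the orthogonal complement of $W_i$ is one-dimensional and spanned by $u_i$. Decompose $R_i=w+h$ with $w\in W_i$ and $h\in V\cap W_i^{\perp}$, so that $\|h\|_2=\mbox{dist}(R_i,W_i)$. Then $1=\inn{R_i,u_i}=\inn{h,u_i}$. Since $h$ and $u_i$ are parallel, $|\inn{h,u_i}|=\|h\|_2\|u_i\|_2$. This gives $\|u_i\|_2=\mbox{dist}^{-1}(R_i,W_i)$. Combining with the previous step, $(G^{-1})_{ii}=\mbox{dist}^{-2}(R_i,W_i)$, and summing over $i$ gives the lemma.
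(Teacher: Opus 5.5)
Your argument is correct and complete. The paper gives no proof of this lemma: it is quoted from \cite{TVuniversality} and used as a black box in the proof of Theorem~\ref{thm:secondcondition}, so there is no internal argument to compare against.

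Compared with a common proof of the cited identity, you share the first step, $\sum_i s_i^{-2}(A')=\mathrm{Tr}(G^{-1})$ with $G=A'(A')^*$. Where you differ is in how the diagonal entries of $G^{-1}$ are identified. The usual route uses Cramer's rule to write $(G^{-1})_{ii}=\det G^{(i)}/\det G$, where $G^{(i)}$ is the Gram matrix of the rows other than $R_i$. It then applies the base-times-height identity $\det G=\mbox{dist}^2(R_i,W_i)\,\det G^{(i)}$ for Gram determinants.

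Your dual-basis computation avoids determinants entirely. The vectors $u_i$ are exactly the complex conjugates of the columns of the pseudo-inverse $(A')^*G^{-1}$. So you are computing $\|(A')^*G^{-1}\|_{HS}^2=\mathrm{Tr}(G^{-1})$ column by column, and reading each column norm off the conditions $\inn{R_j,u_i}=\delta_{ij}$. This is the natural extension of the square case, where $u_i$ comes from the $i$-th column of $(A')^{-1}$. What it buys is a purely geometric identification, with no appeal to the volume interpretation of Gram determinants.

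One small remark: in the step $c_{ii}=\overline{c_{ii}}=((G^T)^{-1})_{ii}=(G^{-1})_{ii}$ you do not need $G^{-1}$ to be Hermitian. The number $c_{ii}$ is real because it equals $\|u_i\|_2^2$, and transposition does not change diagonal entries.
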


We are now ready to verify the second condition in 
Theorem~\ref{thm: replaceprinciple}.

\begin{thm}\label{thm:secondcondition}
Let $\ep \in(0, 1)$ be fixed, $\log^{2+\ep} n \leq  d \leq n/2$, and  $p=d/n$. 
Let $M_n$ be uniformly drawn from $\cM_{n,d}$ and $\Bnp$ be a random $n\times n$ 
matrix with i.i.d. Bernoulli($p$) entries that is independent of $M_n$. 
Then  for every fixed $z \in \mathbb{C}$,
\[
\frac{1}{n} \log |\det (M_n-z \sqrt{d(1-p)}I_n)|-\frac{1}{n} \log |\det (\Bnp-z \sqrt{d(1-p)}I_n)|
\]
converges to zero in probability as $n\to \infty$.
\end{thm}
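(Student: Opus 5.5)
The plan is the Tao--Vu row-by-row comparison used in \cite{TVuniversality, NVcircgivensum}. Fix $z$ and write $w=z\sqrt{d(1-p)}$, so $|w|\le |z|\sqrt d\le \sqrt d\log\log d$ for large $n$. Set $A=M_n-w\I_n$ and $B=\Bnp-w\I_n$. Let $R_i$ be the rows of $A$ and $R_i'$ the rows of $B$. Let $V_i$ (resp.\ $V_i'$) be the span of the first $i-1$ rows of $A$ (resp.\ $B$). The base-times-height formula gives
\[
\frac1n\log|\det A|=\frac1n\sum_{i=1}^n\log \mbox{dist}(R_i,V_i),
\]
and likewise for $B$. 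The $i$-th row of $A$ is $\de+u$, where $\de$ is uniform with $d$ ones and $u=-w e_i$. The $i$-th row of $B$ is $\de'+u$ with $\de'$ Bernoulli($p$). Conditioned on the first $i-1$ rows, $V_i$ is a fixed subspace of dimension at most $i-1$ and is independent of the $i$-th row. I split the indices into three ranges and compare the two sums range by range.

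\emph{Bulk range} $i\le n-n^{1-\ep/4}$ (say). Here $p(1-p)(n-i+1)\ge d n^{-\ep/4}/2$, which tends to infinity. By Theorem~\ref{thm:distance}, $E(p)\ge \tfrac12 D(p)\ge \tfrac12\sqrt{p(1-p)(n-i+1)}$. By Corollary~\ref{Cor:distance1} with $t=E(p)/2$, both distances lie in $[E(p)/2,\,3E(p)/2]$, conditionally on $V_i$ and $V_i'$. The failure probability is at most $C\sqrt d\,e^{-cd n^{-\ep/4}}$, or rather $e^{-c\,d(n-i)/n}$, and a union bound over $i$ makes this summable. The key point is that the same deterministic quantity $E(p)$, computed from the common vector $u$, serves for both models. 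The catch is that $V_i\neq V_i'$, so the $E(p)$'s differ. I would handle this as Tao--Vu do: the contribution $\frac1n\sum\log$ of the bulk range concentrates for each model around a deterministic value. Concretely, $d_{u'}$ is the distance from $-we_i+p\1$ to $V_i$; it is at most $|w|+\sqrt{pd}$, so $\log D(p)$ varies from $\log\sqrt{p(1-p)(n-i+1)}$ by at most $\log$ of a factor $(1+(|w|+\sqrt d)^2/(p(n-i)))^{1/2}$. This is $O(1)$ on the bulk range except near its end. Therefore both sums equal $\frac1n\sum_i\frac12\log(p(1-p)(n-i+1))+O(\text{small})$, with error $o(1)$ after a finer split of the bulk range near $i\approx n$. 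I expect this to need care; if it fails I would instead couple the rows so that $V_i$ and $V_i'$ each nearly contain $\1$, which reduces the comparison to $d_{u'}$ controlled by $|w|$.

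\emph{Upper range} $n-n^{1-\ep/4}<i\le n-\log^{1+\ep/2}n$, roughly. Here I use only lower bounds. By Lemma~\ref{lemma: CIL} and Lemma~\ref{lemma: nsm} applied to the submatrix of the first $n-m$ rows, $\sum_{i\le n-m}\mbox{dist}^{-2}(R_i,W_i)=\sum s_i^{-2}(A')$. Combining this with the polynomial bound $s_n(A)\ge n^{-\bb}$ from Theorem~\ref{thm: lsvlowertails} (resp.\ Theorem~\ref{thm: Berlowertails}), which holds with probability $1-O(d^{-1/2})$, bounds the log-contributions from below by $-O\big((n-i)\cdot\log n\big)/n$ summed over the range. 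The upper bounds come from $\mbox{dist}\le\|R_i\|_2\le \sqrt d+|w|$. The combined error is $O(n^{-\ep/4}\log n)=o(1)$ in the crude range. The cleaner route, and the one I will follow, is the standard one: $\frac1n\sum_i\log s_i$ is compared over $i\ge n-n^{1-\ep/4}$ via $s_{n-j}(A)\gtrsim j/\sqrt n$-type bounds from the negative second moment together with the bulk distance estimates.

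\emph{Last range}, the final $\le n^{1-\ep/4}$ rows. Every distance is at least $s_n(A)\ge n^{-\bb}$ with $\bb\le C_2\log\log n$, and at most $\sqrt d+|w|\le n$. The total contribution is therefore $O(n^{-\ep/4}\log n\log\log n)\to0$, and the same holds for $B$.

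The main obstacle is the bulk range close to $i=n$. There the deterministic centering must be matched between the two models despite $V_i\ne V_i'$, and the $u'$ term (of size $|w|\sim\sqrt d$) competes with $\sqrt{p(n-i)}$. The assumption $d\ge\log^{2+\ep}n$ is used exactly to make the concentration $e^{-cd(n-i)/n}$ survive the union bound down to $n-i\approx n^{1-\ep/4}$, while the polynomial bound on $s_n$ covers the rest.
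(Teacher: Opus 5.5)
There is a genuine gap in your bulk range, at exactly the point you flag. You measure the row of $M_n-w\I_n$ against $V_i$ and the row of $\Bnp-w\I_n$ against $V_i'$, so the two centers $E(p)$ come from different random subspaces. Your remedy, that both sums equal $\frac1n\sum_i\frac12\log(p(1-p)(n-i+1))+o(1)$, is false for $z\neq0$. Your bound $d_{u'}\le|w|+\sqrt{pd}$ only shows that $\log\bigl(D(p)/\sqrt{p(1-p)(n-i+1)}\bigr)$ is $O(1)$ when $n-i\gtrsim n$. An $O(1)$ error on a positive proportion of rows averages to $O(1)$, not $o(1)$.

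This error really is of order one. Your centering does not depend on $z$ and reproduces the $z=0$ value $-\frac12$ of $\frac1n\log|\det(\cdot)|-\frac12\log(d(1-p))$. The true limit, for the Bernoulli model at least, is $\int\log|\lambda-z|\,d\mu_{circ}(\lambda)$, which depends on $z$, and that dependence sits in the $d_{u'}$ terms. Your fallback of making $V_i,V_i'$ nearly contain $\1$ still leaves $\mbox{dist}(we_i,V_i)$ versus $\mbox{dist}(we_i,V_i')$ uncontrolled. Trapping distances in $[E(p)/2,3E(p)/2]$ also gives only $O(1)$ error per row; you need a window $t=o(E(p))$.

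The missing idea is a hybrid (one-row-at-a-time replacement) argument that makes the subspace common to both models. Write $R_i,R_i'$ for the rows of $M_n,\Bnp$ and $u_i=-we_i$. Take $m=\lceil n-d^{-1/2+\ep/6}n\rceil$. Let $\mathrm{Vol}_i$ be the $m$-dimensional volume spanned by $R_1+u_1,\dots,R_i+u_i,R'_{i+1}+u_{i+1},\dots,R'_m+u_m$. Then $\log\mathrm{Vol}_i-\log\mathrm{Vol}_{i-1}=\log r_i-\log r_i'$, where $r_i,r_i'$ are the distances of $R_i+u_i$ and $R_i'+u_i$ to the span $U_i$ of the other $m-1$ vectors. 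Conditionally on those vectors, $U_i$ is fixed and independent of both rows. So Theorem~\ref{thm:distance} and Corollary~\ref{Cor:distance1} center $r_i$ and $r_i'$ at the \emph{same} $E(p)$. With $D(p)^2\ge\frac12 d^{1/2+\ep/6}$ and $t=d^{1/4}$, this gives $|\log r_i-\log r_i'|\le 2/(\frac13 d^{\ep/12}-1)$ for all $i\le m$, with probability $1-Cn\sqrt d\,e^{-c\sqrt d}$. Telescoping from $\mathrm{Vol}_0$ to $\mathrm{Vol}_m$ then controls the bulk with no need to know the limiting value.

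Your cutoffs are also at the wrong scale. For $i\le n-n^{1-\ep/4}$ you only have $p(1-p)(n-i+1)\gtrsim dn^{-\ep/4}$, which tends to $0$, not $\infty$, when $d$ is polylogarithmic. So the hypothesis $p(1-p)(n-k)\ge1$ of Theorem~\ref{thm:distance} fails, and $e^{-cd(n-i)/n}$ cannot survive the union bound. The split must depend on $d$, as with $m$ above. The remaining $d^{-1/2+\ep/6}n$ rows, far more than $n^{1-\ep/4}$, are handled as in your last range. Theorems~\ref{thm: lsvlowertails} and \ref{thm: Berlowertails} with Lemmas~\ref{lemma: CIL} and \ref{lemma: nsm} give $n^{-1/2-\bb}\le d_k,d_k'\le 2\sqrt d\log\log d$. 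So these rows contribute $O(\bb\log n/d^{1/2-\ep/6})=O(\bb(\log n)^{-(\ep-\ep^2)/6})=o(1)$. This, together with $n\sqrt d\,e^{-c\sqrt d}\to0$, is where $d\ge\log^{2+\ep}n$ is used. Your separate ``upper range'' becomes unnecessary.
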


\begin{proof}
Let $u_1, \cdots, u_n$ denote the rows of $-z \sqrt{d(1-p)}I_n$, and let 
$R_1, \cdots, R_n$ and $R_1', \cdots, R_n'$ denote the rows of $M_n$ and 
$\Bnp$, respectively. Set $V_0=\{0\}$, and for $i \geq 1$, let $V_{i}$ 
be the subspace spanned by $R_1+u_1, \cdots, R_{i}+u_{i}$.  Denote the 
distance from $R_i+u_i$ to $V_{i-1}$ by $d_i$. Similarly, define $V_{i}'$ and ${d_i'}$ 
using the vectors $R_j'+u_j$, $j\leq n$. Recall that the absolute value of the 
determinant of a matrix represents the volume of the $n$-dimensional parallelepiped 
generated by the row vectors. Therefore 
\[
\log |\det (M_n-z \sqrt{d(1-p)}I_n)|=\sum_{i=1}^{n} \log d_i \quad \mbox{ and } \quad  \log |\det (\Bnp-z \sqrt{d(1-p)}I_n)
=\sum_{i=1}^{n} \log d_i'.
\]
Let $m=\lceil n-d^{-1/2+\ep/6}n\rceil$. Then 
\[
\frac{1}{n} \log |\det (M_n-z \sqrt{d(1-p)}I_n)|-\frac{1}{n} \log |\det (\Bnp-z \sqrt{d(1-p)}I_n)|
\]
\[
=\underbrace{\frac{1}{n} \Big(\log \prod_{i=1}^{m} d_i-\log  \prod_{i=1}^{m} d_i'\Big)}_{T_1} + \underbrace{\frac{1}{n} \sum_{i=m+1}^{n} (\log d_i-\log d_i')}_{T_2}.
\]
 
   We first evaluate  $T_1$. Fix $1\leq i\leq m$.
Denote by $\mbox{Vol}_{i}$ the volume of the $m$-dimensional parallelepiped 
generated by $R_{1}+u_1, \cdots, R_{i}+u_i, R'_{i+1}+u_{i+1}, \cdots, R'_{m}+u_m$ 
and by $\mbox{Vol}_{0}$ the one generated by $R_{1}'+u_1, \cdots,  R'_{m}+u_m$. 
  Let 
  $$
   \mathcal{F}_i=\{R_{1}+u_1, \cdots, R_{i-1}+u_{i-1}, R'_{i+1}+u_{i+1}, \cdots, R'_{m}+u_m\}
   $$
   and $U_i$ be the subspace generated by $\mathcal{F}_i.$ 
Note that parallelepipeds corresponding to $\mbox{Vol}_{i}$ and $\mbox{Vol}_{i-1}$ 
are generated by $\mathcal{F}_i \cup \{R_{i}+u_i\}$ and $\mathcal{F}_i \cup \{R_{i}'+u_i\}$ respectively. Therefore, 
 %
\[ 
\log \mbox{Vol}_{i} - \log \mbox{Vol}_{i-1}= \log r_i - \log r_i',
\]
where $r_i$ and $r_i'$ denote the distance from $R_{i}+u_i$ and  $R_{i}'+u_i$ to $U_i$  respectively.

\smallskip

For each $i\leq m$ we apply Theorem~\ref{thm:distance} and Corollary~\ref{Cor:distance1} with $t=d^{1/4}$ and $V=U_i$ to vectors $R_i'$ and $R_i$ respectively (note that conditioning on $\cF_i$, the subspace $U_i$ becomes deterministic, and 
$R_i$, $R_i'$ are independent of $\cF_i$).  Note that 
 $$p(1-p)=(d/n)(1-d/n) \geq d/(2n)\quad \quad \mbox{  and } \quad \quad k := \mbox{dim}\, U_i\leq m-1.$$ 
 Thus
$$
 (D(p))^2\geq p(1-p)(n-k)\geq \frac{d}{2n} \, d^{-1/2+\ep/6} n = \tfrac{1}{2}  d^{1/2+\ep/6}. 
$$ 
As we are looking for a limit result and $d\to \infty$ as $n\to \infty$, without 
loss of generality we assume that $d>3^{12/\ep}$. 
Theorem~\ref{thm:distance} and Corollary~\ref{Cor:distance1} imply that 
with probability at least $1-C m\, \sqrt{d}\exp(-c\sqrt{d})$ one has for every $i\leq m$, 
$$
  \frac{r_i}{r_i'}\leq \frac{E(p) + d^{1/4}}{E(p) - d^{1/4}} = 
  1+ \frac{2 d^{1/4}}{E(p) - d^{1/4}} \leq 1+ \frac{2 d^{1/4}}{D(p)/2 - d^{1/4}} 
  =1+ \frac{2}{(1/3)d^{\ep/12} -1},
$$
hence
\[
|\log r_i -\log r_i' | \leq \frac{2}{(1/3)d^{\ep/12} -1}.
\]
  Note that $\mbox{Vol}_{m}=\prod_{i}^{m} d_i$ and $\mbox{Vol}_{0}= \prod_{i}^{m} d_i'$. 
  Therefore
\begin{align*} 
|T_1| &\leq \frac{1}{n} \Big|\log \mbox{Vol}_{m} - \log \mbox{Vol}_{0} \Big|
\leq \frac{1}{n} \sum_{i=1}^{m} |\log \mbox{Vol}_{i} - \log \mbox{Vol}_{i-1}| 
\\ &\leq \frac{1}{n} \sum_{i=1}^{m} |\log r_i - \log r_i'| \leq 
 \frac{2}{(1/3)d^{\ep/12} -1}.
\end{align*}
with probability  at least 
  $$ 1-C n\, \sqrt{d} \exp(-c\sqrt{d})  \longrightarrow 1 \quad \mbox{ as } \quad n\to \infty$$
(we used that $d\geq  \log^{2+\ep} n$). 
 This implies that 
$T_1$ converges to $0$ as $n\to \infty$ in probability. 

\smallskip 

Now we deal with $T_2$. Fix  $z \in \C$ and assume that $|z| \leq  \log \log d$ 
(we can do that as $d\to \infty$ as $n\to \infty$).  
By Theorem~\ref{thm: lsvlowertails} (applied with $z'=z \sqrt{d(1-p)}$ instead of $z$)  
for some $2 <\bb\leq 9$ 
one has $s_n(A)>  n^{-\bb}$ with probability at least 
$1-C_1/\sqrt{d}$, where 
$A=M_n-z \sqrt{d(1-p)}I_n$, $C_1$ and $C_2$ are absolute positive constants. 
Note that the rows of $A$ are $R_i+u_i$, $i\leq n$ and that $A$ is not singular whenever  $s_n(A)>  n^{-\bb}$.
Therefore, we may assume that $V_i$ has full rank for all $i\leq n$. Let $k\leq n$. 
Applying Lemmas~\ref{lemma: CIL} and \ref{lemma: nsm} to matrix $A'$ generated by the first $k$ rows $R_1+u_1,\cdots, R_k+u_k$ we obtain 
\[
d_k^{-2}={\mbox{dist}}^{-2}(R_k+u_k, V_{k-1}) \leq \sum_{i=1}^{k} s_{i}^{-2}(A') \leq
\sum_{i=1}^{k} s_{i+n-k}^{-2}(A) \leq k\, s_{n}^{-2}(A)\leq  n^{1+2\bb}.
\]
On the other hand, clearly $\|R_k\|_2=\sqrt{d}$ and $\|u_k\|_2\leq |z|\sqrt{d}\leq \sqrt{d}\log \log d$, 
therefore 
$$
 d_k={\mbox{dist}}(R_k+u_k, V_{k-1})\leq \|R_k+u_k\|_2\leq 2\sqrt{d}\log \log d.
$$
Thus, for any $k > n-d^{-1/3}n$,
\[
  n^{-1/2-\bb} \leq d_{k} \leq 2 \sqrt{d} \log \log d.
\]

For $M=\Bnp$ we use a similar argument. For the lower bound on $d_k'$, we 
 combine Theorem~\ref{thm: Berlowertails}, \cref{rem:Berlowertails} (applied with $M=\Bnp$ and with 
$z \sqrt{d(1-p)}$ instead of $z$) and Lemmas~\ref{lemma: CIL} and \ref{lemma: nsm}.
For the upper bound, we note that by Lemmas~\ref{Benet} with very 
large probability we have $\|R_k'\|_2 \leq \sqrt{d}\log \log d$ for all $k\le n$. Therefore, 
 with probability at least $1-C'_{1}/\sqrt{d}$, 
\[
   n^{-1/2-\bb} \leq d_{k}' \leq 2 \sqrt{d}\log \log d.
\]

 These bounds on $d_k$ and $d_k'$ together with $d\geq (\log n)^{2+\ep}$, $m\geq  n-d^{-1/2+\ep/6}n$,
  and $\bb<9$ imply 
\[
|T_2| \leq \frac{n-m}{n} \max_{k\leq n} \Big| \log\frac{d_k}{d_k'}\Big| 
\leq  \frac{ \log (2n^{1+\bb} \log \log n) }{d^{1/2-\ep/6}} 
\leq  \frac{ 2(1+\bb)  \log n }{(\log n)^{1+(\ep-\ep^2)/6}} 
\leq  \frac{20}{(\log n)^{(\ep-\ep^2)/6}} 
\]
with probability at least $1-(C_1+C^{\prime}_{1})/\sqrt{d}$. 
Thus, $T_2$ converges to $0$ in probability as $n \rightarrow \infty$.
\end{proof}

\begin{proof}[Proof of Theorem~\ref{thm:CLRCB}] Let 
$p=d/n$, $M_n$ is uniformly drawn from $\cM_{n,d}$, and $\Bnp$ is a random matrix with i.i.d. Bernoulli($p$) entries that is independent of $M_n$. 
Denote 
\[
A_n:=\frac{M_n}{\sqrt{n p(1-p)}} \quad \quad \mbox{ and } \quad \quad 
B_n:=\frac{\Bnp}{\sqrt{n p(1-p)}}.
\]
Then
\[\mathbb{E} \|A_n\|_{HS}^2=\mathbb{E}\|B_n\|_{HS}^2=\frac{n}{1-p} \leq 2n.
\]
Therefore, the first condition of Theorem~\ref{thm: replaceprinciple} is satisfied by Markov's inequality. The second condition for matrices $A_n$, $B_n$ follows by Theorem~\ref{thm:secondcondition}. Therefore, Theorem~\ref{thm: replaceprinciple} 
implies that $\mu_{A_n}-\mu_{B_n}$ converges in probability to $0$. Finally,  applying the circular law for i.i.d. Bernoulli($d/n$)-entry random matrices (see Theorem 1.4 in \cite{AJMcirclawrev}), we complete the proof.
\end{proof}
While Corollary \ref{thm:CLRCB} establishes the circular law under the condition $\min\{d, n-d\} \geq \log^{2+\ep} n $, as is mentioned in Remark~\ref{remark-conj}, we suspect that this condition is not optimal. Indeed, in both the sparse i.i.d. Bernoulli setting \cite{RTcirclawmin} and the $d$-regular case \cite{LLTTYcircdigraph}, the circular law holds whenever $d \rightarrow \infty$ as $n \rightarrow \infty$. We close this section with the following conjecture.
 \begin{con}\label{conjdtoinfty}
    Let $M_n$ be drawn uniformly from $\cM_{n,d}$, and define the normalized matrix
 $$
   \overline{M_{n}}=\frac{M_n}{\sqrt{d(1-d/n)}}. 
 $$ 
 Assume that $d=d(n) \rightarrow \infty$ as $n \rightarrow \infty$, then the empirical spectral distributions  $\left(\mu_{\overline{M_{n}}}\right)_{n\geq 1}$  converges in probability to $\mu_{circ}$ as $n \rightarrow \infty$.
 \end{con}

\section{Restricted operator norm}\label{sec:res_nrm}
The upper bound on the operator norm   $\|M-\mathbb{E}M \|$ plays a crucial role in the construction of nets in \cref{sec: inverb_almost_const}. It is not difficult to see that  $\|M-\mathbb{E}M \|$ equals 
the operator  norm of $M$ restricted to the subspace $\big\{v\in \S^{n-1}: \sum_{i=1}^n v_i=0\big\}$ (see (\ref{restnorm}) below), 
so we call it {\it restricted operator norm}. For $d=n/2$, Tran \cite[Proposition 2.8]{tran2020smallest} employed Bernstein's inequality combined with a net argument to show that this norm  is typically of order $O(\sqrt{n})$ (see also \cite[Lemma 5.1]{jain2021approximate}). The proof also works for $d=pn$, where $p\in (0, 1)$ is a fixed constant. However, their approach does not extend to the sparse regime, particularly when $d$ is of the order of $\log n$. In contrast, we apply the Kahn-Szemer\'{e}di argument to deduce that the restricted operator norm is $O_\lambda(\sqrt{\min\{d,n-d\}+\log n})$ for $1\le d \le n$.
The main result of this section is the following theorem.

\begin{thm} \label{thm: operator_bdd} There are absolute positive constants $C_1$ and $C_2$ 
such that the following holds. 
Let $1\leq m\leq n$, $\gamma\geq 4$, $\beta \geq 1$, and let $d$ be an integer satisfying $1\le d \le n$.  
Let $M$ be an $m \times n$ random matrix with entries in $\{0, 1\}$, where each row is independently and uniformly 
sampled from the set of all vectors in $\{0, 1\}^n$ containing exactly $d$ ones. Let 
$$N= \max\big\{ 9 \min\{d,n-d\}, 2\gamma\log m \big\}.$$
Then, with probability 
    at least $1-n\cdot m^{-\gamma} -2n^{-\beta}$, one has 
    \[
     \|M-\mathbb{E}M\| \leq \frac{2 \min \{d,n-d\}}{\sqrt{N}} + C_1 \beta \sqrt{N}
     \leq C_2 \beta \left( \sqrt{\min\{d,n-d\}+ \gamma \log m} \right).
    \]
In particular, taking $m=n$, $\gamma=4$, $\beta =2$, and $\log n \leq d \leq n- \log n$, we have 
\[
 \|M-\mathbb{E}M\| =\|M^{T}-\mathbb{E}M^{T}\|\leq C_{\ref{thm: operator_bdd}}\sqrt{d}.
\]
with probability at least $1 -3 n^{-1}$. 
\end{thm}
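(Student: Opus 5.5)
We follow the Kahn--Szemer\'edi scheme. Throughout we may assume $d\le n/2$: if $d>n/2$ we replace $M$ by $E-M$, where $E$ is the all-ones $m\times n$ matrix. The rows of $E-M$ are uniform with exactly $n-d$ ones, and $\|(E-M)-\E(E-M)\|=\|M-\E M\|$, so $\min\{d,n-d\}=d$.

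\textbf{Reduction to a bilinear form.} Since every row of $M$ sums to $d$, we have $M\1=d\1=\E M\1$, and $(\E M)v=0$ whenever $\sum_i v_i=0$. Hence
\begin{equation}\label{restnorm}
\|M-\E M\|=\sup_{v\in\S_0^{n-1}}\|Mv\|_2=\sup_{u\in \S^{m-1},\,v\in\S_0^{n-1}}\langle u, Mv\rangle .
\end{equation}
Here it is enough to work over $\R$. By a standard net argument it suffices to bound $\langle u,Mv\rangle$ over $u\in\cN_1$ and $v\in\cN_2$, where $\cN_1,\cN_2$ are grid nets of mesh $c/\sqrt n$ (and $c/\sqrt m$) in the respective spheres; the net for $v$ is projected to zero sum. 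Their cardinalities are at most $e^{Cn}$ and $e^{Cm}$.

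\textbf{Light and heavy pairs.} For fixed $(u,v)$, split the index pairs. Let $L=\{(i,j): |u_iv_j|\le \sqrt N/n\}$ be the light pairs and $H$ the heavy ones. For the light part, write $\sum_{L}u_iv_jM_{ij}$. Its mean is not zero, but since $\sum_j v_j=0$ we get $|\E\sum_{L}u_iv_jM_{ij}|=\tfrac dn|\sum_{H}u_iv_j|\le \tfrac dn\sum_H u_i^2v_j^2\, n/\sqrt N\le d/\sqrt N$. Doubling this to account for both signs gives the term $2\min\{d,n-d\}/\sqrt N$ in the statement. We then split $u_iv_j$ into positive and negative parts. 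For each part we apply Theorem~\ref{thm:Bennettinequality}, valid by the negative association of Lemma~\ref{lemma: Negas}, with $K=\sqrt N/n$ and $\sigma^2\le d/n$. Taking $t=C\sqrt N$ gives a tail bound $\exp(-c\,t^2/(\sigma^2+Kt))\le \exp(-c' n)$, because $Kt=CN/n$ and $N\ge 9d$. This beats the $e^{C(n+m)}$ cardinality of the nets, so the light part is handled with probability $1-e^{-n}$.

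\textbf{Heavy pairs (main obstacle).} For the heavy pairs we need a deterministic bound that holds on a high-probability discrepancy event. First, Lemma~\ref{lem: colsum}, applied with $\gamma$ in place of $\beta$ and with $\tau$ chosen so that $(1+\tau)md/n\lesssim N$, gives that all column sums are at most $CN$, with probability at least $1-nm^{-\gamma}$; row sums equal $d$ deterministically. Second, we establish a discrepancy property: for all $A\subset[m]$ and $B\subset[n]$, letting $e(A,B)$ be the number of ones of $M$ in $A\times B$ and $\mu(A,B)=d|A||B|/n$, either (a) $e(A,B)\le C\beta\,\mu(A,B)$, or (b) $e(A,B)\log\frac{e(A,B)}{\mu(A,B)}\le C\beta\,\max(|A|,|B|)\log\frac{en}{\max(|A|,|B|)}$. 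We prove this via the NA Bennett bound of Theorem~\ref{thm:Bennettinequality} with $Q=\1_{A\times B}$ and a union bound over $\binom{m}{|A|}\binom n{|B|}$ pairs of sets, which leaves failure probability $2n^{-\beta}$. Finally, the standard Kahn--Szemer\'edi dyadic decomposition applies: group coordinates into level sets $A_k=\{i:|u_i|\approx 2^{k}/\sqrt m\}$ and $B_l=\{j:|v_j|\approx2^{l}/\sqrt n\}$, and sum $2^{k+l}e(A_k,B_l)/\sqrt{mn}$ over the heavy pairs, which satisfy $2^{k+l}\gtrsim\sqrt N$. Blocks where the bounded degree bound $CN$ is best are controlled by degree, and the others by cases (a) and (b). This yields $O(\beta\sqrt N)$.

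The delicate step is this bookkeeping. The differing dimensions $m\ne n$ and the degree level $N$ replacing $d$ must be tracked correctly; the factor $\gamma\log m$ enters exactly through the column-sum bound. The second inequality in the statement follows from $2d/\sqrt N\le \sqrt d$ and $\sqrt N\lesssim\sqrt{d+\gamma\log m}$. The ``in particular'' statement is the specialization $m=n$, $\gamma=4$, $\beta=2$: with $\log n\le \min\{d,n-d\}$ we get $N\lesssim d$. The probability is $1-n\cdot n^{-4}-2n^{-2}\ge 1-3n^{-1}$, and $\|M^T-\E M^T\|=\|M-\E M\|$ trivially.
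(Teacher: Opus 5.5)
Your proposal is correct and is essentially the paper's proof. The shared ingredients are: the complement reduction to $d\le n/2$, the restriction to $\S_0^{n-1}$, the light/heavy split at $\sqrt N/n$ (zero-sum trick for the light mean, NA Bennett for its fluctuations), column sums from Lemma~\ref{lem: colsum}, the discrepancy property via NA Bennett plus a union bound, and the deterministic Kahn--Szemer\'edi heavy-pair lemma, which the paper also only cites.

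Only bookkeeping needs adjusting, in three places.
\begin{enumerate}
\item Take the light-part deviation $t\asymp\beta\sqrt N$ (the paper uses $16\beta\sqrt N$). Its failure probability $e^{-c\beta n}\le n^{-\beta}$ then fits, together with the $n^{-\beta}$ from the discrepancy union bound, into the stated $2n^{-\beta}$. Your accounting of $2n^{-\beta}+e^{-n}$ does not.
\item The factor $2$ in $2d/\sqrt N$ comes from the net approximation factor $1/(1-2\varepsilon)$ with $\varepsilon=1/4$, not from a sign split.
\item Both level-set families should use the common scale $1/\sqrt n$ (equivalently, pad $M$ with zero rows). With $2^k/\sqrt m$ the heavy condition is $2^{k+l}\gtrsim\sqrt{Nm/n}$ rather than $2^{k+l}\gtrsim\sqrt N$.
\end{enumerate}
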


\begin{remark}\label{onlysmalld}
    Denote by $E_{m \times n}$ the matrix with all entries equal to 1, by considering the matrix $A=E_{m \times n}-M$ and noting that $M-\mathbb{E}M=-(A-\mathbb{E}A)$, it is enough to prove Theorem~\ref{thm: operator_bdd} for $d \leq n/2$. 
\end{remark}

\begin{remark}
Note that for $d \leq n/2$ one has 
$$
    \|M-EM\| = \|(M-EM)^{T}\| \geq  \|(M-EM)^{T}e_1\|_2=\sqrt{\frac{d(n-d)}{n}} \geq \sqrt{\frac{d}{2}},
$$
     where $e_1$ is the first element of the standard basis  of $\R^{n}$. 
     Thus, for $m=n$ and $\log n \leq d \leq n/2$, the bound in Theorem~\ref{thm: operator_bdd} 
     is sharp up to a constant.  
\end{remark}

The proof of Theorem~\ref{thm: operator_bdd} is based on properties of negatively associated
random variables discussed in Section~\ref{ssna}  and on the  Kahn-Szemer\'{e}di argument, 
which is based on splitting the  sum under consideration into two sums depending on how large 
the corresponding summands are. In the rest of this section, we will assume that $d \leq n/2$.

\subsection{Kahn-Szemer\'{e}di argument}
\label{ssks}

We follow the exposition in \cite{CookGJsizebiased} (see also \cite{zhu2023second}) with the necessary modifications. 
In view of Remark~\ref{onlysmalld} we assume that $d\leq n/2$, so $\min\{ d, n-d\}=d$.
We also note that  $(M-\E M) \1=0$, hence 
\begin{equation}\label{restnorm}
\| M-\mathbb{E}M \|= \sup_{y \in \S_{0}^{n-1}} \|(M-\mathbb{E}M)y\|_2=\sup_{y \in \S_{0}^{n-1}} \|My\|_2=\sup_{x \in \S^{m-1}, y \in \S_{0}^{n-1}} \langle x,My\rangle.
\end{equation}

 For a fixed $x \in \S^{m-1}, y \in \S_{0}^{n-1}$, consider the matrix $Q=Q(x, y):=xy^{T}$. Then 
\[
f_{Q}(M)=\langle x, My \rangle=\sum_{i \in [m],j \in [n]} M_{ij}x_iy_j,
\]
where the function $f_{Q}$ was defined in (\ref{fsubq}). 
The key point in Kahn-Szemer\'{e}di's argument is to split the sum 
$\sum_{1\leq i,j \leq n} M_{ij}x_iy_j$ into two pieces according 
to absolute values of $x_i y_j$'s. Fix 
\begin{equation} \label{newN}
     N= \max{ \Big\{d, \frac{(1+\tau)md}{n} \Big\}}
\end{equation} 
 for some positive $\tau$ to be determined later
 (in fact, $\tau$ comes from Lemma~\ref{lem: colsum}). 
We define the light and heavy couples of coordinates by
\[
\mathcal{L}(x,y)=\Big\{(i,j) \in [m] \times [n]: |x_iy_j| \leq \frac{\sqrt{N}}{n}\Big\}
\quad \mbox{and } \quad 
\mathcal{H}(x,y)=\Big\{(i,j) \in [m] \times [n] : |x_iy_j| > \frac{\sqrt{N}}{n}\Big\}.
\]
We decompose $f_{Q}(M)$ as 
\[
f_{Q}(M)=\sum_{(i,j) \in \mathcal{L}(x,y)}M_{ij}x_iy_j + \sum_{(i,j) \in \mathcal{H}(x,y)}M_{ij}x_iy_j.
\]
We first estimate the mean of the light part.

\begin{lemma}\label{lem: meanlight}
 Let $M$ be as in Theorem~\ref{thm: operator_bdd} and $N$ as in (\ref{newN}). 
 Then for every $x \in \S^{m-1}, y \in \S_{0}^{n-1}$, we have 
 \[
 \left|\mathbb{E}\sum_{(i,j) \in \mathcal{L}(x,y)}M_{ij}x_iy_j \right| \leq \frac{d}{\sqrt{N}}.
 \]
\end{lemma}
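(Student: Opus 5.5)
Since $\mathbb{E}M_{ij}=d/n$ for every $(i,j)$, the expectation equals $\frac{d}{n}\sum_{(i,j)\in\mathcal{L}(x,y)}x_iy_j$. The key identity is that the full sum vanishes because $y\in\S_0^{n-1}$:
\[
\sum_{i\in[m],\,j\in[n]}x_iy_j=\Big(\sum_i x_i\Big)\Big(\sum_j y_j\Big)=0 .
\]
Hence the light sum equals minus the heavy sum. I will therefore write
\[
\Big|\mathbb{E}\sum_{(i,j)\in\mathcal{L}}M_{ij}x_iy_j\Big| = \frac{d}{n}\Big|\sum_{(i,j)\in\mathcal{H}(x,y)}x_iy_j\Big| \le \frac dn\sum_{(i,j)\in\mathcal{H}}|x_iy_j|
\]
and bound the heavy sum instead.

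On $\mathcal{H}$ we have $|x_iy_j|>\sqrt{N}/n$, so $|x_iy_j|<\frac{n}{\sqrt N}|x_iy_j|^2$. This gives
\[
\sum_{(i,j)\in\mathcal{H}}|x_iy_j|\le \frac{n}{\sqrt N}\sum_{i,j}|x_i|^2|y_j|^2=\frac{n}{\sqrt N}\,\|x\|_2^2\|y\|_2^2=\frac{n}{\sqrt N}.
\]
Multiplying by $d/n$ yields the bound $d/\sqrt{N}$, as claimed.

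There is no real obstacle here. The only point needing care is to use the mean-zero property of $y$ to pass from the light pairs to the heavy pairs. Bounding the light sum directly would only give $\frac dn\sum|x_i||y_j|\le d\sqrt{m/n}$, which is far too weak. The definition of $N$ in (\ref{newN}) plays no role beyond $N>0$, and the row constraint is irrelevant because only first moments are used.
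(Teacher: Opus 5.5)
Your proof is correct and matches the paper's. Both arguments use $\mathbb{E}M_{ij}=d/n$, replace the light sum by minus the heavy sum via $\sum_j y_j=0$, and bound the heavy sum by $\frac{n}{\sqrt N}\sum|x_iy_j|^2\le n/\sqrt N$. The only cosmetic difference is that the paper first applies the triangle inequality over $i$ and then cancels row by row, whereas you cancel globally via $(\sum_i x_i)(\sum_j y_j)=0$.
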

\begin{proof}
    Since $\mathbb{E} M_{ij}=d/n$, we have 
    \[\left|\mathbb{E}\sum_{(i,j) \in \mathcal{L}(x,y)}M_{ij}x_iy_j \right|=\frac{d}{n} \left|\sum_{(i,j) \in \mathcal{L}(x,y)}x_iy_j \right| \leq \frac{d}{n}\sum_{i=1}^{m} \left|\sum_{\{j:(i,j) \in \mathcal{L}(x,y)\}}x_iy_j \right|.
    \]
    For any $y \in \S_{0}^{n-1}$, we have $\sum_{j=1}^n y_j =0$. Hence for every (fixed) 
    $i \in [m]$, 
    \[
    \sum_{\{j:(i,j) \in \mathcal{L}(x,y)\}}x_iy_j =-\sum_{\{j:(i,j) \in \mathcal{H}(x,y)\}}x_iy_j .
    \]
    Therefore, using that $|x_iy_j|>\sqrt{N}/n$ on $\mathcal{H}(x,y)$ and that 
    $x, y$ are unit vectors,
    \[
    \left|\mathbb{E}\sum_{(i,j) \in \mathcal{L}(x,y)}M_{ij}x_iy_j \right| \leq \frac{d}{n}\sum_{i=1}^{m} \left|\sum_{\{j:(i,j) \in \mathcal{H}(x,y)\}}x_iy_j \right| \leq \frac{d}{\sqrt N} \sum_{(i,j) \in \mathcal{H}(x,y)} |x_iy_j|^2 \leq \frac{d}{\sqrt{N}}.
    \]
\end{proof}

\begin{lemma}\label{lem: meanlightsec}
    Let $M$ be as in Theorem~\ref{thm: operator_bdd} and $N$ as in (\ref{newN}). Then for every $\beta>0$ and every 
    $x \in \S^{m-1}, y \in \S_{0}^{n-1}$, 
    \[
    \mathbb{P} \left( \left|\sum_{(i,j) \in \mathcal{L}(x,y)}M_{ij}x_iy_j\right| \geq \frac{d}{\sqrt{N}} + \beta \sqrt{N} \right) \leq 4 \exp \left(-\frac{3\beta^2 n}{4(6+\beta)} \right).
    \]
\end{lemma}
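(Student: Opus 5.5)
The plan is to reduce the statement to the Bennett-type inequality of Theorem~\ref{thm:Bennettinequality}, combined with the mean estimate of Lemma~\ref{lem: meanlight}. Theorem~\ref{thm:Bennettinequality} needs nonnegative coefficients, while the products $x_iy_j$ may have either sign. I will therefore split the light part into its positive and negative pieces.

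Fix $x\in\S^{m-1}$ and $y\in\S_0^{n-1}$. Define two $m\times n$ matrices $Q^{+}$ and $Q^{-}$ by
$$Q^{\pm}_{ij}=(\pm x_iy_j)\vee 0 \ \text{ for } (i,j)\in\mathcal{L}(x,y), \qquad Q^{\pm}_{ij}=0 \ \text{ otherwise}.$$
Then
$$\sum_{(i,j)\in\mathcal{L}(x,y)}M_{ij}x_iy_j=f_{Q^+}(M)-f_{Q^-}(M).$$
Both matrices have entries in $[0,K]$ with $K=\sqrt N/n$, by the definition of light couples. Let $\mu_\pm$ and $\sigma_\pm^2$ be the associated parameters from (\ref{fsubq}). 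Since $x$ and $y$ are unit vectors and $N\ge d$,
$$\sigma_\pm^2\le \frac dn\sum_{i,j}x_i^2y_j^2=\frac dn\le \frac Nn.$$

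Next, by Lemma~\ref{lem: meanlight}, $|\mu_+-\mu_-|\le d/\sqrt N$. So if the light sum has absolute value at least $d/\sqrt N+\beta\sqrt N$, then $|(f_{Q^+}-\mu_+)-(f_{Q^-}-\mu_-)|\ge\beta\sqrt N$. Hence at least one of the four one-sided deviations $\pm(f_{Q^\pm}-\mu_\pm)\ge t$ must occur, with $t=\beta\sqrt N/2$.

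Each of these four events is bounded by Theorem~\ref{thm:Bennettinequality}, which applies since the entries of $M$ are NA by Lemma~\ref{lemma: Negas}. With the above parameters we have $6\sigma_\pm^2\le 6N/n$ and $2Kt=\beta N/n$, so
$$\exp\Big(-\frac{3t^2}{6\sigma_\pm^2+2Kt}\Big)\le \exp\Big(-\frac{3\beta^2N/4}{(6+\beta)N/n}\Big)=\exp\Big(-\frac{3\beta^2 n}{4(6+\beta)}\Big).$$
A union bound over the four events gives the factor $4$.

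There is no real obstacle here. The only points needing care are the sign splitting, since Theorem~\ref{thm:Bennettinequality} requires entries in $[0,K]$, and checking that the variance proxy is controlled by $N/n$ using $N\ge d$.
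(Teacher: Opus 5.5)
Your proposal is correct and follows the paper's proof essentially step for step. The paper also splits $\mathcal{L}(x,y)$ into $\mathcal{L}_+$ and $\mathcal{L}_-$, applies Theorem~\ref{thm:Bennettinequality} to each piece with $K=\sqrt{N}/n$, $\sigma^2\le d/n\le N/n$ and deviation $\beta\sqrt{N}/2$, and then uses Lemma~\ref{lem: meanlight} with the triangle inequality to get the factor $4$.
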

\begin{proof}
    We further split $\mathcal{L}(x,y)$ into ``positive" and ``negative" 
    as $\mathcal{L}(x,y)=\mathcal{L}_{+}(x,y) \cup \mathcal{L}_{-}(x,y)$, where
    \[
    \mathcal{L}_{+}(x,y)=\{(i,j): 0 \leq x_iy_j \leq \sqrt{N}/n\},  \quad \quad \mathcal{L}_{-}(x,y) = \mathcal{L}(x,y) \setminus \mathcal{L}_{+}(x,y).
    \]
     Applying Theorem \ref{thm:Bennettinequality} with $K=\sqrt{N}/n$ and 
\begin{align*}
  Q_{ij}=\begin{cases} 
           x_i y_j,\,\, \mbox{ if } (i,j)  \in \mathcal{L}_{+}(x,y),\\
           0, \,\, \mbox{ otherwise},
         \end{cases} 
\end{align*} 
and using $N \geq d$ together with 
$$
 \sigma^2=\frac{d}{n} \sum_{(i,j) \in \mathcal{L}_{+}(x,y)}
(x_iy_j)^2 \leq d/n,
$$
we obtain
    \[
    \mathbb{P} \left( \left|\sum_{(i,j) \in \mathcal{L}_{+}(x,y)}M_{ij}x_iy_j - \mathbb{E}  \sum_{(i,j) \in \mathcal{L}_{+}(x,y)}M_{ij}x_iy_j \right| \geq \beta \sqrt{N} /2\right) \leq 2 \exp \left(-\frac{3\beta^2 n}{4(6+\beta)} \right),
    \]
    Similarly applying Theorem \ref{thm:Bennettinequality} with 
    \begin{align*}
  Q_{ij}=\begin{cases} 
           -x_i y_j,\,\, \mbox{ if } (i,j)  \in \mathcal{L}_{-}(x,y),\\
           0, \,\, \mbox{ otherwise},
         \end{cases} 
\end{align*} 
    we obtain 
    \[
    \mathbb{P} \left( \left|\sum_{(i,j) \in \mathcal{L}_{-}(x,y)}M_{ij}x_iy_j - \mathbb{E}  \sum_{(i,j) \in \mathcal{L}_{-}(x,y)}M_{ij}x_iy_j \right| \geq \beta \sqrt{N} /2\right) \leq 2 \exp \left(-\frac{3\beta^2 n}{4(6+\beta)} \right).
    \]
     Note that  Lemma~\ref{lem: meanlight} implies 
\begin{align*}
     \mathbb{P} &\left( \left|\sum_{(i,j) \in \mathcal{L}(x,y)}M_{ij}x_iy_j\right| \geq \frac{d}{\sqrt{N}} + \beta \sqrt{N} \right) 
     \\ &\leq  \mathbb{P} \left( \left|\sum_{(i,j) \in \mathcal{L}(x,y)}M_{ij}x_iy_j - \mathbb{E}  \sum_{(i,j) \in \mathcal{L}(x,y)}M_{ij}x_iy_j \right| \geq \beta \sqrt{N} \right).
\end{align*}
    Therefore, 
   the desired result follows by the triangle inequality.
\end{proof}

To bound the heavy part, we use the following version of \textit{discrepancy property} for a matrix $M$, introduced in \cite{CookGJsizebiased}. This concept originally dates back to the work of Kahn and Szemer\'edi (see \cite[Lemma 2.5]{friedman1989second}), where they proved that for a fixed $d$ and sufficiently large $n$, the second eigenvalue of a random matrix drawn uniformly from the permutation model is $O(\sqrt{d})$ with high probability.
For an adjacency matrix of a random graph, it ensures
that the number of edges between any two sets of vertices is comparable to the expected
number.

\begin{defn} \label{def: DP}
Let $n, m\geq 1$ and 
    let $M$ be an $m \times n$ matrix with non-negative entries. 
    Given $S\subset [m], T \subset [n]$, define the edge count as  
    \[
    e_{M}(S,T):=\sum_{i \in S}\sum_{j \in T} M_{ij}.
    \]
    Furthermore, we say that the matrix $M$ has the discrepancy property $DP(\de,\kappa_1, \kappa_2)$ with parameters 
    $\delta \in (0,1)$, $\kappa_1>0$, $\kappa_2 \geq 0$ 
    if for all nonempty $S \subset [m]$ and $T \subset [n]$  either 
    \begin{itemize}
        \item[(1)] $\frac{e_{M}(S,T)}{\delta|S||T|} \leq \kappa_1$, or 
        \item[(2)] $e_{M}(S,T) \log \frac{e_{M}(S,T)}{\delta|S||T|} \leq \kappa_2 (|S| \vee |T|) \log \frac{en}{|S| \vee |T|}$.
    \end{itemize}  
\end{defn}

The following lemma asserts that our random matrices enjoy the discrepancy property with high probability.
\begin{lemma} \label{lem: DPRCM}
Let $1\leq m \leq n$. 
    Let $M$ be the random matrix as in Theorem~\ref{thm: operator_bdd}. Then for any $\beta>0$, with probability at least $1-n^{-\beta}$, the discrepancy property  DP$(\delta, \kappa_1, \kappa_2)$ holds for $M$ with $\delta=d/n$, $\kappa_1=e^2$, $\kappa_2=2(4+\beta)$.
\end{lemma}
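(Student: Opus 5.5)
The plan is the standard Chernoff-plus-union-bound argument, as in the Kahn--Szemer\'edi setting. Negative association lets us treat the entries as independent in the upper tail. Fix nonempty $S\subset[m]$ and $T\subset[n]$ with $s=|S|$, $t=|T|$. The edge count $e_M(S,T)=\sum_{i\in S,j\in T}M_{ij}$ equals $f_Q(M)$ with $Q=\1_{S\times T}$. Its mean is $\mu=\delta st$ with $\delta=d/n$, and $\sigma^2=\mu$, $K=1$. Theorem~\ref{thm:Bennettinequality} gives, for $k\ge \mu$,
$$
\P(e_M(S,T)\ge k)\le \exp\bigl(-\mu\, h(k/\mu-1)\bigr)\le \exp\Bigl(-k\log\frac{k}{e\mu}\Bigr),
$$
using $h(u)\ge (1+u)\log(1+u)-(1+u)$. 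Equivalently, we could use the direct bound $\P(e_M\ge k)\le \binom{st}{k}\delta^k\le (e\mu/k)^k$, which is valid by negative association applied to products of indicators. Both give the same estimate.

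Next, set $a=s\vee t$. Let $k(S,T)$ be the smallest $k$ violating both (1) and (2). That is, $k>e^2\mu$ and $k\log\frac{k}{\mu}>\kappa_2 a\log\frac{en}{a}$. For such $k$ we have $\log\frac{k}{e\mu}\ge\frac12\log\frac k\mu$, because $k/\mu\ge e^2$. Since $k\log(k/\mu)$ is increasing in $k$, the event that $(S,T)$ fails DP is contained in $\{e_M(S,T)\ge k(S,T)\}$. Its probability is at most
$$
\exp\Bigl(-\tfrac12 k\log\frac k\mu\Bigr)\le \exp\Bigl(-\tfrac{\kappa_2}{2} a\log\frac{en}{a}\Bigr)=\Bigl(\frac{a}{en}\Bigr)^{(4+\beta)a}.
$$

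Finally we take the union bound over pairs. For fixed sizes $s,t$ with $a=s\vee t$, the number of pairs is $\binom{m}{s}\binom{n}{t}\le \bigl(\frac{en}{a}\bigr)^{2a}$, using $m\le n$ and monotonicity of $(en/x)^x$ on $[1,n]$. So each size class contributes at most $(a/(en))^{(2+\beta)a}$. There are at most $2a\le 2n$ size pairs with a given maximum $a$. Hence the total failure probability is at most
$$
\sum_{a\ge1}2a\Bigl(\frac{a}{en}\Bigr)^{(2+\beta)a}.
$$
The term $a=1$ is $2(en)^{-2-\beta}\le n^{-\beta}/2$ for $n\ge2$. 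For larger $a$ the terms decay geometrically: when $a\le n/e$ the base is small, and when $a>n/e$ the base is at most $e^{-1}$ raised to a power of order $n$. So the sum is at most $n^{-\beta}$.

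The main obstacle is getting the exponent bookkeeping right: the factor $\tfrac12$ lost in $\log(k/(e\mu))$ versus $\log(k/\mu)$, and the $(en/a)^{2a}$ entropy, must both be absorbed by $\kappa_2=2(4+\beta)$. A further small issue is that $k$ is an integer threshold, which is handled by monotonicity. The negative-association tail bound itself is supplied by Theorem~\ref{thm:Bennettinequality}.
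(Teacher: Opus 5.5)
Your proposal is correct and is essentially the paper's argument. It has the same three ingredients: Bennett's inequality for $e_M(S,T)$ via negative association, a union bound over pairs $(S,T)$ with entropy $(en/a)^{2a}$ for $a=|S|\vee|T|$, and the factor $2$ in $\kappa_2=2(4+\beta)$ coming from $\log(k/\mu)-1\ge\frac12\log(k/\mu)$ once $k/\mu\ge e^2$.

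The differences are only organizational. The paper fixes the threshold $(1+\gamma)\mu$ through the Bennett exponent, $h(\gamma^*)\mu=(\beta+4)a\log\frac{en}{a}$, and derives DP deterministically afterwards. It then closes the union bound using $a\log\frac{en}{a}\ge\log(en)$, so each of the at most $mn$ size classes costs at most $(en)^{-(\beta+2)}$. This last step is cleaner than your geometric-decay sketch of the final sum: your estimate for the range $a>n/e$ is only effective for large $n$, whereas the paper's bound holds for every $n$.
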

\begin{proof} Fix $\beta>0$. 
    Let $Q=\mathbf{1}_S \mathbf{1}_T^{\top}$, where $\mathbf{1}_S \in \{0,1\}^{m}$ and $\mathbf{1}_T \in \{0,1\}^{n}$ are indicator vectors of the set $S$ and $T$, respectively. We have 
    \[
    f_Q(M)=\sum_{i \in [m], j \in [n]} (\mathbf{1}_S)_i(\mathbf{1}_T)_j M_{ij}=e_{M}(S,T).
    \]
    Denote 
    $$
     \mu(S,T):= \mathbb{E}f_Q(M)=\frac{d}{n}|S||T|=\delta |S||T|.
    $$
    Let $\gamma_1=e^2-1$.  Recall that $h(u)=(1+u)\log(1+u) -u$ is strictly increasing 
    and let $\gamma^{\ast}(S,T)$  be the (unique) solution of 
    \[
       h(u)\mu(S,T)=(\beta+4)(|S| \vee |T|) \log \frac{en}{|S| \vee |T|}.
    \]
    Denote  $\gamma(S,T)=\max{\{\gamma^{\ast}(S,T),\gamma_1\}}$. 
    Note that 
    $$
     \sigma^{2}:=\frac{d}{n} \sum_{i,j} Q_{ij}^2=\frac{d}{n}|S||T|=\mu(S,T).
     $$
     Applying Theorem \ref{thm:Bennettinequality} with $K=1$ and $t=\gamma(S,T) \mu(S,T)$, 
     we have
    \[
    \mathbb{P}(e_M(S,T) \geq (1+\gamma(S,T)) \mu(S,T)) \leq \exp(-\mu(S,T) h(\gamma(S,T)))
    \leq \exp(-\mu(S,T) h(\gamma^{\ast}(S,T))).
    \]
    Therefore, 
  \begin{align*}
     &\mathbb{P}(\exists S \subset [m],\,  \exists T \subset [n], \, |S|=k,\,  |T|=\ell:\quad  e_{M}(S,T) \geq (1+\gamma(S,T)) \mu(S,T))\\
    &\leq  \sum_{ S \subset [m], |S|=k} \sum_{ T \subset [n], |T|=\ell} \exp(-\mu(S,T) h(\gamma^{\ast}(S,T)))\\
    &\leq \binom{m}{k}\binom{n}{\ell} \exp \left(-(\beta+4)(k \vee \ell) \log \frac{en}{k \vee \ell}\right)\\
    &\leq \left(\frac{em}{k}\right)^{k} \left(\frac{en}{\ell}\right)^{\ell} \exp \left(-(\beta+4)(k \vee \ell) \log \frac{en}{k \vee \ell}\right)\\
    &\leq \exp \left(-(\beta+2)(k \vee \ell) \log \frac{en}{k \vee \ell}\right) \leq \exp (-(\beta+2) \log (en)),
  \end{align*}
  where  we used  that functions $h(u)$ and  $u \mapsto u \log (e/u)$ are increasing. Now, by the union bound over the $mn$ choices of $k \in [m],$ $\ell\in [n]$, we have that with probability at least $1-n^{-\beta}$,
  \begin{equation} \label{concen}
     \forall S \subset [m]\,\,  T \subset [n] \quad \quad e_{M}(S,T) \leq (1+\gamma(S,T)) \mu(S,T). 
  \end{equation}
  If $S,T$ are such that $\gamma(S,T)=\gamma_1=e^2-1$, then from  \eqref{concen}, we have 
  \[
   e_{M}(S,T) \leq e^2 \mu(S,T)=e^2\delta|S||T|,
  \]
  which implies that  (1) in Definition \ref{def: DP} holds with $\kappa_1=e^2$. 
  Otherwise, we have 
  \begin{equation}\label{condST} 
  e_{M}(S,T) \leq (1+\gamma^{\ast}(S,T)) \mu(S,T).
  \end{equation} 
  Consequently,
  \[
  \frac{h(\gamma^{\ast}(S,T))}{1+\gamma^{\ast}(S,T)} e_{M}(S,T) \leq (\be+4)(|S| \vee |T|) \log \frac{en}{|S| \vee |T|}
  \]
  by the definition of $\gamma^{\ast}(S,T)$. Note that $\gamma^{\ast}(S,T) \geq \gamma_1 =e^2 -1$, 
  hence $\log(1+\gamma^{\ast}(S,T)) \geq 2.$
  Therefore, using (\ref{condST}), we obtain 
  \begin{align*}
      \frac{h(\gamma^{\ast}(S,T))}{1+\gamma^{\ast}(S,T)}& \geq \log(1+\gamma^{\ast}(S,T))-1
      \geq \frac{\log(1+\gamma^{\ast}(S,T))}{2} \geq \frac{1}{2} \log \frac{e_{M}(S,T)}{\mu(S,T)}.
  \end{align*}
  This implies  that for $\gamma^{\ast} \geq \gamma_1$,
  \[
  e_{M}(S,T) \log \frac{e_{M}(S,T)}{\mu(S,T)} \leq 2(\be+4) (|S| \vee |T|) \log \frac{en}{|S| \vee |T|}.
  \]
  As $\mu(S,T)=\delta |S||T|$, the second case in Definition \ref{def: DP} holds with $\kappa_2=2(\be+4)$.
  This completes the proof.
  \end{proof}
  The following deterministic lemma shows that when the discrepancy property holds, the heavy part $\sum_{(i,j) \in \mathcal{H}(x,y)}M_{ij}x_iy_j$ is of order $O(\sqrt{N})$. We omit the proof, since it is very similar to the proof of Lemma 6.6 in \cite{CookGJsizebiased}, see also \cite[Lemma 5.5]{zhu2023second}.

  \begin{lemma} \label{lem: DPtobound} \label{rem: absolute}
  Let $1\leq m \leq n$, $\lam>0$, and $N$ as in (\ref{newN}). 
      Let $M$ be a nonnegative $m \times n$ matrix with all row and column sums bounded by $N$. Suppose that $M$ has  $DP(\delta, \kappa_1, \kappa_2)$ with $\delta=\lam N/n$, $\kappa_1>1$, $\kappa_2 \geq 0$. Then for any $x \in \S^{m-1}, y \in \S_{0}^{n-1}$, we have 
      \[
     \left| \sum_{(i,j) \in \mathcal{H}(x,y)}M_{ij}x_iy_j\right| \leq \alpha \sqrt{N},
      \]
      where $\alpha=16+32\lam (1+\kappa_1)+64 \kappa_2 \left( 1+ \frac{2}{\kappa_1 \log \kappa_1} \right)$.
  \end{lemma}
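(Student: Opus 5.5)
The plan is to follow the Kahn--Szemer\'edi heavy-pair argument as in \cite[Lemma~6.6]{CookGJsizebiased}: organize the heavy couples by dyadic level sets of $x$ and $y$, and use the discrepancy property on each pair of level sets. First, the vectors may be real or complex, so we split $x$ and $y$ into positive and negative (and real/imaginary) parts. This costs only a constant factor in $\alpha$, so it suffices to treat $x,y\geq 0$ with $\|x\|_2,\|y\|_2\le 1$. The constraint $\sum y_j=0$ is not used for the heavy part, which is why the bound holds for general unit vectors.

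\textbf{Dyadic decomposition.} Set
\[
S_k=\{i: 2^{k-1}/\sqrt{m}<x_i\le 2^k/\sqrt{m}\},\qquad T_\ell=\{j:2^{\ell-1}/\sqrt{n}<y_j\le 2^\ell/\sqrt{n}\}.
\]
Then $|S_k|\le 4^{1-k}m$ and $|T_\ell|\le 4^{1-\ell}n$, with $\sum_k 4^k|S_k|/m\le 4$, and similarly for $T_\ell$. A heavy couple lies in some $S_k\times T_\ell$ with $2^{k+\ell}/\sqrt{mn}\gtrsim \sqrt N/n$. The heavy sum is therefore bounded by
\[
\sum_{(k,\ell)\ \text{heavy}} e_M(S_k,T_\ell)\,\frac{2^{k+\ell}}{\sqrt{mn}}.
\]
(The case $m<n$ only changes normalizations; one can pad $x$ with zeros to length $n$.) Introduce the shorthand $s_k=|S_k|$, $t_\ell=|T_\ell|$, $\mu_{k\ell}=\delta s_kt_\ell$ with $\delta=\lambda N/n$, and $\alpha_k=4^ks_k/n$, $\beta_\ell=4^\ell t_\ell/n$, so that $\sum\alpha_k,\sum\beta_\ell\le 4$. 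The goal is to show $\sum e(S_k,T_\ell)2^{k+\ell}/n\le\alpha\sqrt N$.

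\textbf{Classification of heavy blocks.} This follows the standard case list.
\begin{itemize}
\item Blocks with $e(S_k,T_\ell)\le\kappa_1\mu_{k\ell}$. Their contribution is $\le\kappa_1\lambda N\sum s_kt_\ell 2^{k+\ell}/n^2$. Using heaviness, $2^{k+\ell}\gtrsim\sqrt N$, so $s_kt_\ell 2^{k+\ell}\le n^2\alpha_k\beta_\ell 2^{-(k+\ell)}\lesssim n^2\alpha_k\beta_\ell/\sqrt N$. This gives $O(\lambda\kappa_1\sqrt N)$.
\item Blocks where the row/column sum bound is better, i.e.\ $e(S_k,T_\ell)\le N\min(s_k,t_\ell)$. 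Blocks with $2^\ell$ much larger than $2^k\sqrt N$ (or the symmetric condition) are summed via $e\le Ns_k$ over $\ell$ in a geometric range. This yields $O(\sqrt N)$; this is the source of the constant $16$.
\item The remaining blocks satisfy the second discrepancy alternative, $e\log(e/\mu)\le\kappa_2 (s\vee t)\log(en/(s\vee t))$. Here one subdivides further according to whether $\log(e/\mu)$ dominates $\tfrac14\log(en/(s\vee t))$, and whether $2^{k}$ is comparable to $2^\ell\sqrt N$. In each subcase, $e\cdot 2^{k+\ell}/n$ is bounded by $\kappa_2\alpha_k$ or $\kappa_2\beta_\ell$ times a factor that is geometrically summable in the other index. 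The $1/(\kappa_1\log\kappa_1)$ term arises from the subcase where $e/\mu\in(\kappa_1,\,\cdot\,)$ and $\log(e/\mu)\ge\log\kappa_1$ is used as a lower bound.
\end{itemize}

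\textbf{Main obstacle.} The delicate part is this last bookkeeping: choosing the subcase thresholds so that every sum over $(k,\ell)$ collapses to $\sum\alpha_k$ or $\sum\beta_\ell$ times a convergent geometric series, while tracking the constants to reach the stated $\alpha$. I would copy the case structure of \cite[Lemma~6.6]{CookGJsizebiased} verbatim, with $d$ replaced by $N$ and $\delta=\lambda N/n$, and check that only the row/column sum bound $N$ and DP are used.
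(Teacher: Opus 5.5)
This is the paper's route: the paper omits the proof and simply refers to \cite[Lemma~6.6]{CookGJsizebiased} with $d$ replaced by $N$. Your extra remarks are correct, namely that $\sum_j y_j=0$ is never used for the heavy part, and that padding $M$ with zero rows to an $n\times n$ matrix is harmless (DP survives because $t\mapsto t\log(en/t)$ is increasing on $(0,n]$).

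Two small fixes are needed. First, do not split into sign parts, since that yields $4\alpha\sqrt N$ rather than the stated bound. Instead bound directly by $\sum_{\mathcal H(x,y)}M_{ij}|x_i||y_j|$, using $\mathcal H(x,y)=\mathcal H(|x|,|y|)$. Second, the pairing in your second bullet is reversed. For $2^\ell\ge 2^k\sqrt N$, use the column-sum bound $e(S_k,T_\ell)\le N t_\ell$ and sum geometrically over $k$. The row-sum bound $e\le N s_k$ goes with $2^k\ge 2^\ell\sqrt N$ and a geometric sum over $\ell$.
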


  \subsection{Completing the proof.}
  \label{sscp}
  
  We will finish the proof of Theorem \ref{thm: operator_bdd} with the standard $\ep$-net argument (as it is classical and used 
  in many works, we omit the proof). 
  
  \begin{lemma} \label{lem: Net}
      For $\ep \in (0,1/2)$. Let $\mathcal{N}_{\ep}$ and $\mathcal{N}^{0}_{\ep}$ be nets for $\S^{m-1}$ and $\S_{0}^{n-1}$, respectively. Let $A$ be an $m \times n$ matrix. Then
      \[
      \sup_{x \in \S^{m-1}\atop y \in \S_{0}^{n-1}} \langle x,Ay\rangle \leq \frac{1}{1-2\ep} \sup_{x \in \mathcal{N}_{\ep}, y \in \mathcal{N}^{0}_{\ep}} | \langle x,Ay\rangle|.
      \]
  \end{lemma}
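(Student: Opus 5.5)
The plan is the standard approximation argument, with one point specific to this setting: the approximation errors in the $y$-variable must stay inside the subspace $\{v:\sum_i v_i=0\}$. I take the nets to be real (as in (\ref{restnorm})), with $\mathcal{N}_{\ep}\subset \S^{m-1}$ and $\mathcal{N}^{0}_{\ep}\subset\S_{0}^{n-1}$, each $\ep$-dense in its sphere in the Euclidean norm.

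First, let
$$s:=\sup_{x \in \S^{m-1},\, y \in \S_{0}^{n-1}} \langle x,Ay\rangle .$$
Because $x\mapsto -x$ preserves $\S^{m-1}$, this also equals $\sup |\langle x,Ay\rangle|$ over the same set. Moreover, $s$ is finite and attained at some pair $(x_0,y_0)$, since both sets are compact and the form is continuous. Now pick $x'\in\mathcal{N}_{\ep}$ and $y'\in\mathcal{N}^0_{\ep}$ with $\|x_0-x'\|_2\le\ep$ and $\|y_0-y'\|_2\le \ep$.

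Second, I expand
$$\langle x_0,Ay_0\rangle=\langle x',Ay'\rangle+\langle x_0-x',Ay_0\rangle+\langle x',A(y_0-y')\rangle .$$
For the middle term, if $x_0\ne x'$, then $(x_0-x')/\|x_0-x'\|_2\in\S^{m-1}$ and $y_0\in\S_0^{n-1}$. By homogeneity and the absolute-value form of $s$, the middle term is at most $\ep s$ in absolute value.

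For the last term, the key observation is that $y_0$ and $y'$ both have zero coordinate sum. Hence $y_0-y'$ lies in the zero-sum subspace, so (if nonzero) $(y_0-y')/\|y_0-y'\|_2\in\S_0^{n-1}$. Since $x'\in\S^{m-1}$, the last term is also at most $\ep s$ in absolute value. This is exactly why the net for the $y$-variable must be chosen inside $\S_0^{n-1}$ rather than in the whole sphere.

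Combining the three terms gives
$$s\le \sup_{x\in\mathcal{N}_\ep,\,y\in\mathcal{N}^0_\ep}|\langle x,Ay\rangle|+2\ep s.$$
Since $\ep<1/2$, rearranging yields the claim. There is no real obstacle here. The only points to check are the zero-sum closure of the difference $y_0-y'$ and the use of the absolute value in the definition of $s$, which is what allows the error terms to be compared with $s$ itself.
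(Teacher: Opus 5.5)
Your proof is correct and is exactly the standard $\ep$-net argument that the paper invokes; the paper omits the proof as classical. In particular, you rightly note that taking $\mathcal{N}^{0}_{\ep}\subset \S_{0}^{n-1}$ keeps $y_0-y'$ in the zero-sum subspace, and that is what allows the second error term to be bounded by $\ep s$.
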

  
  \begin{proof}[Proof of Theorem \ref{thm: operator_bdd}]
   In view of Remark~\ref{onlysmalld} we may assume that $d\leq n/2$. 
  Fix  $\beta\geq 1$, $\beta_1\geq 16$, and $\beta_2 \geq 4$. 
  Let $\tau=\max \big\{e^2, \frac{\beta_2 n\log m}{md}\big\}$, and recall that $N= \max{ \big\{d, \frac{(1+\tau)md}{n} \big\}}$.
  Note that
  $$
   \mbox{ if }\quad  \frac{\beta_2 n\log m}{md}\geq e^2 \quad \quad \mbox{ then } \quad  \quad \frac{(1+\tau) md}{n} <2 \beta_2 \log m
$$ and otherwise, $(1+\tau) md/n \leq (1+\tau)d < 9d$. Therefore,  
  $N \leq  \max\{9 d, \, 2 \beta_2 \log m\}$, and by 
  Lemma~\ref{lem: colsum}, column sums of $M$ do not exceed $N$ with probability at least 
  $1-n\cdot m^{-\beta_2}$. Applying Lemma~\ref{lem: DPRCM} we obtain that 
  with probability at least $1-n\cdot  m^{-\beta_2}-  n^{-\beta}$, the matrix 
  $M$ has  DP$(\delta, \kappa_1, \kappa_2)$ with $\delta=d/n$, $\kappa_1=e^2$, 
  $\kappa_2=2(4+\beta)$. Then Lemma~\ref{rem: absolute} implies that with probability at least $1-n\cdot  m^{-\beta_2}-  n^{-\beta}$,
    \begin{equation} \label{forth41}
     \sup_{x \in \S^{m-1}\atop  y \in \S_{0}^{n-1}}  \left| \sum_{(i,j) \in \mathcal{H}(x,y)}M_{ij}x_iy_j  \right| \leq \alpha \sqrt{N},
   \end{equation}
      where $\alpha=16+32 \frac{(1+e^2)d}{N}+128 (4+\beta) ( 1+ e^{-2})$.
      Let $\mathcal{G}$ denote this event. As we mentioned above, 
      \[\| M-\mathbb{E}M \|= \sup_{x \in \S^{m-1}\atop y \in \S_{0}^{n-1}} \langle x,My\rangle.
      \]
      Fix $\ep=1/4$. By a standard volume argument, on $\S^{m-1}$ and $\S_{0}^{n-1}$ there exist $\ep$-nets $\mathcal{N}_{\ep}$ and $\mathcal{N}^{0}_{\ep}$ of size at most $9^{m}$ and $9^{n}$, respectively. 
   Combining (\ref{forth41}), Lemma \ref{lem: meanlightsec}, and the union 
   bound, we obtain 
      \begin{align*}
          \mathbb{P}\Biggl( \mathcal{G} &\cap \left\{\| M-\mathbb{E}M \|\geq 2 \alpha \sqrt{N } +\frac{2d}{\sqrt{N}} + 2\beta_1 \sqrt{N} \right\} \Biggr) \\
          &\leq \mathbb{P}\left( \mathcal{G} \cap \left\{ \sup_{x \in \mathcal{N}_{\ep}\atop y \in \mathcal{N}^{0}_{\ep}} 
          | \langle x,My\rangle| \geq  \alpha \sqrt{N } +\frac{d}{\sqrt{N}} + \beta_1 \sqrt{N} \right\} \right)\\
          &\leq \mathbb{P}\left( \mathcal{G} \cap \left\{\sup_{x \in \mathcal{N}_{\ep}\atop y \in \mathcal{N}^{0}_{\ep}} 
          \left| \sum_{(i,j) \in \mathcal{L}(x,y)}M_{ij}x_iy_j \right| \geq  \alpha \sqrt{N } +\frac{d}{\sqrt{N}} + 
          \beta_1 \sqrt{N} -\sup_{x \in \mathcal{N}_{\ep}\atop y \in \mathcal{N}^{0}_{\ep}} \left|\sum_{(i,j) \in \mathcal{H}(x,y)}M_{ij}x_iy_j \right|\right\} \right)\\
          &\leq \mathbb{P}\left( \mathcal{G} \cap \left\{\sup_{x \in \mathcal{N}_{\ep}\atop y \in \mathcal{N}^{0}_{\ep}} \left| \sum_{(i,j) \in \mathcal{L}(x,y)}M_{ij}x_iy_j \right| \geq  \frac{d}{\sqrt{N}} + \beta_1 \sqrt{N} \right\} \right) \\
          &\leq 4 \cdot 9^{2n} \exp \left(-\frac{3\beta_1^2 n}{4(6+\beta_1)} \right) \leq \exp \left(-\beta_1 n/8 \right).
      \end{align*}
      Finally, to obtain the desired result, 
          we choose $\gamma=\beta _2$ and $\beta_1 = 16 \beta$. 
  \end{proof}

\subsection{Tall rectangular matrices}
\cref{thm: operator_bdd} is stated (and used below) for $1\le m\le n$. For the sake of completeness, we generalize it  
to the complementary regime $m> n$. This almost immediately follows by partitioning the rows of the matrix into blocks 
of size comparable to $n$, applying \cref{thm: operator_bdd} to each block, and then combing the estimates. 

\begin{thm}\label{cor:oper_bdd}
   Let $m, n\ge 1$, $\gamma\geq 4$, $\beta \geq 1$, and let $d$ be an integer satisfying $1\le d \le n$.  Let $M$ be an $m \times n$ random matrix with entries in $\{0, 1\}$, where each row is independently and uniformly sampled from the set of all vectors in $\{0, 1\}^n$ containing exactly $d$ ones. Denote $k:=\lceil m/n \rceil$. Then, with probability at least $1-k(2^\gamma n^{1-\gamma}+2n^{-\beta})$, one has
   \[
   \|M-\E M\|\le C_{\ref{thm: operator_bdd}}\beta\sqrt{k}\sqrt{\min\{d, n-d\}+\gamma \log n}
   \]
   where $C_{\ref{thm: operator_bdd}}$ denotes the absolute constant from \cref{thm: operator_bdd}.
\end{thm}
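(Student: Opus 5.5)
Partition the rows into $k=\lceil m/n\rceil$ consecutive blocks $I_1,\dots,I_k\subset[m]$, each of size $m_\ell:=|I_\ell|\le n$ (for instance $k-1$ blocks of size $n$ and one of size $m-(k-1)n\ge 1$). Let $M^{(\ell)}$ denote the $m_\ell\times n$ submatrix of $M$ formed by the rows in $I_\ell$. The rows of $M$ are independent and uniformly distributed on the $d$-sparse $0/1$ vectors, so each $M^{(\ell)}$ satisfies the hypotheses of Theorem~\ref{thm: operator_bdd} with $m$ replaced by $m_\ell\le n$. We will apply that theorem to each block, using the same $\gamma,\beta$.

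The block bound is as follows. Theorem~\ref{thm: operator_bdd} gives, with probability at least $1-n\,m_\ell^{-\gamma}-2n^{-\beta}$,
$$\|M^{(\ell)}-\E M^{(\ell)}\|\le C_{\ref{thm: operator_bdd}}\beta\sqrt{\min\{d,n-d\}+\gamma\log m_\ell}\le C_{\ref{thm: operator_bdd}}\beta\sqrt{\min\{d,n-d\}+\gamma\log n}.$$
Here I take $C_{\ref{thm: operator_bdd}}$ to be the constant $C_2$ of that theorem, as the statement indicates.

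The main obstacle is the failure term $n m_\ell^{-\gamma}$. It is small only when $m_\ell$ is comparable to $n$. For a short last block (say $m_\ell=1$) it is useless. This explains the term $2^\gamma n^{1-\gamma}$ in the statement: it equals $n\,(n/2)^{-\gamma}$, so every block should have size at least $n/2$. To arrange this, when $m>n$ I will split the rows into $k$ blocks of nearly equal sizes, each between $\lfloor m/k\rfloor$ and $\lceil m/k\rceil$. Since $m/k>(k-1)n/k\ge n/2$ for $k\ge2$, we get $m_\ell\ge n/2$. Since $m/k\le n$, we also get $m_\ell\le n$. (If $m/k$ is not an integer and $\lceil m/k\rceil$ might exceed $n$, note $m\le kn$ forces $\lceil m/k\rceil\le n$.) Each block then fails with probability at most $n(n/2)^{-\gamma}+2n^{-\beta}=2^\gamma n^{1-\gamma}+2n^{-\beta}$. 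A union bound over the $k$ blocks gives the stated probability. The case $k=1$, i.e. $m\le n$, is directly Theorem~\ref{thm: operator_bdd}, since $m^{-\gamma}\le\dots$; here the bound $n\,m^{-\gamma}$ would need care when $m$ is small. I would handle it by noting the statement is only meaningful there anyway, or by appealing to Theorem~\ref{thm: operator_bdd} as stated.

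Finally we combine the blocks. For any $y$,
$$\|(M-\E M)y\|_2^2=\sum_{\ell=1}^k\|(M^{(\ell)}-\E M^{(\ell)})y\|_2^2\le \sum_\ell \|M^{(\ell)}-\E M^{(\ell)}\|^2\|y\|_2^2 .$$
On the intersection of the $k$ good events this gives $\|M-\E M\|\le\sqrt{k}\,C_{\ref{thm: operator_bdd}}\beta\sqrt{\min\{d,n-d\}+\gamma\log n}$, which is the claim.
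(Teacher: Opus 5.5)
Your argument for $m>n$ is the paper's proof. You split the rows into $k$ blocks of sizes $m_\ell\in[n/2,n]$ and apply Theorem~\ref{thm: operator_bdd} to each block, with failure probability $n m_\ell^{-\gamma}+2n^{-\beta}\le 2^\gamma n^{1-\gamma}+2n^{-\beta}$. You then take a union bound and combine the blocks via $\|(M-\E M)y\|_2^2=\sum_\ell\|(M^{(\ell)}-\E M^{(\ell)})y\|_2^2$. One detail needs fixing: $m/k>n/2$ alone does not give $\lfloor m/k\rfloor\ge n/2$ when $n$ is odd. Use integrality instead: $m\ge (k-1)n+1$ gives $m/k\ge (n+1)/2$ for $k\ge 2$, since this is equivalent to $(k-2)(n-1)\ge 0$. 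This is the paper's $k(n+1)/2\le m\le kn$.

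The real loose end is $k=1$ with $m<n/2$, and neither of your suggested resolutions works. Theorem~\ref{thm: operator_bdd} as stated bounds the failure probability only by $n m^{-\gamma}+2n^{-\beta}$, which exceeds $1$ for $m=1$. The claimed failure probability $2^\gamma n^{1-\gamma}+2n^{-\beta}$ is small, so the statement is not vacuous there. The paper also passes over this, saying only that the case $m\le n$ is ``covered by'' Theorem~\ref{thm: operator_bdd}.

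The fix is a padding argument. Append $n-m$ further independent rows, each uniform among $0/1$ vectors with exactly $d$ ones, to obtain an $n\times n$ matrix $\tilde M$. Since $M-\E M$ is a row-submatrix of $\tilde M-\E\tilde M$, you have $\|M-\E M\|\le\|\tilde M-\E\tilde M\|$. Theorem~\ref{thm: operator_bdd} applied with $m=n$ then gives the required bound with failure probability $n^{1-\gamma}+2n^{-\beta}\le 2^\gamma n^{1-\gamma}+2n^{-\beta}$. Padding to $kn$ rows and using $k$ blocks of exactly $n$ rows would also handle $m>n$ without balancing the block sizes.
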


\begin{remark}
   Taking $\gamma=4$ in \cref{cor:oper_bdd}. If $\log n\le d \le n/2$ and $m\geq n$, then 
   \[
   \|M- \E M\|\le C \beta\sqrt{\frac{md}{n}}
   \]
   for some absolute constant $C>0$. 

   On the other hand, assuming that  $d\leq n/2$, note that 
   each row of $M-\E M$ has the squared Euclidean norm $d(n-d)/n\geq d/2$. 
   Thus, $\|M-\E M\|_{HS}^2\geq md/2$. Moreover, $(M-\E M)\1=0$, and hence, 
   $r:=\text{rank}(M-\E M)\le n-1$. 
   Denoting by $s_1\ge \cdots \ge s_r>0$ 
   the non-zero singular values of $M-\E M$,  we observe 
   \[
   \|M- \E M\|_{HS}^2=\sum_{i=1}^r s_i^2\le r s_1^2=r \|M-\E M\|^2.
   \]
   This implies,
   \[
   \|M-\E M\|^2\ge \frac{\|M- \E M\|_{HS}^2}{r}\ge  \frac{md}{2(n-1)}.
   \]
   Thus, $\|M- \E M\|\ge \sqrt{md/(2(n-1))}$. 
   Therefore, for $\log n \le d \le n/2$ and  $m\geq n$, the bound in \cref{cor:oper_bdd} is sharp up to an absolute constant.
\end{remark}

\begin{proof}
First note that the case $m\leq n$ is covered by \cref{thm: operator_bdd}, so we 
assume that $m>n$ and hence $k\ge 2$. Denote $d_*:=\min\{d, n-d\}$. The case $n=1$ is trivial, so 
we also assume $n\ge 2$. 

Note that for positive integers $m_1,\dots, m_k$ satisfying $n/2\le m_i\le n$ their sum 
$\sum_{i=1}^k m_i$ can take any value between $k\lceil n/2 \rceil$ and $kn$. 
Since $k(n+1)/2\leq m \leq kn$ for $m>n>1$,  there exist positive integers $m_1,\dots, m_k$ such that
    \[
    \sum_{i=1}^k m_i=m   \quad \text{ and } \quad \frac{n}{2} \le m_i\le n \, \text{ for all } i=1,\dots, k.
    \]

Partition the set of row indices into pairwise disjoint consecutive blocks $I_1, \dots, I_k$ with $|I_j|=m_j$ for $j=1,\dots, k$, and let $M^{(j)}$ be the $m_j\times n$ submatrix of $M$ formed by rows indexed by $I_j$. Then
\[
M-\E M =\begin{bmatrix}
    M^{(1)}-\E M^{(1)} \\ \vdots \\ M^{(k)}-\E M^{(k)}
\end{bmatrix}.
\]
For each $j=1,\dots, k$, the rows of $M^{(j)}$ are still independent. Since $m_j\le n$, we apply \cref{thm: operator_bdd} to $M^{(j)}$. Therefore, for $j=1,\dots, k$,
\[
\P\left(\left\|M^{(j)}-\E M^{(j)}\right\|>C_{\ref{thm: operator_bdd}}\beta\sqrt{d_*+\gamma \log m_j}\right) \le nm_j^{-\gamma}+2n^{-\beta}.
\]
Because $m_j\le n$, we have $\log m_j\le \log n$, and because $m_j\ge n/2$, we have $nm_j^{-\gamma}\le 2^{\gamma } n^{1-\gamma}$. Thus, for $j=1,\dots, k$,
\[
\P\left(\left\|M^{(j)}-\E M^{(j)}\right\|>C_{\ref{thm: operator_bdd}}\beta\sqrt{d_*+\gamma \log n}\right) \le 2^\gamma n^{1-\gamma}+2n^{-\beta}.
\]
By the union bound, with probability at least $1-k( 2^\gamma n^{1-\gamma}+2n^{-\beta})$, one has 
\[
\left\|M^{(j)}-\E M^{(j)}\right\|\le C_{\ref{thm: operator_bdd}}\beta\sqrt{d_*+\gamma \log n}   \quad \text{ for all } \,  j=1,\dots, k.
\]
Then, on this event we have for  any $y\in \S^{n-1}$,
\begin{align*}
    \|(M-\E M)y\|_2^2&=\sum_{j=1}^k\left\|(M^{(j)}-\E M^{(j)})y\right\|_2^2
    \\ &\le \sum_{j=1}^k\left\|(M^{(j)}-\E M^{(j)})\right\|_2^2\|y\|_2^2
    \le kC_{\ref{thm: operator_bdd}}^2\beta^2(d_*+\gamma \log n).
\end{align*}
This implies the desired result.
\end{proof}

\section{Concentration results and graph expansion properties}
\label{sec:concen_expan}

\subsection{Auxiliary lemmas}
\label{sec-five}


In this section we present a pair of lemmas that control the connectivity between the rows or columns of $M$ and an arbitrary subset of indices. The first lemma provides a probabilistic bound on the number of rows of $M$ that have an atypically large number of non-zero entries within a given set of columns $J$. The second simple lemma  offers a known deterministic statement for the columns of $M$. It is essentially  \cite[Claim 3.6]{litvak2019smallest} (where a different model was used). We provide its proof for the sake of completeness. 

\begin{lemma}\label{prop: supprow}
Let $1\le d, k \le n$ and $r\geq 2$. Let $J \subset [n]$, $|J|=k$. 
Let $M$ be uniformly drawn from $\cM_{n,d}$. Then with probability at 
least $1- \exp(-(8n/r))$,
\[
\left|\Big\{i \leq n: |\text{Supp}(R_{i}(M)) \cap J| \geq \frac{r kd}{n} \Big\}\right| 
\leq \frac{9 n}{r}.
\] 
\end{lemma}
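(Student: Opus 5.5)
Since the rows $R_1,\dots,R_n$ of $M$ are independent, the quantities $X_i:=|\mathrm{Supp}(R_i)\cap J|$ are independent, each hypergeometric with mean $kd/n$. Let $\xi_i$ be the indicator of the event $X_i\ge rkd/n$. The $\xi_i$ are independent Bernoulli$(q)$ variables, and I will bound $q$ and then apply a Chernoff/Bennett bound to $\sum_i\xi_i$.

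\textbf{Step 1 (bounding $q$).} Markov's inequality gives only $q\le 1/r$. With that alone, Lemma~\ref{Benet} applied with $(1+\tau)q n = 9n/r$, i.e.\ $\tau\ge 8$, would yield only about $\exp(-8\log(8/e)\,n/r)$. The exponent must be at least $8n/r$, so I need $q$ somewhat smaller, or a direct large-deviation computation. The plan is to use a higher moment: by Lemma~\ref{lemma: Negas} the entries $M_{ij}$, $j\in J$, are NA. Hence Theorem~\ref{thm:Bennettinequality} with $Q=e_i\mathbf 1_J^T$, $\mu=\sigma^2=kd/n$, $K=1$, and $t=(r-1)kd/n$ gives
\[
q\le \exp\bigl(-\tfrac{kd}{n}h(r-1)\bigr).
\]
This is useless when $kd/n$ is tiny, so I combine it with Markov. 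More robustly, I would avoid estimating $q$ separately and instead bound the probability that a fixed set of rows all exceed the threshold.

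\textbf{Step 2 (union over row sets).} Let $s=\lceil 9n/r\rceil$. The event fails only if some set $I$ of $s$ rows has $\xi_i=1$ for all $i\in I$. By independence,
\[
\P\le \binom{n}{s}q^{s}\le \Bigl(\frac{enq}{s}\Bigr)^{s}\le \Bigl(\frac{er}{9}\cdot q\Bigr)^{s}.
\]
Using only Markov, $q\le 1/r$, this gives $(e/9)^{s}=\exp(-s\log(9/e))\le \exp(-1.19\cdot 9n/r)\le\exp(-10n/r)$, which is already better than the required $\exp(-8n/r)$. So Step 1 is unnecessary, and Markov suffices: $\E X_i=kd/n$, hence $q=\P(X_i\ge rkd/n)\le 1/r$.

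\textbf{Step 3 (edge cases).} If $9n/r\ge n$ the statement is trivial. Otherwise $s\le n$, and I only need $s\ge 9n/r$ to get the exponent $s\log(9/e)\ge 9\log(9/e)\,n/r>8n/r$, since $\log(9/e)\approx1.197$. The main point requiring care is this numerical constant. The estimate $\binom ns\le(en/s)^s$ and the independence of the rows are routine, and no NA machinery is needed here.
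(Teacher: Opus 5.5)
Your proof is correct. It shares its starting point with the paper's proof: the indicators $\xi_i$ are independent because the rows are, and Markov's inequality gives $q\le 1/r$. The two arguments differ only in the binomial tail step. The paper applies Lemma~\ref{Benet} with $p=q$ and $\tau=9/(rq)-1\ge 8\ge e^2$. Then $\log(\tau/e)\ge 1$, so the exponent is at least $\tau q n=(9/r-q)n\ge 8n/r$. You instead use the first-moment bound $\P(\sum_i\xi_i\ge s)\le\binom{n}{s}q^s\le(erq/9)^s\le(e/9)^s$ with $s=\lceil 9n/r\rceil$. Your route is fully self-contained: it needs neither Bennett's inequality nor the checks $p\le 1/2$ and $\tau>e$. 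It also gives the slightly better exponent $9\log(9/e)\,n/r\approx 10.8\,n/r$. The paper's route has the advantage of reusing a lemma already stated. Your remark in Step~1 is mistaken, however: Markov plus Lemma~\ref{Benet} is \emph{not} insufficient. At $q=1/r$ the exponent is $8\log(8/e)\,n/r$, and $8\log(8/e)\approx 8.64>8$. For $q<1/r$ the exponent $\tau\log(\tau/e)\,qn$ only increases, since both $\tau q n=(9/r-q)n$ and $\log(\tau/e)$ decrease in $q$. That is exactly the paper's proof, so the detour through negative association and Theorem~\ref{thm:Bennettinequality} was never needed, as you ultimately concluded.
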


\begin{proof}
Fix $J\subset [n]$ with $|J|=k$. Denote by $\mathcal{E}_i$ the event that $|\text{Supp}(R_{i}(M)) \cap J| \geq \frac{r kd}{n}$. 
Note that the random variable $|\text{Supp }R_{i}(M) \cap J|$ has a hypergeometric distribution with mean $kd/n$. Therefore, applying Markov's inequality, we have 
\[
   q:=\mathbb{P}(\mathcal{E}_i) \le \frac{\E\lr{|\text{Supp} (R_{i}(M)) \cap J|}}{rkd/n} \leq \frac{1}{r}.
\]
Let $\delta_{i}$ be the indicator function of the event $\mathcal{E}_i$, $i\leq n$. Note that events 
$\mathcal{E}_i$'s (hence random variables $\de_i$'s) are independent. Since $q\leq 1/r$, 
$\tau:= 9/(rq) -1 \geq 8\geq e^2$.  
Then, writing 
$9n/r= (1+ \tau)qn$ and applying Lemma~\ref{Benet}, we observe
$$
 \mathbb{P}\lr{\sum_{i=1}^n \delta_{i} \geq \frac{9n}{r}} \leq \exp(-(9/(rq) -1)qn) \leq 
 \exp(-(8n/r)).  
$$
This completes the proof.
\end{proof}

The following lemma was proved in \cite{litvak2019smallest} (see Claim 3.6 there).
\begin{lemma}\cite[Claim 3.6]{litvak2019smallest} \label{lemma: suppcol}
Let $1\leq d, k, \leq n$. 
Let $J \subset [n]$, $|J|=k$, and $A>1$. Let $M \in \cM_{n,d}$. Then
\[
\left|\Big\{i \leq n: |\text{Supp}(R_{i}(M^{T})) \cap J| \geq \frac{Akd}{n} \Big\}\right| \leq \frac{n}{A}.
\]
\end{lemma}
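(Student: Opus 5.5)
This is a deterministic double-counting (Markov-type) argument, using only that every row of $M\in\cM_{n,d}$ has exactly $d$ ones. Note that $R_i(M^T)$ is the $i$-th column of $M$, so $\mathrm{Supp}(R_i(M^T))\cap J=\{j\in J: M_{ji}=1\}$. Its size is therefore the number of ones of column $i$ that lie in the rows indexed by $J$. Denote this quantity by $c_i:=|\{j\in J: M_{ji}=1\}|$.

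First I will count the total number of ones in the submatrix formed by the rows in $J$. Each row has exactly $d$ ones, so
\[
\sum_{i=1}^n c_i=\sum_{j\in J}\sum_{i=1}^n M_{ji}=kd.
\]
Next, let $I:=\{i\le n: c_i\ge Akd/n\}$ be the set we must bound. Since all $c_i\ge 0$,
\[
|I|\cdot \frac{Akd}{n}\le \sum_{i\in I}c_i\le \sum_{i=1}^n c_i = kd.
\]
Dividing by $Akd/n>0$ gives $|I|\le n/A$, which is the claim.

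There is essentially no obstacle. The only point needing care is reading the indexing correctly: rows of $M^T$ are columns of $M$, and $J$ indexes rows of $M$. The fixed row sums make the total count exactly $kd$, with no probability involved. The hypothesis $A>1$ is not actually needed for the argument; it only makes the bound nontrivial.
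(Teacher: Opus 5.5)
Your proposal is correct and matches the paper's proof: both count the total number of ones in the rows of $M$ indexed by $J$ (exactly $kd$, since every row of $M$ has $d$ ones) and apply the Markov-type bound $|I|\cdot Akd/n \le kd$. Your handling of the indexing (rows of $M^T$ are columns of $M$) is right, and you are also right that $A>1$ is needed only to make the bound nontrivial.
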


\begin{proof}
By the definition, the total number of ones in the submatrix of $M$ indexed by $J\times [n]$ is $kd.$ Therefore, 
\[
\left|\Big\{i \leq n: |\text{Supp}(R_{i}(M^{T})) \cap J| \geq \frac{Akd}{n} \Big\}\right| \cdot \frac{Akd}{n}
\leq kd,
\]
which yields the desired bound.
\end{proof}

\subsection{The triple norm}

Recall that $\1=(1,1,..., 1)$ and 
 denote $\e=\1/\sqrt{n}$.  Let $P_\e$ be the projection on $\e$, and let $P_{\e^\perp}$ be the projection on $\e^\perp$. Consider the triple norm on $\C^n$ defined by
\[
\tnrm{x}^2:=\|P_{\e^\perp} x\|_2^2+d\|P_\e x\|_2^2.
\]
The triple norm will be used later for net construction.

The following proposition is an adaptation of a result from \cite[Proposition 3.14]{litvak2022singularity}. As the proof is analogous to that in \cite{litvak2022singularity}, we omit it. 

\begin{prop}\label{prp: nrm}
    Let $n$ be large enough and $ \log n \le d\le n/2$. Denote
    \[\cE_{\ref{prp: nrm}}:=\set{M\in \cM_{n, d}: \|M-\E M\|\le  C_{\ref{thm: operator_bdd}}\sqrt{d}}.
    \]
    Then for every $x\in \C^n$  and every $M\in \cE_{\ref{prp: nrm}}$ one has $\|Mx\|\le \lr{C_{\ref{thm: operator_bdd}}+1}\sqrt{d}\, \tnrm{x}$.
\end{prop}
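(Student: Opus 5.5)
Decompose $x$ along $\e$ and its orthogonal complement: write $x = P_\e x + P_{\e^\perp} x$, where $P_\e x = \lambda \e$ for some $\lambda\in\C$, and $y:=P_{\e^\perp}x$ satisfies $\sum_i y_i = 0$. By the triangle inequality,
$$
\|Mx\|_2 \le \|M y\|_2 + |\lambda|\,\|M\e\|_2 ,
$$
and we estimate the two terms separately.

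For the first term, recall from (\ref{restnorm}) that $(\E M)y = 0$ whenever $y\perp \1$. This holds because every entry of $\E M$ equals $d/n$, so $(\E M) y = (d/n)\big(\sum_i y_i\big)\1 = 0$; the identity is valid for complex $y$ as well. Hence on $\cE_{\ref{prp: nrm}}$,
$$
\|My\|_2 = \|(M-\E M)y\|_2 \le C_{\ref{thm: operator_bdd}}\sqrt d\,\|P_{\e^\perp}x\|_2 .
$$
Here the operator norm of the real matrix $M-\E M$ acting on $\C^n$ coincides with its norm on $\R^n$.

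For the second term, the deterministic row-sum property of $\cM_{n,d}$ gives $M\1 = d\,\1$, so $M\e = d\,\e$ and $\|M\e\|_2 = d$. Therefore
$$
|\lambda|\,\|M\e\|_2 = d\,|\lambda| = \sqrt d\cdot\sqrt d\,\|P_\e x\|_2 .
$$

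Combining the two bounds yields
$$
\|Mx\|_2 \le C_{\ref{thm: operator_bdd}}\sqrt d\,\|P_{\e^\perp}x\|_2 + \sqrt d\,\bigl(\sqrt d\,\|P_\e x\|_2\bigr) \le \bigl(C_{\ref{thm: operator_bdd}}+1\bigr)\sqrt d\,\tnrm{x},
$$
since each of $\|P_{\e^\perp}x\|_2$ and $\sqrt d\,\|P_\e x\|_2$ is at most $\tnrm{x}$ by the definition of the triple norm. (Using Cauchy--Schwarz instead would even give the sharper constant $\sqrt{C_{\ref{thm: operator_bdd}}^2+1}$.) Note that the only probabilistic input is membership in $\cE_{\ref{prp: nrm}}$. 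The assumption $\log n\le d\le n/2$ is needed only to guarantee, via Theorem~\ref{thm: operator_bdd}, that $\cE_{\ref{prp: nrm}}$ has high probability; it plays no role in the deterministic inequality itself.

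There is no real obstacle in this argument. The only points requiring care are verifying that $(\E M)y=0$ for complex $y$ with zero coordinate sum, and using the exact row sums $M\1=d\1$. That exact identity is precisely why the constant direction carries the weight $d$ in the triple norm. For $M^T$ the analogous bound would fail, because column sums are not fixed, and one would instead need Lemma~\ref{lem: colsum}.
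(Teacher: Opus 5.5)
Your argument is correct and is the standard one the paper has in mind; the paper omits the proof as analogous to \cite[Proposition~3.14]{litvak2022singularity}. You split $x$ into $P_{\e^\perp}x$, on which $\E M$ vanishes so the bound on $\|M-\E M\|$ applies, and $P_\e x$, on which the exact row sums give $M\e=d\,\e$. One small correction to your closing aside: for $M^T$ you do not need Lemma~\ref{lem: colsum}, since $\|M^T\1\|_2\le \|M^T-\E M^T\|\,\sqrt n+d\sqrt n$ already follows from the norm event, and this is how the paper obtains the constant $2C_{\ref{thm: operator_bdd}}+1$ in Proposition~\ref{lem:nrmleft}.
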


A similar result holds for $M^T$.

\begin{prop}\label{lem:nrmleft}
    Let $n$ be large enough and $\log n\le d\le n/2$. Denote
    \[
    \cE_{\ref{lem:nrmleft}}:=\set{M\in \cM_{n,d}: \|M^T-\E M^T\|\le C_{\ref{thm: operator_bdd}}\sqrt{d}
    \quad \mbox{ and }\quad  \|M^T\1\|_2\le (1+C_{\ref{thm: operator_bdd}})d\sqrt{n}  }.
    \]
    Then for every $x\in \C^n$  and every $M\in \cE_{\ref{lem:nrmleft}}$ one has $\|M^{T}x\|\le \lr{2C_{\ref{thm: operator_bdd}}+1}\sqrt{d}\, \tnrm{x}$.
\end{prop}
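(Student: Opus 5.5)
Write $x=P_{\e^\perp}x+P_\e x$ with $P_\e x=\lambda\e$, where $\lambda=\langle x,\e\rangle$ and $|\lambda|=\|P_\e x\|_2$. Since $P_{\e^\perp}x$ has zero coordinate sum, $(\E M^T)P_{\e^\perp}x$ vanishes only for the right action, not the left. So we work directly with the transpose. We have $\E M^T=\frac{d}{n}\1\1^T$, and hence $\E M^T y=\frac{d}{n}\langle y,\1\rangle\1=0$ whenever $\sum_i y_i=0$. Therefore
\[
M^T P_{\e^\perp}x=(M^T-\E M^T)P_{\e^\perp}x,
\]
and on $\cE_{\ref{lem:nrmleft}}$ this gives $\|M^TP_{\e^\perp}x\|_2\le C_{\ref{thm: operator_bdd}}\sqrt d\,\|P_{\e^\perp}x\|_2$.

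For the constant part, $M^TP_\e x=\lambda M^T\e=\frac{\lambda}{\sqrt n}M^T\1$. The second condition in $\cE_{\ref{lem:nrmleft}}$ then yields
\[
\|M^TP_\e x\|_2\le |\lambda|(1+C_{\ref{thm: operator_bdd}})d=(1+C_{\ref{thm: operator_bdd}})\sqrt d\cdot\sqrt d\,\|P_\e x\|_2 .
\]
This is exactly why the triple norm carries the weight $d$ on the $\e$-component: the left action on constants has size of order $d$, not $\sqrt d$. Note that $M^T\1$ is the vector of column sums, which is not deterministic, unlike the row-sum case $M\1=d\1$ used in Proposition~\ref{prp: nrm}. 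This is why a control on $\|M^T\1\|_2$ must be built into the event.

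By the triangle inequality,
\[
\|M^Tx\|_2\le C_{\ref{thm: operator_bdd}}\sqrt d\,\|P_{\e^\perp}x\|_2+(1+C_{\ref{thm: operator_bdd}})\sqrt d\cdot\sqrt d\|P_\e x\|_2\le (2C_{\ref{thm: operator_bdd}}+1)\sqrt d\,\bigl(\|P_{\e^\perp}x\|_2+\sqrt d\|P_\e x\|_2\bigr).
\]
Here the factor $2C_{\ref{thm: operator_bdd}}+1$ dominates both coefficients. The last bracket must be compared with $\tnrm{x}$, and by Cauchy--Schwarz it is at most $\sqrt2\,\tnrm{x}$. A careful count therefore gives $\max\{C_{\ref{thm: operator_bdd}},1+C_{\ref{thm: operator_bdd}}\}\sqrt2\le 2C_{\ref{thm: operator_bdd}}+1$. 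This holds provided $C_{\ref{thm: operator_bdd}}\ge \sqrt2/(2-\sqrt2)\approx 2.42$, which we may assume by enlarging the absolute constant, since $C_{\ref{thm: operator_bdd}}\ge1$ is only an upper-bound constant.

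There is no real obstacle. The only points needing care are the bookkeeping of the constant and the observation that $\E M^T$ annihilates mean-zero vectors.
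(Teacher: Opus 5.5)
Your proof is correct and follows the natural argument behind the paper's omitted proof (and explains the constant $2C+1$, where $C=C_{\ref{thm: operator_bdd}}$): $\E M^T=\frac{d}{n}\1\1^T$ annihilates the mean-zero part $P_{\e^\perp}x$, which is therefore controlled by $\|M^T-\E M^T\|$, and the constant part is controlled by the bound on $\|M^T\1\|_2$. Your final bookkeeping is needlessly lossy, though: since $\|P_{\e^\perp}x\|_2\le\tnrm{x}$ and $\sqrt{d}\,\|P_\e x\|_2\le\tnrm{x}$ hold individually, the two coefficients simply add to $C+(1+C)=2C+1$, so no enlargement of the constant is needed; moreover, $\sqrt{2}\,(1+C)\le 2C+1$ already holds for every $C\ge 1/\sqrt{2}$, not only for $C\ge 2.42$.
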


\subsection{Vertex expansion property}\label{sec:vectex_exp}

In this section we establish two vertex expansion properties for in-neighbors and out-neighbors 
of the random matrix $M$ uniformly drawn from $\cM_{n,d}$.  
The first property, \cref{Expansion1}, is a consequence of the independence between the rows of $M$, which allows 
for a direct application of the classical Chernoff bound for independent random variables. In the contrast, 
the second  property, \cref{Expansion}, is more subtle, as the corresponding indicator variables are not independent. 
To overcome this difficulty, we leverage the property of negative association (NA) introduced in \cref{ssna} and apply a Chernoff concentration for NA random variables.

Let $M\in \cM_{n, d}$. For $J\subset [n]$ denote 
$$S(J,M):=\{i\in [n]: \mbox{Supp} (R_i(M))\cap J\neq \emptyset\}.$$
Given $1\le k\le n$ and $\varepsilon\in (0,1/2]$, let 
    \[
  \Om_{k, \varepsilon}^{in}:=\set{M\in \cM_{n,d}: \forall J\subset [n], |J|=k \mbox{ one has } (1-\varepsilon)kd \le |S(J,M)|\le (1+\varepsilon)kd}
    \]
    and 
 \[
  \Om_{k, \varepsilon}^{out}:=\set{M\in \cM_{n,d}: \forall J\subset [n], |J|=k \mbox{ one has } |S(J,M^{T})|\ge (1-\varepsilon)kd}.
\]
We  also denote 
\begin{equation}\label{generalomega}
 \Om_{k, \varepsilon}:=   \Om_{k, \varepsilon}^{in}\cap   \Om_{k, \varepsilon}^{out}.
\end{equation}
Note that if we interpret a matrix $M \in \cM_{n,d}$ as the adjacency matrix of a directed graph $G$ with vertex set $[n]$, then $S(J, M)$ and $S(J, M^T)$ correspond to the sets of in-neighbors and out-neighbors, respectively, of the vertices in $J \subset [n]$. 
    Moreover, $|S(J,M^{T})| \leq d|J|$, since each column of $M^T$ contains exactly $d$ ones.


\begin{lemma}\label{Expansion1}
    Let $n\geq 3$, $192 \log n \le d \le n/2$,
     $\varepsilon\in \Big[\sqrt{\frac{48\log n}{d}}, 1\Big)$, and $1 \le k\le \varepsilon n/d$. 
    Then
    \[
    \P(\Om_{k, \varepsilon}^{in})\ge 1-2\binom{n}{k}\exp\lr{-\frac{\varepsilon^2 kd}{24}}.
    \]
 Moreover, 
\[
 \P\lr{\bigcap_{k \leq \varepsilon n /d} 
 \Om_{k, \varepsilon}^{in}}\ge 1-3\exp\lr{-\frac{\varepsilon^2 d}{48}}.
\]
\end{lemma}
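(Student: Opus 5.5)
Fix $J\subset[n]$ with $|J|=k$. The plan is to write $|S(J,M)|=\sum_{i=1}^n \delta_i$, where $\delta_i$ is the indicator that $\mbox{Supp}(R_i(M))\cap J\neq\emptyset$. Since the rows are independent, the $\delta_i$ are i.i.d. Bernoulli$(q)$ with
\[
q=1-\binom{n-k}{d}\Big/\binom{n}{d}.
\]
The first step is to show that $q$ lies within a factor $1\pm\varepsilon/3$ of $kd/n$. The upper bound $q\le kd/n$ is the union bound. For the lower bound, inclusion--exclusion (Bonferroni) gives
\[
q\ge \frac{kd}{n}-\binom{k}{2}\frac{d(d-1)}{n(n-1)}\ge \frac{kd}{n}\Big(1-\frac{kd}{2n}\Big)\cdot(1+o(1)),
\]
and $kd/n\le\varepsilon$. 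This yields $q\ge (1-\varepsilon/2)kd/n$ up to negligible terms. Consequently $nq\in[(1-\varepsilon/2)kd,\,kd]$.

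The second step applies the Bernoulli tails. For the upper tail, $(1+\varepsilon)kd\ge(1+\varepsilon)nq$, and Lemma~\ref{Benet} with $\tau=\varepsilon<1$ gives probability at most $\exp(-nq\varepsilon^2/3)\le\exp(-\varepsilon^2kd/6)$. For the lower tail, $(1-\varepsilon)kd\le (1-\varepsilon/2)nq$, so we need a deviation of $\varepsilon/2$ below the mean. Bennett's inequality is applied to $1-\delta_i$, or equivalently the standard lower Chernoff bound $\exp(-nq(\varepsilon/2)^2/2)$ is used; Theorem~\ref{thm:Bennettinequality} with $m=1$ reduces to the independent case and also gives this. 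This bounds the lower tail by $\exp(-\varepsilon^2kd/24)$ after tracking constants. The exact constant $24$ comes from these choices. A union bound over the $\binom{n}{k}$ sets $J$ then gives the first claim.

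For the ``moreover'' part, sum over $1\le k\le\varepsilon n/d$ and use $\binom nk\le \exp(k\log n)$. The condition $\varepsilon^2 d\ge 48\log n$ gives $\varepsilon^2kd/24-k\log n\ge \varepsilon^2kd/48$. Hence
\[
\sum_k 2\exp(-\varepsilon^2kd/48)\le \frac{2e^{-\varepsilon^2d/48}}{1-e^{-\varepsilon^2 d/48}}\le 3e^{-\varepsilon^2 d/48},
\]
since $\varepsilon^2d/48\ge\log n\ge \log 3$.

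The main obstacle is the lower estimate on $q$ with the correct constant, that is, showing that the collision loss $\binom{k}{2}d^2/n^2$ is at most about $(\varepsilon/2)kd/n$. This is exactly where the hypothesis $k\le\varepsilon n/d$ is needed. The constants ($24$, $1-\varepsilon/2$) must be balanced carefully, possibly by splitting the slack as $\varepsilon/2$ for the bias and $\varepsilon/2$ for the fluctuation.
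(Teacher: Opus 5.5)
Your proposal is correct and follows essentially the same route as the paper. Both arguments use independent Bernoulli indicators over the rows, the mean estimate $(1-\varepsilon/2)kd\le \E|S(J,M)|\le kd$ coming from $k\le \varepsilon n/d$ (the paper uses $(1-x)^k\le 1-kx+\binom{k}{2}x^2$ where you use Bonferroni), Chernoff bounds at relative deviation $\varepsilon/2$, a union bound over $J$, and a geometric sum using $\varepsilon^2 d\ge 48\log n$ for the ``moreover'' part.

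Two small cleanups. First, the $(1+o(1))$ hedge is unnecessary, because $d(d-1)/(n(n-1))\le d^2/n^2$ already makes your Bonferroni bound exact. Second, for the tails you should cite the standard multiplicative Chernoff bounds, as the paper does. Lemma~\ref{Benet} assumes $p\le 1/2$, and this can fail here for $\varepsilon$ near $1$ (and always fails for the variables $1-\delta_i$). Theorem~\ref{thm:Bennettinequality} concerns the combinatorial row model rather than i.i.d. indicators.
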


\begin{proof}
Fix a set $J\subset [n]$ with $|J|=k$.  Let $\xi_i$, $i\leq n,$ be the indicator function on the event $\{i\in S(J, M)\}$. 
Then we have
\[
|S(J, M)|=\sum_{i=1}^n \xi_i.
\]
As rows of $M$ are independent, $\xi_i$'s are independent Bernoulli random variables with 
$$
 \P(\xi_i=0)=\P(\mbox{Supp} (R_i(M))\cap J=\emptyset)=\frac{\binom{n-k}{d}}{\binom{n}{d}} 
$$
and 
\[
q:=\P(\xi_i=1)=1-\frac{\binom{n-k}{d}}{\binom{n}{d}}=1-\prod_{i=1}^{k} \left(1-\frac{d}{n-i+1}\right),
\]


Let $\mu:=\E\lr{|S(J,M)|}$. Then 
\[
n \left(1-\left(1-\frac{d}{n}\right)^k \right) \leq \mu =\sum_{i=1}^{n} \E\xi_i=n q 
\leq n \left(1-\left(1-\frac{d}{n-k+1}\right)^k \right).
\]
For $1 \leq k \leq \frac{\ep n}{d}$ and $d\geq 4$, using the inequality $1-kx \leq (1-x)^k \leq 1-kx+\frac{k(k-1)x^2}{2}$, for $x\in (0,1)$, we obtain that 
\[
kd (1-\ep/2) \leq \mu \leq kd(1+\ep/3).
\]
  Chernoff bound (see, e.g., \cite[Theorem 1.1]{dubhashi2009concentration}), for every $\delta>0$,
\[
\P(|S(J,M)|-\mu| \geq \de \mu) \leq 2 e^{-\de^2 \mu /3}. 
\]
Set $\de=\ep/2$, and combining the estimation for $\mu$, we have 
\[
  \P \left( |S(J,M)|-kd| \geq \ep kd \right) \leq 2 \exp\left({-\frac{\ep^2(1-\ep/2)kd}{12}}\right)
  \leq 2 \exp\left({-\frac{\ep^2 kd}{24}}\right) :=p_k.
\]
Then the first result follows from the union bound over $J\subset [n]$, $|J|=k$. 
Using that $\ep^2 d \geq 48 \log n$, 
$$
   \frac{\binom{n}{k+1}\, p_{k+1}}{\binom{n}{k} \, p_k}\leq n \exp\left({-\ep^2  d/24}\right)
   \leq  \exp\left({-\ep^2  d/48}\right),
$$ 
and  the union bound over all $k$,
\[
\P\lr{\bigcup_{k \leq \varepsilon n /d} (\Om_{k,\varepsilon}^{in})^c}\leq 2\sum_{k\leq \varepsilon n /d} \binom{n}{k}\exp\lr{-\frac{\varepsilon^2 kd}{24}} \leq 3 \exp\lr{-\frac{\varepsilon^2 d}{48}} 
\]
for $n\geq 3$. This implies the moreover part.  
\end{proof}


\begin{lemma}\label{Expansion} Let $n\geq 2$ and 
     $128\log n \le d \le n/2$. Let $\varepsilon\in \Big[\sqrt{\frac{32\log n}{d}},1\Big)$. 
    Let $1 \le k\le \varepsilon n/d$. Then
    \[
    \P(\Om_{k, \varepsilon}^{out})\ge 1-\binom{n}{k}\exp\lr{-\frac{\varepsilon^2 kd}{16}}.
    \]
Moreover,
\[
\P\lr{\bigcap_{k \leq \varepsilon n /d} \Om_{k, \varepsilon}^{out}}\ge 1-2\exp\lr{-\frac{\varepsilon^2 d}{32}}.
\]
\end{lemma}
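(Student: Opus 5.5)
Fix $J\subset[n]$ with $|J|=k$. I will write $|S(J,M^T)|$ as a sum of indicators $\eta_j:=\mathbf 1\{\exists i\in J: M_{ij}=1\}$, $j\in[n]$, so that $|S(J,M^T)|=\sum_{j=1}^n\eta_j$. The indicators are not independent. However, each $\eta_j=\max_{i\in J}M_{ij}$ is a coordinate-wise non-decreasing function of the disjoint blocks $\{M_{ij}: i\in J\}$, $j\in[n]$. By Lemma~\ref{lemma: Negas} and the disjoint monotone aggregation property, $\eta_1,\dots,\eta_n$ are therefore negatively associated. The standard lower-tail Chernoff bound holds for NA indicators: the exponential moment of $-\lambda\sum\eta_j$ factorizes with an inequality, by Remark~\ref{rem: non-inc}, exactly as in the proof of Theorem~\ref{thm:Bennettinequality}. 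This gives
\[
\P\Big(\sum_j\eta_j\le (1-\delta)\mu\Big)\le \exp(-\delta^2\mu/2),\qquad \mu:=\E\sum_j\eta_j .
\]

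Next I compute $\mu$. Entries in a fixed column across different rows are independent, since the rows are independent, so $\P(\eta_j=0)=(1-d/n)^k$ and $\mu=n(1-(1-d/n)^k)$. For $k\le \varepsilon n/d$, the inequality $(1-x)^k\le 1-kx+k^2x^2/2$ gives $\mu\ge kd(1-kd/(2n))\ge kd(1-\varepsilon/2)$. Now I take $\delta=\varepsilon/2$. Since $(1-\varepsilon/2)^2\ge 1-\varepsilon$, the event $|S(J,M^T)|<(1-\varepsilon)kd$ is contained in $\{\sum\eta_j\le(1-\delta)\mu\}$, which has probability at most $\exp(-\varepsilon^2\mu/8)\le\exp(-\varepsilon^2 kd(1-\varepsilon/2)/8)\le \exp(-\varepsilon^2kd/16)$. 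A union bound over the $\binom nk$ choices of $J$ gives the first claim.

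For the ``moreover'' part, set $p_k:=\binom nk\exp(-\varepsilon^2kd/16)$. Then $p_{k+1}/p_k\le n\exp(-\varepsilon^2d/16)\le \exp(-\varepsilon^2 d/32)$, because $\varepsilon^2d\ge 32\log n$. This ratio is at most $1/e\le 1/2$, since $\varepsilon^2 d/32\ge\log n\ge\log 2$. Hence
\[
\sum_{k\le\varepsilon n/d}p_k\le 2p_1\le 2n\exp(-\varepsilon^2d/16)\le 2\exp(-\varepsilon^2 d/32).
\]

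The main point to check is the negative-association step, together with the claim that Chernoff's lower tail survives under NA. Both reduce to the results stated earlier in the paper: $x\mapsto e^{-\lambda x}$ is non-increasing, and the products factor as in Theorem~\ref{thm:Bennettinequality}. The constants in the estimate for $\mu$ also need routine care. The hypothesis $d\ge128\log n$ only ensures that the admissible interval for $\varepsilon$ is nonempty (it gives $\sqrt{32\log n/d}\le 1/2$).
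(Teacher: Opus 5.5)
Your proposal is correct and follows essentially the same route as the paper. Both write $|S(J,M^T)|$ as a sum of column-indicators that are NA by Lemma~\ref{lemma: Negas} plus disjoint monotone aggregation, apply the NA Chernoff lower tail with $\delta=\varepsilon/2$ and $\mu=n(1-(1-d/n)^k)\ge kd(1-\varepsilon/2)$, take a union bound over $J$, and sum a geometric series for the ``moreover'' part. The only difference is that you justify the NA Chernoff bound through exponential-moment factorization, where the paper cites Dubhashi--Panconesi.
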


\begin{proof}
Fix $J\subset [n]$ with $|J|=k$.  Let $\xi_i$, $i\leq n,$ be the indicator function on the event $\{i\in S(J, M^{T})\}$. Then we have
\[
|S(J,M^T)|=\sum_{i=1}^n \xi_i.
\]

Let $\cC_j=\set{M_{ij}^T: i\in [n]}$, $j\leq n$. Applying 
Lemma~\ref{lemma: Negas} to the matrix consisting of rows of $M$ indexed by $j\in J$, 
we observe that 
$$\bigcup_{j\in J}\cC_j=\set{M_{ij}^T\,\, :\,\, i\in [n], j\in J}$$ 
is an NA family.
Note that $\xi_i=\max_{j\in J}M^T_{ij}$ for $i\in [n]$. For each row $i$, define $A_i:=\{(i, j): j\in J\}$. 
Clearly, $A_i\cap A_{i'}=\emptyset$ whenever $i\neq i'$. Then the family $\cA=\{A_1,\dots, A_n\}$ 
consists of $n$ disjoint subsets of $\{(i, j): i\in [n], j\in J\}$. For each $A_i$, consider the function 
\[
 h_{A_i}\lr{ (M^T_{ij})_{j\in J}}:=\max_{j\in J}M^T_{ij}
\]
which is coordinate-wise non-decreasing. 
By the disjoint monotone aggregation of NA in \cref{ssna}, the family 
$$\set{h_{A_i}\lr{M^T_{ij}, j\in J}: i\in [n]}=\{\xi_i: i\in [n]\}$$ is NA.

Then by Chernoff-Hoeffding bounds for NA random variables (see, e.g., \cite[Theorem 3.1]{dubhashi2009concentration}), for every $\delta\in (0,1)$
\[
\P\lr{|S(J, M^{T})|<(1-\delta)\mu}\le \exp\lr{-\frac{1}{2}\delta^2\mu},
\]
where $\mu:=\E\lr{|S(J, M^{T})|}$.

\smallskip

Let $\Tc_j=\Tc_j(M^T)$ be the $j$-th column of $M^T$. Note that for every $i\in [n]$
\[\E\xi_i=\P(\xi_i=1)=1-\P\lr{i\notin \mbox{Supp}\lr{\Tc_j}, \forall j\in J}=1-\prod_{j\in J}\P\lr{i\notin \mbox{Supp}\lr{\Tc_j}}=1-\lr{1-\frac{d}{n}}^k.
\]
Since $\mu=\E\sum_{i=1}^n\xi_i$, then for $1 \leq k \leq \varepsilon n/d$
\[
\mu=n\lr{1-\lr{1-\frac{d}{n}}^k}\ge dk-\frac{k(k-1)d^2}{2n} \ge kd\lr{1-\frac{\varepsilon}{2}},
\]
where we again used that for $x\in (0,1)$, $(1-x)^m\le 1-mx+\binom{m}{2}x^2$ with $x=d/n$.  
Take $\delta=\varepsilon/2$. Since $(1-\delta)\mu\ge (1-\varepsilon)kd$,
we have
\[
\P\lr{|S(J, M^{T})|<(1-\varepsilon)kd}\le \exp\lr{-\frac{\varepsilon^2(1-\varepsilon/2)}{8}kd}\le 
\exp\lr{-\frac{\varepsilon^2 kd}{16}}.
\]
Therefore, by the union bound,
\[
\P(\Om_{k, \varepsilon}^{out})\ge 1-\binom{n}{k}\exp\lr{-\frac{\varepsilon^2 kd}{16}}.
\]
The proof of the moreover part is similar to the  
end of the proof of Lemma~\ref{Expansion1}. Using $\ep^2 d\geq 32\ln n$, we observe
\[
\P\lr{\bigcup_{k \leq \varepsilon n /d} (\Om_{k,\varepsilon}^{out})^c}\leq \sum_{k\leq \varepsilon n /d} \binom{n}{k}\exp\lr{-\frac{\varepsilon^2 kd}{16}} \leq 2 \exp\lr{-\frac{\varepsilon^2 d}{32}} .
\]
for $n\geq 2$.  This completes the proof.
\end{proof}

We need  the following notation. Given two disjoint sets $J^{\ell}, J^{r} \subset [n]$ and a matrix $A$ with $\{0,1\}$ entries, denote 
\[
I^{\ell}(A)=I^{\ell} (A, J^{\ell},J^{r}):= \{ i \leq n: |\mbox{Supp}(R_{i}(A)) \cap J^{\ell}|=1 \quad \mbox{and} \quad \mbox{Supp}(R_{i}(A)) \cap J^{r}= \emptyset\}
\]
and 
\[
I^{r}(A)=I^{r} (A, J^{\ell},J^{r}):= \{ i \leq n: \mbox{Supp}(R_{i}(A)) \cap J^{\ell}= \emptyset \quad \mbox{and} \quad |\mbox{Supp}(R_{i}(A)) \cap J^{r}|=1 \}.
\]

We now combine the expansion properties established in Lemmas~\ref{Expansion1}  and \ref{Expansion} 
to derive a key combinatorial property of matrices in the high-probability event 
$\cE_{\ref{lem: rowsum}} \cap \Om_{s,\varepsilon}^{in} \cap \Om_{s,\varepsilon}^{out}$, 
where $s$ is some parameter and $\cE_{\ref{lem: rowsum}}$ is the event from Lemma~\ref{lem: rowsum} 
(in the case of square matrices, that is, when $m=n$ in  Lemma~\ref{lem: rowsum}). 
The following lemma is an adaptation of the argument in \cite[Lemma~2.7]{litvak2019smallest} to our setting. 
While their result was for $d$-regular matrices, one can show that the same combinatorial counting argument holds for our model, 
by using the expansion properties for both in-neighbors and out-neighbors. The proof follows the same line of proof as in \cite[Lemma~2.7]{litvak2019smallest}, and for the sake of brevity we omit it.

\begin{lemma}\label{lem: omega_ke}
Let $5000\log n \le d \le n/2$ and $\varepsilon \in \Big[\sqrt{\frac{48\log n}{d}},0.1\Big)$. Let $q \geq 2, m \geq 1$ be integers satisfying $qm \leq \varepsilon n /d$. Let $J^{\ell}, J^{r} \subset [n]$ be such that 
$$
 J^{\ell} \cap J^{r}= \emptyset, \,\, |J^{\ell}|=m, \,\,\,\,  \mbox{ and }\,\, \,\, |J^{r}|=(q-1)m.
$$
 Let $M \in \cE_{\ref{lem: rowsum}} \cap \Om_{qm,\varepsilon}$. Then
\[
 |I^{\ell}(M)| \geq (1-4\varepsilon q)d|J^{\ell}| \quad \mbox{and} \quad |I^{\ell}(M^{T})| \geq (1-2\varepsilon q)d|J^{\ell}|.
\]
In particular, if $|J^{r}|=|J^{\ell}|=m$ then
\[
(1-8\varepsilon ) dm  \leq \min \{|I^{\ell}(M)|,|I^{r}(M)|\} \leq \max \{|I^{\ell}(M)|,|I^{r}(M)|\} \leq (1+\ep)dm
\]
and
\[
(1-4\varepsilon ) dm  \leq \min \{|I^{\ell}(M^T)|,|I^{r}(M^T)|\} \leq \max \{|I^{\ell}(M^T)|,|I^{r}(M^T)|\} \leq dm.
\]
\end{lemma}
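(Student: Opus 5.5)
We follow the double-counting scheme of \cite[Lemma~2.7]{litvak2019smallest}, with the in-neighbour bounds of $\Om^{in}_{qm,\ep}$ and the out-neighbour bounds of $\Om^{out}_{qm,\ep}$ playing the two roles that $d$-regularity played there. Set $J=J^{\ell}\cup J^{r}$, so $|J|=qm\le \ep n/d$. Since the expansion events are stated only for sets of one fixed size, we use that they are monotone: for $J'\subset J''$ one has $S(J',M)\subset S(J'',M)$. We first reduce to the sizes we need. On $\Om_{qm,\ep}$ we have the bounds for sets of size exactly $qm$, and for the smaller set $J^{r}$ we pad it with arbitrary extra indices up to size $qm$. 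The natural choice is to apply the lower bound to $J$ and upper bounds to supersets; if necessary we would instead assume $\Om_{k,\ep}$ for all $k\le qm$, which the ``moreover'' parts of Lemmas~\ref{Expansion1} and \ref{Expansion} give anyway.

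\textbf{Rows of $M$.} Count the pairs $(i,j)$ with $j\in J$ and $M_{ij}=1$; this number is $e_M([n],J)$. On $\cE_{\ref{lem: rowsum}}$ each column sum is at most $(1+\tau)d$, with $\tau$ small once $\tau$ is chosen of order $\ep$; this is allowed since $d\ge 5000\log n$ and the second part of Lemma~\ref{lem: colsum} applies. Hence $e_M([n],J)\le (1+\ep)qmd$. Every row in $S(J,M)$ contributes at least $1$, and $|S(J,M)|\ge (1-\ep)qmd$. Rows contributing at least $2$ therefore number at most $e_M-|S(J,M)|\le 2\ep qmd$. Now split the rows hitting $J^{\ell}$:
\[
|I^{\ell}(M)|\;\ge\; |S(J^{\ell},M)| - \#\{\text{rows with }\ge 2\text{ ones in }J\}.
\]
Any row meeting $J^{\ell}$ that is not in $I^{\ell}$ has at least two ones in $J$. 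It remains to lower-bound $|S(J^{\ell},M)|$. We get this from $|S(J^\ell,M)|\ge |S(J,M)|-|S(J^r,M)|$, using the lower bound $(1-\ep)qmd$ for $J$ and the upper bound $|S(J^r,M)|\le (1+\ep)(q-1)md$, obtained from $\Om^{in}$ applied to a size-$(q-1)m$ set. This yields $|I^{\ell}(M)|\ge md-2\ep q md-2\ep qmd\ge (1-4\ep q)dm$.

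\textbf{Rows of $M^T$.} The argument is the same, with two simplifications. Every row of $M^T$ restricted to $J$ has exactly $d$ ones per index, so $e_{M^T}([n],J)=qmd$ exactly. Also $|S(J^r,M^T)|\le d(q-1)m$ holds deterministically. Together with $|S(J,M^T)|\ge(1-\ep)qmd$ from $\Om^{out}$, this gives at most $\ep qmd$ rows with at least two ones in $J$. It also gives $|S(J^\ell,M^T)|\ge md-\ep qmd$, hence $|I^\ell(M^T)|\ge(1-2\ep q)dm$.

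\textbf{The case $q=2$.} Taking $q=2$ gives the lower bounds $1-8\ep$ and $1-4\ep$; by symmetry the same holds for $I^{r}$. The upper bounds are $|I^\ell(M)|\le|S(J^\ell,M)|\le(1+\ep)dm$ from $\Om^{in}$, and $|I^\ell(M^T)|\le |S(J^\ell,M^T)|\le dm$ deterministically.

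The main obstacle is bookkeeping. First, we must make sure the column-sum event supplies a $(1+O(\ep))$ factor rather than $e^2$; for this we invoke the $\tau<1$ case of Lemma~\ref{lem: colsum} with $\tau=\ep$, and interpret $\cE_{\ref{lem: rowsum}}$ accordingly. Second, we must make sure the upper in-expansion bound is available for the set $J^r$ of size $(q-1)m$. If the constants do not close, we would sharpen the duplicate count using $|S(J,M)|$ together with an upper bound on $e_M$ at $\tau=\ep$.
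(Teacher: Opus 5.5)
Your double counting is the intended argument: it is that of \cite[Lemma~2.7]{litvak2019smallest}, with in/out expansion replacing regularity. The $M^T$ half is correct as written. It uses only $\Om^{out}_{qm,\varepsilon}$, the exact count $e_{M^T}([n],J)=qmd$, and the deterministic bound $|S(J^r,M^T)|\le (q-1)md$.

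The gap is in the $M$ half. There you twice take an \emph{upper} bound from $\Om^{in}$ at a size other than $qm$:
\begin{itemize}
\item $|S(J^r,M)|\le(1+\varepsilon)(q-1)md$, at size $(q-1)m$;
\item for $q=2$, $|S(J^\ell,M)|\le(1+\varepsilon)md$, at size $m$.
\end{itemize}
The hypothesis $M\in\Om_{qm,\varepsilon}$ says nothing about sets of those sizes. Padding $J^r$ to size $qm$ and using monotonicity only yields $|S(J^r,M)|\le(1+\varepsilon)qmd$. That turns your lower bound on $|S(J^\ell,M)|$ into $-2\varepsilon qmd$, which is useless. Your fallback of assuming $\Om_{k,\varepsilon}$ for all $k\le qm$ proves a weaker statement than the one claimed.

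The repair needs only the column-sum event you already invoke. Every row of $S(J',M)$ has a one in $J'$, so on $\cE_{\ref{lem: rowsum}}(\varepsilon)$, for every $J'\subset[n]$,
\[
|S(J',M)|\le e_M([n],J')\le (1+\varepsilon)d|J'| .
\]
Taking $J'=J^r$ gives exactly the bound you wanted, hence
\[
|I^\ell(M)|\ge 2|S(J,M)|-e_M([n],J)-e_M([n],J^r)\ge md-(4q-1)\varepsilon md .
\]
Taking $J'=J^\ell$ gives $|I^\ell(M)|\le(1+\varepsilon)dm$ in the case $q=2$. So the upper half of $\Om^{in}$ is never needed.

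Note also that ``$\tau$ of order $\varepsilon$'' is not quite enough. The constant $4\varepsilon q$ splits as $2\varepsilon q$ from expansion plus $\tau(2q-1)$ from the two column-sum estimates. So $\tau=\varepsilon$ is the reading of $\cE_{\ref{lem: rowsum}}$ that makes the constant come out. With a fixed $\tau$ such as $1/2$, this counting gives nothing.
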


\section{Invertibility  on the almost constant vectors }
\label{sec: inverb_almost_const}

In this section, we establish the invertibility of the shifted matrix $M-z\I_n$ and its transpose when acting on vectors from the class of almost-constant vectors, $\mbox{Cons}(\delta, \rho)$. To this end, we provide separate lower bounds for both the right-multiplication $\|(M-z\I_n)x\|_2$ and the left-multiplication $\|(M^{T}-z\I_n)x\|_2$, which require different treatments due to asymmetry in the distribution of our model. We will split almost-constant vectors into two parts: almost constant steep vectors (those with ``jump", that is, $x_i^*\gg x_j^*$ for some $i\ll j$) and almost constant gradual vectors (those without jumps). Then we further classify our steep vectors as $\cT_1, \cT_2,\cT_3$  based on a careful selection of the magnitude and location of the first significant jump, see \cref{steepvectors} below for  precise definitions. We will proceed with our proof by using different techniques for almost constant gradual vectors and each subclass $\cT_1, \cT_2, \cT_3$ of steep vectors.

We first prove a bound for almost constant gradual vectors in \cref{thm: invnonsteep}, which is less involved as discussed in the introduction.

 For vectors in $\cT_1$, which have a large jump (of order $d$), our approach is to partition the initial block of coordinates (from $x_1^*$ to $x_{n_1}^*$) into segments whose lengths are powers of $\ell_0$, and then iteratively apply \cref{lem: omega_ke}.
This approach leverages the expansion properties to find a large set of ``good" rows where the inner product with a vector $x\in \cT_1$ is dominated by its unique large coordinates.  A similar idea was also used in \cite[Lemma 3.7]{litvak2019smallest} for random $d$-regular matrices. However, unlike random $d$-regular matrices, we must handle the left- and right-multiplication separately due to asymmetry in distribution (see \cref{sec:t1_step}). To implement this idea, the parameters $n_1, \ell_0$ must be carefully chosen. In subclass $\cT_1$, the parameter $\ep_0\lesssim \sqrt{\log n/d}$ is selected as the minimum value of $\ep$ to effectively apply the expansion properties in Lemmas~\ref{Expansion1} and \ref{Expansion}. This selection of $\ep_0$  in turn determines the choice of $n_1\le \epsilon_0n/d\lesssim (\log n/d)^{1/2}(n/d)$. The choice of  $\ell_0\lesssim 1/\ep_0$ comes from $\ell_0\ep_0 < 1/4 $  needed in \cref{lem: omega_ke}.

For vectors in $\cT_2$ and $\cT_3$, where the jumps are  moderate, the direct combinatorial argument is insufficient.  We therefore use the $\ep$-net argument, adapting the strategy from \cite[Lemma 6.7]{litvak2022singularity}. 
To construct the nets, the triple norm is utilized, and the fact that $\|M-\E M\|$ is bounded by $O(\sqrt{d})$ and our choice of $n_1$ leads to the choice of $\ep=1/d^{3/4}$, which satisfies the condition of \cref{lem: individ1}. This, in turn, constrains the scale of $n_2\lesssim n/d^{3/4}$ and the magnitude of the shift $|z|\lesssim \sqrt{d}\log\log d$ in the proof. The parameter $n_3\approx \delta n$ is chosen to align with the definition of almost-constant vectors. To optimize the final probability bound,  $\delta$ is chosen to be a positive absolute constant. 
To effectively apply the $\ep$-net argument, we need to balance the size of the nets and the individual probability bound. Here again, due to the asymmetry in the distribution of our model, to obtain individual probability bounds, we need to deal with left- and right-multiplication separately. For the right multiplication, the individual probability bound is attained by anti-concentration type techniques in companion with row independence. In contrast, for the left multiplication, due to the lack of independence, the individual probability bound is obtained by anti-concentration type techniques together with a switching argument. The delicate interplay between the expansion properties and the jump structures is essential for the argument. All technical details of this approach are presented in \cref{individual}.

\subsection{Almost-constant vectors}
As in \cite{litvak2017adjacency, litvak2019smallest}, for  
 a lower bound on the smallest singular value $s_n(M-zI_n)$, we consider a decomposition of the complex unit sphere $\S^{2n-1}$ into \textit{almost-constant} and \textit{non-almost-constant vectors}.

\begin{defn}\label{def:almost_constant}
    Let $\delta, \rho \in (0,1)$. We define the set of  almost-constant  $\mbox{Cons}(\delta, \rho)\subset \C^{n}$  as the set of vectors $v\in \C^{n}$ for which there exists a complex number $\lambda$ such that the number of coordinates $i\in [n]$ satisfying $|v_i-\lambda|\le\rho\|v\|_2/\sqrt{n}$ is strictly larger than $(1-\delta)n$. The vectors in the remaining set $\lr{\mbox{Cons}(\delta, \rho)}^c=\C^n\setminus \mbox{Cons}(\delta, \rho)$ are called non-almost-constant.  
\end{defn}


We recall the  definition of the L\'evy concentration function, which roughly measures the spread of a random variable. 
\begin{defn}
    For a random vector $X\in \R^n$ and $\varepsilon\ge 0$, the L\'evy concentration of $X$ of width $\varepsilon$ is 
    \begin{equation*}
        \Ls(X,\varepsilon):=\sup_{x\in \R^n} \P(\|X-x\|_2<\varepsilon).
    \end{equation*}
\end{defn}

We will use the following bound on the L\'evy concentration function of sums of independent random variables. 

\begin{lemma}\cite[Corollary 1 of Theorem 6.1]{esseen1968concentration}\label{rogozin1961increase}
    There exists an absolute positive constant $C_{\ref{rogozin1961increase}}$ such that the following holds. Let $n\ge 1$ and let $\xi_1, \dots, \xi_n$ be independent random vectors in $\R^2$.  Then for any $t\ge t_0>0$, one has
    \[
    \cL\lr{\sum_{i=1}^n\xi_i, t}\le \frac{C_{\ref{rogozin1961increase}} (t/t_0)^2}{\sqrt{n-\sum_{i=1}^n\Ls(\xi_i,t_0)}}.
    \]
    In particular, if $\alpha\ge  \max_{1\le i\le n}\Ls(\xi_i,t_0)$, then
    \[
     \cL\lr{\sum_{i=1}^n\xi_i, t}\le \frac{C_{\ref{rogozin1961increase}}}{\sqrt{n(1-\alpha)}}.
    \]
\end{lemma}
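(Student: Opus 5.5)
We plan to use the Fourier-analytic route of Esseen, and to do the main work at the single scale $t=t_0$. Two reductions come first.

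\emph{Reduction in $t$.} A disk of radius $t$ in $\R^2$ is covered by at most $C(t/t_0)^2$ disks of radius $t_0$. Hence
$$
\cL\lr{\sum_i\xi_i,t}\le C(t/t_0)^2\,\cL\lr{\sum_i\xi_i,t_0},
$$
and it suffices to show
$$
\cL\lr{S,t_0}\le \frac{C}{\sqrt{A_0}},\qquad S:=\sum_i\xi_i,\qquad A_0:=\sum_i\lr{1-\Ls(\xi_i,t_0)}.
$$

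\emph{The ``in particular'' part.} It is immediate: take $t=t_0$ and note that $A_0\ge n(1-\alpha)$.

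\emph{Small $A_0$.} If $A_0$ is bounded by an absolute constant, the claim is trivial for $C$ large. So we may assume $A_0$ is large.

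Next we invoke Esseen's inequality in the plane:
$$
\cL(S,t_0)\le C t_0^2\int_{|\theta|\le 1/t_0}|\varphi_S(\theta)|\,d\theta,\qquad |\varphi_S|=\prod_i|\varphi_{\xi_i}|.
$$
We then symmetrize. Let $\tilde\xi_i=\xi_i-\xi_i'$ with $\xi_i'$ an independent copy. Then $|\varphi_{\xi_i}(\theta)|^2=\E\cos\inn{\theta,\tilde\xi_i}$, and the inequality $x\le e^{x-1}$ gives
$$
|\varphi_S(\theta)|\le \exp\Bigl(-\tfrac12\sum_i\E\bigl(1-\cos\inn{\theta,\tilde\xi_i}\bigr)\Bigr).
$$
Conditioning on $\xi_i'$ shows $a_i:=\P(|\tilde\xi_i|\ge t_0)\ge 1-\Ls(\xi_i,t_0)$, so $A:=\sum_i a_i\ge A_0$. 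Since $1-\cos\ge0$, we may discard the event $|\tilde\xi_i|<t_0$. This gives
$$
\sum_i\E(1-\cos)\ge\sum_i a_i g_i(\theta),\qquad g_i(\theta)=\E\bigl[1-\cos\inn{\theta,\tilde\xi_i}\,\big|\,|\tilde\xi_i|\ge t_0\bigr].
$$

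The key step is to linearize the exponential by convexity, twice.

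\emph{First convexity step.} Since $\exp(-\tfrac12\sum_i a_ig_i)=\exp(-\tfrac12\sum_i (a_i/A)\,Ag_i)$, Jensen's inequality for $e^{-x}$ gives
$$
\exp\Bigl(-\tfrac12\sum_i a_ig_i\Bigr)\le\sum_i\frac{a_i}{A}\,e^{-Ag_i/2}.
$$

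\emph{Second convexity step.} Jensen's inequality again gives
$$
e^{-Ag_i/2}\le\E\bigl[e^{-A(1-\cos\inn{\theta,\tilde\xi_i})/2}\,\big|\,|\tilde\xi_i|\ge t_0\bigr].
$$

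After Fubini, everything reduces to a bound for a single fixed vector $v$ with $|v|\ge t_0$:
$$
\int_{|\theta|\le 1/t_0}e^{-A(1-\cos\inn{\theta,v})/2}\,d\theta\le \frac{C}{t_0^2\sqrt A}.
$$

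\emph{The one-dimensional estimate.} Rotate so that $v$ points along the first axis. The transversal direction contributes a factor $2/t_0$. The remaining integral is over $s\in[-1/t_0,1/t_0]$ of $e^{-A(1-\cos(s|v|))/2}$. This interval contains at most $1+|v|/(\pi t_0)$ periods of length $2\pi/|v|$. Near each maximum of $\cos$, $1-\cos u\ge cu^2$, so each period contributes at most $C/(|v|\sqrt A)$. The total is therefore
$$
\lesssim \frac1{t_0}\Bigl(1+\frac{|v|}{t_0}\Bigr)\frac{1}{|v|\sqrt A}\lesssim\frac{1}{t_0^2\sqrt A},
$$
using $|v|\ge t_0$.

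Multiplying by $Ct_0^2$ and using $A\ge A_0$ finishes the proof. The main obstacle is exactly this conversion of the product $\prod_i|\varphi_{\xi_i}|$ into single-vector integrals via the two convexity steps, together with keeping the scales $t_0$ and $|v|$ consistent. Uniformity in $|v|\ge t_0$ is what makes the bound independent of the sizes of the $\xi_i$.
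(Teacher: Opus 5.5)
Your proof is correct. The paper does not prove this lemma; it only quotes it from Esseen, so there is no in-paper argument to compare with. What you give is a self-contained version of the classical Fourier-analytic proof:
\begin{itemize}
\item the planar Esseen smoothing inequality;
\item symmetrization via $|\varphi_{\xi_i}(\theta)|^2=\E\cos\inn{\theta,\tilde\xi_i}$;
\item the bound $\P(|\tilde\xi_i|\ge t_0)\ge 1-\Ls(\xi_i,t_0)$, obtained by conditioning on $\xi_i'$, which fits the open-ball definition of $\Ls$;
\item a weighted AM--GM/Jensen step turning $\prod_i|\varphi_{\xi_i}|$ into a convex combination of single-summand integrands;
\item a period count for $\int e^{-A(1-\cos\inn{\theta,v})/2}\,d\theta$ that is uniform over $|v|\ge t_0$.
\end{itemize}

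Each step checks out. The $s$-integral is at most $\frac{C}{|v|\sqrt{A}}\left(2+\frac{|v|}{\pi t_0}\right)\le \frac{C'}{t_0\sqrt{A}}$, because $|v|\ge t_0$. The transversal direction contributes a factor $2/t_0$. The factor $t_0^2$ from Esseen's inequality then cancels, giving $\Ls(S,t_0)\le C/\sqrt{A}\le C/\sqrt{A_0}$.

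The only external input is the Esseen inequality $\Ls(S,t_0)\le Ct_0^2\int_{|\theta|\le 1/t_0}|\varphi_S(\theta)|\,d\theta$. If your source states it with radius $c/t_0$ instead, the period count absorbs the constant.

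Compared with the bare citation, your route is transparent and even gives the slightly stronger denominator $\sum_i\P(|\tilde\xi_i|\ge t_0)$. The citation buys brevity and access to Esseen's more general forms, such as a different $t_i$ for each summand and higher dimensions.

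You were right to read the ``in particular'' clause at $t=t_0$. Without the factor $(t/t_0)^2$ it cannot hold for all $t\ge t_0$: the left side tends to $1$ as $t\to\infty$, while the right side does not depend on $t$ and can be small. The paper only uses it with $t=t_0=1/3$.
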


For non-almost-constant vectors, we have the following small ball estimate. 

\begin{lemma}\cite[Lemma 8.5]{sah2023limiting}\label{lem: weak_smallball_notalmost}
    There exists an absolute constant $C_{\ref{lem: weak_smallball_notalmost}}>0$ such that the following holds. Let $1\le d\le n/2$ and $\delta, \rho>0$. Sample $\xi\in \{0, 1\}^n$ uniformly at random with exactly $d$ ones. Let $v\in \lr{\mbox{Cons}(\delta, \rho)}^c$, 
that is
\[
\sup_{\lambda\in \C} \abs{\set{i\in [n]: |v_i-\lam|\le \rho /\sqrt{n}}}\le (1-\delta)n.
\]
Then 
\[
\cL\lr{\inn{\xi, v}, \rho /\sqrt{n}}\le \frac{C_{\ref{lem: weak_smallball_notalmost}}}{\sqrt{\delta d}}+C_{\ref{lem: weak_smallball_notalmost}}
\exp{(-\delta^2 d/C_{\ref{lem: weak_smallball_notalmost}})}.
\]
\end{lemma}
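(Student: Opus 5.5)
This statement is quoted from \cite[Lemma 8.5]{sah2023limiting}, but a proof is short. The strategy is to represent the uniform $d$-sparse vector $\xi$ as a mixture in which, conditionally, $\inn{\xi,v}$ is a sum of many independent nondegenerate two-point variables, and then apply Lemma~\ref{rogozin1961increase}.

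\emph{Step 1: a matching of far-apart pairs.} First, we would show that the non-almost-constant assumption yields many disjoint pairs $(a_k,b_k)$ of coordinates with $|v_{a_k}-v_{b_k}|>\rho/\sqrt n$. A greedy argument works: repeatedly pick an unused index $a$. The unused indices within distance $\rho/\sqrt n$ of $v_a$ form a set of size at most $(1-\delta)n$, by the hypothesis with $\lambda=v_a$. So as long as more than $(1-\delta)n$ indices remain unused, some unused $b$ is far from $v_a$. This produces a matching $\cP$ of at least $\delta n/2$ such pairs.

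\emph{Step 2: random pairing decomposition.} Next, we would realize $\xi$ as follows. Choose a uniformly random perfect-ish pairing of $[n]$ into $\lfloor n/2\rfloor$ pairs, independent of an independent uniform $\xi$. Then, for each pair $\{a,b\}$ with $\xi_a\ne\xi_b$, swap the values with probability $1/2$ independently. The resulting vector is still uniform with exactly $d$ ones. Conditionally on the pairing and on the multiset structure, $\inn{\xi,v}$ equals a constant plus $\sum_{k}\eta_k (v_{a_k}-v_{b_k})$ with $\eta_k$ independent fair signs (up to shift), summing over the pairs $k$ that are "split", i.e., contain exactly one $1$.

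It is simpler to fix the pairing so that it contains the matching $\cP$ from Step 1; the uniform-swap invariance still holds. The number $S$ of split pairs among $\cP$ is then a sum over $|\cP|\ge\delta n/2$ pairs, each split with probability about $2(d/n)(1-d/n)\ge d/n$. Hence $\E S\gtrsim \delta d$. By negative association (Lemma~\ref{lemma: Negas} with disjoint monotone aggregation), or a hypergeometric tail bound, $S\ge c\delta d$ except with probability $\exp(-c\delta^2 d)$. I expect this concentration step to be the main technical point: splitting is not monotone in the entries. Instead I would count pairs containing at least one $1$ (monotone increasing) and pairs containing at least one $0$ separately, or else directly use hypergeometric-type Chernoff bounds on the pair-type counts.

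\emph{Step 3: anti-concentration.} Conditionally on $S=s\ge c\delta d$, each summand $\eta_k(v_{a_k}-v_{b_k})$ is a two-point variable in $\C\cong\R^2$ with atoms at distance $>\rho/\sqrt n$. Hence $\cL(\cdot,\rho/(2\sqrt n))\le1/2$. Applying Lemma~\ref{rogozin1961increase} with $t_0=\rho/(2\sqrt n)$ and $t=\rho/\sqrt n$ gives a bound of $C/\sqrt{s}\le C/\sqrt{\delta d}$. Averaging over the conditioning and adding the failure probability $\exp(-\delta^2d/C)$ from Step 2 yields the claim.
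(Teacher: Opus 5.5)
The paper gives no proof of this lemma; it is quoted from \cite{sah2023limiting}. Your argument is therefore an independent, self-contained route, and it is correct. You fix in advance a greedy matching of $K\ge \delta n/2$ pairs with $|v_a-v_b|>\rho/\sqrt n$. You then condition on $\xi$ off the matching and on the pair sums $\xi_a+\xi_b$, so that each split pair independently contributes $v_a$ or $v_b$ with probability $1/2$. Finally you apply Lemma~\ref{rogozin1961increase} with $t_0=\rho/(2\sqrt n)$ and $t=\rho/\sqrt n$. This uses only tools already in the paper, and it reduces all remaining randomness to the number $S$ of split pairs, whose mean is $2Kd(n-d)/(n(n-1))\ge \delta d/2$. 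That is what gives you a tail $e^{-c\delta d}$, stronger than the $e^{-\delta^2 d/C}$ in the statement. A $\delta^2$ typically appears when the pairing itself is random and the number of far-apart pairs is controlled by a martingale bound. The citation, of course, buys brevity.

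The one step you must execute carefully is the lower tail of $S$. Suppose you write $S=U+Z-K$, where $U$ counts pairs containing a one and $Z$ counts pairs containing a zero, and use an additive bound for $Z$. This fails for $d\ll n$: $\E Z\approx K$ while the required precision is of order $Kd/n$, so you only get $\exp(-cKd^2/n^2)$.

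Write instead $S=U-T_2$, where $T_2$ counts pairs with $\xi_a=\xi_b=1$. The indicators $\mathbf 1[\xi_a\vee\xi_b=1]$ and $\xi_a\xi_b$ are coordinatewise increasing functions of disjoint blocks. They are therefore NA by Lemma~\ref{lemma: Negas} and disjoint monotone aggregation, so multiplicative Chernoff bounds apply. Use the lower tail for $U$. Use the upper tail for $T_2$, with $\E T_2$ replaced by the upper bound $\E U/3$; this is valid because $\E T_2/\E U=(d-1)/(2n-d-1)<1/3$ for $d\le n/2$. This yields $S\ge \E U/4$ outside an event of probability $2e^{-c\,\E U}$. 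Since $\E U\ge Kd/n\ge \delta d/2$, this completes your argument.
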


Next we introduce parameters which will be used in the remaining part of the paper. 
We always assume that $C_{\ref{thm: lsvlowertails}}$ is a large enough absolute constant, that $n$ is large enough, and that   $C_{\ref{thm: lsvlowertails}}\log n \leq d \leq n/2$. Fix a sufficiently small absolute positive constant $\delta$. In order to use Lemma~\ref{Expansion}, we fix 
\begin{equation*}
    \ep_0=\sqrt{\frac{48\log n}{d}} \quad  \quad \mbox{ and } \quad \quad\ell_0
    =\left\lfloor{1/(100\ep_0)}\right\rfloor
    =\left\lfloor{\frac{\sqrt{d}}{100 \sqrt{48\log n}}}\right\rfloor\geq 2.
\end{equation*}
Let $a_{1}, a_2, a_3$ be three positive, sufficiently small, absolute constants, satisfying
\[
a_3 \leq a_2/30 \leq a_1/900.
\]
Set
\[
n_{1}:=\left\lceil\frac{a_1\varepsilon_0 n }{d}\right\rceil,
\quad  \quad
n_{2}:=\left\lfloor \frac{a_2 n}{d^{3/4}} \right\rfloor, \quad \mbox{and} \quad n_{3}:=\left \lfloor a_3 n\right\rfloor.
\]
Finally, if $n_1>1$, fix the integer $r \geq 0$ satisfying
$$
  \ell_0^r < n_1 \leq \ell_0^{r+1}.
$$

\subsubsection{$\cT$-vectors}\label{steepvectors}
We describe the class of steep vectors, $\cT$, which is the union of three parts: $\cT_1, \cT_2$ and $\cT_3$. If $n_1=1$ set $\cT_{1}=\emptyset$. Otherwise, set 
$$\cT_{10}=\set{x\in \C^n: x_1^*>6d x^{\ast}_{\min\{\ell_0,n_1\}}},$$ 
and, for $1 \leq i \leq r-1$, 
\[
\cT_{1i}:=\set{x\in \C^n: x\notin  \bigcup_{j=0}^{i-1}\cT_{1j} \mbox{ and } x_{\ell_0^{i}}^*>6dx_{\ell_0^{i+1}}^*},
\]
for $i=r$ 
\[
\cT_{1r}:=\set{x\in \C^n: x\notin  \bigcup_{j=0}^{r-1}\cT_{1j} \mbox{ and } x_{\ell_0^r}^*>6dx_{n_{1}}^*} .
\]
Then 
$$
 \cT_{1}:=\bigcup_{i=0}^{r} \cT_{1i}.
$$
Finally set 
\[
\cT_2:=\set{x\in \C^n: x\notin \cT_1 \mbox{ and } x_{n_{1}}^*>d^{3/4}  x_{n_{2}}^*},
\]
\[
\cT_3:=\set{x\in \C^n: x\notin \cT_1\cup \cT_2 \mbox{ and } x_{n_{2}}^*>4  x_{n_{3}}^*}, \quad \mbox{ and } \quad \cT:=\bigcup_{i=1}^3 \cT_i.
\]
\cref{fig:steep_vector_classification} offers a visual guide to this classification scheme.


In the case $n_1>1$ we denote
\begin{equation}\label{bt}
\btt= \frac{(6d)^{r+1}\, d^{1/4}}{26 \log^{1/4} n}, \quad 
\bt3=\frac{(6d)^{r+1}\, d}{26 \log^{1/4} n} \quad \mbox{ and } \quad 
     B_\cT=\btc=\frac{(6d)^{r+2}}{36(\log n)^{1/4}}
\end{equation}
and for $n_1=1$,
\begin{equation}\label{btn}
\btt=\sqrt{n}, \qquad    \bt3  =B_\cT=\sqrt{2n}.
\end{equation}

\begin{figure}[htp]
    \centering
    \includegraphics[width=0.8\textwidth]{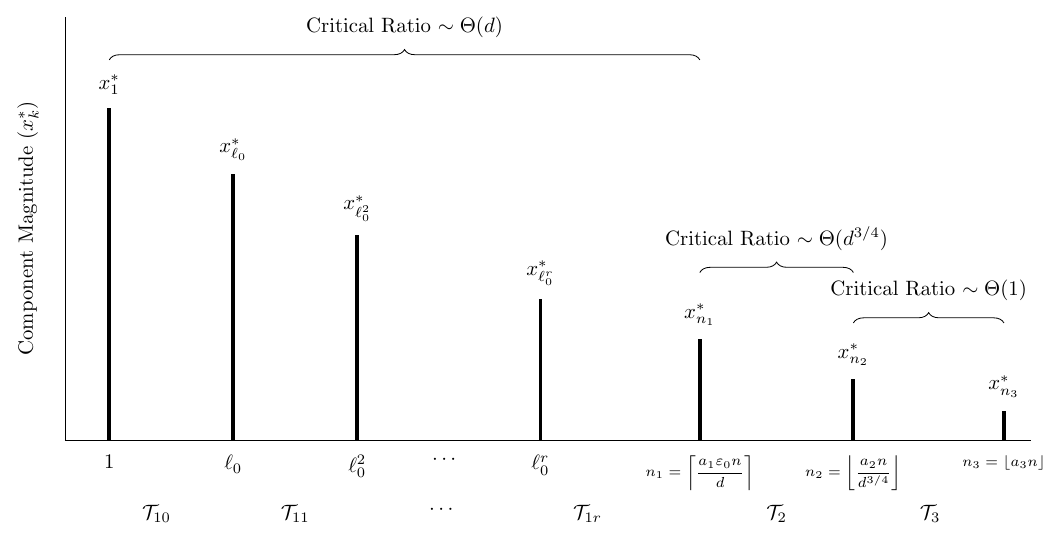}
    \caption{An illustration of the classification of steep vectors into the sets $\mathcal{T}_1$, $\mathcal{T}_2$, and $\mathcal{T}_3$. The vertical axis represents the magnitude of the sorted components ($x_k^*$) and the horizontal axis represents the component index $k$. The picture is drawn for the regime $\ell_0<n_1$, so that $\mathcal{T}_{10}$ is represented by the jump condition $x_1^*>6d\,x_{\ell_0}^*$. A vector is classified based on the location and the size of the first jump.}
    \label{fig:steep_vector_classification}
\end{figure}

\subsubsection{Bounds for $\ell_2$-norm}

The following lemma provides a bound on Euclidean norms of vectors in the class $\cT$ and its complement in terms of their order statistics. This result is similar to \cite[Lemma 6.4]{litvak2022singularity} (see also \cite[Lemma 3.5]{litvak2019smallest}, \cite[Lemma 4.3]{litvak2019structure}). Since our choice of parameters as well as the definition of steep vectors is slightly different, we provide the proof for the sake of completeness.

\begin{lemma}\label{lem: l2norm_linf} 
 There exists a positive absolute constant $C$ such that the following hold.   Let $n$ be large enough and let $C\log n\le d\le n/2$.  Let
       $y\in \cT_2$, $z\in \cT_3$, and $w\in \cT^c$. Then 
\[
      \|y\|_2\le \btt \, y_{n_{1}}^*,  \qquad 
    \|z\|_2\le \bt3\, z_{n_{2}}^*, \qquad \mbox{ and }  \qquad 
    \|w\|_2\le \, B_\cT \, w_{n_3}^*.
 \]
    Moreover, if $n_1>1$ (so $\cT_{1}\ne \emptyset$) and $x\in \cT_{1j}$ for some $0\le j\leq  r$, then 
 \[
    \|x\|_2\le \max{ \left\{   \frac{ d^{1/4} \, (6d)^{j}}{15\log^{1/4} n}, \sqrt{2 n}    \right\} }
    \, x_{\ell_0^j}^*. 
\]
\end{lemma}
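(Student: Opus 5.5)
The argument is the standard ``chain of non-jumps'' estimate. For any $x\in\C^n$ and any index $k$, split
$$\|x\|_2^2\le k\,(x_1^*)^2+\sum_{i>k}(x_i^*)^2\le k\,(x_1^*)^2+n\,(x_k^*)^2 .$$
It therefore suffices to bound $x_1^*$ by a multiple of the reference order statistic. That bound comes from the fact that the vector lies outside all earlier jump classes. Each class excluded before the current one gives an inequality of the form $x^*_{a}\le 6d\,x^*_{b}$ (for the $\cT_{1i}$), $x^*_{n_1}\le d^{3/4}x^*_{n_2}$ (outside $\cT_2$), or $x^*_{n_2}\le 4x^*_{n_3}$ (outside $\cT_3$). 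Chaining these gives $x_1^*$ in terms of the target coordinate. Treat the case $n_1=1$ separately: then $\cT_1=\emptyset$, and the bounds $\sqrt n$, $\sqrt{2n}$ in (\ref{btn}) follow directly from the split above.

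Start with $x\in\cT_{1j}$ and $n_1>1$. Since $x\notin\cT_{10},\dots,\cT_{1,j-1}$, we get $x_1^*\le 6d\,x^*_{\ell_0}\le\dots\le(6d)^{j}x^*_{\ell_0^j}$. Now split at $k=\ell_0^{j}$. The indices $i\le\ell_0^j$ fall into dyadic-like blocks $(\ell_0^{t},\ell_0^{t+1}]$, $t<j$. On each block $x_i^*\le x^*_{\ell_0^t}\le(6d)^{j-t}x^*_{\ell_0^j}$ and the block has at most $\ell_0^{t+1}$ indices. Hence
$$\sum_{i\le\ell_0^j}(x_i^*)^2\le (x^*_{\ell_0^j})^2\sum_{t<j}\ell_0^{t+1}(6d)^{2(j-t)} .$$
Because $\ell_0\le\sqrt d/(100\sqrt{48\log n})\ll(6d)^2$, this geometric sum is dominated by the $t=0$ term, roughly $\ell_0(6d)^{2j}$. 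Taking square roots gives $\sqrt{\ell_0}\,(6d)^j\lesssim d^{1/4}(6d)^j/(15\log^{1/4}n)$, using $\sqrt{\ell_0}\le d^{1/4}/(10\cdot48^{1/4}\log^{1/4}n)$. The tail contributes at most $\sqrt n\,x^*_{\ell_0^j}$. Combining the two parts with $\sqrt{a^2+b^2}\le\sqrt2\max\{a,b\}$ yields the stated maximum, provided the constants are checked; the $15$ in the statement absorbs this.

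The remaining classes are handled the same way with longer chains. For $y\in\cT_2$, the vector is outside all of $\cT_1$, so $y_1^*\le(6d)^{r+1}y^*_{n_1}$, with the last link given by the $\cT_{1r}$ condition $x^*_{\ell_0^r}\le 6d\,x^*_{n_1}$. The same block sum now runs up to $n_1\le\ell_0^{r+1}$, giving $\sqrt{\ell_0}(6d)^{r+1}$, and the tail gives $\sqrt n\,y^*_{n_1}$. To fit everything into $B(\cT_2)$, which carries no $\sqrt n$ term, I need $\sqrt n\le (6d)^{r+1}d^{1/4}/\log^{1/4}n$. This should follow from $\ell_0^{r+1}\ge n_1\approx a_1\ep_0 n/d$, i.e.\ $(6d)^{r+1}\gg\ell_0^{r+1}\gtrsim \sqrt{\log n}\,n/d^{3/2}$, together with $(6d)^{r+1}\ge 6d$. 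I need to compare $\sqrt n$ against $(6d)^{r+1}$ carefully. If $r$ is small and $n\gg d^{3}$, this could fail; I would then use $(6d)^{r+1}\ge(36 d^2/\ell_0^2)^{(r+1)/2}\,\ell_0^{r+1}$ to get a factor $\ge n_1\cdot d^{(r+1)}$. This is the step I expect to be the main obstacle, since it is where the exact parameter choices matter.

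For $z\in\cT_3$, add the link $z^*_{n_1}\le d^{3/4}z^*_{n_2}$. For the block $(n_1,n_2]$, bound it by $n_2^{1/2}d^{3/4}\le\sqrt{a_2}\,n^{1/2}d^{3/8}$ times $z^*_{n_2}$, and again absorb this into $(6d)^{r+1}d$ using the same comparison. For $w\in\cT^c$, add $w^*_{n_2}\le 4w^*_{n_3}$, so the blocks up to $n_3$ pick up a factor $4d^{3/4}(6d)^{r+1}\le(6d)^{r+2}/36$, with the tail $\sqrt n\,w^*_{n_3}$ absorbed similarly.
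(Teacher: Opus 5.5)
\paragraph{Gap: the case $n_1=1$.}
The genuine gap is in this case. You assert that the values $B(\cT_3)=B_{\cT}=\sqrt{2n}$ from (\ref{btn}) ``follow directly from the split''. They do not, and in fact they are false.

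When $n_1=1$, the condition $z\notin\cT_2$ only gives $z_1^*=z^*_{n_1}\le d^{3/4}z^*_{n_2}$. Splitting at $k=n_2$ then yields $\|z\|_2^2\le (n_2d^{3/2}+n)(z^*_{n_2})^2\approx (a_2d^{3/4}+1)\,n\,(z^*_{n_2})^2$, not $2n(z^*_{n_2})^2$.

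This is sharp. Take $z_s=d^{3/4}$ for $s<n_2$, $z_{n_2}=1$, and $z_s=1/5$ for $s>n_2$.
\begin{itemize}
\item $z\notin\cT_2$, because the equality $z_1^*=d^{3/4}z^*_{n_2}$ is not a strict jump.
\item $z\in\cT_3$, because $z^*_{n_2}=1>4z^*_{n_3}$.
\item Yet $\|z\|_2\ge\frac12\sqrt{a_2n}\,d^{3/8}$, which is far above $\sqrt{2n}\,z^*_{n_2}$ once $d^{3/4}\gg 1/a_2$.
\end{itemize}
Taking the values $4d^{3/4}$, $4$, $1$ on the index ranges $[1,n_2)$, $[n_2,n_3)$, $[n_3,n]$ gives the same failure for $w\in\cT^c$ against $B_{\cT}=\sqrt{2n}$. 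So in the regime $n_1=1$ (i.e.\ $d\gtrsim n^{2/3}(\log n)^{1/3}$) the statement with (\ref{btn}) is false, and no argument can close this step.

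The paper's proof has the same defect. It bounds $\sum_{j<n_2}(w_j^*)^2$ by $4d^{3/4}n_2\,(w^*_{n_3})^2$, forgetting to square $4d^{3/4}$. The correct term is $16d^{3/2}n_2\le 16a_2nd^{3/4}$.

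What is actually true for $n_1=1$ is $\|z\|_2\lesssim \sqrt n\,d^{3/8}z^*_{n_2}$ and $\|w\|_2\lesssim\sqrt n\,d^{3/8}w^*_{n_3}$. So $B(\cT_3)$ and $B_{\cT}$ in (\ref{btn}) must be enlarged to order $\sqrt n\,d^{3/8}$. This has knock-on effects on $\theta(n,d)$ and on part (1) of Remark~\ref{sing-comp-bb}.

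\paragraph{The case $n_1>1$.}
Here your plan coincides with the paper's: chain the non-jump inequalities, sum over the blocks $(\ell_0^t,\ell_0^{t+1}]$ so that the first block dominates, and absorb the tail using $(6d)^{2r+2}\gtrsim n$.

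The paper proves $(6d)^{2r+2}\ge 2n$ by a case split.
\begin{itemize}
\item If $d^2\ge n$, then $(6d)^2\ge 36n$.
\item If $d^2\le n$, then $r+1\ge\log n_1/\log\ell_0$ and $\log(6d)\ge 2\log\ell_0$ give $(6d)^{2r+2}\ge n_1^4\ge 2n$.
\end{itemize}

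Your alternative works after fixing a slip. The correct bound is $(6d)^{r+1}=(6d/\ell_0)^{r+1}\ell_0^{r+1}\ge (6d/\ell_0)\,n_1$, not $n_1d^{r+1}$. Multiplying by $(6d)^{r+1}\ge 6d$ then gives $(6d)^{2r+2}\ge 36d^2n_1/\ell_0\gtrsim a_1 n\log n$, with no case split needed.

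In the $w$ case you also dropped the factor $\sqrt{\ell_0}$ coming from the first block. The inequality actually needed is $\sqrt{\ell_0}\cdot 4d^{3/4}(6d)^{r+1}\le \frac{4}{26}\cdot\frac{d(6d)^{r+1}}{\log^{1/4}n}<\frac{(6d)^{r+2}}{36\log^{1/4}n}$. This holds, but with only about $8\%$ room, so the constants there must be tracked carefully.
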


\begin{proof}
 We first consider the case $n_1>1$. Note that in this case $(6d)^{2r+2}\geq 2 n$ for large enough $n$. 
Indeed, if $d^2\geq n$, then
$$(6d)^{2r+2}\geq (6d)^{2}\geq 36 n;$$ 
if $d^2\leq n$, then, using the definitions of $r$ and $n_1$, 
$$
 (6d)^{2r+2}\geq \exp\left( 2 (\log (6d) (\log n_1) /\log \ell_0 \right) 
 \geq n_1^4 \geq 2 n.
$$

  Let $z\in \cT_3$. Then 
    \[
    z_1^*\le (6d)z_{\ell_0}^*\le (6d)^2z_{\ell_0^2}^*\le \cdots \le (6d)^{r}z_{\ell_0^{r}}^* \le (6d)^{r+1}z_{n_1}^*\le (6d)^{r+1} \, d^{3/4}  \, z_{n_{2}}^*.
    \]
Recall that $\ell_0^{r} <  n_1\leq \ell_0^{r+1}$ and $\ell_0\leq \sqrt{d}/(690\sqrt{\log n})$. Therefore, for large enough $d$,  
\begin{align*}
    \|z\|_2^2&=\sum_{s=1}^{\ell_0}   (z_s^*)^2    +   \sum_{s=\ell_0+1}^{\ell_0^{2}}  (z_s^*)^2+
     \sum_{s=\ell_0^2+1}^{\ell_0^{3}}  (z_s^*)^2 
    + \cdots+  \sum_{s=\ell_0^{r}+1}^{n_1} (z_s^*)^2  +  \sum_{s=n_1+1}^n (z_s^*)^2 \\
    &\le \left(\ell_0(6d)^{2r+2}+\ell_0^2(6d)^{2r}+\ell_0^3(6d)^{2r-2}+\cdots+\ell_0^{r+1} (6d)^2+n\right) d^{3/2}\, (z_{n_{2}}^*)^2\\
    &\le \Big(\ell_0 (6d)^{2r+2}\sum_{s\ge 0}(6d)^{-2s} \ell_0^s +n\Big) d^{3/2}\, (z_{n_{2}}^*)^2 
    \le \Big(1.5 \ell_0 (6d)^{2r+2} +n\Big) d^{3/2}\, (z_{n_{2}}^*)^2 \\
    &\le 2 \ell_0 (6d)^{2r+2}  d^{3/2}\, (z_{n_{2}}^*)^2 \le  (6d)^{2r+2}  d^{2}\, (z_{n_{2}}^*)^2/(690\sqrt{\log n}).
\end{align*}
Thus, 
\begin{align*}
    \|z\|_2&\leq  (6d)^{r+1}\, d\, z_{n_{2}}^*/ (26 \log^{1/4} n).
\end{align*}
This completes the proof of the bound of $z$. 
The bounds for $y$ and $w$ are obtained similarly.

 \smallskip 

 Let $x\in \cT_{1j}$. If $j=0$ we clearly have $ \|x\|_2\leq \sqrt{n}  x_{1}^*$. If $1\le j\leq r$, then 
 \[
    x_1^*\le (6d) x_{\ell_0}^*\le (6d)^2 x_{\ell_0^2}^*\le \cdots \le (6d)^{j} x_{\ell_0^{j}}^* .
 \]
Similarly to the bound for $ \|z\|_2$, we obtain
\begin{align*}
    \|x\|_2^2
    &\le \left(\ell_0(6d)^{2j}+\ell_0^2(6d)^{2j-2}+\cdots+\ell_0^{j} (6d)^2+n\right) \, (x_{\ell_0^{j}}^*)^2\\
    &\le \Big(\ell_0 (6d)^{2j}\sum_{s\ge 0}(6d)^{-2s} \ell_0^s +n\Big)\, (x_{\ell_0^{j}}^*)^2 
    \le \Big(1.5 \ell_0 (6d)^{2j} +n\Big) \, (x_{\ell_0^{j}}^*)^2 \\
    &\le \Big(\sqrt{d} \, (6d)^{2j}/(460\sqrt{\log n}) +n\Big) \, (x_{\ell_0^{j}}^*)^2.
\end{align*}
This implies the desired bound.

The case $n_1=1$ is similar but simpler. Indeed, the bound for $y$ is trivial, for  $w\in \cT^c$ we have 
$w_j^*\leq d^{3/4}w^*_{n_2}\leq  4d^{3/4}w^*_{n_3}$ for $j< n_2$ and $w_j^*\leq  4w^*_{n_3}$ for 
$n_2<j\leq n_3$, hence 
$$
  \|w\|_2^2\leq (4d^{3/4} n_2 + 4 n_3 +n ) (w^*_{n_3})^2 \leq (4a_2 n + 4 a_3 n +n ) (w^*_{n_3})^2
  \leq 2 n (w^*_{n_3})^2. 
$$
The bound for $z$ is similar.  
\end{proof}

For the class of steep vectors $\cT$, we prove the following lower bound, in which we denote 
\[
    \cT':= \cT_1\cup\cT_2\cup \lr{\cT_3\cap \mbox{Cons}(a_3, \rho)}
\]
and 
$$ \theta(n, d) = 
    \min\left\{  \frac{\sqrt{d} }{4 \sqrt{n}} ,\,  
            \frac{5  \, \ell_0^{r/2}\, (d \log n)^{1/4}}{  (6d)^{r}}, \, 
     \, \frac{54 (\log n)^{1/4}\, \sqrt{n}}{(6d)^{r+2}}\right\}, 
$$ 
whenever $n_1>1$, 
$$ 
  \theta(n, d) = 
  c_1  \left((\log n)/d\right)^{1/4}
$$ 
for sufficiently small absolute constant $c_1>0$  in the case $n_1=1$.

\begin{thm}\label{thm: lower_bdd_T}
  Let $n$ be a sufficiently large  integer and  $C\log n\le d\le n/2$ for large enough absolute constant $C$. 
 Let $p=d/n$, $M\in \cM_{n,d}$, $0<\rho<\sqrt{n}/(B_\cT d^{3/4})$, and let $z\in \C$ be such that $|z|\le \sqrt{d}\log \log d$. Denote
    \[
    \cE_{\mbox{steep}}(M):=\set{\exists x\in \cT' \mbox{ such that } \|(M-z\I_n)x\|_2<\theta(n, d) \|x\|_2},
    \]
    and
    \[
    \cE_{\mbox{steep}}(M^T):=\set{\exists x\in \cT' \mbox{ such that } \|(M^{T}-z\I_n)x\|_2<\theta(n, d)\|x\|_2},
    \]
 Then
\[
\P\left ( \cE_{\mbox{steep}}(M) \bigcup  \cE_{\mbox{steep}}(M^T)\right )\le 22/n.
\]
\end{thm}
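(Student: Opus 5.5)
The plan is to intersect a few high-probability events and treat the three subclasses $\cT_1$, $\cT_2$ and $\cT_3\cap \mbox{Cons}(a_3,\rho)$ separately. Each class is handled for both $M$ and $M^T$. The good event $\cE$ is the intersection of four pieces: the column-sum event $\cE_{\ref{lem: rowsum}}$; the expansion events $\bigcap_{k\le \ep_0 n/d}\Om_{k,\ep_0}$ from Lemmas~\ref{Expansion1} and \ref{Expansion}; the norm events $\cE_{\ref{prp: nrm}}\cap\cE_{\ref{lem:nrmleft}}$ from Theorem~\ref{thm: operator_bdd}; and a bound on the number of rows with large intersection with a fixed set (Lemma~\ref{prop: supprow}), used in the net arguments. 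With $\ep_0=\sqrt{48\log n/d}$ the expansion events fail with probability at most $5\exp(-\ep_0^2 d/48)=5/n$. The norm events fail with probability $O(1/n)$, and the column-sum event fails with probability $n\cdot n^{-\beta}$. Together these contribute a bounded multiple of $1/n$ toward the final $22/n$.

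For $x\in\cT_{1j}$ the argument is deterministic on $\cE$. Let $J^\ell$ be the indices of the $\ell_0^j$ largest coordinates and $J^r$ the indices of ranks $\ell_0^j+1,\dots,\ell_0^{j+1}$ (or up to $n_1$ when $j=r$), so that $q\approx\ell_0$ and $q\ep_0<1/100$. Lemma~\ref{lem: omega_ke}, applied to both $M$ and $M^T$, gives at least $(1-4\ep_0 q)d\ell_0^j\ge d\ell_0^j/2$ rows whose support meets $J^\ell$ in exactly one coordinate and avoids $J^r$. For such a row $i$, the inner product $\langle R_i,x\rangle$ equals one coordinate of modulus at least $x^*_{\ell_0^j}$, up to an error of at most $d\,x^*_{\ell_0^{j+1}}<x^*_{\ell_0^j}/6$ coming from the remaining $d-1$ small coordinates. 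The shift contributes $|z x_i|$, which is a problem only for rows $i\in J^\ell$. There are at most $\ell_0^j$ such rows, which is much smaller than $d\ell_0^j/2$, so we discard them. Hence $\|(M-z\I_n)x\|_2\ge \tfrac12\sqrt{d\ell_0^j}\,x^*_{\ell_0^j}$. Lemma~\ref{lem: l2norm_linf} then converts this into a bound against $\|x\|_2$, which yields the middle or first term of $\theta(n,d)$. Since $|z|\ll d$, the choice of $J^\ell$ and the discarded rows also handle the case $j=0$.

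For $\cT_2$ and $\cT_3\cap\mbox{Cons}(a_3,\rho)$ I would use a net argument in the triple norm $\tnrm{\cdot}$. Normalize so that $x^*_{n_1}=1$ (respectively $x^*_{n_2}=1$); the vector is then essentially determined by its top $n_2$ (respectively $n_3$) coordinates together with an almost-constant value $\lambda$. The net is built from: the choice of support, costing $\binom{n}{n_2}$; coordinates on a grid of mesh $d^{-3/4}$; and $\lambda$ on a grid. Its cardinality is about $\exp(C n_2\log d)$ for $\cT_2$ and $\exp(Cn_3\log(1/a_3))$ for $\cT_3$. Propositions~\ref{prp: nrm} and \ref{lem:nrmleft} guarantee that approximation in $\tnrm{\cdot}$ at scale $\ep=d^{-3/4}$ changes $\|(M-z\I_n)x\|_2$ by at most $C\sqrt d\,\ep+|z|\ep\lesssim d^{-1/4}\log\log d$. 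This is where the constraint $|z|\le\sqrt d\log\log d$ enters, and it is below the target lower bound, which is of order $1$ times the normalized scale.

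For the individual probability of a fixed net point $y$, in the right multiplication I would use row independence: for about $d n_1\gtrsim n\ep_0$ rows, $\langle R_i,y\rangle$ has Lévy concentration bounded away from $1$ at scale $1$, by Lemma~\ref{rogozin1961increase}, because the row picks up a big coordinate with constant probability. Tensorization then gives $\P(\|(M-z)y\|_2<c\sqrt{n_1 d})\le e^{-c n_1 d}$, and this beats the net cardinality since $n_1 d\gg n_2\log d$ by the choice $a_2\ll a_1$ and the size of $n_2$. For the left multiplication, columns are dependent, so I would condition on the rows outside a set and use a switching argument within each row. Moving a one between a column in the large support and one outside changes $(M^Ty)_j$ by at least $1/2$, and using negative association (Lemma~\ref{lemma: Negas}) gives a comparable exponential bound.

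The main obstacle is the bookkeeping for $\cT_3$ on the left: balancing the net size $e^{Cn_3}$ against the individual probability and making the constants $a_3\le a_2/30\le a_1/900$ close the inequality. This also requires the almost-constant structure, i.e.\ the $\lambda$-grid combined with the bound $\rho< \sqrt n/(B_\cT d^{3/4})$, so that $\lambda$ is fixed to within the allowed error. I expect the final step to be a union bound over the three classes and both sides.
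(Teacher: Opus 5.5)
Your treatment of $\cT_1$ and $\cT_2$ follows the paper's route in outline. The $\cT_3$ step, however, has a genuine gap, and it starts with the net cardinality $\exp(Cn_3\log(1/a_3))$ you assume, which is wrong. A normalized vector in $\cT_3\cap\mbox{Cons}(a_3,\rho)$ has about $n_3$ coordinates that range freely over a region containing a disc of constant radius. For the perturbation $(2C_{\ref{thm: operator_bdd}}+1)\sqrt d\,\tnrm{x-u-w}+|z|\,\|x-u-w\|_2$ to stay below a target of order $\sqrt{c\,m_0d}\asymp\sqrt n$, you need $\tnrm{x-u-w}\lesssim\sqrt{n/d}$. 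A volume count then forces at least $(c\sqrt{a_3d})^{2n_3}\ge\exp(c'n_3\log d)$ net points for large $d$; this is the $\exp(6n_3\log d)$ of Lemma~\ref{lem: net_Tvectors}.

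Per-row L\'evy concentration bounded away from $1$, plus tensorization, only yields an individual bound $e^{-cn}$. This loses to $\exp(c'a_3n\log d)$ once $\log d\gg 1/a_3$, however small the absolute constant $a_3$ is. So you need per-row anti-concentration that improves with $d$, and your single-hit mechanism cannot supply it here:
\begin{itemize}
\item with $|J^\ell|=|J^r|=n_2$ one has $n_2d/n=a_2d^{1/4}\gg1$;
\item Lemma~\ref{lem: omega_ke} does not apply, since it needs sets of size at most $\ep n/d$;
\item a typical row meets $J^\ell\cup J^r$ many times.
\end{itemize}

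The paper's fix is Lemma~\ref{lem: individ1} in the case $m_1=n_2>C_1m_0$, with five steps:
\begin{enumerate}
\item Split $\sigma([n_2])$ and $\sigma([n-n_2+1,n])$ into $q_0=n_2/m_0$ blocks of size $m_0\approx n/(200d)$.
\item Apply Lemma~\ref{lem: omega_ke} blockwise on $\Om_{2m_0,0.01}$.
\item Condition on the sets of rows hitting each block exactly once, and then on finer equivalence classes (earlier rows, columns outside the chosen blocks, and block row sums fixed). This makes the block contributions $\xi_q$ of a given row independent.
\item Apply Esseen's inequality (Lemma~\ref{rogozin1961increase}) over the $|B_i|\gtrsim n_2d/n$ blocks. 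This gives concentration $\lesssim\sqrt{n/(n_2d)}\asymp d^{-1/8}$ per row.
\item Tensorize over $\approx m_0d$ rows to get $(C_2n/(n_2d))^{m_0d/4}=\exp(-c\,n\log d)$, which beats $\exp(6n_3\log d)$ for $a_3$ small.
\end{enumerate}
For $M^T$ the same conditioning is combined with a switching estimate: the single hit lands in $J_q^\ell$ rather than $J_q^r$ with conditional probability at most $4/5$. Negative association alone does not provide this, since small-ball events are not monotone.

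There are also smaller slips.
\begin{itemize}
\item In the $\cT_1$ step you must discard the rows indexed by $J^r$ as well as $J^\ell$. For $i\in J^r$, $|x_i|$ can be as large as $x^*_{\ell_0^j}$, so $|z x_i|$ swamps the main term. This is harmless since $|J^r|\le\ell_0^{j+1}\ll d\ell_0^j$.
\item For $M^T$ the rows are columns of $M$ and do not have exactly $d$ ones. You need the column-sum event with $\tau<1$; the paper takes $\tau=1/2$, which fails with probability at most $ne^{-d/12}$. The regime you quote ($\tau\ge e^2$, failure probability $n^{1-\beta}$) only gives column sums up to $(1+e^2)d$, and $(1+e^2)d\,x^*_{\ell_0^{j+1}}$ is not dominated by $x^*_{\ell_0^j}$ under a jump of size $6d$.
\item A coordinate grid of mesh $d^{-3/4}$ gives triple-norm error $\sqrt{2n}\,d^{-3/4}$, not $d^{-3/4}$. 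The perturbation is then $\lesssim\sqrt n\,d^{-1/4}\log\log d$. Comparing this with $\sqrt{c\,n_1d}\gtrsim\sqrt n\,(\log n/d)^{1/4}$ is where $|z|\le\sqrt d\log\log d$ is really used.
\end{itemize}
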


\subsubsection{Gradual vectors}

We now establish the lower bound for the almost-constant gradual vectors $\mbox{Cons}(a_3, \rho)\setminus \cT$, which by definition lack the significant jumps that characterize steep vectors. The proof of this theorem adapts the strategy from \cite[Theorem 3.1]{litvak2019smallest}. However, key modifications are necessary for our model. To deal with the right multiplication $\|(M-z\I_n)x\|_2$, we  use \cref{prop: supprow} to handle the non-fixed column sums of our matrix. Moreover, the argument is necessarily adapted to align with our specific selection of the jump size and its location for steep vectors. Recall that $B_\cT$ was defined in \eqref{bt}.

\begin{thm}\label{thm: invnonsteep}
   There exists a positive absolute constant $C$ such that the following hold. Let $n$ be large enough,   
   $C\log n\le d\le n/2$ and $M\in \cM_{n,d}$.  Let $0<\rho \le \sqrt{n}/(5 B_\cT)$ 
    and let $z\in \C$ be such that $|z|\le 4 d/25$. Then for all vectors $x\in (\mbox{Cons}(a_3, \rho)\setminus \cT)$ 
  with probability at least $1-4\exp(-(8n/81))-n\exp(-d/108)$, 
\[
    \|(M-z\I_n)x\|_2\ge \frac{3d\sqrt{n}}{25B_\cT} \|x\|_2 \ge  \theta(n, d) \|x\|_2,
\]
and deterministically, 
\[
    \|(M^{T}-z\I_n)x\|_2\ge  \frac{3d\sqrt{n}}{25B_\cT} \|x\|_2 \ge  \theta(n, d) \|x\|_2.
    \]
%
\end{thm}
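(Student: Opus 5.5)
Normalize $\|x\|_2=1$ and fix $x\in \mbox{Cons}(a_3,\rho)\setminus\cT$ with the associated $\lambda\in\C$. Let $J_0:=\{i:|x_i-\lambda|\le\rho/\sqrt n\}$, so $|J_0|>(1-a_3)n$. The plan is to show that $x$ is essentially $\lambda\1$ with $|\lambda|$ comparable to $1/\sqrt n$. Then most rows and columns of $M$ produce an inner product with $x$ close to $d\lambda$, and this beats the shift $|z||x_i|$ because $|z|\le 4d/25$.

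\textbf{Step 1 (size of $\lambda$).} Since $x\notin\cT$, Lemma~\ref{lem: l2norm_linf} gives $1=\|x\|_2\le B_\cT x^*_{n_3}$. Because $n_3=\lfloor a_3 n\rfloor$ and $|J_0|>(1-a_3)n$, some $i\in J_0$ has $|x_i|\ge x^*_{n_3}\ge 1/B_\cT$. Hence
\[
|\lambda|\ge \frac{1}{B_\cT}-\frac{\rho}{\sqrt n}\ge \frac{4}{5B_\cT},
\]
using $\rho\le\sqrt n/(5B_\cT)$.

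\textbf{Step 2 (left multiplication, deterministic).} The $j$-th coordinate of $M^Tx$ is $\langle R_j(M),x\rangle=\sum_{k\in\mathrm{Supp}R_j}x_k$, which has exactly $d$ terms. Write it as $d\lambda+\sum_{k\in\mathrm{Supp}R_j}(x_k-\lambda)$. Terms with $k\in J_0$ contribute at most $d\rho/\sqrt n\le d/(5B_\cT)$. For $k\notin J_0$, the non-constant coordinates are too few to spoil many rows. By Lemma~\ref{lemma: suppcol} applied to $M^T$ (column sums of $M^T$ equal $d$) with $J=J_0^c$, $|J|\le a_3 n$, and a suitable $A$, all but $n/A$ rows $j$ meet $J_0^c$ in at most $Aa_3 d$ coordinates. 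I would handle the magnitude $|x_k-\lambda|$ for $k\notin J_0$ by splitting at the threshold $x^*_{n_3}$. Coordinates of size at most $x^*_{n_3}\lesssim|\lambda|$ contribute at most $Aa_3 d\cdot O(|\lambda|)$, which is small when $a_3$ is tiny. The top $n_3$ coordinates are controlled through $\ell_2$: by Cauchy--Schwarz, their total contribution over all rows is at most $\|x\|_2$ times a column-count bound. So on a set $G$ of at least $n/2$ indices $j$ we get $|\langle R_j,x\rangle-d\lambda|\le d|\lambda|/5$.

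We also need $|z x_j|$ small on a large subset of $G$. Since $|z|\le 4d/25$ and $|x_j|\le|\lambda|+\rho/\sqrt n\le |\lambda|(1+1/4)$ for $j\in J_0$, we have $|zx_j|\le d|\lambda|/5$ there. Therefore on $G\cap J_0$, which has size at least $n/3$,
\[
|((M^T-z\I_n)x)_j|\ge d|\lambda|\Bigl(1-\tfrac15-\tfrac15\Bigr)\ge \tfrac{3}{5}\cdot\tfrac{4d}{5B_\cT}.
\]
Summing squares gives a lower bound of order $d\sqrt n/B_\cT$. With careful constants this should reach $3d\sqrt n/(25B_\cT)$, and comparing with the definition of $\theta(n,d)$ gives the second inequality.

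\textbf{Step 3 (right multiplication, probabilistic).} The argument is the same, but row sums of $M$ restricted to $J_0^c$ are not fixed. Here I would replace Lemma~\ref{lemma: suppcol} by Lemma~\ref{prop: supprow} with $r=81$, which gives the $4\exp(-8n/81)$ term, and use Lemma~\ref{lem: colsum} with $\tau<1$, which gives the $n\exp(-d/108)$ term, so that column sums are at most $(1+\tau)d$ for the $\ell_2$ bookkeeping. The lemma is applied to a fixed set $J$, while $J_0$ depends on $x$. The uniformity over all $x$ is therefore the main obstacle. I would first try to apply Lemma~\ref{prop: supprow} to the single set of the top-$n_3$ coordinates of $x$, but that set still varies with $x$. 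Failing that, I would exploit that the bound only needs, for every $J$ with $|J|\le a_3n$, few rows heavily meeting $J$. This can be derived from the column-sum event $\cE_{\ref{lem: colsum}}$ by double counting, exactly as in Lemma~\ref{lemma: suppcol}, and it removes the dependence on $x$. With those bounds in place, Step 2 repeats verbatim for $M$.
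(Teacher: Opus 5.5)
Your proposal has two genuine gaps.

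First, in Step~2 you write the $j$-th coordinate of $M^Tx$ as $\langle R_j(M),x\rangle$ with exactly $d$ terms. That is $(Mx)_j$. The coordinate $(M^Tx)_j$ pairs $x$ with the $j$-th \emph{column} of $M$, whose number of ones is random, so there is no expansion around $d\lambda$. Conversely, for $Mx$, where the count $d$ is right, Lemma~\ref{lemma: suppcol} is not available, because it counts rows of $M^T$. Each case therefore needs one probabilistic input. The paper's argument for $M^T$ is not deterministic either: it uses the event that every column of $M$ has at least $5d/6$ ones. That event has probability at least $1-n\exp(-d/108)$, and this is where that term comes from.

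In fact no deterministic bound can hold. If every row of $M$ is supported on $[d]$, then $M^Tx=(\sum_k x_k)\1_{[d]}$. For, say, $C\log n\le d\le n^{1/4}$ there are $x\in\mbox{Cons}(a_3,\rho)\setminus\cT$ with $\sum_k x_k=0$:
\begin{itemize}
\item put $x_k=1$ outside a set of $n_1-1$ coordinates;
\item on that set, put negative entries with moduli $\min\{(6d)^{r+1-i},L\}$ at ranks in $[\ell_0^i,\ell_0^{i+1})$;
\item choose the cap $L$ so that these entries sum to $-(n-n_1+1)$.
\end{itemize}
Such an $L$ exists because $(6d)^{r+1}\ge n_1^2\ge n$ in that range. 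This $x$ is non-steep, since each checkpoint ratio is at most $6d$.

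Second, and more fundamentally, handling the top $n_3$ coordinates through $\ell_2$ fails when $n_1>1$. Your argument uses non-steepness only through $\|x\|_2\le B_\cT x^*_{n_3}$. The resulting Markov bound on the number of spoiled rows is of order $\sqrt{a_3n}\,\|x\|_2/|\lambda|$, which can be of order $\sqrt{a_3n}\,B_\cT$. Since $(6d)^{2r+2}\ge 2n$ in this regime, $B_\cT\ge d\sqrt{2n}/(6(\log n)^{1/4})$, so the bound exceeds $n$.

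This is not mere lossiness. As the example shows, the top $n_1$ coordinates of a non-steep vector can carry $\ell_1$-mass far beyond $n|\lambda|$. So no magnitude-weighted count can leave $n/2$ good rows. The missing idea is to count the rows that touch the largest coordinates, rather than weigh them.

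The paper splits $[n]\setminus J_0$ into $J_1=\sigma([n_1])\setminus J_0$ and pieces $J_2,J_3,J_4$ with ranks in $(n_1,n_2]$, $(n_2,n_3]$, $(n_3,n]$.
\begin{itemize}
\item It discards every row meeting $J_1$ at all. For $M^T$ there are at most $dn_1\le n/9$ such rows, because rows of $M$ have $d$ ones. For $M$ it uses Lemma~\ref{prop: supprow} with parameter $81$, noting $81n_1d/n<1$.
\item On $J_2,J_3,J_4$ it uses the pointwise bounds $5d^{3/4}|\lambda|$, $5|\lambda|$, $\tfrac54|\lambda|$, which come from non-steepness at $n_1$ and $n_2$.
\item These are combined with the count thresholds $9n_\ell d/n$ (resp.\ $81n_\ell d/n$). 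For example, $J_2$ contributes at most $\tfrac{9n_2d}{n}\cdot 5d^{3/4}|\lambda|\le 45a_2d|\lambda|$.
\end{itemize}

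Your closing remark in Step~3 is correct and worth keeping. On $\cE_{\ref{lem: colsum}}(\tau)$, double counting gives, simultaneously for all $J$, at most $(1+\tau)n/A$ rows of $M$ with at least $A|J|d/n$ ones in $J$. Used in place of Lemma~\ref{prop: supprow} inside the paper's decomposition, this makes the bound for $M$ uniform in $x$. The paper, by contrast, applies Lemma~\ref{prop: supprow} to sets that depend on the fixed $x$.
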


\smallskip 

To combine Theorems~\ref{thm: lower_bdd_T} and \ref{thm: invnonsteep}, we introduce the following two events. 
Let $\rho>0$ and $z\in \C$  satisfy assumptions of both theorems. 
 Denote 
\[
\cE_{\ref{cor:inv_cons}}(M, \rho):=\set{M\in \cM_{n, d}: 
\exists x\in \mbox{Cons}(a_3, \rho)\,\, \mbox{ such that }\,\, \|(M-z\I_n)x\|_2\le \theta(n, d)\|x\|_2},
\]
and
\[
\cE_{\ref{cor:inv_cons}}(M^T, \rho):=\set{M\in \cM_{n, d}: \exists x\in \mbox{Cons}(a_3, \rho)\,\, \mbox{ such that } 
\,\, \|(M^{T}-z\I_n)x\|_2\le \theta(n, d)\|x\|_2}. 
\]
  The following is an immediate consequence of Theorems~\ref{thm: lower_bdd_T} and \ref{thm: invnonsteep}.
\begin{corollary}\label{cor:inv_cons}
     There exist  positive absolute constants $C_{\ref{cor:inv_cons}}, c_{\ref{cor:inv_cons}}>0$ such that the following holds. Let $n$ be large enough and $C_{\ref{cor:inv_cons}} \log n\le d\le n/2$. Let $0<\rho\le  \sqrt{n}/(B_\cT d^{3/4})$. Let $z\in \C$ be such that $|z|\le \sqrt{d}\, \log \log d$. Then
     \[
     \P\left (\cE_{\ref{cor:inv_cons}}(M, \rho)\bigcup \cE_{\ref{cor:inv_cons}}(M^T, \rho)\right )\le 23/n.
     \]
\end{corollary}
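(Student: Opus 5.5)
The plan is to split $\mbox{Cons}(a_3,\rho)$ into its steep and gradual parts and take a union bound over Theorems~\ref{thm: lower_bdd_T} and \ref{thm: invnonsteep}. Every $x\in \mbox{Cons}(a_3,\rho)$ lies either in $\cT$ or in $\mbox{Cons}(a_3,\rho)\setminus\cT$. In the first case I will check that $x\in\cT'$. Indeed, $x\in \cT_1\cup\cT_2\subset\cT'$ directly, and if $x\in\cT_3$ then $x\in \cT_3\cap \mbox{Cons}(a_3,\rho)\subset \cT'$. Hence the event $\cE_{\ref{cor:inv_cons}}(M,\rho)$ is contained in
$$\cE_{\mbox{steep}}(M)\cup\{\exists x\in \mbox{Cons}(a_3,\rho)\setminus\cT:\ \|(M-z\I_n)x\|_2\le\theta(n,d)\|x\|_2\},$$
and the analogous inclusion holds for $M^T$.

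Next I will verify that the hypotheses of both theorems hold under those of the corollary. For $\rho$: the corollary assumes $\rho\le \sqrt n/(B_\cT d^{3/4})$. This gives $\rho\le\sqrt n/(5B_\cT)$ once $d^{3/4}\ge5$, which holds for $n$ large. Theorem~\ref{thm: lower_bdd_T} instead requires the strict inequality $\rho<\sqrt n/(B_\cT d^{3/4})$. At the endpoint I would either note that its proof only uses the non-strict version, or slightly shrink the constant. For $z$: the bound $|z|\le\sqrt d\log\log d$ implies $|z|\le 4d/25$ as soon as $\sqrt d\ge \frac{25}{4}\log\log d$, which is true for $d\ge C\log n$ with $n$ large. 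Finally I take $C_{\ref{cor:inv_cons}}$ to be the maximum of the constants required by the two theorems.

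There is one subtlety: the strict "$<\theta$" in $\cE_{\mbox{steep}}$ versus the "$\le\theta$" in $\cE_{\ref{cor:inv_cons}}$. For the gradual part this causes no trouble, because Theorem~\ref{thm: invnonsteep} gives the lower bound $\frac{3d\sqrt n}{25B_\cT}\|x\|_2$, which is strictly larger than $\theta$ up to constants. For the steep part I would handle it by replacing $\theta$ with $\theta/2$, or by observing that the bounds in the proof of Theorem~\ref{thm: lower_bdd_T} have slack. I expect this bookkeeping to be the only point needing care.

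With these checks in place, the probabilities add up. Theorem~\ref{thm: lower_bdd_T} bounds the combined steep events for $M$ and $M^T$ by $22/n$. The gradual event for $M^T$ is empty, since that bound holds deterministically. The gradual event for $M$ has probability at most $4e^{-8n/81}+ne^{-d/108}$, which is at most $1/n$ once $d\ge C\log n$ with $C\ge 216$ and $n$ is large. The total is therefore at most $23/n$.
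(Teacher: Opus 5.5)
Your proposal is correct and is the paper's argument: the paper calls the corollary an immediate consequence of Theorems~\ref{thm: lower_bdd_T} and~\ref{thm: invnonsteep}, which means the same steep/gradual split of $\mbox{Cons}(a_3,\rho)$ and the same union bound, $22/n$ from the steep part plus $4e^{-8n/81}+ne^{-d/108}\le 1/n$ from the gradual part. On the $<$ versus $\le$ issue, your second option is the right one, since the proof of Theorem~\ref{thm: lower_bdd_T} bounds events defined with $\le$ and $\theta(n,d)$ lies below every threshold used there, whereas replacing $\theta$ by $\theta/2$ would only prove a weaker statement (and, as in the paper, $x=0$ has to be tacitly excluded, since it lies in $\mbox{Cons}(a_3,\rho)$ and trivially satisfies the $\le$ condition).
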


\subsection{{Proof of \cref{thm: invnonsteep}}}

Without loss of generality, we assume that $x\neq 0$. Fix a permutation $\sigma=\sigma_{x}$ of $[n]$ such that $|x_{\sigma{(i)}}|=x_{i}^{*}$. Since $x\notin \cT\cup \{0\}$, we have $x_{n_{3}}^*\neq 0$. 
Since $x\in \mbox{Cons}(a_3, \rho)$ there exists  
 $\lam_0\in \C$ such that the cardinality of
\[
J_0:=\set{i\in [n]: |x_i-\lam_0|\le \frac{\rho \|x\|_2}{\sqrt{n}}}
\]
is at least $n-n_{3}+1$. 
Therefore, there exists $k, \ell$ such that $k\le n_{3}<\ell$ and $\si(k), \si(\ell)\in J_0$. 
By \cref{lem: l2norm_linf}, $\|x\|_2\le B_\cT x_{n_{3}}^*,$
hence, using $\rho\le \sqrt{n}/(5B_\cT)$, 
$$
 \frac{\rho \|x\|_2}{\sqrt{n}}\le  \frac{ x_{n_{3}}^*}{5}. 
$$
Thus, 
\[
 x_{n_{3}}^*\le x_k^*\le |\lam_0|+\frac{\rho \|x\|_2}{\sqrt{n}}\le |\lam_0|+\frac{ x_{n_{3}}^*}{5}.
\]
Similarly, we have
\[
x_{n_{3}}^*\ge x_\ell^*\ge |\lam_0|-\frac{\rho \|x\|_2}{\sqrt{n}}\ge |\lam_0|-\frac{ x_{n_{3}}^*}{5}.
\]
Therefore
\begin{equation}\label{ineqforrho}
\frac{5}{6}|\lam_0|\le x_{n_{3}}^*\le \frac{5}{4}|\lam_0|\quad\quad \mbox{ and } \quad \quad
\frac{\rho \|x\|_2}{\sqrt{n}} \leq \frac{x_{n_{3}}^*}{5}   \leq \frac{1}{4}|\lam_0|.
\end{equation}

Next, we split $[n]\setminus J_0$ as follows. Set
\[J_1=\si([n_{1}])\setminus J_0,
 J_2=\si([n_{2}])\setminus (J_0 \cup J_1), \, J_3=\si([n_{3}])\setminus (J_0 \cup J_1\cup J_2), \, J_4=\si([n])\setminus \lr{\cup_{i=0}^3 J_i}.
\]
Then $|J_1|\le n_{1}, |J_2|\le n_{2}, |J_3|\le n_{3}$, $|J_4|\le n_{3}$, $[n]=\bigcup_{i=0}^4 J_i$. One has
\[
   \forall j\in J_2\, : \quad |x_j|\le x_{n_{1}}^*\le  d^{3/4}  x_{n_{2}}^*\le 4d^{3/4}x_{n_{3}}^* \leq 5 d^{3/4}|\lam_0|,
\]
\[
 \forall j\in J_3\, : \quad |x_j|\le x_{n_{2}}^*\le 4x_{n_{3}}^*\le 5|\lam_0|, 
\]
and
\[
 \forall j\in J_4\, : \quad |x_j|\le x_{n_{3}}^*\le \frac{5}{4}|\lam_0|. 
\]
Now, given a matrix $M\in \cM_{n, d}$, we define
\[
I_1= \set{i\in [n]: \text{Supp}(R_i(M^{T}))\cap J_1 \neq \emptyset}, \quad I_1'= \set{i\in [n]: \text{Supp}(R_i(M))\cap J_1 \neq \emptyset}
\]
and, denoting $n_4=n_3$, for $\ell=2, 3, 4$ define
\[
I_\ell=\set{i\in [n]: \abs{\sR_i(M^{T})\cap J_\ell}\ge \frac{9n_{\ell}d}{n}},\, \,\, \,  I_\ell'=\set{i\in [n]: 
\abs{\text{Supp}(R_i(M))\cap J_\ell}\ge \frac{81n_{\ell}d}{n}}.
\]

 Since columns of $M^T$ contain exactly $d$ ones, we first note that for $M^T$ one has  
 $$|I_1| \leq d n_1 \leq \frac{n}{9},$$ provided that $a_1$ is small enough. Then applying \cref{lemma: suppcol} to $J=J_2, J_3, J_4$, 
\[
\abs{\set{i \leq n: |\text{Supp}(R_{i}(M^T)) \cap J_\ell| \geq \frac{9 |J_\ell|d}{n} }} \leq \frac{n}{9}.
\]
Since $|J_\ell|\le n_{\ell}$ for $\ell=2, 3, 4$, we obtain $|I_\ell|\le n/9, \mbox{ for } 1\leq \ell\leq 4.$

\smallskip 

For $M$, applying \cref{prop: supprow}, we observe  that  with probability at least $1-4\exp(-(8n/81))$,
\[
\forall \ell \in \{1, 2, 3, 4\}  : \quad \abs{\set{i \leq n: |\text{Supp}(R_{i}(M))\cap J_\ell| \geq \frac{81 |J_\ell|d}{n} }} \leq \frac{n}{9}.
\]
 Note that $$\frac{81 |J_1|d}{n} \leq \frac{81 n_1d} {n} <1,$$ provided that $a_1$ is small enough.   Since  $|J_\ell|\le n_{\ell}$ for $\ell=2, 3, 4$, defining the event 
$$\cE:=\{\forall  \ell \in \{1, 2, 3, 4\}  \, :\quad |I_\ell'|\le n/9\},$$
 we obtain 
\[
\P(\cE)\ge 1-4\exp(-(8n/81)).
\]

Set $I:=[n]\setminus \lr{\cup_{i=1}^4 I_i \cup \si([n_{3}])}$ and $I':=[n]\setminus \lr{\cup_{i=1}^4 I'_i \cup \si([n_{3}])}$. Then, assuming $a_3\leq 0.001$, 
\[
|I|\ge n-\frac{4n}{9}-n_{3} \ge \frac{n}{2},
\]
and, assuming that $\cE$ occurs, 
\[
|I'|\ge n-\frac{4n}{9}-n_{3} \ge \frac{n}{2}.
\]
Note that
\[
 \forall i\in I \cup I': \qquad |x_i|\le x^*_{n_{3}}\le \frac{5}{4}|\lam_0|.
\]

In the case of $M^T$, for $ i\in I$,  denote 
$$
  J_{\ell}'=J_{\ell}'(i):= \text{Supp}( R_{i}(M^T)) \cap J_\ell, \quad \mbox{ for } \ell\in \{0, 1, 2, 3, 4\}
$$
and note that $J_{1}'=\emptyset$ since $i \notin I_1$. Then using the triangle inequality, we observe that for every $ i \in I$,
\[
|\langle R_{i}((M^{T}-z\I_n)), x \rangle| \ge |
\sum_{j \in J_{0}'}x_j|-\sum_{j \in J_{2}'}|x_j|-\sum_{j \in J_{3}'}|x_j| - \sum_{j \in J_{4}'}|x_j|-|zx_i|.
\]
We estimate each term separately.  First note that similarly to Lemma~\ref{lem: colsum} one observes that 
with probability at least $1-n\exp(-d/108)$ for every $i\leq n$ one has $|\text{Supp}( R_{i}(M^T))|\geq 5d/6$. 
Thus, for $i \notin I_1 \cup I_2 \cup I_3 \cup I_4$ (in particular, $J_{1}'=\emptyset$) one has
\[
|J_{0}'|\geq 5d/6-|J_2'|-|J_3'|-|J_4'| \geq 5d/6 - \frac{9n_{2}d}{n}-\frac{18n_{3}d}{n} \geq 5d/6- 27 a_3 d\geq  4d/5
\]
(recall,  $a_3 \leq 1/1000$). Therefore, using  the definition of $J_0$, we obtain
\[
\big|\sum_{j \in J_{0}'}x_j\big| \geq |\lambda_{0}||J_{0}'|-\sum_{j \in J_{0}'}|x_{j}-\lambda_{0}| \ge \frac{4d}{5}\lr{|\lambda_{0}|-\frac{\rho }{\sqrt{n}}\|x\|_2}.
\]
Meanwhile, one has
\begin{align*}
    \sum_{j \in J_{2}'}|x_j|+\sum_{j \in J_{3}'}|x_j| + \sum_{j \in J_{4}'}|x_j| 
    &\leq |J_{2}'|x_{n_{1}}^*+ |J_{3}'|x_{n_{2}}^* +|J_{4}'|x_{n_{3}}^*\\
    &\leq \left(\frac{45 n_{2}d^{7/4} }{n} + \frac{57 n_{3}d}{n}\right) |\lambda_0|\le \frac{ d\, |\lam_0|}{5},
\end{align*}
provided  $a_2 \leq 1/250, a_3\leq a_2/30$. Putting together the above estimates, and using that for $i\in I$, $i\notin \si([n_{3}])$, 
hence $|x_i|\le  x_{n_{3}}^*\le (5/4)|\lam_0|$, 
we obtain for $d\geq 7^4$
\begin{align*}
    |\langle R_{i}((M^{T}-z\I_n)), x \rangle| &\ge\frac{4d}{5}\lr{|\lambda_{0}|-\frac{\rho }{\sqrt{n}}\|x\|_2}-\frac{d}{5}|\lam_0|-\frac{5}{4}|\lam_0||z| \ge \frac{d}{5}|\lam_0|,
\end{align*}
where we used (\ref{ineqforrho}) and $|z|\le 4 d/25$. Since $|I|\geq n/2$, by (\ref{ineqforrho}), 
this implies that 
\[
\|(M^{T}-z\I_n)x\|_2 \geq \frac{|\lambda_{0}|d}{5} \sqrt{\frac{n}{2}} \geq \frac{4d\sqrt{n}}{25\sqrt{2}}  x_{n_{3}}^* \geq \frac{3d\sqrt{n}}{25B_\cT}\|x\|_2,
\]
where we again used $\|x\|_2\le B_\cT x_{n_{3}}^*$.

\smallskip 

In the case of $M$, on event $\cE$, for $k\in I'$, define $J_{\ell}{''}=J_{\ell}{''}(k):=\text{Supp} R_{k}(M)\cap J_\ell$ for 
$\ell\in \{0, 1, 2, 3, 4\}$. Repeating the above argument (note that $|\text{Supp} R_{k}(M)|=d$) yields, 
\[
|\langle R_{k}((M-z\I_n)), x \rangle|\ge \frac{d}{5}\, |\lam_0|.
\]
Hence, conditioned on $\cE$, \[
\|(M-z\I_n)x\|_2 \geq  \frac{3d\sqrt{n}}{25B_\cT}\, \|x\|_2.
\]
This completes the proof.

\subsection{Lower bounds for vectors from $\cT_1$}\label{sec:t1_step}

In this section, we provide lower bounds on $\|(M-z\I_n)x\|_2$ and $\|(M^{T}-z\I_n)x\|_2$ for vectors from $\cT_1$. 
As $\cT_1=\emptyset$ in the case $n_1=1$, the results of this section are relevant only in the case $n_1>1$. 
We essentially follow the proof in  \cite[Lemma 3.7]{litvak2019smallest} with necessary modifications. The primary difference lies in the analysis of the left--multiplication $\|\bar{x}(M-z\I_n)\|_2$, where fixed column sums of $M$ are not fixed as in the $d$-regular case. We apply a probabilistic argument involving \cref{lem: rowsum} to resolve this issue.

\begin{lemma}\label{lem:t1}
There exists an absolute constant $C>0$ such that the following holds. 
Let $n\geq 3$. 
    Let $C\log n\le d \le n/2$ and $z\in \C$ satisfy $|z|\le d$. Consider the events
    \[\cE_{\ref{lem:t1}}(M^T):=\set{\exists x\in \cT_1\,\,\, \mbox{ such that }\,\,\, \|(M^{T}-z\I_n)x\|_2\le 
    \omega (n, d) \|x\|_2},
    \]
    and
    \[
    \cE_{\ref{lem:t1}}(M):=\set{\exists x\in \cT_1\,\,\, \mbox{ such that }\,\,\, 
    \|(M-z\I_n)x\|_2\le \omega (n, d)\|x\|_2},
    \]
    where 
    $$
       \omega (n, d)= \min \left\{ \frac{\sqrt{d} }{4 \sqrt{n}} ,\,  
            \frac{5  \, \ell_0^{r/2}\, (d \log n)^{1/4}}{  (6d)^{r}}  \right\} .
    $$
    Then
    \[
      \P\lr{\cE_{\ref{lem:t1}}(M)\bigcup \cE_{\ref{lem:t1}}(M^T)}\le \frac{7e}{n}.
    \]
\end{lemma}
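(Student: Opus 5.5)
We work on the intersection of high-probability events: $\cE_{\ref{lem: rowsum}}$ for $M$ (column sums of $M$ at most $(1+e^2)d\le 9d$, which holds with probability $\ge 1-n^{-3}$ by Lemma~\ref{lem: colsum} with $m=n$, $\beta=4$), and $\bigcap_{k\le \ep_0 n/d}\Om_{k,\ep_0}$ from Lemmas~\ref{Expansion1} and \ref{Expansion}. With $\ep_0=\sqrt{48\log n/d}$ we have $\ep_0^2 d=48\log n$, so those lemmas fail with probability at most $3e^{-\log n}+2e^{-1.5\log n}\le 5/n$. The total failure probability is then below $7e/n$. On this event the argument is deterministic, and it is the same for $M$ and $M^T$. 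For $M^T$ we use the second inequality of Lemma~\ref{lem: omega_ke} together with the fact that the columns of $M^T$ have exactly $d$ ones.

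Fix $x\in\cT_{1j}$ with $0\le j\le r$. Set $m:=\ell_0^j$ and $k:=\ell_0^{j+1}$, replacing $k$ by $\min\{\ell_0,n_1\}$ when $j=0$ and by $n_1$ when $j=r$. Let $J^\ell=\sigma_x([m])$ and $J^r=\sigma_x([k])\setminus J^\ell$, so that $|J^\ell\cup J^r|=k\le n_1\le a_1\ep_0 n/d$. The jump condition gives $x^*_m>6d\,x^*_k$; for $j=0$ this reads $x_1^*>6d\,x^*_{\ell_0}$.

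Apply Lemma~\ref{lem: omega_ke} with $q=k/m\le \ell_0\le 1/(100\ep_0)$. Then $4\ep_0 q\le 1/25$, so $|I^\ell(M)|\ge \tfrac{24}{25}dm$. Padding $J^r$ to size exactly $(q-1)m$ is harmless; for $j=r$ the ratio $k/m$ need not be an integer, and we handle this by enlarging $J^r$ to size $(\lceil k/m\rceil-1)m$, which is still at most $\ep_0 n/d$.

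Each $i\in I^\ell(M)$ has exactly one support point $s(i)\in J^\ell$, no support points in $J^r$, and at most $d$ support points among coordinates of modulus at most $x^*_k$. Hence
$$|\langle R_i(M),x\rangle|\ge |x_{s(i)}|-d\,x_k^*\ge \tfrac56|x_{s(i)}|,$$
using $|x_{s(i)}|\ge x_m^*>6d\,x^*_k$.

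Next we deal with the shift term $z x_i$. We discard those $i\in I^\ell$ that lie in $\sigma_x([k])$; there are at most $k\le n_1\ll dm$ of them when $m\ge1$. Actually a cleaner route is to also discard $i$ with $|x_i|>x^*_k$. For the remaining $i$,
$$|zx_i|\le d\,x_k^*\le |x_{s(i)}|/6,$$
so $|\langle R_i(M-z\I_n),x\rangle|\ge \tfrac23|x_{s(i)}|$.

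We then need many distinct $i$ attached to each large coordinate. Each column (or, for $M^T$, each row) has at most $9d$ ones, so every $s\in J^\ell$ is hit by at most $9d$ indices $i$. Since $|I^\ell|\ge \tfrac{24}{25}dm$, a counting argument shows that at least $c\,dm$ of these rows have $|x_{s(i)}|\ge x_m^*$. This gives
$$\|(M-z\I_n)x\|_2^2\ge c'\,dm\,(x_m^*)^2.$$

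Finally compare with $\|x\|_2$ using Lemma~\ref{lem: l2norm_linf}:
$$\|x\|_2\le \max\Big\{\frac{d^{1/4}(6d)^j}{15\log^{1/4}n},\sqrt{2n}\Big\}\,x^*_{\ell_0^j}.$$
Therefore
$$\frac{\|(M-z\I_n)x\|_2}{\|x\|_2}\ge \min\Big\{\frac{c\sqrt{d\ell_0^j}}{\sqrt{2n}},\ \frac{c\,\ell_0^{j/2}\sqrt d\,\log^{1/4}n}{d^{1/4}(6d)^j}\Big\}.$$
This is decreasing in $j$ in the relevant sense, so it is minimized at $j=0$ for the first term and at $j=r$ for the second. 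That yields $\omega(n,d)$ with the stated constants, after checking $(6d)^{-j}\ell_0^{j/2}\ge(6d)^{-r}\ell_0^{r/2}$.

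The main obstacle is the bookkeeping in this step. We must guarantee that the rows in $I^\ell$ are not all concentrated on a few coordinates of $J^\ell$ with small weight. The simple bound $\sum_{i\in I^\ell}|x_{s(i)}|^2\ge |I^\ell|\,(x_m^*)^2$ already suffices, since every $s\in J^\ell$ satisfies $|x_s|\ge x_m^*$. So the real care is only in the size constraints of Lemma~\ref{lem: omega_ke} (integrality of $q$ and $qm\le \ep_0 n/d$) and in removing the diagonal indices.
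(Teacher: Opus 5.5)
\textbf{Gap in the $M^T$ case.} Your overall route is the paper's. You restrict to the expansion events plus a column-sum event, apply Lemma~\ref{lem: omega_ke} to $J^\ell=\sigma_x([m])$ and $J^r$, remove the diagonal indices, use the $6d$ jump to bound each remaining inner product from below by a multiple of $x_m^*$, and convert with Lemma~\ref{lem: l2norm_linf}. However, the $M^T$ half fails as written. For $i\in I^\ell(M^T)$ the tail term is $\sum_{t\notin\sigma_x([k])}(M^T)_{it}|x_t|$. The number of ones in the $i$-th row of $M^T$ is the $i$-th \emph{column} sum of $M$, which is random. The fact you cite, that the columns of $M^T$ have exactly $d$ ones, is irrelevant here, so the claim ``at most $d$ support points'' is unjustified for $M^T$. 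The only column-sum bound you state is $(1+e^2)d\approx 8.4d$ (you later use ``at most $9d$''). With that bound the tail can be as large as $8.4d\,x_k^*$, while the definition of $\cT_1$ only guarantees $x_m^*>6d\,x_k^*$. So the tail is not dominated by $x_m^*$, and after subtracting $|zx_i|\le x_m^*/6$ your lower bound is negative. The step needs column sums of $M$ below roughly $5d$, since you need $(1+\tau)/6+1/6<1$.

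\textbf{How to repair it.} This is why the paper takes $\tau=1/2$ in $\cE_{\ref{lem: rowsum}}$. Row sums of $M^T$ are then at most $3d/2$, which gives $|\langle R_s(M^T-z\I_n),x\rangle|\ge x_{m}^*(1-\tfrac14-\tfrac16)\ge \tfrac12 x_m^*$, at a cost of $ne^{-d/12}\le 1/n$. Alternatively, your event $\bigcap_{k}\Om_{k,\ep_0}$ already contains $\Om^{in}_{1,\ep_0}$. That event says exactly that every column sum of $M$ lies in $[(1-\ep_0)d,(1+\ep_0)d]$. With it your inequality becomes $|x_{s(i)}|-(1+\ep_0)d\,x_k^*\ge (1-\tfrac{1+\ep_0}{6})|x_{s(i)}|$, and the rest of your argument, including the constants in $\omega(n,d)$, goes through. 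You need to invoke one of these explicitly.

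\textbf{A smaller slip.} When discarding the diagonal indices you write $k\le n_1\ll dm$. This is false for small $m$: with $m=1$ and $d\approx C\log n$, $n_1\approx a_1\ep_0 n/d$ is much larger than $d$. The correct bound is $k\le qm\le \ell_0 m\le dm/10$, which is what the paper uses in $|I_0^\ell|\ge 3dm_1/5-m_2\ge dm_1/2$.
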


\begin{proof}
   We first estimate the probability of the event $\cE_{\ref{lem:t1}}(M^T)$.    Recall that the parameters $\ell_0, \varepsilon_0, n_1$ are defined  in the beginning of \cref{sec: inverb_almost_const} and the events $\Om_{k, \varepsilon_1}^{in}$ and $\Om_{k, \varepsilon_2}^{out}$ for $1\le k\le n,\, \varepsilon_1\in [\varepsilon_0,1/2], \, \varepsilon_2\in [\varepsilon_0,1)$ --- in \cref{sec:vectex_exp}. Without loss of generality assume $n_1>1$ 
   (otherwise, $\cT_1=\emptyset$). For each $0\le i\le r-1$, let
\begin{align*}
          \cE_{i}:=\Om_{\ell_0^i, \varepsilon_0}^{in}\cap\Om_{\ell_0^i, \varepsilon_0}^{out}, \,  \quad \cE_{r}:=\Om_{n_1, \varepsilon_0}^{in}\cap \Om_{n_1, \varepsilon_0}^{out}, \, \quad \mbox{ and } \quad 
         \cE:=\bigcap_{i=0}^{r}\cE_i.
    \end{align*}
We choose $\tau=1/2$ in \cref{lem: rowsum}, and set \[
\cE':=\cE\cap \cE_{\ref{lem: rowsum}}.
\]    
    By  \cref{Expansion1}, \cref{Expansion}, and since $\varepsilon_0<1/2$, for every $1\le k\le \varepsilon_0 n/d$ one has 
 \begin{align*}
\P\big((\Omega^{in}_{k,\varepsilon_0}\cap\Omega^{out}_{k,\varepsilon_0})^c\big)
&\le 2\binom{n}{k}\exp\Big(-\frac{\varepsilon_0^2}{24}kd\Big)
  +\binom{n}{k}\exp\Big(-\frac{\varepsilon_0^2}{16}kd\Big)\\
&\le 3\binom{n}{k}\exp\Big(-\frac{\varepsilon_0^2}{24}kd\Big)
\le 3\Big(\frac{en}{k}\Big)^k\exp\Big(-\frac{\varepsilon_0^2}{24}kd\Big).
\end{align*}
Using that $\varepsilon_0^2 d=48\log n$, we obtain
\[
\P\big((\Omega^{in}_{k,\varepsilon_0}\cap\Omega^{out}_{k,\varepsilon_0})^c\big)
\le 3\Big(\frac{en}{k}\Big)^k n^{-2k}= 3\Big(\frac{e}{kn}\Big)^k
\]
Using that $\ell_0\geq 2$ and  $n_1>\ell_0^r\geq 1$,  by the union bound, 
\begin{align*}
 \P(\cE^c)&\le \sum_{j=0}^{r}\P(\cE_j^c)
 \le 3\sum_{j=0}^{r-1}\Big(\frac{e}{n\ell_0^j}\Big)^{\ell_0^j}
  +3\Big(\frac{e}{nn_1}\Big)^{n_1}
  \le \frac{3e}{n}\Big(\sum_{j=0}^{r-1}\frac{1}{\ell_0^j}+\frac{1}{n_1}\Big)
  \le \frac{3e}{n}\sum_{j=0}^{r}\frac{1}{\ell_0^j}\le \frac{6e}{n}.
\end{align*}
Then, by \cref{lem: rowsum} applied with $m=n$, 
    \begin{equation*}
        \P((\cE')^c)\le \P(\cE^c)+\P(\cE_{\ref{lem: rowsum}}^c)\le  \frac{7e}{n}.
    \end{equation*}
Fix a realization $M=(M_{ij})\in \cE'$. For $0\le i\le r$, set
\[m_1= \ell_0^i, 
\qquad 
m_2= \begin{cases}
\ell_0^{i+1},\quad & \mbox{ if } 0\le  i\le r-1 \\
n_1\quad & \mbox{ if } i=r
\end{cases} 
\]
Recall that, given $x$, the permutation   $\sigma=\sigma_x$ denotes a permutation $[n]$ such that $x_i^*=|x_{\sigma(i)}|$ for $i\in [n]$. Define the index sets
\[
J^\ell=J^\ell(x)=\sigma([m_1]), \, \quad J^r=J^r(x)=\si([m_2]\setminus[m_1]), \, \quad J=J(x)=(J^\ell\cup J^r)^c.
\]
Fix $0\le i\le r$ and $x\in \cT_{1i}$. Then for all $j\in J^\ell$ we have
\[
\abs{x_{j}}\ge x_{m_1}^*\ge 6d x_{m_2}^*, \quad \mbox{ and } \quad  \max_{k\in J}|x_k|\le x_{m_2}^*\le x_{m_1}^*/(6d).
\]
Let $A$  be either $M$ or $M^T$. 
Recall that $I^\ell(A)=I^\ell(A, J^\ell, J^r)$ is the (random) set of rows of $A$  having exactly one $1$ in $J^\ell(x)$ and no $1$'s in $J^r(x)$. Next we apply  \cref{lem: omega_ke} with $q=\ell_0$, $m=m_1$ (with corresponding adjustment for $i=r$, where we have to take $qm=m_2$ and $q=m_2/m_1\leq \ell_0$ --- we may assume that it is integer) to obtain
\[
|I^\ell(M^T)|\ge (1-2\varepsilon_0\ell_0)d|J^\ell|\ge 3dm_1/5,
\]
and
\[
|I^\ell(M)|\ge (1-4\varepsilon_0\ell_0)d|J^\ell|\ge 3dm_1/5.
\]
Let $I_0^\ell(A):=I^\ell(A)\setminus(J^\ell\cup J^r)$. In both cases ($A=M$ or $A=M^T$), one has 
$$|I_0^\ell(A)|\ge 3dm_1/5-m_2\ge dm_1/2$$ 
provided that the absolute constant in $d\geq C \log n$ is large enough. 

 Now let  $B$ denote $(M-z\I_n)$ if $A=M$  or $(M^{T}-z\I_n)$ if $A=M^T$. 
 For any $s\in I_0^\ell(A)$   there exists  $j(s)\in J^\ell$ such that the $s$-th row of $A$ satisfies
\[
\mbox{Supp }R_s(A)\cap J^\ell=\{j(s)\} \qquad \mbox{ and } \qquad \mbox{Supp }R_s(A)\cap J^r=\emptyset.
\]
Then one has
\begin{align*}
\abs{\inn{R_s(B), x}} &= \abs{x_{j(s)} + \sum_{k\in J} A_{sk}x_k - zx_s}\ge \abs{x_{j(s)}} - \sum_{k\in J} A_{sk}|x_k| - |z||x_s|.
\end{align*}
If $A=M^T$, $R_s(A)$ is the $s$-th row of $M^T$ (i.e., $s$-th column of $M$). Since $M\in \cE_{\ref{lem: rowsum}}$, $$\sum_{k\in J}M^T_{sk} = \sum_{k\in J}M_{ks}\le \sum_{k=1}^nM_{ks}\le 3d/2.$$
If $A=M$, $R_s(A)$ is the $s$-th row of $M$, hence
$$\sum_{k\in J}M_{sk}\le \sum_{k=1}^nM_{sk}=d.$$ 
Using $|x_k|\le x_{m_2}^*$ for $k\in J$,  $x_s^*\le x_{m_2}^*$ (since $s\notin J^\ell\cup J^r$), $x_{m_1}^*\ge 6dx_{m_2}^*$, and $|z|\le d$, we obtain 
\begin{align*}
  \abs{\inn{R_s(B), x}}
 & \ge x_{m_1}^*-x_{m_2}^*\sum_{k\in J}A_{sk}-|z|x_{m_2}^*
\ge x_{m_1}^*-\frac{x_{m_1}^*}{6d}\cdot\frac{3d}{2}-\frac{x_{m_1}^*}{6}\ge \frac{1}{2} x_{m_1}^*.
\end{align*}
Since this holds for at least $\vert I_0^\ell(A)\vert \ge dm_1/2$ indices $s$, we have
\[\|Bx\|_2\ge \frac{1}{2}x^*_{m_1}\sqrt{\frac{dm_1}{2}}= \frac{\sqrt{d} \, \ell_0^{i/2}}{2\sqrt{2}}\, x^*_{m_1}.
\]
Applying \cref{lem: l2norm_linf} for $x\in \cT_{1i}$, we obtain that in both cases 
$B=(M-z\I_n)$ and $B=(M^{T}-z\I_n)$,
\[
   \|Bx\|_2\ge \min \left\{ \frac{\sqrt{d} \, \ell_0^{i/2}}{4 \sqrt{n}} ,\,  
   \frac{5 \sqrt{d} \, \ell_0^{i/2} \log^{1/4} n}{ d^{1/4} \, (6d)^{i}}  \right\} 
    \|x\|_2  \geq 
    \min \left\{ \frac{\sqrt{d} }{4 \sqrt{n}} ,\,  
   \frac{5  \, \ell_0^{r/2}\, (d \log n)^{1/4}}{  (6d)^{r}}  \right\} 
    \|x\|_2  .
\]
This completes the proof. 
\end{proof}

\subsection{Lower bounds for vectors from $\cT_2\cup (\cT_3\cap \mbox{Cons}(a_3, \rho))$} \label{individual}

In this section, we provide
lower bounds on $\|(M-z\I_n)x\|_2$ and $\|(M^{T}-z\I_n)x\|_2$ for vectors from 
$\cT_2\cup (\cT_3\cap \mbox{Cons}(a_3, \rho))$. We first estimate 
the cardinality of certain nets --- discretizations of the sets of steep vectors $\cT_2$ and $\cT_3$ introduced earlier. Then we obtain individual probability bounds for a fixed vector in a net. Finally we conclude with the standard $\ep$-net argument.

\subsubsection{Nets}\label{sec:nets}
Recall that $\1=(1,1, \ldots, 1)$,  $\e$ denotes the vector $\1/\sqrt{n}$, $P_{\e}$ denotes the projection on span of  $\e$, and $P_{\e^{\perp}}$ denotes the projection on $\e^{\perp}$. The triple norm on $\C^n$ is defined by 
\[ \tnrm{x}=\sqrt{\|P_{\e^{\perp}} x\|_{2}^{2} + d \|P_{\e} x\|_{2}^{2}}.
\]
 For convenience, we will consider the following normalization: 
\[
\cT_2':=\left \{x\in \cT_2: x_{n_{1}}^*=1\right\}=\left\{\frac{x}{x_{n_{1}}^*}: x\in \cT_2\right\}
\quad \mbox{ and } \quad 
\cT_3':=\left \{x\in \cT_3: x_{n_{2}}^*=1\right\}\cap \mbox{Cons}(a_3,\rho),
\]
where $0<\rho\le  \sqrt{n}/(B_\cT d^{3/4})$.

\begin{lemma}\label{lem: net_Tvectors}
    There exists an absolute constant $C>0$ such that the following holds. Let $n\ge 1$ and let $C\log n\le d\le n/2$. Let $0<\rho\le  \sqrt{n}/(B_\cT d^{3/4})$. Let $i\in \{2, 3\}$. There exists a set $\cN_i=\cN_i'+\cN_i''$, $\cN_i''\subset \mbox{span} \{\1\}$, with the following properties:
    \begin{enumerate}
        \item $|\cN_i|\le \exp(6n_{i}\log d)$.
        \item For every $u\in \cN_i'$ one has $u_j^*\le 1/4+d^{-3/4}$ for all $j\ge n_{i}$.
        \item For every $x\in \cT_i'$ there is $u\in \cN_i'$ and $w\in \cN_i''$ such that 
       \[\|x-u\|_\infty \le \frac{1}{ d^{3/4}}, \quad \quad \|w\|_\infty \le \frac{1}{ d^{3/4}}, 
        \quad\quad \mbox{ and }  \quad \quad \tnrm{x-u-w} \le \frac{\sqrt{2n}}{ d^{3/4}}.
        \]
    \end{enumerate}
\end{lemma}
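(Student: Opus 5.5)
The plan is to build the net coordinate-wise, separating the few large coordinates from the ``flat'' part, which is almost constant. Fix $i\in\{2,3\}$ and $x\in\cT_i'$. For $i=2$ we have $x^*_{n_1}=1$, and since $x\notin\cT_1$ the top coordinates are controlled: $x_1^*\le (6d)^{r+1}$ by the chain of non-jumps used in the proof of \cref{lem: l2norm_linf}. All coordinates outside $\sigma([n_2])$ satisfy $|x_j|\le x^*_{n_2}<d^{-3/4}$. Hence only $n_2$ coordinates matter at scale $d^{-3/4}$, and the rest can be rounded to $0$ in $\ell_\infty$.

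For $i=3$ we have $x^*_{n_2}=1$, and the coordinates outside $\sigma([n_3])$ satisfy $|x_j|\le x^*_{n_3}<1/4$. Since $x\in\mbox{Cons}(a_3,\rho)$, there is $\lambda$ such that more than $(1-a_3)n$ coordinates lie within $\rho\|x\|_2/\sqrt n\le \rho B_{\cT}\, x^*_{n_3}/\sqrt n\le d^{-3/4}$ of $\lambda$. Here I use \cref{lem: l2norm_linf} (namely $\|x\|_2\le \bt3\le B_\cT$ times the appropriate order statistic) together with the assumption on $\rho$. Since $a_3 n\approx n_3$, the coordinates outside a set of size about $n_3$ are within $d^{-3/4}$ of $\lambda$, with $|\lambda|\le 1/4+d^{-3/4}$.

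The construction is as follows. Choose a support set $S$ of size $n_i$ (or $n_3+$ a small overlap), in $\binom{n}{n_i}\le (en/n_i)^{n_i}$ ways. On $S$, round each coordinate to the complex lattice $d^{-3/4}(\mathbb Z+i\mathbb Z)/\sqrt2$ inside a disc of radius at most $(6d)^{r+1}$ for $i=2$, or $d^{3/4}(6d)^{r+1}$ for $i=3$. The real obstacle is counting: $(6d)^{r+1}$ can be as large as roughly $\sqrt n$ times polylogarithmic factors, so each coordinate contributes about $\log(d\,\sqrt n)$ to the entropy rather than $\log d$. I would handle this by also using $\|x\|_2\le \btt$ and rounding the large coordinates in a cruder way. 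Alternatively, since $\log(en/n_i)\approx\frac34\log d$ or $\log(1/a_3)$, and the number of coordinates exceeding $d$ is at most $\ell_0^{r}\ll n_i/d$, their entropy cost $\ell_0^r\log n$ is absorbed into $n_i\log d$. Coordinates bounded by a power of $d$ then cost $O(\log d)$ each, and the total stays within $\exp(6n_i\log d)$.

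Off $S$, set $u_j=0$ in the case $i=2$; in the case $i=3$, set $u_j=0$ as well and put $\lambda$ into $\cN_i''$ as $w=\lambda'\1$. Property 2 is then immediate. For the triple norm, write $v=x-u-w$. We have $\|P_{\e^\perp}v\|_2\le\|v\|_2\le\sqrt n\|v\|_\infty$, and $\sqrt d\|P_\e v\|_2=\sqrt{d/n}\,|\sum_j v_j|$. For this second term I would adjust $w$: choose the constant in $\cN_i''$ on a grid of mesh $d^{-3/4}/\sqrt d$ so that $|\sum v_j|\le n\,d^{-5/4}$. Note that the average of $x-u$ is small, at most $d^{-3/4}$, only on the flat part, but the mean over the $n_i$ large coordinates can be large. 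I therefore let $w$ absorb $\frac1n\sum(x-u)_j$ up to the grid error, while requiring $\|w\|_\infty\le d^{-3/4}$. This forces me to put $\lambda$ into $u$ instead and keep in $w$ only the fine correction, whose size is bounded by $\|x-u\|_\infty\le d^{-3/4}$. The resulting grid for $w$ has $O(d^{1/2})^2$ points, which is negligible. The conclusion is $\tnrm{v}\le\sqrt{n}d^{-3/4}+\sqrt n d^{-3/4}\le\sqrt{2n}\,d^{-3/4}$ after tuning constants.
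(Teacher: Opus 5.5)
\textbf{The gap: the entropy of the largest coordinates for $i=2$.} Your handling of the flat part, of $w$, and of the two pieces of $\tnrm{x-u-w}$ is the standard argument. There you keep the level $\lambda$ inside $u$ and take $w$ to be a grid point of mesh $\asymp d^{-5/4}$ approximating $\frac1n\sum_j (x-u)_j$, whose modulus is at most $\|x-u\|_\infty$. The gap is property~1 for $i=2$, i.e.\ the entropy of the largest coordinates of $x\in\cT_2'$. Both estimates you use there are false in the sparse regime.

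Take $d=C\log n$. Then $\ell_0$ is an absolute constant, $r\approx \log n/\log \ell_0$, and $(6d)^{r+1}=n^{\Theta(\log\log n)}$, not $\sqrt n$ up to logarithms. So a single coordinate of that size contributes $\asymp (r+1)\log(6d)\asymp \log n\,\log\log n$ to $\log|\cN_2|$ at precision $d^{-3/4}$.

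Moreover, $\ell_0^r\ll n_2/d$ fails in general: $\ell_0^r\ge n_1/\ell_0\asymp n\log n/d^2$, while $n_2/d\asymp n/d^{7/4}$. For $d=C\log n$ one even has $\ell_0^r\asymp n_1\asymp n/d$. So already your own count $\ell_0^r\log n\asymp n$ exceeds the budget $6n_2\log d\asymp n\log\log n/(\log n)^{3/4}$.

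Your other fallback does not rescue this. Coarser rounding of large coordinates is excluded by $\|x-u\|_\infty\le d^{-3/4}$. Covering a Euclidean ball of radius $B(\cT_2)$ in $\C^{n_1}$ by $\ell_\infty$-cubes of side $d^{-3/4}$ costs $\asymp\log\bigl(B(\cT_2)d^{3/4}/\sqrt{n_1}\bigr)$ per coordinate, again of order $\log n\,\log\log n$. For $i=3$ your crude count does survive, because the budget is $\asymp n\log d$ and $n_1\log n\ll n_3$ once $C$ is large.

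\textbf{The missing idea: a multi-scale count.} Use the whole chain of non-jumps coming from $x\notin\cT_1$, exactly as in the proof of \cref{lem: l2norm_linf}. For $x\in\cT_2'$, coordinates of rank in $(\ell_0^k,\ell_0^{k+1}]$, $0\le k<r$, are bounded by $x^*_{\ell_0^k}\le (6d)^{r+1-k}$. Those of rank in $(\ell_0^r,n_1]$ are bounded by $6d$.

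Record the nested sets $\sigma([\ell_0^k])\subset\sigma([n_1])\subset\sigma([n_2])$, and round each level on a grid of mesh $\asymp d^{-3/4}$ in its own disc. For $j\ge 1$, the number of coordinates at scale $(6d)^{j+1}$ is at most $n_1\ell_0^{-(j-1)}$, while the cost per coordinate grows only linearly in $j$. Since $\ell_0\ge 2$, the top $n_1$ coordinates, including the nested sets, cost $O(n_1\log d)$ in total.

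Because $n_1/n_2\asymp\sqrt{\log n}/d^{3/4}\to 0$, this fits into $\exp(6n_2\log d)$. The remaining cost is about $\frac94 n_2\log d$, spent on choosing $\sigma([n_2])$ and on the coordinates of rank in $(n_1,n_2]$.

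With this multi-scale count the rest of your construction goes through. One correction for $i=3$: the bound to use is $\|x\|_2\le B(\cT_3)$, not the $\cT^c$ estimate. Moreover $B(\cT_3)=\frac{3}{13}B_\cT$ when $n_1>1$, which leaves the room needed to round $\lambda$ without exceeding $\|x-u\|_\infty\le d^{-3/4}$.
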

As the proof of this lemma repeats the proofs from \cite[Lemma 3.8]{litvak2019smallest} and \cite[Lemma 6.7]{litvak2022singularity}, we  omit it.

\subsubsection{Individual probability}

We turn now to the individual probability bounds.
Following the proof in \cite{litvak2019structure, litvak2022singularity}, we consider any $n\times n$ complex matrix $W$ instead of the shift $z\I_n$.
Recall that $\cM_{n, d}$ denotes the class of  all $n\times n$ matrices with entries in $\{0,1\}$ for which each row sums to $d$.  Recall that the probability measure $\P$ on $\cM_{n,d}$ is the uniform probability measure, in particular rows of a random matrix from $\cM_{n,d}$ are independent.  For a fixed vector $x$ from the nets constructed earlier, we need to  bound from below both $\|(M+W)x\|_2$ and $\|(M^{T}+W)x\|_2$ for $M\in \cM_{n,d}$.
As $M$ has independent rows, whereas $M^T$ does not, we  will treat $A=M$ and $A=M^T$ separately. To obtain a lower bound, it suffices to obtain an anti-concentration bound for the inner products $\inn{R_{i}(A+W), x^\dagger}$ for indices $i\in [n]$, where $A$ stands  either for $M$ or for $M^T$, given a vector $z\in \C^n$, 
$$x^\dagger=\bar{x}=(\bar{x}_i)_{i=1}^n$$
for  $x\in \C^n$ with $\bar{x}_i$ is the complex conjugate of $x_i\in \C$. 
To estimate this inner product, we will apply Esseen inequalities (see \cref{rogozin1961increase}) for sums of independent random variables. To create independent random variables, we will use \cref{lem: omega_ke}  for  $2m$ columns of $A$ corresponding to $m$ largest and $m$ smallest (in modulus) coordinates of $x$, where in our application $m=n_1$ or $m=n_2$. The combinatorial input \cref{lem: omega_ke} applies only to the sets of columns of size at most $ \varepsilon_0 n/d$. This restriction is satisfied when $m=n_1$, but fails for $m=n_2$. To overcome this problem, following the idea from \cite[Section 4.3]{litvak2019structure}, we split the selected $2m$ columns into disjoint blocks of size at most $\varepsilon_0 n/d$ each. Conditioning on this splitting, we obtain independent random variables 
such that their sum is essentially $\inn{R_{i}(A+W), x^\dagger}$. This allows us to apply \cref{rogozin1961increase} for sums of independent random variables.

Recall that given $J\subset [n]$ and an $n\times n$ matrix $A$, by $I(J, A)$ we denote 
the set of row indices where the row has exactly one $1$ in the columns indexed by $J$, that is,
\[
I(J, A)=\set{i\in [n]: \abs{\mbox{Supp} (R_i(A)) \cap J}=1}.
\]
%
%
Given a subset $J\subset [n]$, we define two families of $n\times |J|$ matrices,
\[
\cM^\mathrm{col}_J:=\set{V\in \R^{n\times |J|}: \exists M\in \cM_{n, d} \mbox{ such that } V=M_{[:, J]}}
\]
and
\[
\cM^\mathrm{row}_J:=\set{V\in \R^{n\times |J|}: \exists M\in \cM_{n, d} \mbox{ such that } V=(M^T)_{[:, J]}},
\]
where for a matrix $A$, $A_{[:,J]}$ denotes the submatrix consisting of the columns of $A$ with indices in $J$.

Fix a positive integer $1\le q_0\le n$ and a disjoint partition $J_0, J_1, \dots, J_{q_0}$ of $[n]$. Given subsets $I_1, I_2, \dots, I_{q_0}$ of $[n]$, set $\cI=(I_1, I_2, \dots, I_{q_0})$ and consider the following two classes.
For $V\in \cM^\mathrm{col}_{J_0}$, denote
\[
\mathcal{F}(\mathcal{I}, V) := \left\{ M \in \mathcal{M}_{n,d}:  M_{[:,J_0]} = V \text{ and } I(J_k, M)=I_k , \forall k \in [q_0]
\right\}
\]
and for $V\in \cM^\mathrm{row}_{J_0}$, denote
\[
\mathcal{G}(\mathcal{I}, V) := \left\{ M \in \mathcal{M}_{n,d}:  (M^T)_{[:,J_0]} = V \text{ and } I(J_k, M^T)=I_k , \forall k \in [q_0]
\right\}.
\]
Note that these two classes depend on the choice of $\cI$ and they can be empty. In words, we start with a matrix $A$ such that either $A=M$ or $A=M^T$, for $M \in \cM_{n, d}$ and proceed  as follows. First, we fix the columns of $A$ indexed by $J_0$ to be $V$. Then for each $k\in [q_0]$, we also fix the row indices of $A$  that have exactly one $1$ in columns of $A$ indexed by $J_k$. 
 
In both cases, for any fixed partition $J_0, J_1, \dots, J_{q_0}$ of $[n]$, the families $\{\mathcal{F}(\mathcal{I}, V)\}_{\cI, V}$ and $\{\mathcal{G}(\mathcal{I}, V)\}_{\cI, V}$ form disjoint decompositions of $\mathcal{M}_{n,d}$:
 \[
 \mathcal{M}_{n,d}=\bigcup_{V\in \cM^\mathrm{col}_{J_0}}\ \bigcup_{\mathcal{I}\in (\cP([n]))^{q_0}}\mathcal{F}(\mathcal{I}, V) \qquad\text{and}\qquad \mathcal{M}_{n,d}=\bigcup_{V\in \cM^\mathrm{row}_{J_0}}\ \bigcup_{\mathcal{I}\in (\cP([n]))^{q_0}}\mathcal{G}(\mathcal{I}, V),
 \]
where $\cP([n])$ is the power set of $[n]$.

Moreover, given $\cI$ and $V$, we further split each class $\cF(\cI, V)$ (and similarly $\mathcal{G}(\mathcal{I}, V)$)  into smaller equivalence classes using an equivalence relation defined below. 

Equivalence for $\cF(\cI,V)$: 
Fix a row index $i_0\in [n]$ and a subset $K\subset [q_0]$. Let
\[
K_0:=\{0\}\cup([q_0]\setminus K),  \qquad J^*=J^*(K):=\bigcup_{q\in K_0}J_q=J_0\cup \bigcup_{q\in [q_0]\setminus
     K} J_q.
\]
We say that two matrices $M=(M_{ij}), M'=(M_{ij}')\in \cF(\cI, V)$ are equivalent if
\begin{enumerate}[label=(\roman*)]
\item For all $1\le s\le i_0-1$ and for all $j\in [n]$, $M_{sj}=M'_{sj}$.
\item  For all $s\in [n]$ and all $j\in J^*$, $M_{sj}=M'_{sj}$.
    \item For every row $s\in [n]$ and for every $ q\in  K$, the number of $1$'s in columns indexed by $J_q$ is the same:
    \[
    \sum_{j\in J_q}M_{sj}= \sum_{j\in J_q}M_{sj}'.
    \]
\end{enumerate}
Denote by $\cH:=\cH(\cF(\cI, V),i_0, K)$ the set of all equivalence classes corresponding to this relation. Note that 
\[
\cF(\cI, V)=\bigcup_{H\in\cH^M}H.
\]

Equivalence for $\cG(\cI,V)$: 
Fix a row index $i_0\in [n]$ and a subset $K\subset [q_0]$, and define $K_0$ and $J^*(K)$ as above.  For $M, M'\in \cG(\cI, V)$, we say that two matrices $M, M'$ are equivalent if their transposes  $M^T$ and $(M')^T$ satisfy the same three conditions above with $M^T, (M')^T$ in place of $M, M'$. Denote by $\cH^{M^T}:=\cH(\cG(\cI, V),i_0, K)$ the set of all equivalence classes corresponding to this relation, then
     \[
     \cG(\cI, V)=\bigcup_{H\in\cH^{M^T}}H.
     \]

Note that for matrices in a given class $H\in\cH^M$ (or $H\in\cH^{M^T}$), all columns in $J^*(K)$ and the first $i_0-1$ rows are fixed, thus the randomness comes only from the positions on the $1$'s inside the blocks $[n]\times J_q, q\in K$, subject to a prescribed sum in each row. Thus, in a sense, these blocks are independent of each other on $H$. 

Finally, we fix a vector $x\in \C^n$, a fixed constant vector $y=y_1\mathbf{1}\in \mbox{span}\{\1\}, y_1\in \C$, and a deterministic matrix $W=(w_{ij})$. Let $A=(A_{ij})$ denote either $M\in \cM_{n, d} $ (when conditioning on $\cF(\cI, V))$ or $M^T$ for $M\in \cM_{n, d} $ (when conditioning on $\cG(\cI, V)$).  Given a class $H\in \cH^M$ or $H\in \cH^{M^T}$ (in particular, $V, \cI, i_0, K$ are fixed), and for each $k\in K$, define the random variable
$\xi_k$ on $H$  by 
\[\xi_k=\xi_k(A):=\sum_{j\in J_k}A_{i_0j}x_j.
\]
In words, $\xi_k$ represents the inner product of $x$ with the restriction of the $i_0$-th row of $A$ to $J_k$. Later we will use this construction in the case $i_0\in I_k$ for all $k\in K$, so that $\abs{\mbox{Supp} (R_{i_0}(A)) \cap J_k}=1$ and each $\xi_k$ selects exactly one coordinate $x_j$ from the block $[n]\times J_k$. As we have already mentioned, by construction, for matrices $A\in H$, the rows $s<i_0$ are fixed, the entries in columns $J^* $ are fixed, and each row's sum over $J_k$ for $k\in K$ is prescribed. Thus,  the blocks $[n]\times J_k$ for $k\in K$ are independent on  $H$, and the random variables $\xi_k$, $k\in K$, are independent.

Define also
  \[
     \xi'=\xi'(A):=y_1\sum_{k\in K}\sum_{j\in J_k}A_{i_0j}
+\sum_{k\in K_0}\sum_{j\in J_k} A_{i_0j}(x_j+y_j)+\sum_{j=1}^n w_{i_0j}(x_j+y_j).
     \]
By definition of the equivalence classes, $\xi'$ is constant on $H$. Therefore,
 \[
     \inn{R_{i_0}(A+W), (x+y)^\dagger}=\sum_{k\in K} \xi_k+\xi'.
     \]
 Suppose there exists $\alpha>0$ such that $\cL(\xi_k, 1/3)\le \alpha$ for every $k\in K$. Then, applying \cref{rogozin1961increase} with $t_0=1/3$, we obtain
\begin{equation}\label{eq: innprds}
    \P\lr{\abs{\inn{R_{i_0}(A+W), (x+y)^\dagger}}\le 1/3}\le \frac{C_{\ref{rogozin1961increase}}}{\sqrt{(1-\alpha)|K|}}.
\end{equation}

Recall that $\Om_{k, \varepsilon}:=\Om^{in}_{k, \varepsilon}\cap \Om^{out}_{k, \varepsilon}$ was defined 
in (\ref{generalomega}) and the event  $\cE_{\ref{lem: rowsum}}(\tau)$ in Lemma~$\ref{lem: colsum}$. 

\begin{lemma}\label{lem: individ1}
    There exist absolute constants $C_1, C_2, C_3, c_{\ref{lem: individ1}}>0$ such that the following holds. Let $C_3 \log n \le d\le n/2$, and let $\varepsilon\in [\varepsilon_0, 0.01]$. Set $m_0=\max\{1, \lfloor  \varepsilon n/2d\rfloor\}$, and let $m_1, m_2$ be such that
    \[
    1\le m_1\le m_2\le n-m_1.
    \]
    Assume that $x\in \C^n$ satisfies
    \[
    \forall i>m_2:\, \, \,      x_{m_1}^*>2/3+x_i^*.\]
     Let $y\in \mbox{span}\{\1\}$ and $z\in \C$. Let $m=\min\{m_0, m_1\}$ and $\cE_{\ref{lem: rowsum}}=\cE_{\ref{lem: rowsum}}(\ep)$. Consider the event
     \[\cE_{\ref{lem: individ1}}'(x,y,A,k)=
     \lr{\cE_{\ref{lem: individ1}}(x,y,M)\cap \Om_{2k,\varepsilon} \cap \cE_{\ref{lem: rowsum}}} \cup \lr{\cE_{\ref{lem: individ1}}(x,y,M^T)\cap \Om_{2k,\varepsilon} \cap \cE_{\ref{lem: rowsum}}},
     \]
     where $k=m_0$ or $k=m_1$ and for $A=M$ or $A=M^T$,
    \[
     \cE_{\ref{lem: individ1}}(x,y,A):=\set{M\in \cM_{n, d}:\, \|(A-z\I_n)(x+y)\|_2\le \sqrt{c_{\ref{lem: individ1}}md}}.
    \]
       Then
    \begin{enumerate}
        \item in the case $m_1\le m_0$, one has $\quad \P\lr{\cE_{\ref{lem: individ1}}'(x,y,A, m_1)}\leq (5/6)^{m_1d/2};$
        \item in the case $m_1>C_1 m_0$ and $\varepsilon=0.01$, one has 
        $\quad \P\lr{ \cE_{\ref{lem: individ1}}'(x,y,A, m_0)}\leq  \lr{\frac{C_2 n}{m_1d}}^{m_0d/4}$.
    \end{enumerate}
\end{lemma}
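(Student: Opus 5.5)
The plan is to reduce $\|(A-z\I_n)(x+y)\|_2$ to many rows, each with an anti-concentrated inner product, and then tensorize over independent rows or, for $A=M^T$, over a sequential conditioning.

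\textbf{Setup.} Let $J^\ell=\sigma_x([m])$ be the $m$ largest coordinates and $J^r$ the $m$ smallest. For $j\in J^\ell$ and $j'\in J^r$ we have $|x_j|-|x_{j'}|>2/3$, because $m\le m_1$ and the indices of $J^r$ exceed $m_2$ (using $m_2\le n-m_1$). On $\Om_{2k,\varepsilon}\cap\cE_{\ref{lem: rowsum}}$ we apply \cref{lem: omega_ke} with $q=2$. This gives at least $(1-8\varepsilon)dm$ rows in $I^\ell(A)$ and at least that many in $I^r(A)$, and at least $(1-8\varepsilon)dm$ rows having exactly one $1$ in $J^\ell\cup J^r$ counted over both sides.

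In case 1 ($m=m_1\le m_0$, so $2m\le\varepsilon n/d$) a single block suffices. We take $q_0=1$, $J_1=J^\ell\cup J^r$, and $J_0$ the rest. For a row $i_0\in I_1$, the variable $\xi_1=x_{j}$ with $j$ uniform-ish on $J_1$. We then need $\cL(\xi_1,1/3)\le\alpha<1$. The row lies in $I^\ell$ or $I^r$, and a switching between the $\ell$ and $r$ halves of the block inside the equivalence class $H$ moves the single $1$ between $J^\ell$ and $J^r$. By the $2/3$ gap, no disk of radius $1/3$ contains values from both sides, so the concentration is at most the larger class proportion. I expect this proportion to be $\le 5/6$ after accounting for the counts from \cref{lem: omega_ke}.

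\textbf{Tensorization.} For $A=M$, rows are independent given the column block $J_0$ and $\cI$. Revealing rows sequentially with the equivalence relation (condition (i) fixes earlier rows), we get conditional probability $\le 5/6$ that $|\inn{R_{i_0}(A-z\I),(x+y)^\dagger}|\le1/3$ for each $i_0\in I_1$. Note that $\xi'$ absorbs $z$, $y$, and $J_0$. If $\|(A-z\I)(x+y)\|_2\le\sqrt{c\,md}$, then at most $9c\,md$ rows exceed $1/3$, so at least $(1-8\varepsilon-9c)md\ge md/2$ rows of $I_1$ are small. A union over the choice of these row sets costs $\binom{md}{\cdot}$, which is absorbed when $c$ is small, and gives $(5/6)^{m_1d/2}$. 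For $A=M^T$ we use $\cG(\cI,V)$ in the same way. The switching now acts on columns of $M$ restricted to the row block, which preserves row sums. This is exactly why the classes only fix row-wise counts within $J_q$.

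In case 2 ($m_1>C_1m_0$, $\varepsilon=0.01$), we take $J^\ell=\sigma([m_1])$, $|J^\ell|=m_1$, and $J^r$ among indices $>m_2$. We split $J^\ell\cup J^r$ into $q_0\approx m_1/m_0$ blocks $J_q$ of size $2m_0$, each containing $m_0$ elements from each side. Applying \cref{lem: omega_ke} blockwise, each block has $\approx m_0d$ rows with a single hit, and a typical row is a single-hit row for many blocks $K$ with $|K|\gtrsim m_1d/n$ (by double counting). The Esseen bound \eqref{eq: innprds} then gives per-row probability $\le C/\sqrt{|K|}\le\sqrt{C n/(m_1d)}$. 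Tensorizing over $\gtrsim m_0d/2$ rows gives $(C_2n/(m_1d))^{m_0d/4}$.

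The main obstacle is the rigorous per-row anti-concentration under the switching and conditioning, that is, showing the single $1$ lands on the $J^\ell$ versus $J^r$ side with conditional probability bounded away from $1$. In case 2 there is the further difficulty of guaranteeing that enough rows are single-hit in many blocks simultaneously, with the union over $\cI$ and row choices not destroying the bound.
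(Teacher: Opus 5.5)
Your plan is essentially the paper's own proof: the same partition of the top $m_1$ and bottom $m_1$ coordinates into blocks $J_q=J_q^\ell\cup J_q^r$ of size $2m$, conditioning on the classes $\cF(\cI,V)$, $\cG(\cI,V)$ and then on the row-by-row equivalence classes, double counting to find $\gtrsim md$ rows that are single-hit in $\gtrsim m_1d/n$ blocks, Esseen's inequality, and the $\binom{k}{k_0}$ union over which rows are small. The paper closes the step you flag as follows: for $A=M$ the single $1$ in $J_q$ is uniform on $J_q$ given the class, so $\alpha\le 1/2$ by symmetry alone, while for $A=M^T$ a simple switching (imported as Claim~4.8 of \cite{litvak2019structure}) gives $\alpha'\le 4/5$; and no union over $\cI$ is needed, because one only bounds the conditional probability uniformly over admissible classes.
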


\begin{remark}\label{rem:lem611}
    We apply this lemma for $\cT_i$ with the following choice of parameters.  For $i=2$, we set $m_1=n_1<m_0$, $m_2=n_2$, $\varepsilon=0.01$, obtaining 
    \[
     \P\lr{\lr{\cE_{\ref{lem: individ1}}(x,y,M)\cap \Om_{2n_1,0.01} \cap \cE_{\ref{lem: rowsum}}} \bigcup \lr{\cE_{\ref{lem: individ1}}(x,y,M^T)\cap \Om_{2n_1,0.01} \cap \cE_{\ref{lem: rowsum}}}}\le (5/6)^{n_1d/2}.
    \]
    For $i=3$, we set $m_1=n_2>m_0$, $m_2=n_3$, $\varepsilon=0.01$, obtaining
    \begin{align*}
    \P\Big(  (\cE_{\ref{lem: individ1}}(x,y,M)&\cap \Omega_{2m_0, 0.01}\cap \cE_{\ref{lem: rowsum}} 
     \bigcup 
    \left(\cE_{\ref{lem: individ1}}(x,y,M^T)\cap \Omega_{2m_0, 0.01} \cap \cE_{\ref{lem: rowsum}} \right)\Big)
    \\& \le \lr{\frac{C_2 n}{n_2d}}^{0.01n/8}
     \le  \lr{\frac{a_2d^{1/5}}{C_2}}^{-n/800}.
    \end{align*}
\end{remark}

Recall that given two disjoint subsets $J^\ell, J^r \subset [n]$ and a matrix $M\in \cM_{n, d}$,
\[
I^\ell=I^\ell(J^\ell, J^r, M)=\set{i\in [n]: \abs{\mbox{Supp}(R_i)\cap J^\ell}=1 \mbox{ and } \mbox{Supp}(R_i)\cap J^r=\emptyset}
\]
and
\[I^r=I^r(J^r, J^\ell,  M)=\set{i\in [n]: \abs{\mbox{Supp}(R_i)\cap J^r}=1 \mbox{ and } \mbox{Supp}(R_i)\cap J^\ell=\emptyset}.\]

\begin{proof}
    Fix $x\in \C^n $ and $y\in \text{span}\{\1\}$ satisfying the condition of the lemma.  Let $\sigma=\sigma_x$ be defined as above. Denote $q_0=m_1/m$ and without loss of generality, assume that either $q_0=1$ or that $q_0$ is a large enough integer.  Let $J_1^\ell, \dots, J_{q_0}^\ell$ be a partition of $\sigma{[m_1]}$ into sets of cardinality $m$ each, and let $J_1^r, \dots, J_{q_0}^r$ be a partition of $\sigma([n-m_1+1, n])$ into sets of cardinality $m$ each. Define for $q\in [q_0]$
    \[
    J_{q}:=J_q^\ell\cup J_q^r    \quad \mbox{ and } \quad  J_0:=[n]\setminus \bigcup_{q=1}^{q_0} J_q.
    \]
    Then $J_0, J_1, \dots, J_{q_0}$ forms a disjoint partition of $[n]$.

We deal simultaneously with the cases of $M$ and $M^T$, so let $A$ denote either $M$ or $M^T$. All definitions and notations involving $A$ apply to both cases. Let  
$$
  I_q^\ell(A)=I_q^\ell(J_q^\ell, J_q^r, A)\quad \mbox{ and }\quad  I_q^r(A)=I_q^r(J_q^r, J_q^\ell, A)
$$ 
be defined as above, and for every $1\le q \le q_0$ denote 
$$ 
 I_q(A)=I_q(J_q, A):=I_q^\ell(A)\cup I_q^r(A). 
$$ 
 Note that by definition, $I_q^\ell(A)\cap I_q^r(A) =\emptyset$ for each $1 \le q \le q_0$. 
 Since the cardinality of $J_q$ is  $2m\leq 2m_0\le \varepsilon n/d$, for 
 $M \in  \cE_{\ref{lem: rowsum}} \cap \Om_{2m, \varepsilon}$,  by \cref{lem: omega_ke}, we observe
    \begin{equation*}\label{eq: card_I_q_lemm}
            |I_ q^\ell(M)|,\, |I_ q^r(M)|\in [(1-8\varepsilon)dm, (1+\varepsilon)dm], 
    \end{equation*}
    and
    \begin{equation*}\label{eq: card_I_q_lemm1}
            |I_ q^\ell(M^T)|,\, |I_ q^r(M^T)|\in [(1-4\varepsilon)dm, dm].
    \end{equation*}
Therefore, as we deal with disjoint unions, 
    \begin{equation}\label{eq:Iqs}
        |I_q(M)|\in [2(1-8\varepsilon)dm, 2(1+\varepsilon)dm],
    \end{equation}
    and
    \begin{equation}\label{eq:Iqs1}
        |I_q(M^T)|\in [2(1-4\varepsilon)dm, 2dm].
    \end{equation}
    
Split $\cM_{n,d}$ into disjoint union of classes $\cF(\cI, V)$  defined at the beginning of this section with the fixed disjoint partition $J_0, J_1, \dots, J_{q_0}$ chosen earlier. 
Let 
$$
 \cF:=\{\cF(\cI, V): \Om_{2m,\varepsilon}\cap  \cE_{\ref{lem: rowsum}} \cap \cF(\cI, V)\neq \emptyset\}.
 $$
 Since $\cE_{\ref{lem: rowsum}} \cap \Om_{2m, \varepsilon}\cap \cF(\cI, V) \neq \emptyset$ implies that $|I_q|$ satisfies \eqref{eq:Iqs} for each $1 \le q \le q_0$, we have 
\begin{align*}
 \P( \cE_{\ref{lem: individ1}}(x,y,M) &\cap \Om_{2m,\varepsilon} \cap  \cE_{\ref{lem: rowsum}}   )\\
 &=\sum_{\cF(\cI, V) \in \cF} \P\lr{\cE_{\ref{lem: individ1}}(x,y,M)\cap \Om_{2m,\varepsilon} \cap  \cE_{\ref{lem: rowsum}} \,\, |\,\, \cF(\cI, V)} \P\lr{\cF(\cI, V)}\\
&\le \max_{\cF(\cI, V) }\P\lr{\cE_{\ref{lem: individ1}}(x,y,M)\cap \Om_{2m,\varepsilon} \cap  \cE_{\ref{lem: rowsum}} \,\, |\,\, \cF(\cI, V)}\\
&\le  \max_{\cF(\cI, V) }\P\lr{\cE_{\ref{lem: individ1}}(x,y,M)\,\,|\,\,\cF(\cI, V)},
\end{align*}
where the maximum is taking over $\cF(\cI, V)\in \cF$ with $I_q$'s satisfying \eqref{eq:Iqs} for $1 \le q \le q_0$.

\smallskip 

Similarly, we may also split $\cM_{n,d}$ into the disjoint union of classes $\cG(\cI, V)$  defined at the beginning of this section with the fixed disjoint partition $J_0, J_1, \dots, J_{q_0}$, and deduce that
\begin{equation} \label{eq: CPcG}
\P\lr{\cE_{\ref{lem: individ1}}(x,y,M^{T})\cap \Om_{2m,\varepsilon} \cap  \cE_{\ref{lem: rowsum}}} \leq  \max_{\cG(\cI, V) }\P\lr{\cE_{\ref{lem: individ1}}(x,y,M^{T})\,\,|\,\,\cG(\cI, V)},
\end{equation}
where the maximum is taking over $\cG(\cI, V)$ with $I_q$'s satisfying \eqref{eq:Iqs1} for $1 \le q \le q_0$.

Thus, to prove our lemma, it suffices to prove a uniform upper bound for the conditional probabilities. Since we partition $\cM_{n,d}$ using two different types of equivalent classes, to avoid potential confusion, we will treat those conditional probabilities separately, although the proofs are very similar. We start by fixing an admissible class $\cF(\cI_1, V_1) $
such that for $1 \leq q \leq q_0$, $I_q \in \cI_1$ satisfies \eqref{eq:Iqs}.  Denote the corresponding induced probability measure: 
\[
\P_\cF(\cdot):=\P(\cdot\,\,|\,\, \cF(\cI_1, V_1))
\]
Let \[
I:=\bigcup_{q=1}^{q_0} I_q.
\]
We first show that the set of $i$'s belonging to many $I_q$'s is large. More precisely, given $i\in [n]$, denote
\[
B_i:=\set{q\in [q_0]: i\in I_q}  \qquad \mbox{ and } \qquad I_0:=\set{i\in [n]: \abs{B_i}\ge \frac{3mdq_0}{5n}}.
\]
By \eqref{eq:Iqs} we have
\[
2(1-8\varepsilon)dmq_0\le \sum_{q=1}^{q_0}|I_q|=\sum_{i=1}^n |B_i| = 
\sum_{i\in I_0} |B_i|+\sum_{i\not\in I_0} |B_i|\le |I_0| q_0 + n \cdot\frac{3mdq_0}{5n}.
\]
Thus, one has $|I_0|\ge 2md/3$.
Without loss of generality, assume that $I_0=[I_0]=\{1,\dots, |I_0|\}$ and we only consider the first $k:=\lceil 2md/3\rceil$ indices from $I_0$. 
For every $M\in \cE_{\ref{lem: individ1}}(x,y,M)$ we have
\[
\|(M-z\I_n)(x+y)\|_2^2=\sum_{i=1}^n\abs{\inn{R_i(M-z\I_n), (x+y)^\dagger}}^2\le c_{\ref{lem: individ1}}md.
\]
Thus, there are at most $9c_{\ref{lem: individ1}}md$ rows with $\abs{\inn{R_i(M-z\I_n), (x+y)^\dagger}}\ge 1/3$.  Hence,
\[
\abs{\set{i\in [k]: \abs{\inn{R_i(M-z\I_n), (x+y)^\dagger}}< \frac{1}{3}}}\ge  \lr{\frac{2}{3}-9c_{\ref{lem: individ1}}}md.
\]
Let $k_0=\lceil \lr{\frac{2}{3}-9c_{\ref{lem: individ1}}}md\rceil$ ($k_0\geq md/2$ for small enough 
$c_{\ref{lem: individ1}}$).  For every $i\le k$, we denote
\begin{equation}\label{eq: event_subclasses_FIV}
\Om_i:=\set{ M \in \cF(\cI_1, V_1): \abs{\inn{R_i(M-z\I_n), (x+y)^\dagger}}<1/3}  \quad  \mbox{ and } \quad  \Om_0:=\cF(\cI_1, V_1).
\end{equation}
Then 
\begin{align*}
    \P_\cF(\cE_{\ref{lem: individ1}}(x,y,M))\le \sum_{\substack{B\subset [k]\\ |B|=k_0}}\P_\cF\lr{\bigcap_{i\in B}\Om_i}\le \binom{k}{k_0}\max_{\substack{B\subset [k]\\ |B|=k_0}}\P_\cF\lr{\bigcap_{i\in B}\Om_i}.
\end{align*}
Without loss of generality, we assume that the maximum above is attained at $B=[k_0]$. Then 
\begin{align*}
     \P_\cF(\cE_{\ref{lem: individ1}}(x,y,M))&\le \lr{\frac{ek}{k-k_0}}^{k-k_0}\P_\cF\lr{\bigcap_{i\in[k_0]}\Om_i}\le \lr{\frac{1}{c_{\ref{lem: individ1}}}}^{9c_{\ref{lem: individ1}}md}\P_\cF\lr{\bigcap_{i\in[k_0]}\Om_i}\\
     &=\lr{\frac{1}{c_{\ref{lem: individ1}}}}^{9c_{\ref{lem: individ1}}md}\prod_{i=1}^{k_0}\P_\cF\lr{\Om_i|\Om_1\cap \cdots \cap \Om_{i-1}}.
\end{align*}

Now we estimate the factors in the product. Fix $i\in [k_0]$ and $B_i$.  As $i\in I_0$, 
$|B_i|\ge\frac{3mdq_0}{5n}$. 
Consider the partition of $\cF(\cI_1,V_1)$ 
into equivalent classes $H\in \cH(\cF(\cI_1,V_1),i, B_i)$ (such classes were defined at the beginning of this section). Let $\P_H$ denote the conditional probability measure $\P(\cdot \vert H)$  on a class $H $.   Since all matrices in $H$ have their first $i-1$ rows fixed, for every $H$, 
the intersection $H_i:=H\cap \Om_1\cap \cdots \cap \Om_{i-1}$  is either $H$ or $\emptyset$. Thus,
\[
\P_\cF\lr{\Om_i|\Om_1\cap \cdots \cap \Om_{i-1}} \le \max_{H: H_i\neq \emptyset} \P_H(\Om_i).
\]
To bound the probability from above, we define a random variable $\xi_q(M, i), q\in B_i$ by
\[
\xi_q(M,i):=\sum_{j\in J_q}M_{ij}x_j.
\]
Fix $H$ such that $H_i\neq \emptyset$. For any matrix in the equivalent class $H$, by the definition, every block $[n]\times J_q$ has a prescribed sum in each row. Recall that 
$$
  J^*(B_i):=J_0\cup \bigcup_{k\in [q_0]\setminus B_i} J_k.
$$
Thus, conditional on $H$, random variables $\xi_q(M,i)$'s are independent for $q\in B_i$. 
Defining 
$$\alpha:=\max_{q\in B_i}\cL_H(\xi_q(M,i), 1/3),$$
by  \eqref{eq: innprds} we obtain
\begin{align*}
    \P_H(\Om_i)&=\P_H\lr{ \abs{ \left \langle R_i(M-z\I_n), (x+y)^\dagger\right \rangle}\le\frac{1}{3}}
    \le \frac{C_{\ref{rogozin1961increase}}  }{\sqrt{(1-\alpha)|B_i|}}\le \frac{C_{\ref{rogozin1961increase}}  }{\sqrt{(1-\alpha)3mdq_0/(5n)}}.
\end{align*}
Moreover, in the case $q_0=1$ we simply have
\[
\P_H(\Om_i)=\alpha=\cL_H(\xi_1(M,i), 1/3) .
\]

Now it remains to bound $\alpha$. Fix $q\in B_i$ such that $i\in I_q$. 
Since the intersection of the support of $R_i(M)$ with $J_q$ is a singleton, say indexed by 
$$j_r(q)=j_r(q, M, i) \in J_q^r\quad\quad \mbox{ or } \quad \quad j_{\ell}(q)=j_{\ell}(q, M, i) \in J_q^{\ell},$$
we have 
\[
|\xi_q(M,i)|=\begin{cases}
 |x_{j_r(q)}|,\\
|x_{j_\ell(q)}|.
\end{cases} 
\]
Since each row of $M$ is independently and uniformly sampled from the vectors in $\{0,1\}^n$ containing $d$ ones, we observe that conditioned on $H$, the single 1 appears in either $J_q^r$ or $J_q^{\ell}$ with equal probability, i.e., $\P_{H}\lr{j_r(q) \in J_q^r}=\P_{H}\lr{j_{\ell}(q) \in J_q^{\ell}}=1/2$.  Moreover, since $\abs{x_{j_\ell(q)}}-\abs{x_{j_r(q)}}>2/3$, we have $\alpha\le 1/2$. 
%
Therefore, we have 
in the case $q_0=1$,
\begin{equation}\label{eq: CPcF1}
   \P_\cF(\cE_{\ref{lem: individ1}}(x,y,M))\le \lr{\frac{1}{c_{\ref{lem: individ1}}}}^{9c_{\ref{lem: individ1}}md}\lr{\frac{1}{2}}^{(2/3-9c_{\ref{lem: individ1}})md}.
\end{equation}
In the case $q_0=m_1/m>C_1$ we obtain,
\begin{equation}\label{eq: CPcF2}
\P_\cF(\cE_{\ref{lem: individ1}}(x,y,M)) \le \lr{\frac{1}{c_{\ref{lem: individ1}}}}^{9c_{\ref{lem: individ1}}md}\lr{\frac{C_{\ref{rogozin1961increase}}\sqrt{10n}}{\sqrt{3m_1d}}}^{(2/3-9c_{\ref{lem: individ1}})md}.
\end{equation}

Now we turn to bounding the conditional probability in \eqref{eq: CPcG}. Fix an admissible class $\cG(\cI_2, V_2) $
such that for $1 \leq q \leq q_0$, $I_q \in \cI_2$ satisfies \eqref{eq:Iqs1}.  Denote the corresponding induced probability measure by
\[
\P_\cG(\cdot):=\P(\cdot\,\,|\,\, \cG(\cI_2, V_2)).
\]
Define the same sets $I$, $B_i$, and $I_0$ as before. 
By \eqref{eq:Iqs1}, we can again deduce that $|I_0|\ge 2md/3$. Now let
\begin{equation}\label{eq: event_subclasses_GIV}
 \Om_i':=\set{ M \in \cG(\cI_2, V_2): \abs{\inn{R_i(M^T-z\I_n), (x+y)^\dagger}}<1/3}  \quad  \mbox{ and } \quad  \Om_0':=\cG(\cI_2, V_2).
\end{equation}
Repeating the above proof, we obtain
\begin{align*}
 \P_\cG(\cE_{\ref{lem: individ1}}(x,y,M^T)) &\le \lr{\frac{1}{c_{\ref{lem: individ1}}}}^{9c_{\ref{lem: individ1}}md}\prod_{i=1}^{k_0}\P_\cG\lr{\Om'_i|\Om'_1\cap \cdots \cap \Om'_{i-1}} \\
 &\le \lr{\frac{1}{c_{\ref{lem: individ1}}}}^{9c_{\ref{lem: individ1}}md} \left(\max_{H: H_i\neq \emptyset} \P_H(\Om'_i) \right)^{k_0},
\end{align*}
where $H \in \cH(\cG(\cI_2,V_2),i, B_i)$ and $H_i:=H\cap \Om'_1\cap \cdots \cap \Om'_{i-1}$. To estimate the maximum probability, we again fix an $H$ such that $H_i \neq \emptyset$ and define 
\[
\xi_q(M^T,i):=\sum_{j\in J_q}M^T_{ij}x_j.
\]
Repeating the above proof, we have
\[
\P_{H}(\Om_i')=\P_{H}\lr{ \abs{ \left \langle R_i(M^T-z\I_n), (x+y)^\dagger\right \rangle}\le\frac{1}{3}}\\
    \le \frac{C_{\ref{rogozin1961increase}}  }{\sqrt{(1-\alpha')3mdq_0/(5n)}},
 \]
where $\alpha':=\max_{q\in B_i}\cL_{H}(\xi_q(M^{T},i), 1/3)$. And in the case $q_0=1$,
\[
\P_H(\Om'_i)=\alpha'=\cL_H(\xi_1(M^T,i), 1/3) .
\]
To  bound from above $\alpha'$, we follow our previous proof by indexing the single 1 by 
$$
  j_r(q)=j_r(q, M^T, i) \in J_q^r \quad\quad \mbox{ or } \quad \quad 
  j_{\ell}(q)=j_{\ell}(q, M^T, i) \in J_q^{\ell},
$$
 and defining 
\[
|\xi_q(M^T,i)|=\begin{cases}
 |x_{j_r(q)}|,\\
|x_{j_\ell(q)}|.
\end{cases} 
\]
Since $\abs{x_{j_\ell(q)}}-\abs{x_{j_r(q)}}>2/3$, we have
\[
 \cL_{H}(\xi_q(M^T, i), 1/3)\le \max(\P_{H}(H_l), \P_{H}(H_r))
 \]
 where $H_\ell:=\{M \in H: j_{\ell}(q)\in J_q^\ell\}$ and $H_r:=\{M \in H: j_r(q)\in J_q^r\}$.
 Since the rows of $M^{T}$ are not independent, to evaluate the probabilities, one needs to use a ``simple switching" argument, the details of which can be found in \cite[Claim 4.8]{litvak2019structure}. Applying the claim, we get for $i \in [k_0]$,
 \[
  \cL_{H}(\xi_q(M^T, i), 1/3)\le\max(\P_{H}(H_l), \P_{H}(H_r))\le 4/5,
\]
 which implies that $\alpha' \leq 4/5$.
 Hence, we have 
in the case $q_0=1$,
\begin{equation}\label{eq: CPcG1}
   \P_\cG(\cE_{\ref{lem: individ1}}(x,y,M^T))\le \lr{\frac{1}{c_{\ref{lem: individ1}}}}^{9c_{\ref{lem: individ1}}md}\lr{\frac{4}{5}}^{(2/3-9c_{\ref{lem: individ1}})md}.
\end{equation}
In the case $q_0=m_1/m>C_1$ we obtain,
\begin{equation}\label{eq: CPcG2}
\P_\cG(\cE_{\ref{lem: individ1}}(x,y,M^T)) \le \lr{\frac{1}{c_{\ref{lem: individ1}}}}^{9c_{\ref{lem: individ1}}md}\lr{\frac{5C_{\ref{rogozin1961increase}}\sqrt{n}}{\sqrt{3m_1d}}}^{(2/3-9c_{\ref{lem: individ1}})md}.
\end{equation}
Combining \eqref{eq: CPcF1} and \eqref{eq: CPcG1},  in the case $q_0=1$ we obtain that 
\begin{align*}
 \P_\cF(\cE_{\ref{lem: individ1}}(x,y,M))&+ \P_\cG(\cE_{\ref{lem: individ1}}(x,y,M^{T}))
  \\&\le \lr{\frac{1}{c_{\ref{lem: individ1}}}}^{9c_{\ref{lem: individ1}}md}
  \lr{\lr{\frac{4}{5}}^{(2/3-9c_{\ref{lem: individ1}})md}+\lr{\frac{1}{2}}^{(2/3-9c_{\ref{lem: individ1}})md}}
  \le \lr{\frac{5}{6}}^{md/2},
 \end{align*}
 provided that $c_{\ref{lem: individ1}}$ is small enough.
 In the case $q_0=m_1/m>C_1$, by combining \eqref{eq: CPcF2} and \eqref{eq: CPcG2}, we obtain
 \begin{align*}
     \max\{\P_\cF(\cE_{\ref{lem: individ1}}(x,y,M)), \P_\cG(\cE_{\ref{lem: individ1}}(x,y,M^{T}))\}
     &\le \lr{\frac{1}{c_{\ref{lem: individ1}}}}^{9c_{\ref{lem: individ1}}md}\lr{\frac{5C_{\ref{rogozin1961increase}}\sqrt{n}}{\sqrt{3m_1d}}}^{(2/3-9c_{\ref{lem: individ1}})md}
     \\& \le \frac{1}{2}\,\lr{\frac{C_2 n}{m_1d}}^{md/4},
\end{align*}
 provided $c_{\ref{lem: individ1}}$ is small enough and  $C_1, C_2$ are large enough. 
This completes the proof. 
\end{proof}

\subsection{{Proof of \cref{thm: lower_bdd_T}}}\label{sec:proof_lowt}
Set $\varepsilon:=0.01$ and  $m_0:=\max\{1, \lfloor  \varepsilon n/(2d)\rfloor\}=\max\{1, \lfloor  n/(200d)\rfloor\}$.
By  \cref{lem:t1}, we have
\begin{equation}\label{eq:probt1}
     \P\lr{\cE_{\ref{lem:t1}}(M)\cup \cE_{\ref{lem:t1}}(M^T)}\le \frac{7e}{n}.
\end{equation} 
Thus it is enough to consider vectors from $\cT_2\cup(\cT_3\cap \mbox{Cons}(a_3, \rho))$. 
Recall that $\btt$ and $\bt3$ were defined in (\ref{bt}) and (\ref{btn}). 
For  $j\in \{2, 3\}$ denote  
\[
\cE_j(M)=:\set{M\in \cM_{n, d}: \exists x\in \cT_j \mbox{ such that } \|(M-z\I_n)x\|_2\le 
\frac{\sqrt{c_{\ref{lem: individ1}}md}}{2B(\cT_j)}\|x\|_2}
\]
and 
\[
\cE_j(M^T)=:\set{M\in \cM_{n, d}: \exists x\in \cT_j \mbox{ such that } 
\|(M^T-z\I_n)x\|_2\le \frac{\sqrt{c_{\ref{lem: individ1}}md}}{2B(\cT_j)}\|x\|_2},
\]
where 
$$
   m= 
\begin{cases}
                       n_1, & \,\, \mbox{ when }\,\, j=2,\\
                       m_0, &\,\, \mbox{ when }\,\, j=3.
\end{cases}
$$
We prove \cref{thm: lower_bdd_T} by bounding from above the probability of the event 
$$
\cE:= \cE_{\ref{lem:t1}}(M)\cup \cE_{\ref{lem:t1}}(M^T)\cup \cE_2(M)\cup \cE_2(M^T)\cup \cE_3(M)\cup \cE_3(M^T). 
$$
Note that in the case $n_1>1$ bounds on the ratios of norms 
$$\|(M-z\I_n)x\|_2/\|x\|_2\qquad \mbox{ and }\qquad \|(M^T-z\I_n)x\|_2/\|x\|_2$$ 
coming from $\cE$ is the minimum between 
  $\omega(n, d),$
$$
  \frac{13 (\log n)^{1/4} \, \sqrt{c_{\ref{lem: individ1}} n_1 d}}{(6d)^{r+1}\, d^{1/4}}
  \ge \frac{c_0  \sqrt{n \log n} }{(6d)^{r+1}\, \sqrt{d}},
  \quad 
   \mbox{ and } \quad 
  \frac{13 (\log n)^{1/4}\, \sqrt{c_{\ref{lem: individ1}}m_0 \, d}}{(6d)^{r+1}\, d}
  \ge \frac{54 (\log n)^{1/4}\, \sqrt{n}}{(6d)^{r+2}},
$$
where $c_0>0$ is an absolute constant. 
In the case $n_1=1$ we have $\cT_1=\emptyset$, so we may assume 
$\cE_{\ref{lem:t1}}(M)= \cE_{\ref{lem:t1}}(M^T)=\emptyset$, therefore 
bounds on the ratios of norms above   
coming from $\cE$ is the minimum between 
$$
  \frac{\sqrt{c_{\ref{lem: individ1}} n_1 d}}{2\sqrt{2n }}
  \ge \frac{c_1  (\log n)^{1/4} }{d^{1/4}}
  \quad \quad 
   \mbox{ and } \quad \quad 
  \frac{\sqrt{c_{\ref{lem: individ1}}m_0 \, d}}{2\sqrt{2n }}
  \ge c_1,
$$
where $c_0>1$ is an absolute constant. This  leads to the bound announced in the Theorem~\ref{thm: lower_bdd_T}. 
Thus, it remains to estimate probabilities.

\smallskip 

Fix $j\in \{2,3\}$. Let $A$ be either $M$ or $M^T$. Define the corresponding ``norm control" event
\[
\cE_{\rm nrm}(A):=
\begin{cases}
\cE_{\ref{prp: nrm}}, & \,\, \mbox{ when }\,\, A=M,\\
\cE_{\ref{lem:nrmleft}}, &\,\, \mbox{ when }\,\, A=M^T.
\end{cases}
\]
Assume $M\in \cE_j(A)\cap \cE_{\rm nrm}(A)$. 
Then there exists $x=x(A)$ (which we fix now) such that 
\[
x\in 
\begin{cases}
\cT_2, & \,\, \mbox{ when }\,\, j=2,\\
\cT_3\cap \mbox{Cons}(a_3, \rho), &\,\, \mbox{ when }\,\, j=3 \qquad \mbox{ and } \qquad 
\|(A-z\I_n)x\|_2\le \frac{\sqrt{c_{\ref{lem: individ1}}md}}{2B(\cT_j)}\|x\|_2.
\end{cases}
\]
Without loss of generality we assume that $x^*_{n_{j-1}}=1$, which means that 
$x\in \cT_j'$ (recall that $\cT_j'$ is defined in \cref{sec:nets} for $j=2, 3$).
 By \cref{lem: l2norm_linf}, we have $\|x\|_2\le B(\cT_j)$ for $j=2, 3$. Thus,
\[
\|(A-z\I_n)x\|_2\le \frac{\sqrt{c_{\ref{lem: individ1}}md}}{2}.
\]
Let $\cN_j:=\cN_j'+\cN_j''$ be the net constructed in \cref{lem: net_Tvectors}. (Here and below, for fixed $j$, we write $u\in \cN_j'$ and $w\in \cN_j''$. When $j$ changes, we reuse the notation.) For the  $x=x(A)\in \cT_j'$ fixed above, there exist $u\in \cN_j'$ and $w\in \cN_j''\subset \mbox{span} \{\1\}$  such that
\begin{equation}\label{triplenormbound}
u_{n_{j-1}}^*\ge 1-\frac{1}{d^{3/4}}, \quad  u_{\ell}^*\le \frac{1}{4}+\frac{1}{d^{3/4}}  \quad  \forall \ell\ge n_j, \quad  \tnrm{x-(u+w)}\le \frac{\sqrt{2n}}{d^{3/4}}. 
\end{equation}
Moreover, for any $\ell\ge n_j$ and $d$ large enough,
\[
u^*_{n_{j-1}}-u_{\ell}^*\ge \Big(1-\frac{1}{d^{3/4}}\Big)-\Big(\frac{1}{4}+\frac{1}{d^{3/4}}\Big)=\frac{3}{4}-\frac{2}{d^{3/4}}>\frac{2}{3}.
\]
Hence, $u^*_{n_{j-1}}>2/3+u_{\ell}^*$ for all $\ell\ge n_j$.

Thus, for either $A=M$ or $A=M^T$, there exist $u=u(A)\in \cN_j'$ and $w=w(A)\in \cN_j''$ such that
\begin{align*}
    \|(A-z\I_n)(u+w)\|_2&\le \|(A-z\I)x\|_2+\|(A-z\I_n)(x-u-w)\|_2\\
    &\le \frac{\sqrt{c_{\ref{lem: individ1}}md}}{2}+\|A(x-u-w)\|_2+|z|\|x-u-w\|_2.
\end{align*}
Applying  \cref{prp: nrm} for $A=M$ and   \cref{lem:nrmleft} for $A=M^T$, together with (\ref{triplenormbound}), 
\[
\|A(x-u-w)\|_2\le \lr{2C_{\ref{thm: operator_bdd}}+1} \sqrt{d}\cdot\tnrm{x-u-w}\le \lr{2C_{\ref{thm: operator_bdd}}+1}\frac{\sqrt{2n}}{d^{1/4}}.
\]
Since $\|x-u-w\|_2\le \tnrm{x-u-w}\le \sqrt{2n}/d^{3/4}$ and $\abs{z}\le \sqrt{d}\log \log d$,
\[
|z|\|x-u-w\|_2\le \frac{\sqrt{2n} \log \log d}{d^{1/4}} .
\]
Note that  $c_{\ref{lem: individ1}}md\geq cn \sqrt{(\log n)/d}$ for small enough absolute constant $c>0$.  
Therefore, for large enough $n$,
\[
  \|(A-z\I_n)(u+w)\|_2\le \frac{\sqrt{c_{\ref{lem: individ1}}md}}{2}+\lr{2C_{\ref{thm: operator_bdd}}+1}\frac{\sqrt{2n}}{d^{1/4}}+\frac{\sqrt{2n}\log \log d }{d^{1/4}}\le \sqrt{c_{\ref{lem: individ1}}md}.
\]

Next we apply Lemma~\ref{lem: net_Tvectors} and  \cref{lem: individ1} twice with the  choice of parameters as in  \cref{rem:lem611}.

In the case $j=2$, by \cref{lem: net_Tvectors}, $\abs{\cN_2}\le \exp(6n_2\log d)$, and by  \cref{lem: individ1} (with $m_1=n_1<m_0, m_2=n_{2}$, $\varepsilon=0.01$ ---  see \cref{rem:lem611}), for each fixed $u\in \cN_2'$ and $w\in \cN_2''$,
\[
\P\Big( \big(\cE_{\ref{lem: individ1}}(u,w,M)\cap \Omega_{2n_1,0.01}\cap \cE_{\ref{lem: rowsum}}\big)
\cup
\big(\cE_{\ref{lem: individ1}}(u,w,M^T)\cap \Omega_{2n_1,0.01}\cap \cE_{\ref{lem: rowsum}}\big)\Big)
\le \Big(\frac{5}{6}\Big)^{n_1 d/2}.
\]
Hence, by the union bound over the net $\cN_2$, the probabilities $\P(\cE_2(M)\cap \Om_{2n_1, 0.01}\cap \cE_{\ref{prp: nrm}}\cap \cE_{\ref{lem: rowsum}})$ and $\P(\cE_2(M^T)\cap \Om_{2n_1, 0.01}\cap \cE_{\ref{lem:nrmleft}}\cap \cE_{\ref{lem: rowsum}})$ are both bounded by
\begin{align*}
   e^{6n_{2}\log d}\cdot (5/6)^{n_1d/2}\le e^{-n_1d/20}
\end{align*}
provided that $d$ is large enough and $a_2$ is small enough. Since $n_1=\left\lceil a_1\varepsilon_0 n /d\right\rceil$ and $d\le n/2$, one has
\[
e^{-n_1d/20}\le \exp\Big(-\frac{a_1}{20}\varepsilon_0 n\Big)=\exp\Big(-\frac{a_1n}{20}\sqrt{\frac{48\log n}{d}}\Big)\le \exp \left(-c\sqrt{n\log n}\right)
\]
for some absolute constant $c>0$.

For $j=3$, we argue similarly. By \cref{lem: net_Tvectors}, $\abs{\cN_3}\le \exp(6n_3\log d)$, and by  \cref{lem: individ1} (with $m_1=n_{2}>m_0$, $m_2=n_{3}$, $\varepsilon=0.01$ ---  see \cref{rem:lem611}), for each fixed $u\in \cN_3'$ and $w\in \cN_3''$, for some absolute constant $C_2>0$, 
\[
\P\Big( \big(\cE_{\ref{lem: individ1}}(u,w,M)\cap \Omega_{2m_0,0.01}\cap \cE_{\ref{lem: rowsum}}\big)
\bigcup
\big(\cE_{\ref{lem: individ1}}(u,w,M^T)\cap \Omega_{2m_0,0.01}\cap \cE_{\ref{lem: rowsum}}\big)\Big)
\le   \lr{\frac{a_2d^{1/5}}{C_2}}^{-n/800}.
\]
Thus, by the union bound over the net $\cN_3$, the probabilities $\P(\cE_3(M)\cap \Om_{2m_0, 0.01}\cap \cE_{\ref{prp: nrm}}\cap \cE_{\ref{lem: rowsum}})$ and $\P(\cE_3(M^T)\cap \Om_{2m_0, 0.01}\cap \cE_{\ref{lem:nrmleft}}\cap \cE_{\ref{lem: rowsum}})$  are both bounded by
\[
e^{6n_{3}\log d}\cdot \lr{\frac{a_2d^{1/5}}{C_2}}^{-n/800}\le e^{-c'n\log d},
\]
for $d$ is large enough and $a_3$ is sufficiently small, with  an absolute constant $c'>0$.

Finally, we bound the remaining probabilities. By \cref{lem: rowsum} (with $\tau=0.01, m=n$), 
\[
\P( \cE_{\ref{lem: rowsum}}^c)\le ne^{-d/30000}.
\]

Note that $\cE_{\ref{prp: nrm}} \subset \cE_{\ref{lem:nrmleft}}$. Indeed, if $\cE_{\ref{prp: nrm}} $ occurs, 
then by the triangle inequality
\begin{align*}
    \|M^T \1\|_2&\le \|(M^T-\E M^T) \1\|_2+\|\E M^T\1\|_2\le  \|(M^T-\E M^T)\|\cdot\| \1\|_2+d\|\1\|_2\\
    &\le (C_{\ref{thm: operator_bdd}}\sqrt{d}+d)\sqrt{n}\le (C_{\ref{thm: operator_bdd}}+1)d\sqrt{n}.
\end{align*}
Thus, by  \cref{thm: operator_bdd}, one has 
$\P(\cE_{\ref{prp: nrm}}^c)=\P((\cE_{\ref{prp: nrm}}\cup \cE_{\ref{prp: nrm}})^c)\le 2/n$.

\smallskip

By \cref{Expansion1} and \cref{Expansion} (with $\varepsilon=0.01$),
\[
\P(\Om_{2n_1, 0.01}^c)+\P(\Om_{2m_0, 0.01}^c)\le 10\,  e^{-d/480000}.
\]

Collecting the bounds for $j=2, 3$ and adding the probability in \eqref{eq:probt1}, we obtain that 
the total probability is at most $22/n$ for $d\ge C\log n$ with large enough absolute constant $C$ 
and large enough $n$. 
This completes the proof.
\qed

\section{Proof of \cref{thm: lsvlowertails}}
\label{sec:proof_lsv}
The infimum over the non-almost-constant vectors is tackled by associating it with the average distance of a row of the matrix $M-z\I_n$ from the subspace spanned by the rest of the rows. We adapt the result in \cite[Lemma 3.5]{rudelson2008littlewood}, where their original argument applies to incompressible vectors. 

\begin{lemma}[Invertibility via distance]\label{lem: inv_dis}
    Let $A$ be a $n\times n$ random matrix. Let $R_1,\dots, R_n$ be the row vectors of $A$, and let $H_k=\mbox{span}\{R_i: i\neq k\}$ be the span of all row vectors except the $k$-th row. Then for every $\delta, \rho\in (0,1)$ and every $\varepsilon\ge 0$, one has
    \[
    \P\lr{\inf_{x\in \S^{2n-1}\cap \lr{\mbox{\rm Cons}(\delta, \rho)}^c}\, \|\bar{x}A\|_2\le \frac{\varepsilon\, \rho}{\sqrt{n}}}\le \frac{1}{\delta n}\sum_{k=1}^n \P\lr{\mbox{\rm dist}\lr{R_k, H_k}\le \varepsilon}.
    \]
\end{lemma}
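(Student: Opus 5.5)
This is the Rudelson--Vershynin averaging argument; the only change is that incompressibility is replaced by the non-almost-constant condition. We work deterministically on the event
\[
\inf_{x\in \S^{2n-1}\cap \lr{\mbox{Cons}(\delta,\rho)}^c}\|\bar{x}A\|_2\le \frac{\ep\rho}{\sqrt{n}} .
\]
The goal is to show that on this event at least $\delta n$ indices $k$ satisfy $\mbox{dist}(R_k,H_k)\le \ep$. Once this is established, we count: with $N:=|\{k:\mbox{dist}(R_k,H_k)\le\ep\}|$, Markov's inequality gives
\[
\P(\text{event})\le \P(N\ge \delta n)\le \frac{\E N}{\delta n}=\frac{1}{\delta n}\sum_{k=1}^n\P(\mbox{dist}(R_k,H_k)\le\ep),
\]
which is exactly the claimed bound.

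The deterministic step goes as follows. Fix $x\in\S^{2n-1}$ that is not almost constant and satisfies $\|\bar{x}A\|_2\le \ep\rho/\sqrt n$; by compactness of the sphere we may take the infimum to be attained, or else work with a slightly larger threshold. Write $\bar{x}A=\sum_i \bar{x}_i R_i$. For each $k$, the vector $\sum_{i\ne k}\bar x_i R_i$ lies in $H_k$, so
\[
|x_k|\,\mbox{dist}(R_k,H_k)=\mbox{dist}(\bar{x}_kR_k,H_k)\le \Big\|\sum_i \bar{x}_iR_i\Big\|_2\le \frac{\ep\rho}{\sqrt n}.
\]
Hence every $k$ with $|x_k|\ge \rho/\sqrt n$ satisfies $\mbox{dist}(R_k,H_k)\le\ep$.

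It remains to show that at least $\delta n$ coordinates have $|x_k|\ge\rho/\sqrt n$. We use Definition~\ref{def:almost_constant} with $\lambda=0$. Since $x\notin\mbox{Cons}(\delta,\rho)$ and $\|x\|_2=1$, the set $\{i:|x_i-0|\le\rho/\sqrt n\}$ has cardinality at most $(1-\delta)n$. Therefore at least $\delta n$ indices satisfy $|x_i|>\rho/\sqrt n$, and each of them has $\mbox{dist}(R_i,H_i)\le\ep$, which is what the averaging step needs. This is the only place where the structural hypothesis enters. It is worth remarking that testing a single value of $\lambda$ suffices here, even though the almost-constant condition quantifies over all $\lambda$.

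There is no substantial obstacle. The only points needing care are the non-strict versus strict inequalities and the possibility that the infimum is not attained. To avoid both, we prove the statement with $\le \ep\rho/\sqrt n$ replaced by $\le \ep'\rho/\sqrt n$ for every $\ep'>\ep$, and then let $\ep'\downarrow\ep$ using continuity from above of $\P(\mbox{dist}\le\cdot)$. Alternatively, we can use that the infimum of the continuous function $x\mapsto\|\bar xA\|_2$ over the closed set $\S^{2n-1}\cap(\mbox{Cons}(\delta,\rho))^c$ is attained. This set is closed because the condition ``at most $(1-\delta)n$ coordinates within $\rho/\sqrt n$ of $\lambda$, for every $\lambda$'' is preserved under limits; for this, note that with the non-strict inequality $|x_i-\lambda|\le\rho/\sqrt n$ the sets involved are closed conditions.
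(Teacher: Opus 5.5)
Your argument is correct and is essentially the paper's proof: testing $\lambda=0$ gives at least $\delta n$ coordinates with $|x_k|\ge\rho/\sqrt n$, the bound $\|\bar xA\|_2\ge |x_k|\,\mathrm{dist}(R_k,H_k)$ forces the corresponding distances to be at most $\varepsilon$, and Markov's inequality on the number of small distances finishes (the paper runs the same argument contrapositively via $|\sigma_1|+|\sigma_2(x)|>n$). Your $\varepsilon'\downarrow\varepsilon$ device correctly handles a possibly non-attained infimum, but you should drop the ``alternatively'' remark, because the closedness claim there is false: with the non-strict inequality it is $\S^{2n-1}\cap\mathrm{Cons}(\delta,\rho)$ that is closed (it is the projection of a compact set), so its complement is relatively open in the connected sphere and, being nonempty and proper, is not closed.
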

\begin{proof}
    Let $x\in \S^{2n-1}\cap \lr{\mbox{Cons}(\delta, \rho)}^c$. Let $X_k:=R_k^T$ be the column vector corresponding $R_k$ for $k\in [n]$. Note that for any vector $u$, and any linear subspace  $H$, $\mbox{dist}(u, H)\le \|u\|_2$. Therefore,
    \[
    \|\bar{x}A\|_2\ge \max_{k=1,\dots, n}\mbox{dist}(\bar{x}A, H_k)=\max_{k=1,\dots, n}\mbox{dist}\big(\sum_{i=1}^n\bar{x}_iX_i, H_k\big),
    \]
where $x_i$ is the $i$-th coordinate of the vector $x$.
Note that by the definition  $X_j\in H_k$ unless $j= k$. Therefore $\mbox{dist}(X_j, H_k)=0$ for every $j\neq k$. Thus,
\[
\|\bar{x}A\|_2\ge \max_{k=1,\dots, n}\mbox{dist}(\bar{x}_k X_k, H_k)=\max_{k=1,\dots, n}|x_k|\, \mbox{dist}(X_k, H_k).
\]
Let $p_k:=\P\lr{\mbox{dist}(X_k, H_k)\le\varepsilon}$. Let  $\sigma_1:=\set{k\in [n]: \mbox{dist}(X_k, H_k)>\varepsilon}$, and denote by $\cE$ the event that $\sigma_1$ contains more than $(1-\delta)n$ elements.   By Markov's inequality,
\begin{align*}
    \P(\cE^c)= \P(|\sigma_1^c|\ge \delta n)\le \frac{1}{\delta n}\E\lr{\sum_{k=1}^n\1_{\{\mbox{dist}(X_k, H_k)\le\varepsilon\}}}\le \frac{1}{\delta n}\sum_{k=1}^np_k.
\end{align*}
Let $\sigma_2(x):=\set{k\in [n]: |x_k|\ge \rho/\sqrt{n}}$. Note that $|\sigma_2^c(x)|\le (1-\delta) n$ (otherwise, $$|\{k\in [n]: |x_k-0|\le \rho/\sqrt{n}\}|\ge (1-\delta)n,$$
contradicts that $x$ is a non-almost-constant vector). Thus, $|\sigma_2(x)|\ge \delta n$.

Now, suppose that $\cE$ occurs. Then
\[
|\sigma_1|+|\sigma_2(x)|>(1-\delta)n+\delta n=n,
\]
which implies that $\sigma_1\cap\sigma_2(x)\neq \emptyset$. Let $\ell \in \sigma_1\cap \sigma_2(x)$. Then by the definitions of the sets $\sigma_1$ and $\sigma_2(x)$ one has
\[
\|\bar{x}A\|_2\ge |x_\ell |\, \mbox{dist}(X_\ell , H_\ell ) > \frac{\rho\, \ep}{\sqrt{n}}.
\]
Thus,
\[
  \P\lr{\inf_{x\in \S^{2n-1}\cap \lr{\mbox{Cons}(\delta, \rho)}^c}\|\bar{x}A\|_2\le \frac{\ep\, \rho}{\sqrt{n}}}\le \P(\cE^c)\le\frac{1}{\delta n}\sum_{k=1}^np_k.
\]
This completes the proof.
\end{proof}

To complete the picture, it remains to bound the probabilities appearing on the right-hand side in \cref{lem: inv_dis}.  
Recall that $B_\cT$ is defined in \eqref{bt} and $a_3$ is a small enough positive constant introduced in \cref{sec: inverb_almost_const}    to define $n_3$. 

\begin{lemma}\label{lem:dist_p}
   Let $C_{\ref{cor:inv_cons}}\log n\le d\le n/2$. Let $M\in \cM_{n,d}$. Let $z\in \C$. Let $R_1,\dots, R_n$ be the row vectors of $M-z\I_n$ and let $H_1=\text{span}\{R_2,\dots, R_n\}$. Let $R^*$ be a random unit vector orthogonal to $H_1$. Let $0<\rho\leq \sqrt{n}/(B_\cT d^{3/4})$. Then $R^* \in \S^{2n-1}\cap\lr{\mbox{Cons}(a_3, \rho)}^c$  with probability at least $1-23/n$ and 
    \[
    \P(\text{\rm dist}(R_1, H_1)\le \rho /\sqrt{n})  \le \frac{23}{n} +  \frac{C_{\ref{lem: weak_smallball_notalmost}}}{\sqrt{a_3 d}}        +
    C_{\ref{lem: weak_smallball_notalmost}}\exp\left({-a_3^2d/C_{\ref{lem: weak_smallball_notalmost}}}\right).
    \]
\end{lemma}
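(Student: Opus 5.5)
The argument has two parts. First we show that the normal vector $R^*$ is, with high probability, not almost constant. Then we condition on $R_2,\dots,R_n$ and apply the small ball estimate of \cref{lem: weak_smallball_notalmost} to the independent row $R_1$.

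For the first step, recall that $R^*$ is a unit vector orthogonal to every $R_i$ with $i\ge 2$. Set $x:=\overline{R^*}$, so that $\langle R_i, (R^*)\rangle = \sum_j (R_i)_j \overline{R^*_j}$ vanishes for $i\ge 2$. Consequently $(M-z\I_n)x$ has only its first coordinate nonzero, namely $\langle R_1, \overline{x}^{\,\dagger}\rangle$ with the appropriate conjugation. Orthogonality to $H_1$ alone does not force this vector to be small, since the first row is free. The remedy is to use $M^T$. Let $y$ be the vector with $y^T(M-z\I_n)=0$ in all coordinates except the first, equivalently a left-kernel–type vector of the submatrix formed by rows $2,\dots,n$. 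Rather than chase this, the cleaner formulation is as follows. The rows $R_2,\dots,R_n$ are the rows of the $(n-1)\times n$ matrix $M'-zI'$, where $I'$ is the identity with its first row deleted. Orthogonality says $(M'-zI')\overline{R^*}=0$ (up to conjugation conventions), which gives $\|(M-z\I_n)\overline{R^*}\|_2=|\langle R_1,R^*\rangle|\le \|R_1\|_2\le \sqrt d+|z|$.

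This bound is not below $\theta(n,d)$, so the reduction must be redone more carefully. Apply the almost-constant argument directly to the $(n-1)\times n$ matrix. Every step of \cref{thm: lower_bdd_T} and \cref{thm: invnonsteep} lower-bounds $\|(M-z\I_n)x\|_2$ through many rows (at least order $dm_1$ or $n/2$ of them) that each contribute, so deleting a single row changes these counts negligibly. The proofs therefore give $\|(M'-zI')x\|_2\ge \theta'\|x\|_2>0$ for all $x\in\mbox{Cons}(a_3,\rho)$ on the same event, of probability at least $1-23/n$ (the bound from \cref{cor:inv_cons}). On that event, $\overline{R^*}$ (and hence $R^*$, since $\mbox{Cons}$ is invariant under conjugation) cannot lie in $\mbox{Cons}(a_3,\rho)$, because it is annihilated by $M'-zI'$. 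If I prefer to cite \cref{cor:inv_cons} verbatim, I would instead observe that the same-distribution argument applies to rows $2,\dots,n$, since those rows are i.i.d. like any rows of $\cM_{n,d}$, and justify the single-row deletion by this robustness remark. This robustness is the main obstacle: it needs a check that the row counts in \cref{lem:t1}, \cref{lem: individ1} and \cref{thm: invnonsteep} all exceed $1$ with room to spare, which holds since each is at least $dm/2\ge 2$. Note also that the equation involves only $M$ and not $M^T$, so the right-multiplication half of \cref{cor:inv_cons} suffices.

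For the second step, $R^*$ is a function of $R_2,\dots,R_n$ only, and is therefore independent of $R_1=\xi-z e_1$, where $\xi$ is uniform on the $0/1$ vectors with $d$ ones. We have $\mbox{dist}(R_1,H_1)\ge |\langle R_1,R^*\rangle| = |\langle \xi, R^*\rangle - z\overline{R^*_1}|$. Conditioning on $R_2,\dots,R_n$ in the good event, the shift $z\overline{R^*_1}$ is a constant. Hence \cref{lem: weak_smallball_notalmost}, applied with $v=R^*$ (or its conjugate, to match the inner product convention) and $\delta=a_3$, bounds the conditional probability that $|\langle\xi,R^*\rangle - c|<\rho/\sqrt n$ by $C_{\ref{lem: weak_smallball_notalmost}}/\sqrt{a_3 d}+C_{\ref{lem: weak_smallball_notalmost}}\exp(-a_3^2d/C_{\ref{lem: weak_smallball_notalmost}})$. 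The fine point that the lemma uses a strict inequality while we need $\le$ is absorbed by applying it with width slightly larger than $\rho/\sqrt n$, which is harmless since the lemma's bound does not depend on $\rho$. Adding the $23/n$ from the bad event completes the bound.
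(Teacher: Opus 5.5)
Your proposal is correct and follows the paper's proof. You show $R^*\notin \mbox{Cons}(a_3,\rho)$ with probability at least $1-23/n$ by running the right-multiplication half of Corollary~\ref{cor:inv_cons} on the $(n-1)\times n$ matrix of rows $R_2,\dots,R_n$ (the paper likewise just asserts that this proof survives replacing $n$ by $n-1$), and then apply Lemma~\ref{lem: weak_smallball_notalmost} conditionally on $R_2,\dots,R_n$ to the independent row $R_1=\xi-ze_1$.

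One small point concerns your widening of the window to handle $\le$ versus $<$. It needs $R^*\notin\mbox{Cons}(a_3,\rho')$ for some $\rho'>\rho$, and the lemma's bound being independent of $\rho$ does not give this. It does hold for each fixed $R^*$, because $\bigcap_{\rho'>\rho}\mbox{Cons}(a_3,\rho')=\mbox{Cons}(a_3,\rho)$ by a simple compactness argument.
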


\begin{proof}
    It is clear that $R^*$ is a random vector that depends only on $R_2,\dots, R_n$ (hence independent of $R_1$) 
    and that 
    \[
    \text{dist}(R_1, H_1)\ge \abs{\inn{R^*, R_1}}.
    \]
    Let $$\cE:=\{R^*\in \S^{2n-1}\cap \lr{\mbox{Cons}(a_3, \rho)}^c\}. $$
   
    Then
    \begin{align*}
         \P\Big( \text{dist}(R_1, H_1)\le \frac{\rho}{\sqrt{n}}\Big) &\le \P\left(\abs{\inn{R^*, R_1}}\le  \frac{\rho}{\sqrt{n}}\right)
         \le  \P\left(\left \{ \abs{\inn{R^*, R_1}}  \le \frac{\rho}{\sqrt{n}} \right \} \cap \cE \right)+\P(\cE^c)\\
         &=   \P\Big( \abs{\inn{R^*, R_1}}  \le \frac{\rho}{\sqrt{n}}\,\,  \vert\,\, \cE \Big)\P(\cE)+\P(\cE^c). 
    \end{align*}
Since $R^*$ is independent of $R_1$, conditioning on $\cE$, 
we have 
$$ 
 \P\Big( \abs{\inn{R^*, R_1}}  \le \frac{\rho}{\sqrt{n}}  \,\,  \vert\,\,  \cE \Big)\le \Ls(\inn{R^*, \xi}, 
 \frac{\rho}{\sqrt{n}}),
$$
where $\xi:=R_1+ze_1$ is uniformly distributed on the set of all vectors in $\{0,1\}^n$ containing exactly $d$ ones.
Therefore,  by \cref{lem: weak_smallball_notalmost},
    \[
  \P\Big( \abs{\inn{R^*, R_1}}  \le \frac{\rho}{\sqrt{n}}  \,\,  \vert\,\,  \cE \Big)\le \frac{C_{\ref{lem: weak_smallball_notalmost}}}{\sqrt{a_3 d}}+C_{\ref{lem: weak_smallball_notalmost}}
   \exp{(-a_3^2 d/C_{\ref{lem: weak_smallball_notalmost}})}.
    \]
To bound $\P(\cE^c)$, let $M'$ be the $(n-1)\times n$ random matrix with row vectors $R_2,\dots, R_n$, i.e., the submatrix of $M-z\I_n$ obtained by removing the first row. By the definition of $R^*$, 
\[
M'R^*=0.
\]
Thus, on the event $\cE^c$, there exists $x\in \S^{2n-1}\cap \mbox{Cons}(a_3, \rho)$ such that $M'x=0$.
By replacing $n$ with $n-1$, one can  check that the proof of \cref{cor:inv_cons} remains valid for $M'$ instead of $M-z\I_n$. Thus,
\[
\P(\cE^c)\le 23/n.
\]
    This completes the proof.
\end{proof}

We are now ready to  prove the lower bound on the smallest singular value.

\begin{proof}[Proof of \cref{thm: lsvlowertails}] We choose parameters 
$$
  \rho =\sqrt{n}\, \min\left\{\frac{1}{B_\cT\, d^{3/4}}, \, \sqrt{\theta(n,d)}\right\}
  \quad \quad \mbox{ and } \quad \quad    \varepsilon=\frac{\rho}{\sqrt{n}} 
$$
and  set 
$$
A=M^T-\bar{z}\I_n,\,\,\,  S_1=\S^{2n-1}\cap  \mbox{Cons}(a_3, \rho),
 \,\,\, \mbox{ and } \,\,\, S_2=\S^{2n-1}\cap  (\mbox{Cons}(a_3, \rho))^c.
$$
  Consider the event
\[
\cE:=\set{\inf_{v\in \S^{2n-1}}\|Av\|_2\le \frac{\varepsilon\, \rho}{\sqrt{n}}} = \cE_1\cup \cE_2,
\]
where 
\[
\cE_1:=\set{\inf_{x\in S_1}\|A x\|_2\le \frac{\varepsilon\, \rho}{\sqrt{n}}}\quad \quad \mbox{ and } 
\quad \quad 
\cE_2:=\set{\inf_{x\in S_2}\|Ax\|_2\le \frac{\varepsilon\, \rho}{\sqrt{n}}}.
\]
Since $\rho\leq \sqrt{n}/(B_\cT d^{3/4})$, $\varepsilon \rho/\sqrt{n}=\rho^2/n\leq \theta(n, d)$, and $|\bar{z}|=|z| \leq \sqrt{d} \log \log d$, 
 \cref{cor:inv_cons}  implies that $$\P(\cE_1)\leq 23/n.$$ 
 Let $H_k=\mbox{span}\{R_i: i\neq k\}$, $k\leq n$. Using  that 
 $\mbox{dist}\lr{R_k(A), H_k}$, $k\leq n$, have the same distributions and   
 applying  \cref{lem: inv_dis} with $\delta=a_3$, we observe 
\begin{align*}
    \P(\cE_2)&\le  \frac{1}{a_3}\P\lr{\mbox{dist}\lr{R_1(A), H_1}\le \varepsilon}.
\end{align*}
Applying \cref{lem:dist_p} (note again that $\rho\leq \sqrt{n}/(B_\cT d^{3/4})$)), 
\begin{align*}
    \P(\cE)&\le  \P\lr{\cE_1}+ \P\lr{\cE_2}\le  \frac{23}{n}+\frac{1}{a_3}\lr{     \frac{23}{n}   +
           \frac{C_{\ref{lem: weak_smallball_notalmost}}}{\sqrt{a_3 d}}        +
      C_{\ref{lem: weak_smallball_notalmost}}\exp\left(-a_3^2d/C_{\ref{lem: weak_smallball_notalmost}}\right)}
      \leq \frac{C}{\sqrt{d}},
\end{align*}
where $C$ is a large enough absolute constant. This proves the bound on the singular value 
\begin{equation*}
    \frac{\varepsilon\, \rho}{\sqrt{n}}= \frac{\rho^2}{n} = \min\left\{\frac{1}{B_\cT^2\, d^{3/2}},\,  \theta(n,d)\right\}
\end{equation*}
Finally we analyze this minimum. 
 
First assume that $n_1=1$ (that is, $d\geq c\,  n^{2/3}(\log n)^{1/3}$). In this case, for large $n$, 
$$
 \min\left\{\frac{1}{B_\cT^2\, d^{3/2}},\,  \theta(n,d)\right\}=
 \min\left\{\frac{1}{2n \, d^{3/2}},\,  \frac{c_1 (\log n)^{1/4}}{d^{1/4}}\right\}=\frac{1}{2n \, d^{3/2}}\geq 
 \frac{1}{n^{5/2}}.
$$

Next assume that $n_1>1$ (that is, $d\leq c\,  n^{2/3}(\log n)^{1/3}$). It is not difficult to see that 
 \begin{equation}\label{eq:smsing}
     \min\left\{\frac{1}{B_\cT^2\, d^{3/2}},\,  \theta(n,d)\right\}
  = \min\left\{\frac{6^4 \sqrt{\log n}}{(6d)^{2r+4}\, d^{3/2}},\, \frac{\sqrt{d}}{4 \sqrt{n}}\right\}.
\end{equation}
Recall that $\ell_0^r< n_1\le \ell_0^{r+1}$. Denote 
\begin{equation*}
    \alpha:=\frac{\log(6d)}{\log \ell_0}\in (2,  C \log \log n]
\end{equation*}
for some absolute constant $C\geq 1$. Then 
$$
  (6d)^{2r+4}\, d^{3/2}=\ell_0^{2\alpha (r+1)} d^{7/2}\geq n_1^{2\alpha}d^{7/2}
  \ge n_1^{4}d^{7/2}\ge c_0 n^4 (\ln n)^2 /d^{5/2}  \ge c_0 n^{3/2},
$$
for some absolute constant $c_0>0$. 
This shows that for large enough $n$, the minimum in (\ref{eq:smsing}) attains at the first term. 
On the other hand, 
$$
  (6d)^{2r+4}\, d^{3/2}=\ell_0^{2\alpha r} d^{11/2}\leq n_1^{2\alpha} d^{11/2}\leq 
   \left( n^2 (\log n )/d^3 \right)^{\alpha } d^{11/2},
$$
Therefore the minimum in (\ref{eq:smsing})  is bounded below by 
$$
  \frac{ d^{3\alpha-5.5}}{n^{2 \alpha} (\log n)^{\alpha -1/2}}\geq n^{-2 \alpha} . 
$$
Finally note that \\
(1) if  
$d\geq  C_1 (\log n)^{21}$ then $\alpha \leq 2.1$, so the bound is larger than 
$$
  \frac{ d^{0.8}}{n^{4.2} (\log n)^{1.6}} ; 
$$
(2) if  
$d\geq   (\log n)^{2}$ then  $\alpha \leq 4.1$ (for large enough $n$), so the bound is larger than
$$
  \frac{ d^{6.8}}{n^{8.2} (\log n)^{3.6}} ; 
$$
   This 
  completes the proof of \cref{thm: lsvlowertails} and the bound in Remark~\ref{sing-comp-bb}.
\end{proof}

\section{Singularity threshold}\label{appx}

In \cite{ferber2022singularity}, Ferber, Kwan, and Sauermann established that a random matrix uniformly drawn from $\cM_{n,d}$ is asymptotically almost surely non-singular whenever $\min{(d,n-d)} \ge (1+\varepsilon)\log n$. In \cite{APsparseRCM}, Aigner--Horev and Person  mentioned that this threshold is sharp, moreover,  when $d\le (1-\varepsilon)\log n$, a random  matrix  asymptotically almost surely contains a zero column.
 However, it seems that a proof of this fact does not appear in the existing literature, so we provide 
 a direct  proof of this threshold.

\begin{prop} Let $d, n$ be positive integers and $d\le n/2$.  
    Let $M$ be an $n\times n$ random matrix where each row is independently and uniformly chosen from all $n$-dimensional binary vectors $\{0,1\}^n$ with exactly $d$ ones. Let $X$ denote the number of zero columns in $M$. Then the following holds.
    \begin{enumerate}
        \item Assume that $d=\log n - g(n)$ for some non-negative function $g$ satisfying $g(n) \longrightarrow \infty$ as $n \rightarrow \infty$. Then,
        \[
        \lim_{n\to \infty} \P(X\ge 1)=1.
        \]
        That is, asymptotically (in $n$) the matrix $M$ almost surely has at least one zero column. In particular, it holds for 
        $d\le (1-\varepsilon)\log n$ (for  any fixed $\varepsilon>0$). 
        \item For any fixed $d\geq 1$ and fixed $\theta \in (0,1)$,
        \[
         \liminf_{n\to \infty} \P(X > \theta \E X ) 
         \geq (1-\theta)^2.
        \]
        In particular, 
        \[
         \liminf_{n\to \infty}\P(X>\theta  e^{-d} n /2) 
         \geq (1-\theta)^2,
        \]
        that is, for large enough $n$, the nullity of the matrix $M$ is bounded from below by  a constant proportion of $n$ with a positive probability.
    \end{enumerate}
\end{prop}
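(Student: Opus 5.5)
We will use the second moment method on $X=\sum_{j=1}^n \xi_j$, where $\xi_j$ is the indicator that column $j$ of $M$ is zero. Rows are independent and a fixed row misses column $j$ with probability $1-d/n$, so
\[
\E X = n(1-d/n)^n .
\]
A fixed row misses two given columns $j\neq k$ with probability $\frac{(n-d)(n-d-1)}{n(n-1)}$, which is at most $(1-d/n)^2$. Hence, for $j\ne k$,
\[
\E \xi_j\xi_k=\Big(\tfrac{(n-d)(n-d-1)}{n(n-1)}\Big)^n\le (1-d/n)^{2n}.
\]
Summing over all pairs gives $\E X^2\le \E X+(\E X)^2$. This is just the negative correlation of the $\xi_j$, consistent with Lemma~\ref{lemma: Negas}. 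The key estimate is therefore
\[
\frac{\E X^2}{(\E X)^2}\le 1+\frac{1}{\E X}.
\]

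For part 1 we apply the Paley--Zygmund inequality (or Chebyshev) to get
\[
\P(X\ge 1)\ge \frac{(\E X)^2}{\E X^2}\ge \frac{1}{1+1/\E X}.
\]
It then suffices to show $\E X\to\infty$. Using $\log(1-x)\ge -x-x^2$ for $x\le 1/2$, we get
\[
\E X\ge n\exp(-d-d^2/n)=\exp\big(g(n)-d^2/n\big).
\]
Here $d^2/n\le (\log n)^2/n\to 0$, so $\E X\ge \tfrac12 e^{g(n)}\to\infty$. The case $d\le(1-\varepsilon)\log n$ is covered by taking $g(n)=\log n-d\ge \varepsilon\log n$. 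The argument is insensitive to whether $\log n-d$ is an integer, since $d$ is an integer and $g$ is defined by the relation.

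For part 2, with $d$ fixed, the Paley--Zygmund inequality gives
\[
\P(X>\theta\E X)\ge (1-\theta)^2\frac{(\E X)^2}{\E X^2}\ge \frac{(1-\theta)^2}{1+1/\E X}.
\]
Moreover $\E X=n(1-d/n)^n= n e^{-d}(1+o(1))\to\infty$, which yields the $\liminf$ bound. For the "in particular" statement, note that for large $n$ we have $\E X\ge e^{-d}n/2$, so the event $\{X>\theta e^{-d}n/2\}$ contains $\{X>\theta\E X\}$. Finally, each zero column forces $e_j\in\ker M$, and these kernel vectors are linearly independent, so the nullity of $M$ is at least $X$.

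We expect no serious obstacle. The only point requiring care is the pair-correlation inequality $\frac{(n-d)(n-d-1)}{n(n-1)}\le (1-d/n)^2$. It is equivalent to $\frac{n-d-1}{n-1}\le \frac{n-d}{n}$, which holds because $d\ge 0$.
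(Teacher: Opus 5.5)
Your proposal is correct and follows the paper's proof. Like the paper, you use the second moment method via Paley--Zygmund, with the per-row pair bound $\binom{n-2}{d}/\binom{n}{d}\le (1-d/n)^2$ giving $\E X^2\le \E X+(\E X)^2$, and the lower bound $\E X\ge \tfrac12 e^{g(n)}$ from $\log(1-x)\ge -x-x^2$; the paper keeps the slightly sharper factor $\frac{n-1}{n}$ on $(\E X)^2$, which is immaterial. You also justify two points the paper leaves implicit: the pair-correlation inequality, and the nullity claim ($e_j\in\ker M$ for each zero column $j$).
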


\begin{proof}
 In both cases we will use the Paley-Zygmund inequality, which states that for any $\lam \in [0, 1]$,
\[
\P(X>\lam \,  \E X)\ge (1-\lam)^2\, \frac{(\E X)^2}{\E X^2}.
\]
We estimate $\E X$ and $\E X^2$ separately. 
   Note that $X=\sum_{i=1}^n I_i$ where $I_i$ is the indicator function of the event that $i$-th column is  zero. Thus,
   \[
   q:=\mathbb{E}X=\sum_{i=1}^nE(I_i)=n(1-d/n)^n
   \]
   and 
 \[
\E X^2=\sum_i \E I_i^2+\sum_{i\neq j}\E I_iI_j =q+\sum_{i\neq j}\P(I_i=1, I_j=1).
\]
Since $I_i=1, I_j=1$ means that $i$-th and $j$-th columns of $M$ are zero columns and since $\P$ is the uniform 
probability on $\cM_{n,d}$, 
\[
   \P(I_i=1, I_j=1)=
   \lr{\frac{\binom{n-2}{d}}{\binom{n}{d}}}^n=
   \lr{\frac{(n-d)(n-d-1)}{n(n-1)}}^n\le (1-d/n)^{2n} = (q/n)^2.
\]
Therefore, 
\[
\E X^2\le q+n(n-1)(q/n)^2=q+\frac{n-1}{n}q^2,
\]
and, consequently, 
$$
  \frac{(\E X)^2}{\E X^2}\ge  \frac{q^2}{q+\frac{n-1}{n}q^2}=\frac{1}{1/q+ \frac{n-1}{n}}. 
$$

\noindent 
    \textbf{Case 1:}  {\it $d=\log n - g(n)$, $g(n)\geq 0$, $g(n) \longrightarrow \infty$.}   
In this case, using  $\log(1-x)\ge -x-x^2$ on $[0, 1/2]$ and $d\leq n/2$, we have 
$$
 n\log(1-d/n)\ge -d-d^2/n \geq -\log n + g(n) - (\log^2 n)/n 
 \geq -\log n + g(n)- \log 2
$$
for large enough $n$. Thus,
\[
 q=\E X\ge  ne^{ -\log n + g(n)- \log 2}= e^{ g(n)}/2 \longrightarrow \infty
 \quad \mbox{ as } n\to \infty.
\]
Applying the Paley-Zygmund inequality with $\lam=0$, 
\[
 \P(X\geq 1)=\P(X>0)\ge \frac{(\E X)^2}{\E X^2}\ge 
  \frac{1}{1/q+ \frac{n-1}{n}}\longrightarrow 1  \quad \mbox{ as } n\to \infty.
\]

\noindent 
\textbf{Case 2: } {\it $d\geq 1$ and $\theta \in (0,1)$ are fixed.} 
Note that in this case $q/n=(1-d/n)^n\to  e^{-d}$ as $n\to \infty$, 
in particular, $q\to \infty$. 
Therefore, applying the Paley-Zygmund inequality with $\lam=\theta$, we have  
\[
 \P(X>\theta \E X)\ge (1-\theta)^2\frac{(\E X)^2}{\E X^2}\geq 
 \frac{(1-\theta)^2 }{1/q+ \frac{n-1}{n}}\longrightarrow (1-\theta)^2   \quad \mbox{ as } n\to \infty. 
\]
The ``in particular" part follows as $q/n \to e^{-d}$.  
\end{proof}

Finally, given the resemblance between random combinatorial matrices and i.i.d. Bernoulli random matrices, 
we would like to formulate the following conjecture. For corresponding results on 
Bernoulli($d/n$) random matrices we refer to 
\cite{basak2021sharp, litvak2022singularity, Huang-Bern, jain2020sharp, jain2020sharpsing}
and references therein.

 \begin{con}\label{con1}
   Let $\Omega_0$, $\Omega_r$,  and $\Omega_c$  denote the events that $M$ has a zero column, 
    that $M$ contains two identical rows, and  $M$ contains two identical columns respectively. Then 
\begin{equation}\label{eqcon}
     \P(\mbox{$M$ is singular})=(1+o(1))\left(\P(\Omega_0) + \P(\Omega_r) + \P(\Omega_c) \right).
\end{equation}
\end{con}

 Analyzing probabilities in Conjecture~\ref{con1}, note that $M$ is singular iff $E_{n}-M$ is singular, where 
 $E_n$ denotes the matrix with all entries equal to 1. Therefore, we assume that $d \leq n/2$. For $d=n/2$, Nguyen \cite{nguyen2013singularity} conjectured that a random combinatorial matrix $M$ is singular with probability $(\frac{1}{2}+o(1))^n$, which has recently been confirmed  by Jain, Sah, and Sawhney 
   \cite[Theorem~1.8] {jain2020sharpsing}. Note that for $d=n/2$, this bound is weaker than  (\ref{eqcon}). 
   Furthermore, for $d< c_0 n$ (for a suitable $c_0>0$) we have  $\P(\Omega_c)>\P(\Omega_r)$ and  if 
    $d\geq 2\log n$ then  $\P(\Omega_0)$ dominates $\P(\Omega_c)$. Moreover, for any 
   fixed positive constant $c < 1/2$,  if $d \leq c n$ then  $\P(\Omega_0)$  dominates $\P(\Omega_r)$. 
   Therefore, we may formulate a weaker  conjecture.
   
   \begin{con}
    Let $c \in (0,1/2)$ be a fixed constant, assume that $2\log n \leq d \leq c n$. Then
    \[
    \P(\mbox{$M$ is singular})=(1+o(1))\P(\Omega_0).
    \]    
\end{con}

\bibliographystyle{abbrv}
\bibliography{Singular_value}

\vspace{1cm}

\par\noindent
\textsc{Dongbin Li} 
\\
\textit{Email address:} \texttt{dongbinli@snnu.edu.cn}
\\
School of Mathematics and Statistics, Shaanxi Normal University, Xi'an, 710119, China.

\vspace{1cm}
\par\noindent
\textsc{Alexander E. Litvak} 
\\
\textit{Email address:} \texttt{alitvak@ualberta.ca}
\\
%
Dept. of Math. and Stat. Sciences, 
University of Alberta, Edmonton, AB, T6G 2R3, Canada.

\vspace{1cm}
\par\noindent
\textsc{Tingzhou Yu} 
\\
\textit{Email address:} \texttt{tingzho1@ualberta.ca}
\\
%
Dept. of Math. and Stat. Sciences, 
University of Alberta, Edmonton, AB, T6G 2R3, Canada.

\end{document}